 \newtheorem{theorem}{Theorem}[section]
 \newtheorem{lemma}[theorem]{Lemma}
 \newtheorem{corol}[theorem]{Corollary}
 \newtheorem{prop}[theorem]{Proposition}
 \theoremstyle{definition}
 \newtheorem{definition}[theorem]{Definition}
 \newtheorem{definitionAndLemma}[theorem]{Definition and Lemma}
 \def\Arg{\text{Arg}}
 \def\t{\tilde}
 \def\wt{\widetilde}
 \def\tbh{\tilde{\bar{h}}}
 \newcommand{\RN}[1]{%
 \textup{\uppercase\expandafter{\romannumeral#1}}%
}
 \def\qed{\hfill$\Box$\medskip}
\newcommand{\lozengeincreasing}[1]{
\begin{tikzpicture}[#1]
\draw (0,0) -- (0,1ex);
\draw (0,0) -- (0.87ex,0.5ex);
\draw (0,1ex) -- (0.87ex,1.5ex);
\draw (0.87ex,0.5ex) -- (0.87ex,1.5ex);
\end{tikzpicture}
}
\newcommand{\lozengedecreasing}[1]{
\begin{tikzpicture}[#1]
\draw (0,1.5ex) -- (0,0.5ex);
\draw (0,1.5ex) -- (0.87ex,1ex);
\draw (0,0.5ex) -- (0.87ex,0ex);
\draw (0.87ex,1ex) -- (0.87ex,0);
\end{tikzpicture}
}
\newcommand{\lozengeconstant}[1]{
\begin{tikzpicture}[#1]
\draw (0,0.5ex) -- (0.87ex,1ex);
\draw (0,0.5ex) -- (0.87ex,0ex);
\draw (0.87ex,1ex) -- (1.74ex,0.5ex);
\draw (0.87ex,0ex) -- (1.74ex,0.5ex);
\end{tikzpicture}
}
\begin{document}
\selectlanguage{english}

\title{\large\bf DIMER MODEL, BEAD MODEL AND STANDARD YOUNG TABLEAUX: FINITE CASES AND LIMIT SHAPES}

\author{Wangru Sun \footnotemark[1]}

\date{}

\renewcommand{\thefootnote}{\fnsymbol{footnote}}

\maketitle

\footnotetext[1]{Laboratoire de Probabilit\'es et Mod\`eles Al\'eatoires, UMR 7599, Universit\'e Pierre et Marie Curie,
4 place Jussieu, 75005 Paris, France. wangru.sun@etu.upmc.fr}

{\narrower

\noindent{\bf Abstract.} The bead model is a random point field on $\mathbb{Z}\times\mathbb{R}$ which can be viewed as a scaling limit of dimer model. We prove that, in the scaling limit, the normalized height function of a uniformly chosen random bead configuration lies in an arbitrarily small neighborhood of a surface $h_0$ that maximizes some functional which we call as entropy. We also prove that the limit shape $h_0$ is a scaling limit of the limit shapes of a properly chosen sequence of dimer models. There is a map from bead configurations to standard tableaux of a (skew) Young diagram, and the map preserves uniform measures, and our results of the bead model yield the existence of the limit shape of a random standard Young tableau.
\smallskip

\smallskip




\bigskip

\section{Introduction}

The \textit{bead model} is a random point field on $\mathbb{Z}\times\mathbb{R}$ or a subset of it. A bead configuration is composed of a collection of parallel vertical threads, and on each thread there is a collection of points which we call the \emph{beads}. We furthermore ask a local finiteness and an interlacing relation on the vertical positions of the beads: for two consecutive beads on a thread, on each of its neighboring thread there is exactly one bead whose vertical position is between them. Figure~\ref{FigureBead} shows a typical configuration.

\begin{figure}[H]
\centering
\includegraphics[width=0.3\textwidth]{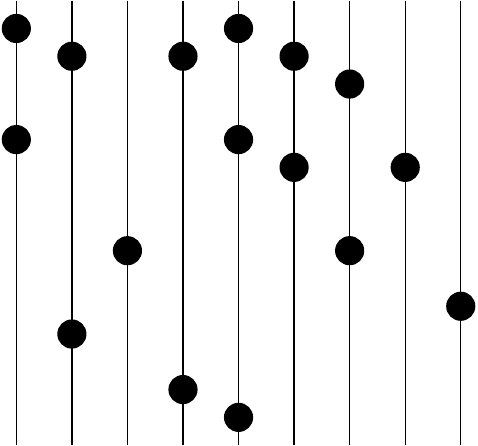}
\caption{A bead configuration.\label{FigureBead}}
\end{figure}

Boutillier \cite{BeadModel} considers this model on the infinite plane and constructs a family of ergodic Gibbs measures. This measure
is constructed as a limit of the dimer model measures on a bipartite graph when some weights degenerate. The author proves that under this measure the beads form a determinantal point process whose marginal is the sine process.

This paper focuses on the finite case. Authors of \cite{BeadFinitization} have done many works in the cases corresponding to $abc$-hexagons, and in this paper we focus on more general cases via approaches mainly inherited from \cite{CEP,CKP01}.

In Section 2, we describe the general setting of a bead model (Section~\ref{sqifmlumz}), define the height function (Section~\ref{qesfihjzemrau}), precisely define the boundary conditions (Section~\ref{qseliyrzame}) and define the uniform measure of the bead model (Section~\ref{sect5.2}). We also show that the bead model in such cases can be viewed as a limit of the dimer models (Section~\ref{eqsscifmjlkqsxfw} and~\ref{sqdfhyzqmurei}). Section~3 shows that every Young diagram (which can be skew) corresponds to one specific bead model and constructs a measure-preserving map from bead configurations to standard tableaux when considering the uniform measure.

We then consider the scaling limits of the bead models. As the bead model is some kind of limit of the dimer model, we can expect a result similar to that known in the dimer model \cite{CKP01}, that is, for a fixed asymptotic boundary condition, when the size of the domain tends to infinity, the normalized random surface converges in probability to a surface maximizing some functional called entropy. Once proved, by the measure preserving map in Section~3, this result directly yields the existence of the limit shape of standard (skew) Young tableaux, which generalizes the results of \cite{PR} and \cite{Sni}.

Since it is already very interesting, and also due to some technical reasons, for scaling limits we mainly consider the bead models corresponding to (skew) Young diagrams, which means with constant boundary height function on the left and right sides of the domains. Here below is an outline, where many ideas and technics come from \cite{CKP01} and \cite{CEP}.

In Section~\ref{erdsfdhzilmhfq}, we define the (adjusted) combinatorial entropy $S(.)$ of the bead model. The definition may appear not natural, but we will show that this gives a good order in the limit. In Section~\ref{sect5.3}, we consider the toroidal bead model and compute its free energy and the local entropy function $ent(.,.)$. We postpone the proof of the relation between the local entropy function $ent$ and the combinatorial entropy $S$ to Section~\ref{sectionProveVP}. In Sections~\ref{sect5.4} and~\ref{sectionProveVP}, we define the functional $Ent(.)$ which is almost the integral of $ent$ on the unit square $D=[0,1]\times[0,1]$, and define the space of admissible functions as the complete space of normalized height functions on $D$. We prove the following variational principle:

\begin{theorem}\label{sdfqsdfgeger}
For any given asymptotic boundary height function $h^{\partial}$ defined on $\partial D$ and being constant on $\{x=0\}$ and $\{x=1\}$, there is a unique function $h_0$ among the space of admissible functions that maximizes $Ent(.)$.
\end{theorem}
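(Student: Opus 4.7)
The plan is to follow the variational-principle strategy of \cite{CKP01}, adapted to the bead model. The three ingredients are compactness of the admissible space of height functions, strict concavity of the local entropy $ent$, and the direct method of the calculus of variations. Once these are in place, existence and uniqueness of $h_0$ fall out almost automatically.

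First I would establish compactness. By the construction in Section~\ref{sect5.4}, admissible height functions are Lipschitz with gradients confined to a fixed Newton polygon $N$ of the bead model. Functions matching $h^{\partial}$ on $\partial D$ are therefore uniformly bounded and equi-Lipschitz, so by Arzelà--Ascoli they form a relatively compact subset of $C(D)$; the slope constraint passes to uniform limits almost everywhere, so the set is closed and hence compact. At the same time this set is convex, which is what will eventually deliver uniqueness.

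Next I would use the toroidal free-energy computation of Section~\ref{sect5.3} to identify $ent(s,t)$ as (minus) the Legendre transform of the free energy, obtaining an explicit formula that is strictly concave on $\mathrm{int}\,N$ and upper semicontinuous on $N$ (with possible value $-\infty$ on $\partial N$). Strict concavity of $ent$ implies strict concavity of $Ent(h)=\int_D ent(\nabla h)\,dx\,dy$ on the convex admissible set with fixed boundary data, and standard convex-integrand theory gives upper semicontinuity of $Ent$ under uniform convergence of equi-Lipschitz functions. Combining compactness with upper semicontinuity produces a maximizer $h_0$. For uniqueness, if $h_1$ and $h_2$ were two maximizers then $(h_1+h_2)/2$ would still be admissible with the correct boundary values, and strict concavity would force $\nabla h_1=\nabla h_2$ almost everywhere on the set where both gradients lie in $\mathrm{int}\,N$; because the boundary values agree, this forces $h_1=h_2$.

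The main obstacle I anticipate is Step~2: establishing strict concavity of $ent$ on $\mathrm{int}\,N$ together with the correct boundary behavior, which requires a careful analysis of the determinantal free energy of the toroidal bead model and of the dimer-to-bead limit (Sections~\ref{eqsscifmjlkqsxfw}--\ref{sqdfhyzqmurei} and~\ref{sect5.3}), in particular ruling out affine directions along which $ent$ might fail to be strict. A secondary technical point, needed to turn strict concavity into uniqueness, is showing that any maximizer has $\nabla h_0 \in \mathrm{int}\,N$ almost everywhere outside a negligible set, so that concavity is genuinely active: a maximizing sequence cannot park a positive-area set of gradients on $\partial N$ without losing entropy. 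Finally, one must verify that the functional $Ent$ is finite on at least one competitor compatible with the constant boundary values on $\{x=0\}$ and $\{x=1\}$, which I would do by explicitly exhibiting a truncated harmonic-type extension of $h^{\partial}$ with gradients in $\mathrm{int}\,N$.
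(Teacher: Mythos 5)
Your plan is the direct CKP01 template, but two of its three pillars fail for the bead model, and the paper's proof of this theorem (Theorem~\ref{theoremexistence}, Section~\ref{sect5.4part2}) is structured precisely around repairing them. First, there is no ``fixed Newton polygon'': the slope set is $[-\frac{1}{2},\frac{1}{2}]\times[0,+\infty]$, so admissible functions are only non-decreasing in $y$, not equi-Lipschitz, and need not even be continuous (Definition~\ref{admissible}). Arzel\`a--Ascoli does not apply to $\mathcal{H}$ as you claim; the paper recovers compactness only after rotating the $y$--$z$ plane by $\pi/4$ so that vertical monotonicity becomes a $1$-Lipschitz condition in the new coordinate $\t{y}$. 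Second, ``standard convex-integrand theory'' does not give upper semicontinuity here, because $ent$ is unbounded below ($ent=-\infty$ at $s=\pm\frac12$ with $t\neq 0$, and $ent\to-\infty$ as $t\to\infty$). Worse, the naive functional $h\mapsto\iint_D ent\circ\nabla h$ that you propose to maximize is not even the right object: on the step function $h(x,y)=h(x,0)$ for $y\le\frac12$, $h(x,1)$ for $y>\frac12$ (or on Cantor-type functions) it evaluates to $0$ rather than $-\infty$, so a maximizing sequence could ``park'' all the beads on a single horizontal segment without the functional detecting it. This is exactly why the paper defines $Ent(h)=\wt{Ent}(\t{h})$ through the rotated integral (Definition~\ref{sdlflhsmdgqedgedeer}) and restricts to the subspace $\mathcal{H}_0$ where the two definitions agree. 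Your proposed resolution of the gradient-on-the-boundary issue as a ``secondary technical point'' is where the real work lives, and it cannot be carried out inside your framework because the compactness and semicontinuity claims it relies on are themselves the broken steps.

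The paper's actual route is: exhibit a finite-entropy competitor $h^t$ (Definition and Lemma~\ref{DL1} --- this part of your plan matches); introduce the truncated spaces $\mathcal{H}^{\lambda}=\{h: ent\circ\nabla h\ge-\lambda\text{ a.e.}\}$, which \emph{are} compact and on which semicontinuity does hold (Lemma~\ref{lemmaHlambda}); build approximation operators $A_{\delta,\delta'}$ (convolution, contraction toward a star-center, and explicit boundary filling) satisfying $Ent(A_{\delta,\delta'}(h))\ge(1-2\delta)^2Ent(h)+O(\delta\ln\delta)$ with $ent\circ\nabla A_{\delta,\delta'}(h)$ bounded below (Lemma~\ref{lemmaapproximation}), which yields $\sup_{\mathcal{H}}Ent=\lim_{\lambda}\sup_{\mathcal{H}^{\lambda}}Ent$ (Corollary~\ref{corolsup}); then extract a uniform limit $\t{h}_0$ of the truncated maximizers in the compact rotated space, rule out a positive-measure set where $\partial\t{h}_0/\partial\t{y}=1$ by an averaged-slope argument, and transfer optimality back via $A_{\delta,\delta'}$. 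Only the uniqueness step --- strict concavity of $ent$ in the interior of its domain applied to the midpoint of two maximizers --- survives from your outline essentially unchanged. To make your proof work you would need to either reproduce this truncation-and-approximation machinery or find a substitute for compactness and semicontinuity on the full space $\mathcal{H}$, and the paper's counterexamples show no such substitute exists for the naive integral functional.
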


\begin{theorem}\label{thmmainbead}
Consider a given asymptotic boundary height function $h^{\partial}$ defined on  $\partial D$ and being constant on $\{x=0\}$ and on $\{x=1\}$. For any ${n\in\mathbb{N}^*}$, consider the bead model on $D$ with $n$ threads. For any admissible function ${h:D\rightarrow\mathbb{R}}$ such that $Ent(h)>-\infty$, when $n$ tends to infinity, the probability that the normalized surface of a random bead configuration lies within a $\delta$ neighborhood of $h$ is proportional to $e^{(Ent(h)+o(1))n^2}$ when $\delta\rightarrow 0$.
\end{theorem}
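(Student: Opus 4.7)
The plan is to adapt the Cohn--Kenyon--Propp variational principle (\cite{CKP01,CEP}) to the bead setting, exploiting the fact, established in Sections~\ref{eqsscifmjlkqsxfw}--\ref{sqdfhyzqmurei}, that the bead model on $D$ is a degenerate limit of a sequence of dimer models. Because bead positions are continuous along each thread, enumeration is replaced throughout by a Liouville-type volume, which is exactly what the adjusted combinatorial entropy $S$ of Section~\ref{erdsfdhzilmhfq} was designed to measure. The target is to prove matching bounds
\[
\log \mathrm{Vol}\bigl\{\text{configurations with normalized height in }B_{\delta}(h)\bigr\}=(Ent(h)+o(1))n^{2},
\]
from which the claimed probability statement follows after dividing by the total partition function (whose logarithm is $Ent(h_{0})n^{2}+o(n^{2})$ by Theorem~\ref{sdfqsdfgeger} applied to the maximizer $h_{0}$).

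First I would discretize. Cover $D$ by a grid of axis-parallel squares $Q_{ij}$ of side $\epsilon\gg 1/n$, with $\epsilon$ sent to $0$ after $n$. On each $Q_{ij}$, any normalized height function that is $\delta$-close to $h$ is $O(\delta+\epsilon)$-close to the affine function with slope $\nabla h(x_{ij})$, where $x_{ij}$ is the center of $Q_{ij}$. The global volume then factorizes, up to boundary corrections between adjacent patches, as a product of local bead-configuration volumes on the $Q_{ij}$'s with prescribed mean slope.

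For the upper bound, I would compare each local volume to the toroidal partition function at slope $\nabla h(x_{ij})$, whose logarithmic rate is the local entropy $ent(\nabla h(x_{ij}))$ computed in Section~\ref{sect5.3}. A subadditivity/tiling argument in the spirit of \cite{CEP} bounds the log-volume on $Q_{ij}$ by $\epsilon^{2}n^{2}\,ent(\nabla h(x_{ij}))+o(n^{2})/\#\{Q_{ij}\}$; summing over $(i,j)$ and sending $\epsilon\to 0$ yields $(Ent(h)+o(1))n^{2}$, using the identification $Ent(h)=\int_{D}ent(\nabla h)$ established in Section~\ref{sectionProveVP}.

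For the lower bound, I would patch, on each $Q_{ij}$, an independent toroidal sample at slope $\nabla h(x_{ij})$, and allow an $e^{o(n^{2})}$ cost to install thin connectivity corridors that interpolate slopes between neighboring patches and fit the macroscopic boundary condition $h^{\partial}$. The main obstacle is precisely this patching, because the bead interlacing rule forces the vertical positions on each thread to interlace simultaneously with both neighbors, which is more rigid than the dimer incidence rule. I would circumvent it by running the CKP patching at the dimer level in the finitary approximation of Section~\ref{sqdfhyzqmurei}, where the standard argument applies verbatim, and then passing to the degenerate bead limit with uniform control in $n$ obtained from the free-energy convergence of Section~\ref{sect5.3}.
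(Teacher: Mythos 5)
Your overall strategy is the right one --- this is the Cohn--Kenyon--Propp route that the paper itself follows (the detailed statement is Theorem~\ref{thm4.3CKP01}, proved by tiling $D$ with small pieces, matching each piece against the toroidal/planar entropy $ent(s,t)$ of Section~\ref{sect5.3}, and summing). But the proposal treats as routine the two points where the bead model genuinely departs from the dimer model, and these are exactly where the paper has to work. First, in the dimer model two boundary conditions that are $\delta$-close in sup norm have entropies that are $o(1)$-close, which is what makes the CKP patching and subadditivity arguments go through; in the bead model this is \emph{false}. By manipulating the spacing of the jumps of the boundary height function one can, inside any sup-norm neighborhood of a plane, produce boundary conditions whose entropies are arbitrarily far apart (arbitrarily negative). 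Consequently your upper bound step --- ``compare each local volume to the toroidal partition function at slope $\nabla h(x_{ij})$ by a subadditivity/tiling argument'' --- does not close: you must show that the \emph{union} over all boundary conditions $\delta$-close to a plane still has entropy at most $ent(s,t)+o(1)$, and for the lower bound you must exhibit a specific boundary condition on each piece that actually achieves $ent(s,t)$. This is the content of the ``almost planar'' boundary conditions (Definitions~\ref{definitionalmostplanar}--\ref{definitionalmostplanardiscrete}), of Lemma~\ref{prop3.6CKP0} (which requires a delicate surface-coupling/decomposition argument, and is where the paper's one admitted technical assumption lives), and of Theorem~\ref{thm-ent}; your proposal does not engage with it at all. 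Relatedly, ``running the CKP patching at the dimer level and passing to the bead limit with uniform control in $n$'' is precisely the exchange of the limits $m\to\infty$ and $n\to\infty$ that the paper shows (via the explicit simplex computation after Proposition~\ref{beadanddimerpartitionfunction}) is \emph{not} uniform, so it cannot be waved through on the strength of the toroidal free-energy convergence alone.

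Second, the local entropy $ent$ is unbounded below ($ent(s,t)\to-\infty$ as $t\to\infty$ or as $s\to\pm\tfrac12$ with $t$ bounded away from $0$), and the identity $Ent(h)=\iint_D ent\circ\nabla h\,dx\,dy$ that you invoke holds only on the subclass $\mathcal{H}_0$; in general $Ent$ must be defined through the turned coordinates $(\t{x},\t{y})$ to penalize height functions with atoms. Your Riemann-sum step (replace $h$ on each patch by the affine function of slope $\nabla h(x_{ij})$ and sum $\epsilon^2\,ent(\nabla h(x_{ij}))$) is immediate in \cite{CKP01} because the dimer entropy is bounded, but here it fails without the regularization operators $A_{\delta,\delta'}$ and the two triangulation lemmas (Lemmas~\ref{triangulation} and~\ref{triangulationprime}), which control the contribution of the regions where the slope degenerates (frozen regions $s=\pm\tfrac12$ and regions of large vertical slope) separately for the lower and upper bounds. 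A minor further slip: the logarithm of the configuration volume is not $(Ent(h)+o(1))n^2$ but $(Ent(h)+o(1))n^2-N\ln n$; the adjustment $N\ln n$ cancels in the probability ratio only because the boundary condition fixes $N$, and this should be said.
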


A more detailed version is given by Theorems~\ref{theoremexistence} and~\ref{thm-ent}. Please pay attention to the different uses of the same terminology ``entropy" in this paper:
\begin{itemize}
\item the adjusted combinatorial entropy $S$ of the bead model, see Section~\ref{erdsfdhzilmhfq}.
\item the local entropy function $ent$ as a function of the slope, see Section~\ref{surfacetensionlocalentropyfunction}.
\item the entropy function $Ent$ as a functional on the space of admissible functions, see Section~\ref{sect5.4part1}.
\end{itemize}

Note that the large deviation property (Theorem~\ref{thmmainbead}) particularly yields that when the size of the bead model is big, the random surface converges to $h_0$, which is the maximizer of the functional $Ent$ (see Theorem~\ref{Thm-Bead-convergenceinproba}).

As the bead model is a limit of the dimer model, it is natural to consider the following question: is the limit shape of the bead model a limit of the limit shapes of the dimer model? We give a positive answer, see below or Theorem~\ref{thm-hmcvg2h0} for details.

\begin{theorem}
The limit shape of the bead model $h_0$ is a properly normalized limit of the limit shapes of the lozenge tilings for the corresponding sequence of domains.
\end{theorem}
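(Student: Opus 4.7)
The plan is to deduce convergence of normalized dimer limit shapes to $h_0$ from (a) convergence of the corresponding variational problems and (b) uniqueness of the bead maximizer provided by Theorem~\ref{sdfqsdfgeger}.

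First, I would use the correspondence established in Sections~\ref{eqsscifmjlkqsxfw} and~\ref{sqdfhyzqmurei} to realize the bead model on $D$ with $n$ threads as a degeneration limit of lozenge tilings of a sequence of hexagonal-lattice domains $D_{n,N}$ whose horizontal extent is of order $n$ and whose vertical extent is of order $Nn$, with $N$ a large vertical-stretching parameter. The discrete boundary polygon of $D_{n,N}$ should be chosen so that, under the affine rescaling shrinking $D_{n,N}$ onto $D$, the induced discrete boundary height function converges uniformly on $\partial D$ to the prescribed $h^{\partial}$, including constancy on $\{x=0\}$ and $\{x=1\}$.

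Second, for each pair $(n,N)$ the Cohn-Kenyon-Propp variational principle provides a unique dimer limit shape $h_{n,N}$ which maximizes the lozenge entropy functional with these boundary data. The degeneration computation already used in Section~\ref{sect5.3} to define $ent$ shows that, under the same vertical rescaling that takes $D_{n,N}$ to $D$, the lozenge surface tension $\sigma$ converges to $ent$ on the interior of the admissible set of slopes, and the associated entropy functionals converge to $Ent$ uniformly on equi-Lipschitz families of admissible functions.

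Third, I would close with a compactness-and-uniqueness argument. Admissible height functions on $D$ are uniformly Lipschitz, so Arzel\`a-Ascoli allows extraction of a uniformly convergent subsequence of the properly rescaled $h_{n,N}$, with some limit $h_\ast$. Combining the convergence of functionals with the variational characterizations of $h_{n,N}$ and $h_0$ forces $h_\ast$ to be a maximizer of $Ent$ on the admissible space with boundary data $h^{\partial}$. Theorem~\ref{sdfqsdfgeger} then gives $h_\ast = h_0$, and since the argument applies to every subsequence, the whole sequence converges.

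The hard part, I expect, is the treatment of the double limit $n\to\infty$, $N\to\infty$: the polygons $D_{n,N}$ become very elongated and a careful joint scaling must be chosen so that simultaneously (i) the rescaled boundary data approach $h^{\partial}$ without losing control near the elongated vertical sides where small perturbations could have macroscopic effects, and (ii) the convergence of $\sigma$ to $ent$ is uniform enough to pass to the limit at the level of the functionals. A natural remedy is to establish a stability estimate for the dimer variational principle under perturbations of the boundary data, in the spirit of Section~\ref{sectionProveVP}, and then extract a diagonal sequence along which both convergences hold simultaneously.
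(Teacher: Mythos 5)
Your overall scheme -- extract a convergent subsequence of the rescaled dimer limit shapes, pass the variational characterization to the limit, and invoke uniqueness of the $Ent$-maximizer -- is exactly the skeleton of the paper's proof of Theorem~\ref{thm-hmcvg2h0}. However, two of your intermediate claims fail as stated, and they are precisely where the real work lies.

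First, the compactness step. You assert that ``admissible height functions on $D$ are uniformly Lipschitz,'' but bead-admissible functions are only $\frac{1}{2}$-Lipschitz horizontally and \emph{monotone} (not Lipschitz) vertically; the dimer limit shapes $\bar{h}_m$ are vertically $(m+\sigma)$-Lipschitz with constant blowing up in $m$. Arzel\`a--Ascoli therefore does not apply directly to the family $\{\bar{h}_m\}$. The paper gets compactness by rotating the $y$--$z$ plane by $\pi/4$ (the space $\t{\mathcal{H}}$), and then must separately prove that the subsequential limit lies in $\mathcal{H}_0$, i.e.\ that no positive-measure set develops slope $\frac{\partial\t{h}}{\partial\t{y}}=1$ (vertical mass concentration); this is ruled out by showing such concentration would force $Ent_m(\bar{h}_m)\to-\infty$, contradicting the lower bound. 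Your argument has no counterpart to this step.

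Second, the claim that ``the entropy functionals converge to $Ent$ uniformly on equi-Lipschitz families of admissible functions'' is not available. Proposition~\ref{entropyexchange} gives uniform convergence of $ent_m$ to $ent$ only on compact slope sets avoiding $s=\pm\frac{1}{2}$, and $ent$ is unbounded below (it diverges as $t\to\infty$ and as $s\to\pm\frac{1}{2}$ with $t$ not small); near $(\pm\frac{1}{2},0)$ the convergence is genuinely non-uniform. Consequently one cannot pass to the limit at the level of the functionals by a blanket uniformity statement. The paper instead runs the $\liminf$ and $\limsup$ inequalities through regularized test functions $A_{\delta,\delta'}(\cdot)$ with bounded local entropy (Lemma~\ref{lemmaapproximation}, Lemma~\ref{triangulation}), exhausts $D$ by sets $K_l$ on which the slope stays away from the singular set, uses equicontinuity of $\{ent_m\}$ there, and controls the complement by Proposition~\ref{entropyexchange}(b). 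Your proposed remedy (a boundary-data stability estimate plus a diagonal extraction in $(n,N)$) addresses the elongation of the domains but not this degeneracy of the local entropy, which is the actual obstruction. Incidentally, the cleaner formulation is the paper's: for each fixed stretching $m$ the CKP limit shape $\bar{h}_m$ is already the $n\to\infty$ object, and the theorem is the single limit $\bar{h}_m\to h_0$, so no diagonal argument over $(n,N)$ is needed.
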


This theorem proves the commutativity of the following two limits: the limit from dimer models to a bead model when the heights tend to infinity, and the asymptotic limit for the dimer models on an increasing sequence of graphs with given asymptotic boundary condition, see Commutative Diagram~(\ref{commutativediagram}). Authors of \cite{KO1} provide a way to find the limit shape of the dimer model, especially for that of the hexagon lattice on domains with an asymptotic boundary condition piecewise linear in the direction of the edges of the hexagons. By the commutative diagram, their result implies directly a way to find the limit shape of the bead model.

As an example, if we consider a bead model on the unit square with the boundary condition given by ${h^{\partial}:\partial ([0,1]\times[0,1])\rightarrow\mathbb{R}},$
\begin{eqnarray}\label{sqlilfjrgeg}
h^{\partial}:(x,y)\mapsto
\begin{cases}
\frac{1}{4}-\frac{1}{2}|x-\frac{1}{2}| \ \text{ if }y\leq 0,\\
-\frac{1}{4}+\frac{1}{2}|x-\frac{1}{2}| \text{ if }y> 0,
\end{cases}
\end{eqnarray}
then Figure~\ref{BeadSquareu} is the expected density (which we will show is the vertical partial derivative of $h_0$), and Figure~\ref{BeadSquared} is a simulation of 256 beads.

\begin{figure}[H]
\centering
\subfigure[The estimated density of beads.\label{BeadSquareu}]{
\includegraphics[clip, trim=6.5cm 10cm 6.5cm 10cm, width=0.4\textwidth]{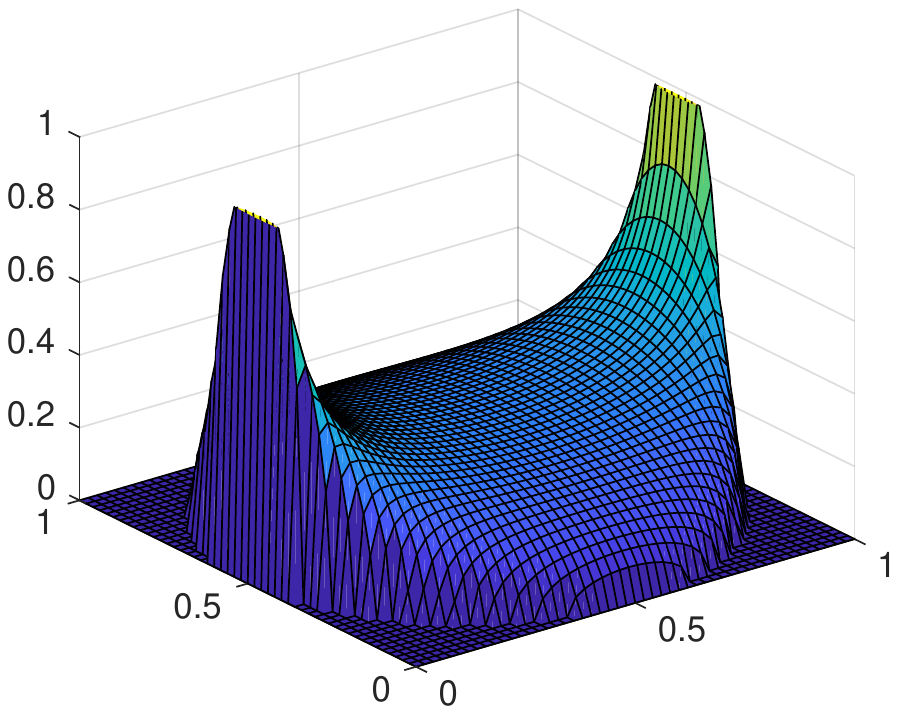}}
\subfigure[A simulation of 256 beads.\label{BeadSquared}]{
\includegraphics[clip, trim=7cm 10cm 7cm 10cm, width=0.4\textwidth]{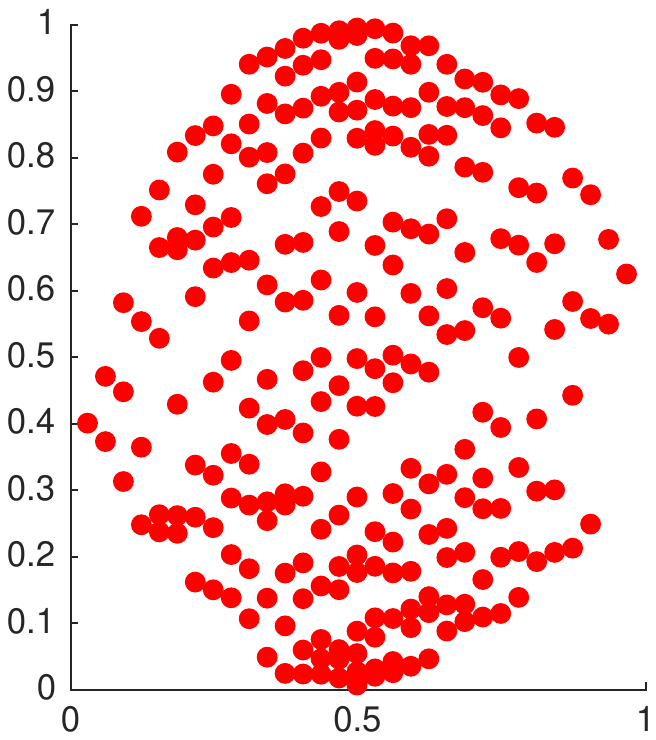}}
\caption{Theoretical and empirical density of beads.}
\end{figure}

In Section~\ref{sect6.4}, we apply the results on the bead model to random standard Young tableaux with a given asymptotic shape. We prove a surface version (Theorem~\ref{convergenceYoungSurface}) and a contour line version (Theorem~\ref{boundarymeasureconvergence}) of convergence of the tableaux, which generalize the results of \cite{PR} and \cite{Sni}, notably containing also the skew shapes. 

\vspace{0.5cm}

\noindent\textbf{Acknowledgements.} We would like to thank C\'edric Boutillier and B\'eatrice de Tili\`ere for their directions, comments and references. We would also like to thank Michel Pain for the helpful discussion.

\section{Presentation of the bead model}\label{The bead model}

\subsection{General setting of a bead configuration}\label{sqifmlumz}

Denote a bead configuration by $\mathbf{B}$. In this paper we focus on the case where the number of threads and that of beads are finite (but can be very large). Denote by $(i,y)$ the coordinate of a bead. We suppose that there are $n$ threads for some $n\in\mathbb{Z}$. Without loss of generality we suppose that the threads are ${\{i=1,2,...,n\}}$. For the vertical coordinates $y$ of the beads, we always suppose that $y$ takes value in $[0,1]$.

We consider the bead model on finite, planar, simply connected domains and on the torus. More precisely,
\begin{itemize}
\item \textit{The case of a finite planar simply connected domain.} Consider a planar simply connected domain ${R\subset]0,n+1[\times[0,1]}$. A bead configuration on the domain $R$ means that the coordinates of the beads $(i,y)$ take value in ${R\cap\left(\mathbb{Z}\times[0,1]\right)}$.
\item \textit{The case of a torus.} We suppose that ${(i,y)\in(\mathbb{Z}/n\mathbb{Z})\times(\mathbb{R}/\mathbb{Z})}$, so we can writes ${i\in\{1,...n\}}$ in the sense of modulo $n$ and $y\in[0,1[$ in the sense of modulo $1$.
\end{itemize}

Among the simply planar domains we are particularly interested in the rectangular case where $R=[1,n]\times[0,1]$, but due to some technical reasons we will also consider the case that $R$ is a right triangle.

\subsection{Bead configurations as limit of lozenge tilings: a first view}\label{eqsscifmjlkqsxfw}

The bead model can be viewed as a limit of lozenge tilings \cite{BeadModel,BeadFinitization}, which is equivalent to the dimer model on the hexagonal lattice. Throughout this paper we consider the following three types of lozenges:
\begin{itemize}
\item $\lozengeconstant{}$\ , generated by the vectors $(1,-\frac{1}{2})$ and $(1,\frac{1}{2})$,
\item $\lozengedecreasing{}$\ , generated by the vectors $(1,-\frac{1}{2})$ and $(0,1)$,
\item $\lozengeincreasing{}$\ , generated by the vectors $(1,\frac{1}{2})$ and $(0,1)$.
\end{itemize}

View the horizontal lozenges as ``beads", naturally located on the threads passing their centers, then such particles automatically verify the interlacing property. By let the vertical size of the domain or that of the torus tend to infinity and then vertically scale the domain into $[0,1]$ or $\mathbb{R}/\mathbb{Z}$, the discrete tiling model tends to continuous bead model \cite{BeadModel,BeadFinitization}.

\begin{figure}[H]
\centering
\includegraphics[width=0.35\textwidth]{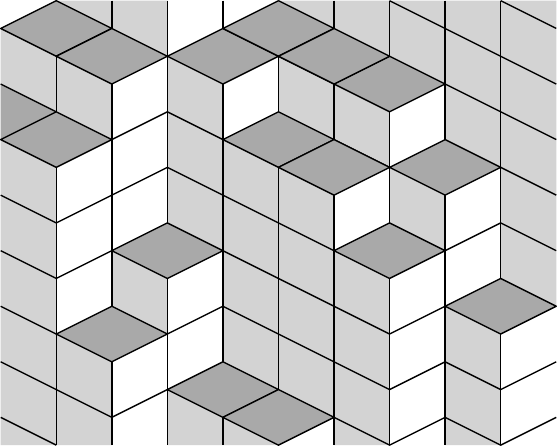}
\caption{A lozenge tiling corresponding to the bead configuration in Figure~\ref{FigureBead}.}
\end{figure}

\subsection{Height function}\label{qesfihjzemrau}

We first define the height function $H$ of lozenge tilings and then introduce the definition on bead configurations as an analogue. For a horizontal lozenge $\lozengeconstant{}$, the upper vertex is $1$ higher than the lower vertex, and the other two are equal to the average. For $\lozengeincreasing{}$ or $\lozengedecreasing{}$, vertices along the same vertical edge have the same height, and going right-up or left-up one step will raise the height by $\frac{1}{2}$.

\begin{figure}[H]
\centering
\includegraphics[width=0.25\textwidth]{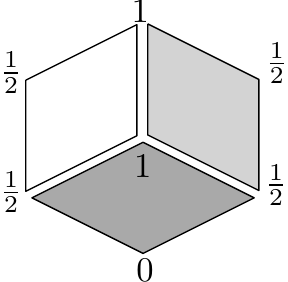}
\caption{Discrete height function $H$.}\label{heightfunctionlozengetiling}
\end{figure}


We consider the height function of the bead model as the following natural limit of the above definition when the vertical step size tends to $0$, where the vertices become a finite subset of points on threads ${\{1,2,...,n\}\times[0,1]}$. We still use the same letter $H$ which normally won't cause ambiguity.

\begin{definition}\label{unrenormalizedheightfunctionbeadmodel}
Consider a finite planar simply connected domain ${R\subset]0,n+1[\times[0,1]}$. Given a bead configuration $\mathbf{B}$ of the domain $R$, the \emph{height function} $H=H^{\mathbf{B}}$ (for convenience we omit $\mathbf{B}$) is the function
$$H:R\cap(\mathbb{Z}\times[0,1])\rightarrow\mathbb{R}$$
unique up to a constant which verifies the following conditions. The constant is fixed once we fixe the height of any point of $R\cap(\mathbb{Z}\times[0,1])$.
\begin{itemize}
\item The function $H$ is up-continuous, \emph{i.e.}, for any point $(i_0,y_0)\in D$,
      $$\lim_{y\rightarrow y_0^+}H(i_0,y)=H(i_0,y_0).$$
\item For any $i\in\{1,...,n\}$, and for any $y_1,y_2\in[0,1]$, $y_1<y_2$, ${H(i,y_2)-H(i,y_1)}$ is equal to the number of beads on the $i^{th}$ thread between $y_1$ and $y_2$.
\item If there is a bead at some point $(i_0,y_0)$, then on the neighboring threads ${i=i_0\pm1}$, we have
      $$\lim_{y\rightarrow y_0^-}H(i_0\pm 1,y)=H(i_0,y_0)-\frac{1}{2}.$$
\end{itemize}
\end{definition}



\vspace{0.5cm}

The toroidal case is an analogue, where a bead configuration defines a unique (up to a constant) multivalued height function $H$ defined on $(\mathbb{Z}/n\mathbb{Z})\times(\mathbb{R}/\mathbb{Z})$.

\subsection{Fixed and periodic boundary conditions}\label{qseliyrzame}

We begin by defining the fixed boundary conditions for a bead model in a simply connected region $R$.

\begin{definition}\label{qelisurqmzhfn}
For any planar simply connected domain ${R\subset]0,n+1[\times[0,1]}$, the bead model on it is said to have \emph{fixed boundary condition} if, given a fixed exterior bead configuration $\mathbf{B}^{ext}$ on $(\mathbb{Z}\times\mathbb{R})\backslash R$, the union of any bead configuration of $R$ and $\mathbf{B}^{ext}$ is a bead configuration of $\mathbb{Z}\times\mathbb{R}$.
\end{definition}

In other words, a fixed boundary condition is uniquely determined by the exterior bead configurations $\mathbf{B}^{ext}$ on ${(\mathbb{Z}\times\mathbb{R})\backslash R}$ modulo an equivalence relation (two exterior configurations are equivalent if they give the same restriction on the beads inside $R$).  These restriction are given by numbers of beads on every thread and the inequalities on the vertical coordinates of beads (other than that asked by the interlacing property).

In some cases it is easier to describe the boundary condition by fixing the height function on the boundary. For example, it is simple to verify that when $R=[1,n]\times[0,1]$, the following definition of a fixed boundary condition is reduced to Definition~\ref{qelisurqmzhfn}.

\begin{definition}\label{esqrfsdfhql}
For the bead model on $R=[1,n]\times[0,1]$, a function
$$H^{\partial}:(\{0,n+1\}\times[0,1])\cup(\{1,2,...,n\}\times\{0,1\})\rightarrow\mathbb{R}$$
is called \emph{boundary height function} if
\begin{itemize}
\item $H^{\partial}$ takes value in $\frac{1}{2}\mathbb{Z}$ up to a constant.
\item Restricted to $\{0\}\times[0,1]$ or $\{n+1\}\times[0,1]$, $H^{\partial}$ viewed as a function of $y$ is non-decreasing, piecewise constant and every jump is equal to $1$.
\item For every $i\in\{0,1,...,n+1\}$, $H^{\partial}(i,1)-H^{\partial}(i,0)\in\mathbb{N}$.
\item For every $i\in\{0,1,...,n\}$, $H^{\partial}(i+1,1)-H^{\partial}(i,1)$ and $H^{\partial}(i+1,0)-H^{\partial}(i,0)$ take values in $\{\pm\frac{1}{2}\}$.
\end{itemize}
A bead model on $R$ is said to have \emph{fixed boundary condition given by} $H^{\partial}$ if every bead configurations $\mathbf{B}$ can be extended to a bead configuration of ${\{0,...n+1\}\times[0,1]}$, and the height function of the extended configuration coincides with $H^{\partial}$ where $H^{\partial}$ is defined.
\end{definition}

Clearly, the number of beads is fixed by the height function, and it is equal to
$$\sum_{i=1}^{n}\big(H^{\partial}(i,1)-H^{\partial}(i,0)\big).$$

It is not hard to adapt the above definition into a more general shape of $R$.

We end this part by the following definition.
\begin{definition}
Let $R\subset]0,n+1[\times[0,1]$ be a simply connected domain, and let $\mathcal{U}$ be a subset of fixed boundary conditions of the bead model on $R$. The bead model is said to have the \emph{$\mathcal{U}$-boundary condition} if it contains all the configurations with fixed boundary conditions taken from $\mathcal{U}$.
\end{definition}

Especially, if $\mathcal{U}$ has only one element, the $\mathcal{U}$-boundary condition is just a fixed boundary condition, and if $\mathcal{U}$ contains all possible fixed boundary conditions, we say that the bead model has free boundary conditions.

\vspace{0.5cm}

Now consider the toroidal case. A toroidal bead configuration gives rise to a configuration in $\mathbb{Z}\times\mathbb{R}$, $n$-periodic in $i$ and $1$-periodic in $y$. As in the dimer model, for any $(i_0,y_0)\in\mathbb{Z}\times\mathbb{R}$, define the horizontal height change as
$$H_x=H(i_0+n,y_0)-H(i_0,y_0),$$
and the vertical height change as
$$H_y=H(i_0,y_0+1)-H(i_0,y_0).$$

It is not hard to see that when the number of beads is not $0$, $(H_x,H_y)$ takes value in
$$\left\{-\frac{n}{2}+1,-\frac{n}{2}+2,...,\frac{n}{2}-2,\frac{n}{2}-1\right\}\times\mathbb{N}^*,$$
independent of the choice of $(i_0,y_0)$.
\begin{definition}
For every given pair
$$(a,b)\in\left\{-\frac{n}{2}+1,-\frac{n}{2}+2,...,\frac{n}{2}-2,\frac{n}{2}-1\right\}\times\mathbb{N}^*,$$
we say that a toroidal model has \emph{periodic boundary condition} $(a,b)$ if its height change $(H_x,H_y)$ is equal to $(a,b)$.
\end{definition}

Clearly, the number of beads is fixed by the periodic conditions and equal to ${n H_y=nb}$.

\subsection{The uniform measure of the bead model}\label{sect5.2}

Consider a bead model with fixed boundary condition or periodic condition, which fixes the number of beads in the model. Denote the number of beads by $N$. The vertical coordinates can be viewed as a subset of $[0,1]^N$ or $\mathbb{T}^N$ (the $N$-dimensional torus). Moreover, the fixed boundary condition is equivalent to a collection of inequalities, so the set of the vertical coordinates is a convex set. The meaning of inequality is not clear for the toroidal case, but it is not hard to verify that the periodic condition also gives a convex subset of $\mathbb{T}^N$. In both cases, it makes sense to talk about the Lebesgue measure of the set of the vertical coordinates. Thus, we can define the uniform bead measure:

\begin{definition}\label{defunifrommeasurebeadmodel}
For a fixed, resp. periodic, boundary condition of the bead model with $N$ beads, the \emph{uniform bead measure} is the uniform probability measure of the vertical coordinates on the convex set determined by the fixed, resp. periodic, boundary condition, viewed as a subspace of $[0,1]^N$, resp. $\mathbb{T}^N$, equipped with the Lebesgue measure.
\end{definition}

In particular, under the uniform measure, the event that any two beads have the same vertical coordinate is a subspace of the convex of coordinates with lower dimension. So with probability $1$, the vertical coordinates of the beads are all different.

\subsection{Bead configuration as limit of lozenge tilings: a second view}\label{sqdfhyzqmurei}

Now that we have defined the fixed and periodic periodic conditions of a bead model and the uniform measure, the argument that ``the bead model is a limit of the lozenge tiling model" in Section~\ref{eqsscifmjlkqsxfw} can be described in a more detailed way. Although looks natural, the explicit construction in this section is to be used to make the proof of the variational principle (Section~\ref{sectionProveVP}) more rigorous.

We begin by simply connected planar domains. To simplify, we suppose that the simply connected planar domain is $R=[1,n]\times[0,1]$ where $n$ as usual is the number of threads. Given a boundary condition $H^{\partial}$ as in Definition~\ref{esqrfsdfhql}, for any $l\in\mathbb{N}^*$ big enough, we construct a very tall polygon $R_{l,H^{\partial}}$ tileable by lozenges as follows.

We first construct two piecewise linear paths $p_0$ and $p_1$. The path $p_0$ is a piecewise linear continuous path defined on $[0,n+1]$, which is a linear extension of $H^{\partial}(x,0)-H^{\partial}(0,0)$ on every interval $x\in[i,i+1]$, $i\in\{0,1,...,n\}$. Define analogously $p_1$ on $[0,n+1]$ as a piecewise linear extension of $H^{\partial}(x,1)-H^{\partial}(0,1)+l$. The paths
$$p_0,\ \ p_1,\ \ \{0\}\times[0,l],$$
$$\{n+1\}\times[H^{\partial}(n+1,0)-H^{\partial}(0,0),H^{\partial}(n+1,1)-H^{\partial}(0,1)+l]$$
enclose a region of $\mathbb{R}^2$ when $l$ is big enough so that $p_0$ and $p_1$ do not intersect. The paths $p_0$ and $p_1$ correspond
to the upper and lower boundary conditions of $R$, and we still need to remove some tiny triangles from this region so that it corresponds to the left and right boundary condition.

For any $j\in\mathbb{N}$, define $\Delta_j^0$ as the triangle defined by the three vertices
$$(0,j),\ (0,j+1),\ (1,j+\frac{1}{2})$$
and for any $j'\in\mathbb{N}$ define $\Delta_{j'}^1$ as the triangle defined by
$$(n+1,H^{\partial}(n+1,0)-H^{\partial}(0,0)+j'),\ \ (n+1,H^{\partial}(n+1,0)-H^{\partial}(0,0)+j'+1),$$ $$(n,H^{\partial}(n+1,0)-H^{\partial}(0,0)+j'+\frac{1}{2}).$$

Suppose that the jumps of $H^{\partial}(0,y)$ (resp. $H^{\partial}(n+1,y)$) are at $(0,y_k)$ (resp. $(n+1,y_{k'})$), we remove the triangles $\Delta_{\lfloor l y_k \rfloor}^0$ and $\Delta_{\lfloor l y_{k'} \rfloor}^1$ (when $l$ is large enough, these triangles are all different) from the region defined above, and we define $R_{l,H^{\partial}}$ as the new domain. A removed triangle is called a \emph{crack} on the left or on the right boundary of $R_{l,H^{\partial}}$. It is not hard to check that $R_{l,H^{\partial}}$ is tileable.

For some reason that will be clear later, we are particularly interested in the case where there are no cracks, \emph{i.e.} the function $H^{\partial}$ restricted to $i=0$ or on $i=n+1$ is constant. This domain is tileable in the following way: consider the case $l=0$, the region $R_{0,H^{\partial}}$ is tileable and only tileable by all $\lozengeconstant{}$. Now for $l>0$, the region $R_{l,H^{\partial}}\backslash R_{0,H^{\partial}}$ is enclosed by two pairs of parallel paths, and it is easy to see that this difference is tileable by $\lozengeincreasing{}$ and $\lozengedecreasing{}$.

Figure~\ref{R0nANDRln} gives an illustration of a bead model of $9$ threads and boundary condition $H^{\partial}$. On the left, the grey region is $R_{0,H^{\partial}}$, tiled in the only possible way. It is enclosed in a bigger polygon $R_{7,H^{\partial}}$, where on the right is a general tiling.  Readers can think of a pile of boxes in $\mathbb{R}^3$, and the height function $H$ is given by the projection of the pile on $\mathbb{R}^2$ in the direction $(1,1,0)$. The number of horizontal lozenges in a tiling is the projection in the direction $(0,0,1)$, thus independent of the exact pile of boxes and $l$ (the height of that pile).

\begin{figure}[H]
\centering
\includegraphics[width=0.7\textwidth]{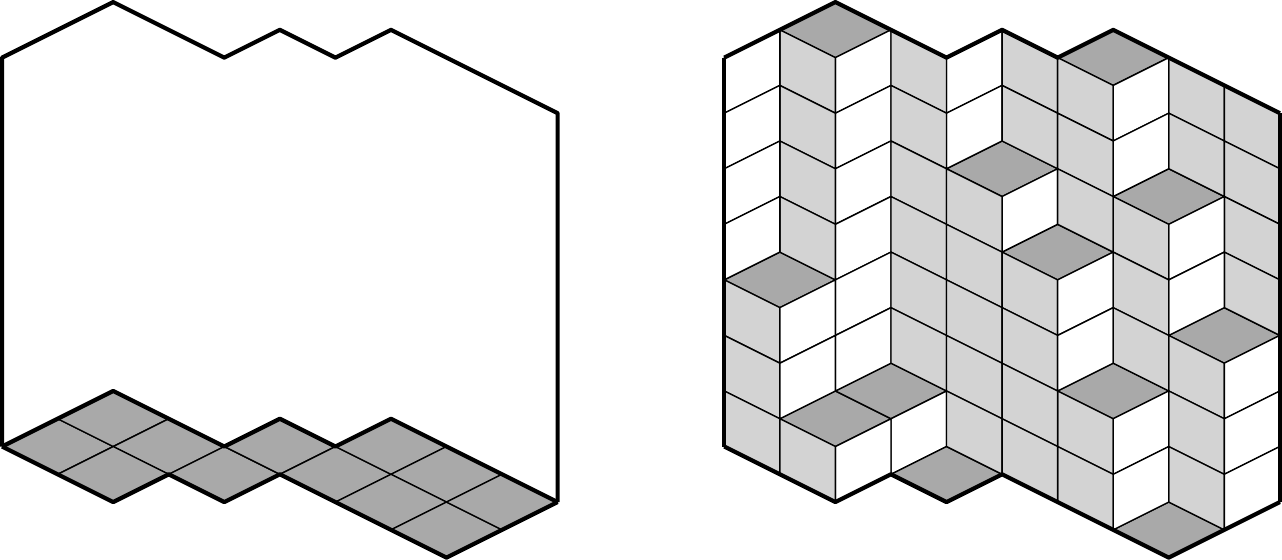}
\caption{Tiling $R_{0,H^{\partial}}$ and $R_{7,H^{\partial}}$.}\label{R0nANDRln}
\end{figure}

If we consider the uniform measure on the tilings, it is not hard to check that when $l\rightarrow\infty$, the joint Dirac measure of the positions of the horizontal lozenges $\lozengeconstant{}$ in a uniform tiling of $R_{l,H^{\partial}}$ and vertically normalized by $l$ converges weakly to that of the uniform bead measure with boundary condition $H^{\partial}$.

\vspace{0.5cm}

The torus is much simpler.  We consider $T_{l,n}$ as a torus of size $n\times l$ where $l$ is big enough. Its height change $(H_x,H_y)$ can take value in $$\left\{-\frac{n}{2}+1,-\frac{n}{2}+2,...,\frac{n}{2}-2,\frac{n}{2}-1\right\}\times\mathbb{N}^*\cup\left\{(\pm\frac{n}{2},0)\right\},$$
where $(\pm\frac{n}{2},0)$ correspond to the cases that there are only $\lozengeincreasing{}$ or $\lozengedecreasing{}$, so they should not be taken into consideration. If we fix $(H_x,H_y)$, then the number of $\lozengeconstant{}$ is fixed and equal to $n H_y$. When $l\rightarrow\infty$ the joint Dirac measure of the positions of $\lozengeconstant{}$ in a uniform tiling of $T_{l,n}$ and vertically normalized by $l$ converges weakly to that of the uniform bead measure with periodic condition $(H_x,H_y)$.


\section{Standard Young tableaux and bead model}\label{sect6.4.1map}

Throughout this paper we use the Russian convention of Young diagrams, skew diagrams and Young tableaux, see Figure \ref{qdsfqizfyhmlqrg}. Denote the number of boxes of a (skew) diagram $\lambda$ by $|\lambda|$. A column here means boxes with same horizontal position.

\begin{figure}[H]
\centering
\includegraphics[width=0.65\textwidth]{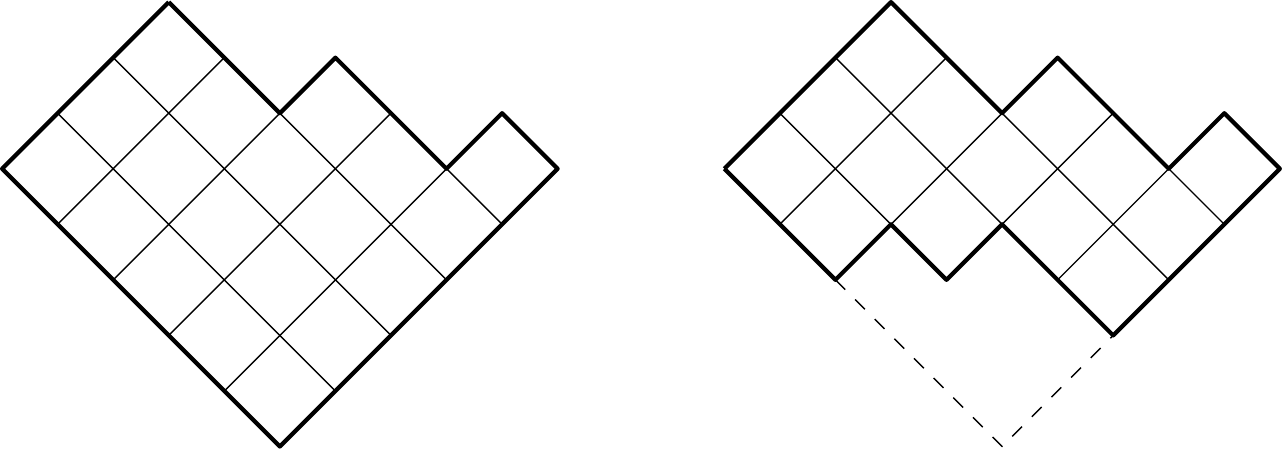}
\caption{A Young diagram and a skew diagram under the Russian convention.}\label{qdsfqizfyhmlqrg}
\end{figure}

In Figure~\ref{R0nANDRln}, $R_{0,H^{\partial}}$ is a skew Young diagram if we view every horizontal lozenge in the tiling of $R_{0,H^{\partial}}$ as a box. More generally, any (skew) Young diagram (of $n$ columns) corresponds to a bead model (with $n$ threads).
For any such bead model, if we take the domain as $R=[1,n]\times[0,1]$, the boundary function $H^{\partial}$ is constant if restricted on $i=0$ or $i=n+1$. The path $p_0$ (resp. $p_1$) constructed as in Section~\ref{sqdfhyzqmurei} is exactly the lower (resp. upper) boundary of the Young diagram, and every box of the diagram is encoded with a bead (so the number of beads is equal to $|\lambda|$).

For any bead configuration, let $y_{i,j}$ be the vertical coordinate of the $j^{th}$ bead on the $i^{th}$ thread. We sort them in a non-decreasing order:\label{sqdflfhriluhreiugtruetgrkjfjfdfv}
$$y_{i_1,j_1}\leq y_{i_2,j_2}\leq...\leq y_{i_{|\lambda|},{j_{|\lambda|}}}.$$

As under the uniform measure, the probability that any two coordinates coincide is equal to $0$, with probability $1$ we can rewrite the inequalities above as
\begin{eqnarray}\label{sdlfqkhsddmlfhdsrg}
y_{i_1,j_1}< y_{i_2,j_2}<...< y_{i_{|\lambda|},{j_{|\lambda|}}}.
\end{eqnarray}

For the given diagram $\lambda$, define $\mathcal{T}_{\lambda}$ as the set of standard tableaux of $\lambda$ and $\mathcal{B}_{\lambda}$ as the space of bead configurations with same constraint. Any inequality on the vertical coordinates of a pair of beads on neighboring threads interprets itself to be an inequality relation between the ranks of neighboring boxes, which is exactly the inequality relation of the neighboring entries in the definition of a standard Young tableau. So conditioned to that all vertical coordinates $y_{i,j}$ are different, if we define the following map $\mathcal{Y}$ as:
\begin{eqnarray*}
\mathcal{Y}: & \mathcal{B}_{\lambda} & \rightarrow \mathcal{T}_{\lambda},\\
& \mathbf{B} &\mapsto T,
\end{eqnarray*}
where $T=\mathcal{Y}(\mathbf{B})$ is a filling of $\lambda$ such that $T(i_k,j_k)=k$ for any $k\leq|\lambda|$ (define $T(i,j)$ as the number in the cell $(i,j)$ of $T$). Then for every configuration $\mathbf{B}\in\mathcal{B}_{\lambda}$, $T=\mathcal{Y}(\mathbf{B})$ is a tableau whose entries are all different and verify the constraint of a Young tableau, thus $T\in\mathcal{T}_{\lambda}$.

In short, the map $\mathcal{Y}$ just turns the continuous coordinates $y$ to its total rank among all the coordinates. If we take the uniform measure of the bead model, the measure induced by $\mathcal{Y}$ on the standard (skew) tableaux is the uniform measure. This fact is known in \cite{MR2733067} and \cite{MR2001148}. In fact, for any $T\in\mathcal{T}_{\lambda}$, the induced probability measure is by definition proportional to the Lebesgue measure of its preimage $\mathcal{Y}^{-1}(T)$, \emph{i.e.}, the volume of the simplex
$$0<y_{i_1,j_1}<y_{i_2,j_2}<...<y_{i_{|\lambda|},j_{|\lambda|}}<1,$$
which is always equal to $\frac{1}{|\lambda|!}$ for any $T$.

\section{Entropy of the bead model}\label{erdsfdhzilmhfq}

Our first task is to define the combinatorial entropy. Once defined, we will use the letter $S$ to denote it.

In a classical case (dimer model for example) where a random variable $X$ takes a countable number of possible different values (or states) with $p_i$ be the corresponding probability, the \emph{combinatorial entropy} is defined as
$$S(X)=\sum_i -p_i\ln p_i.$$
In particular, if every state has the same probability, then
$$S(X)=\ln Z,$$
where $Z$ is the number of states, known as the ``partition function".

One significant difference between the bead model and the dimer model is that rather than considering the ``number" of dimer configurations in a state, here we should consider the volume of similar bead configurations. Moreover, in practice we will adjust it by adding an additional term to let the entropy be of the good order. We give the definition here below.

\begin{definition}
Consider a bead model with fixed number of beads. Let $N$ be the number of beads and $n$ be that of threads. Consider a random bead configuration as a random vector $X$ taking values in ${[0,1]^{N}}$, where every component of $X$ is the vertical coordinates of the corresponding bead (in the toroidal case the coordinates are in the sense of modulo $1$).

For any point ${\mathbf{y}=(y_1,y_2,...,y_N)\in[0,1]^{N}}$, define $\rho(\mathbf{y})$ as the \emph{density of the bead measure} $\mathbb{P}$ at the point $\mathbf{y}$ with respect to the Lebesgue measure of $[0,1]^{N}$ whenever it exists, \emph{i.e.},
$$\rho(\mathbf{y})=\lim_{\varepsilon\rightarrow 0}\frac{\mathbb{P}(X\in\prod_{i=1}^{N}[y_i-\varepsilon,y_i+\varepsilon])}{(2\varepsilon)^{N}}$$
whenever this limit exists.
\end{definition}

If we consider the uniform bead measure, and if we define $V$ as the $N$-dimensional Lebesgue measure of the convex set of coordinates, then
\begin{align*}
\rho(\mathbf{y})=
\begin{cases}
\frac{1}{V}&\text{ if }h\text{ is an inner point of the convex set of the admissible coordinates,}\\
0&\text{ otherwise,}
\end{cases}
\end{align*}
and the undefined points are negligible.

We use the same letter $S$ to denote the adjusted combinatorial entropy of the bead model.

\begin{definition}\label{ezrlkhmaqdsfqe}
If the density $\rho$ is well defined almost everywhere, then for the bead model with a fixed boundary condition or periodic condition, we define the \emph{(adjusted) combinatorial entropy} $S$ associated to the random variable $X$ of the bead model as
$$S(X)=\int_{[0,1]^{N}}-\rho(\mathbf{y})\ln\rho(\mathbf{y})dy_1...dy_N+N\ln n,$$
where $N$ is the number of beads and $n$ is the number of threads.
\end{definition}

The term $N\ln n$ may seem not natural, but soon we will see that this term helps to adjust the entropy so that it is of a proper order if we consider a sequence of bead models where $n\rightarrow\infty$ and $N$ is of order $n^2$.

In particular, if we consider the uniform measure, we have
$$S(X)=\ln V+N\ln n.$$

The following lemma is a general result for entropies.
\begin{lemma}\label{lemmaSdecomposition}
Suppose $E=\{E_1,E_2,...\}$ is a countable partition of the state space, and $I_E$ is a random variable that tells $X$ is in which $E_i$, and $X_i$ is the variable equipped with the conditional law of $X$ restricted on $E_i$. We have
\begin{eqnarray}\label{dsfmeqrzlruihrgqedq}
S(X)=S(I_E)+\sum_i\mathbb{P}(X\in E_i)S(X_i).
\end{eqnarray}
\end{lemma}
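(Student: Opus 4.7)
The plan is to run the standard chain-rule computation for differential entropy, being careful to track the adjustment term $N\ln n$ and to observe that it drops out cleanly because both $X$ and each $X_i$ live in the same state space and correspond to the same bead model.

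First, write $p_i=\mathbb{P}(X\in E_i)$ and note that the conditional density of $X_i$ is $\rho_i(\mathbf{y})=\rho(\mathbf{y})/p_i$ for $\mathbf{y}\in E_i$ (and zero otherwise). By Definition~\ref{ezrlkhmaqdsfqe} applied to $X_i$,
\begin{align*}
S(X_i) &= \int_{E_i} -\frac{\rho(\mathbf{y})}{p_i}\ln\frac{\rho(\mathbf{y})}{p_i}\,d\mathbf{y} + N\ln n \\
&= -\frac{1}{p_i}\int_{E_i}\rho(\mathbf{y})\ln\rho(\mathbf{y})\,d\mathbf{y} + \ln p_i + N\ln n,
\end{align*}
using $\int_{E_i}\rho=p_i$. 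Multiply by $p_i$:
\begin{equation*}
p_i\,S(X_i) = -\int_{E_i}\rho(\mathbf{y})\ln\rho(\mathbf{y})\,d\mathbf{y} + p_i\ln p_i + p_i N\ln n.
\end{equation*}

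Now sum over $i$. Since $\{E_i\}$ partitions $[0,1]^N$,
\begin{equation*}
\sum_i\int_{E_i}-\rho(\mathbf{y})\ln\rho(\mathbf{y})\,d\mathbf{y}=\int_{[0,1]^N}-\rho(\mathbf{y})\ln\rho(\mathbf{y})\,d\mathbf{y},
\end{equation*}
and $\sum_i p_i=1$, so the adjustment contributes exactly $N\ln n$. Recognizing $\sum_i-p_i\ln p_i$ as the classical Shannon entropy $S(I_E)$ of the discrete random variable $I_E$, one obtains
\begin{equation*}
\sum_i p_i\,S(X_i) = \Bigl(\int_{[0,1]^N}-\rho\ln\rho\,d\mathbf{y}+N\ln n\Bigr) - S(I_E) = S(X) - S(I_E),
\end{equation*}
which is the desired identity~(\ref{dsfmeqrzlruihrgqedq}).

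There is no genuine obstacle here, just bookkeeping: the computation is the classical differential-entropy chain rule, and the only thing to check is that the ad-hoc term $N\ln n$ behaves consistently. This works because the partition $\{E_i\}$ does not alter $N$ or $n$, so the additive correction appears once on each side and cancels. If one wishes to be careful about the countability assumption, one should assume (or verify) that each $E_i$ has positive Lebesgue measure and $\rho$ is measurable, so that all integrals above are well defined and Fubini-type interchange of sum and integral is justified by nonnegativity of $-\rho\ln\rho$ after splitting into its positive and negative parts.
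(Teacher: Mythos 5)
Your computation is correct and is exactly the standard chain-rule argument the paper has in mind when it declares the proof ``straightforward'' and omits it: the conditional densities $\rho/p_i$ produce the $\ln p_i$ terms that assemble into $S(I_E)$, and the adjustment $N\ln n$ appears once on each side since every $X_i$ has the same $N$ and $n$. Your closing remarks on measurability and on splitting $-\rho\ln\rho$ into signed parts are appropriate care that the paper does not bother to record.
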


The proof is straightforward.

We want to remark that the decomposition~(\ref{dsfmeqrzlruihrgqedq}) allows us to define the entropy $S$ for a union of conditions that not necessarily have the same number of beads once we have defined the probability of taking different number of beads $N$:
\begin{definition}\label{sqdsdfkufhrulgerkzzer}
For a random bead configuration $X$ that with probability $p_i$ to be in the state of $N_i$ beads, define
\begin{eqnarray*}
S(X)=-\sum_i p_i\ln p_i +\sum_i p_i S(X_i),
\end{eqnarray*}
where $X_i$ is the random configuration equipped with the induced probability measure conditioning to have $N_i$ beads.
\end{definition}

The following proposition proves that the entropy of the bead model under the uniform measure is the limit of the entropies of the corresponding lozenge tiling models. This discrete approximation is useful in the remaining part of this paper.


Consider a bead model with $n$ threads and $N$ beads, with a fixed boundary condition $H^{\partial}$ or a given periodic condition ${(H_x,H_y)}$. Consider the corresponding lozenge tiling model, where for $l$ sufficiently large we tile a simply connected domain $R_{l,H^{\partial}}$ or a toroidal region $T_{l,n}$. In each of the cases, we define $Z_{l,n}$ as the partition function of the lozenge tilings of the region, and $V$ as the volume of the convex set in ${[0,1]^N}$ or $(\mathbb{R}/\mathbb{Z})^N$ formed by the vertical coordinates of the beads.

\begin{prop}\label{beadanddimerpartitionfunction}
For either a fixed boundary condition $H^{\partial}$ or a given periodic condition ${(H_x,H_y)}$, we have the following relation between $Z_{l,n}$ and $V$:
\begin{eqnarray}\label{qsdifhmzeeraq}
\ln V=\lim_{l\rightarrow\infty}\big(\ln Z_{l,n}-N\ln l\big).
\end{eqnarray}
In particular, if we let $l=mn$, then for fixed $n$ and $N$, $l\rightarrow\infty$ is equivalent to $m\rightarrow\infty$, and we have that the entropy of the bead model is equal to
\begin{eqnarray}\label{limitofpartition}
S(X)=\lim_{m\rightarrow\infty}\big(\ln Z_{mn,n}-N\ln m\big).
\end{eqnarray}
\end{prop}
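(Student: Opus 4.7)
The plan is to realize $Z_{l,n}$ as a count of lattice points of a fine grid inside the continuous bead polytope, and deduce~(\ref{qsdifhmzeeraq}) from a standard Riemann-sum asymptotic; the second statement~(\ref{limitofpartition}) is then an immediate consequence of the definition of $S$.

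First I would identify the combinatorial side with a lattice count. A lozenge tiling of $R_{l,H^{\partial}}$ (resp.\ of $T_{l,n}$ with given $(H_x,H_y)$) is uniquely determined by the vertical positions of its $N$ horizontal lozenges $\lozengeconstant{}$, which sit on a discrete vertical grid of size proportional to $l$ on each thread. Writing these positions as $z_{i,j}$ and setting $y_{i,j}=z_{i,j}/l\in[0,1]$ (resp.\ $\mathbb{R}/\mathbb{Z}$), I would check, using the explicit construction of $R_{l,H^{\partial}}$ in Section~\ref{sqdfhyzqmurei} (positions of cracks included) and the analogous toroidal description, that the interlacing of horizontal lozenges together with the constraints coming from $H^{\partial}$ or $(H_x,H_y)$ translate, after scaling, into exactly the system of linear inequalities cutting out the convex set $C\subset[0,1]^{N}$ (resp.\ $(\mathbb{R}/\mathbb{Z})^{N}$) used in Definition~\ref{defunifrommeasurebeadmodel} to define $V$.

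Once this correspondence is pinned down, $Z_{l,n}$ equals, up to a boundary correction of order $l^{N-1}$, the number of points of the scaled lattice $(\tfrac{1}{l}\mathbb{Z})^{N}$ lying in $C$. Because $C$ is a bounded convex polytope cut out by finitely many linear inequalities, the Riemann integrability of $\mathbf{1}_C$ yields
\[
Z_{l,n}=l^{N}\,V+O(l^{N-1}),
\]
whence $\ln Z_{l,n}-N\ln l=\ln V+o(1)$, which is~(\ref{qsdifhmzeeraq}). Specializing to $l=mn$ and using $S(X)=\ln V+N\ln n$ (Definition~\ref{ezrlkhmaqdsfqe} applied to the uniform measure) then gives
\[
S(X)=\ln V+N\ln n=\lim_{m\to\infty}\bigl(\ln Z_{mn,n}-N\ln(mn)\bigr)+N\ln n=\lim_{m\to\infty}\bigl(\ln Z_{mn,n}-N\ln m\bigr),
\]
which is~(\ref{limitofpartition}).

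The main obstacle I anticipate is not the asymptotic itself but the bookkeeping in the first step: one has to verify carefully that the discretized interlacing constraints, with the exact boundary data encoded either by the cracks in $R_{l,H^{\partial}}$ or by the height change $(H_x,H_y)$ on the torus, are the same linear inequalities (up to rescaling by $l$) as those defining $C$, so that no spurious boundary contributions spoil the leading-order count. Once that identification is made, the lattice-point asymptotic is routine because $C$ is a rational convex polytope.
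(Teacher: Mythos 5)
Your proposal is correct and follows essentially the same route as the paper: the paper's proof likewise identifies $Z_{l,n}$ with the number of points of the lattice $\bigl(\tfrac{1}{l}\mathbb{Z}\bigr)^{N}$ inside the convex set of vertical coordinates, concludes $\lim_{l\to\infty} Z_{l,n}/l^{N}=V$, takes logarithms, and substitutes $l=mn$. Your version merely makes the lattice-point asymptotic and the bookkeeping of the boundary constraints more explicit than the paper does.
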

\begin{proof}
Consider the convex set of the vertical coordinates of the beads. For any $l\in\mathbb{N}^*$ big enough, $Z_{l,n}$ is approximately equal to the number of points on the lattice $\left(\frac{1}{l}\mathbb{Z}\right)^2$ inside the convex set, so we have
\begin{eqnarray*}
\lim_{l\rightarrow\infty}\frac{Z_{l,n}}{l^N}=V,
\end{eqnarray*}
and by taking logarithm we get Equation~(\ref{qsdifhmzeeraq}) in the proposition. Replacing $l$ by $mn$, we obtain Equation~(\ref{limitofpartition}).
\qed
\end{proof}

We now explain why the combinatorial entropy $S$ defined in Definition~\ref{ezrlkhmaqdsfqe} is adjusted by $N\ln n$, and why we use the substitution of $l$ by $mn$ in Proposition~\ref{beadanddimerpartitionfunction}. As mentioned, we are interested in the asymptotic behavior of the bead model, \emph{i.e.} in the limit $n\rightarrow\infty$. If the boundary function $H^{\partial}$ of the bead model has an asymptotic limit when $n\rightarrow\infty$, then $N$ is asymptotically proportional to $n^2$. We write ${H^{\partial}=H^{\partial}(n)}$ and ${N=N(n)}$ to emphasize their dependances on $n$.

For every given $m$ and $n$, consider $R_{mn,H^{\partial}(n)}$ as in Section~\ref{sqdfhyzqmurei}. Since we fix the asymptotic shape of $H^{\partial}(n)$ in the remaining part of this paper, from now on we simply write $R_{mn,n}$ instead of $R_{mn,H^{\partial}(n)}$ to simplify the notation. Consider
\begin{eqnarray}\label{sqdlifhyzitz}
\frac{\ln Z_{mn,n}-N(n)\ln m}{n^2}.
\end{eqnarray}
If we fix $m$ and let $n\rightarrow\infty$ (we pretend to forget that $m$ should be chosen large enough depending on $n$), the boundary condition of $R_{mn,n}$ has an asymptotic limit, so by \cite{CKP01,KOS06},~(\ref{sqdlifhyzitz}) converges when $m$ is fixed and $n\rightarrow\infty$. Meanwhile, Proposition~\ref{beadanddimerpartitionfunction} proves that~(\ref{sqdlifhyzitz}) converges to $\frac{S(X)}{n^2}$ when $m\rightarrow\infty$ for fixed $n$.

For this reason, it is natural to ask the following questions:
\begin{itemize}
\item In~(\ref{sqdlifhyzitz}), can we take the limit $m\rightarrow\infty$ first and then the limit ${n\rightarrow\infty}$?
\item If this limit exists, does it have a good order?
\item Can we exchange the order of the limits in $m$ and in $n$?
\end{itemize}

We give a positive answer to each of them in Sections~\ref{sectionProveVP} and~\ref{solution}, but before that we want to give some discussion.

We first give an intuitive explanation for the first and the second questions in a specific case. When the boundary condition of the bead model corresponds to a square Young diagram (Section~\ref{sect6.4.1map}), then $N=\frac{(n+1)^2}{4}$. The volume $V$ is equal to the number of possible total ranking of the vertical coordinates (which is the number of standard Young tableaux for a $\frac{n+1}{2}\times\frac{n+1}{2}$ square diagram) times the volume of the convex set of the coordinates totally ranked (which is equal to $\frac{1}{N!}$). Thus, by the hook formula (\cite{FRT}), we have
\begin{eqnarray*}
\frac{S(X)}{n^2}
&=&\frac{\ln V+N(n)\ln n}{n^2}\\
&=&\frac{1}{n^2}\left(\ln\left(\frac{N(n)!}{\prod_{1\leq i,j\leq\frac{n+1}{2}}(i+j+1)}\frac{1}{N(n)!}\right)+ N(n)\ln n\right)\\
&=&\frac{1}{n^2}\left(\sum_{1\leq i,j\leq\frac{n+1}{2}}\ln\left(\frac{n}{i+j+1}\right)\right)\\
&\simeq&\frac{1}{4}\iint_{x,y\in[0,1]}\ln\frac{1}{x+y}dxdy.
\end{eqnarray*}
This double integral also appears in \cite{PR}, which studies the limit shape of a random square Young tableaux using hook formula.

For the third question, we show why a priori it is not obvious that we can exchange the order of the limits in $n$ and in $m$. In fact, in the proof of Proposition~\ref{beadanddimerpartitionfunction} we use an approximation of the volume of a convex set of dimension ${N(n)=O(n^2)}$ by a mesh of size $\frac{1}{mn}$, and this approximation is not uniform in $m$ and $n$ for whatever type of convex set. For example, if we consider a simplex
$$0\leq y_1\leq y_2...\leq y_{N(n)}\leq 1,$$
the volume of this simplex is $\frac{1}{N(n)!}$, while the number of lattice points of a $(\mathbb{Z}/mn)^{N(n)}$ mesh inside this simplex is equal to the number of choosing $N(n)+1$ non-negative ordered integers that sum to $mn$, which is equal to $\binom{mn+N(n)}{N(n)}$. Thus, the approximation has a relative error of order
\begin{eqnarray*}
&\ &\binom{mn+N(n)}{N(n)}\left(\frac{1}{mn}\right)^{N(n)}\left(\frac{1}{N(n)!}\right)^{-1}-1\\
&=&\left(1+\frac{1}{mn}\right)\left(1+\frac{2}{mn}\right)...\left(1+\frac{N(n)}{mn}\right)-1.
\end{eqnarray*}
We see that as in Proposition~\ref{beadanddimerpartitionfunction}, for fixed $n$, the relative error tends to $0$ when $m\rightarrow\infty$, while this is not uniform in $n$.

\section{Free energy and local entropy function of the bead model}\label{sect5.3}
\sectionmark{Free energy and local entropy function}

In this section, we consider a sequence of toroidal bead models with given asymptotic periodic condition (which means that the height change $(H_x,H_y)$ is proportional to the number of threads $n$), and we calculate its entropy when the size of the torus tends to infinity. In the computation we mainly use the discrete approximation given by Proposition~\ref{beadanddimerpartitionfunction}.

In \cite{BeadModel}, the author defines a family of ergodic Gibbs bead measures which are limits of the ergodic Gibbs measures of the dimer model on the hexagonal lattice when some weights degenerate. We begin by considering this parameterized weight setting of the dimer model, then apply a Legendre transform on the adjusted partition function of the dimer model to obtain the local entropy function $ent$. The proof of that $ent$ is equal to the normalized combinatorial entropy $S$ is postponed to Theorem~\ref{thm-ent} of Section~\ref{sectionProveVP}.

\subsection{Free energy}

Throughout this section, suppose that a lozenge $\lozengeconstant{}$ has weight $a$, $\lozengedecreasing{}$ has weight $b$ and $\lozengeincreasing{}$ has weight $c$, see Figure~\ref{ThreeLozenges}.
\begin{figure}[H]
\centering
\includegraphics[width=0.2\textwidth]{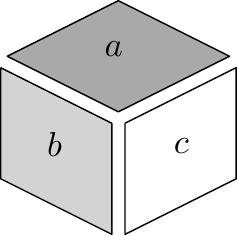}
\caption{The lozenges respectively weighted $a$, $b$ and $c$.}\label{ThreeLozenges}
\end{figure}

Consider a fundamental domain as Figure~\ref{fundom}. The characteristic polynomial is
$$a^2z-(b+cw)^2/w.$$
\begin{figure}[H]
\centering
\includegraphics[width=0.2\textwidth]{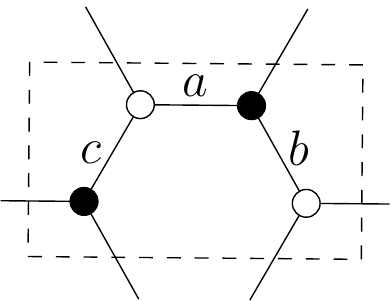}
\caption{The fundamental domain.}\label{fundom}
\end{figure}
The advantage of taking this domain is that it has an obvious horizontal-vertical decomposition. We consider $\mathcal{G}_{mn,n}$ as a toroidal graph which is $mn\times n$ this fundamental domain. The dimer model on $\mathcal{G}_{mn,n}$ corresponds to a toroidal lozenge tiling model defined in Section~\ref{sqdfhyzqmurei} where we use the substitution of $l$ by $mn$. But pay attention, $\mathcal{G}_{mn,n}$ corresponds to $T_{mn,2n}$ rather than $T_{mn,n}$.

We take two parameters $\alpha\in\mathbb{R}^+$, $\gamma\in ]-1,1[$, and let $a=\alpha/m$, $b=e^{\alpha\gamma /m}$, $c=1$. The author of \cite{BeadModel} proves that the ergodic Gibbs dimer measure under such setting, which is to first take $n\rightarrow\infty$ for a dimer model on $\mathcal{G}_{mn,n}$, converges to an ergodic Gibbs measure on the configurations of the beads on threads when $m\rightarrow\infty$, and the limiting measure is parameterized with respect to $\alpha$ and $\gamma$. We will take the reverse order, \emph{i.e.}, we first take $m\rightarrow\infty$ and then $n\rightarrow\infty$.

Denote the dimer partition function of $\mathcal{G}_{mn,n}$ by $Z_{mn,n}(\alpha,\gamma)$. The set of dimer configurations is denoted by $\mathcal{M}=\mathcal{M}(\mathcal{G}_{mn,n})$. Let $N_a$ (resp. $N_b$ and $N_c$) be the number of edges with weight $a$ (resp. $b$ and $c$), then the partition function is
\begin{eqnarray*}
Z_{mn,n}(\alpha,\gamma)=\sum_{M\in\mathcal{M}}a^{N_a(M)}b^{N_b(M)}c^{N_c(M)}=\sum_{M\in\mathcal{M}}(\alpha/m)^{N_a(M)}e^{\alpha\gamma N_b(M)/m}.
\end{eqnarray*}

To simplify the notation we denote the weight of a configuration $(\alpha /m)^{N_a(M)}e^{\alpha\gamma N_b(M)/m}$ by $w(M)$.

Here the order of $\ln Z_{mn,n}(\alpha,\gamma)$ is $n^2$ (while for fixed $a$, $b$ and $c$ the logarithm of the partition function should be of order $mn^2$). In fact, if we differentiate $\ln Z_{mn,n}(\alpha,\gamma)$ with respect to $\gamma$ or $\alpha$, we get:
\begin{eqnarray*}
\frac{\partial\ln Z_{mn,n}(\alpha,\gamma)}{\partial\gamma}&=&\frac{1}{Z_{mn,n}(\alpha,\gamma)}\frac{\partial Z_{mn,n}(\alpha,\gamma)}{\partial\gamma}=\frac{\sum_{M\in\mathcal{M}} \alpha N_b(M)w(M)}{m\sum_{M\in\mathcal{M}} w(M)}
=\frac{\alpha \mathbb{E}[N_b]}{m},\\
\frac{\partial\ln Z_{mn,n}(\alpha,\gamma)}{\partial\alpha}&=&\frac{1}{Z_{mn,n}(\alpha,\gamma)}\frac{\partial Z_{mn,n}(\alpha,\gamma)}{\partial\alpha}=\frac{\sum_{M\in\mathcal{M}} \big(\frac{1}{\alpha}N_a(M)+\gamma /m N_b(M)\big)w(M)}{\sum_{M\in\mathcal{M}} w(M)}\\
&=&\frac{1}{\alpha}\mathbb{E}[N_a]+\frac{\gamma}{m}\mathbb{E}[N_b].
\end{eqnarray*}

When divided by $n^2$, we get
\begin{eqnarray}
\frac{\partial}{\partial\gamma}\frac{\ln Z_{mn,n}(\alpha,\gamma)}{n^2}&=&\alpha \frac{\mathbb{E}[N_b]}{mn^2}.\label{1}\\
\frac{\partial}{\partial\alpha}\frac{\ln Z_{mn,n}(\alpha,\gamma)}{n^2}&=&\frac{1}{\alpha} \frac{\mathbb{E}[N_a]}{n^2}+\gamma\frac{\mathbb{E}[N_b]}{mn^2}.\label{2}
\end{eqnarray}

Since we expect that the number of edges $a$ is of order $n^2$ and that of edges $b$ and $c$ is of order $mn^2$, Equations~(\ref{1}) and~(\ref{2}) show that $\ln Z_{mn,n}(\alpha,\gamma)$ normalized by $n^2$ is of the good order. The aim of this section is to compute the limit of $\frac{\ln Z_{mn,n}(\alpha,\gamma)}{n^2}$, where we first take ${m\rightarrow\infty}$ and then ${n\rightarrow\infty}$.

\begin{prop}\label{prop1}
For any given $n\in\mathbb{N}^*$, when $m\rightarrow\infty$, $Z_{mn,n}(\alpha,\gamma)$ converges.
\end{prop}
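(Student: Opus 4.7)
The plan is to decompose $Z_{mn,n}(\alpha,\gamma)$ according to the periodic condition $(H_x,H_y)$ and apply Proposition~\ref{beadanddimerpartitionfunction} to each piece. On the bipartite torus $\mathcal{G}_{mn,n}$ the height change $(H_x,H_y)$ of a tiling determines each of the three counts $N_a,N_b,N_c$: one has $N_a=2nH_y$ (since $\mathcal{G}_{mn,n}$ corresponds to $T_{mn,2n}$), and $N_b=\beta(H_x,H_y)\,m+O(1)$ for fixed $n$, with $\beta$ readable off any reference tiling. Factoring out these deterministic weights gives
\begin{equation*}
Z_{mn,n}(\alpha,\gamma)=\sum_{(H_x,H_y)}\left(\frac{\alpha}{m}\right)^{N_a}e^{\alpha\gamma N_b/m}\,\widetilde{Z}_{mn,n}^{(H_x,H_y)},
\end{equation*}
where $\widetilde{Z}_{mn,n}^{(H_x,H_y)}$ counts unweighted tilings in the class $(H_x,H_y)\in\{-n+1,\dots,n-1\}\times\mathbb{N}$.

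For each fixed $(H_x,H_y)$, Proposition~\ref{beadanddimerpartitionfunction} applied to that periodic condition gives $\widetilde{Z}_{mn,n}^{(H_x,H_y)}=(mn)^{N_a}V^{(H_x,H_y)}(1+o(1))$ as $m\to\infty$, where $V^{(H_x,H_y)}$ is the Lebesgue volume of admissible vertical coordinates in $(\mathbb{R}/\mathbb{Z})^{N_a}$. Since $N_a$ does not depend on $m$ and $N_b/m\to\beta(H_x,H_y)$, the powers of $m$ in the summand cancel exactly, and each term converges to $\alpha^{N_a}n^{N_a}e^{\alpha\gamma\beta(H_x,H_y)}V^{(H_x,H_y)}$. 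The proposition will follow once we may interchange the limit $m\to\infty$ and the sum over $(H_x,H_y)$.

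The interchange is the main obstacle and the one step that requires genuine estimates. I would apply dominated convergence, for which I need a bound on each term that is uniform in $m$ and summable over $(H_x,H_y)$. The crucial input is a factorial decay of $V^{(H_x,H_y)}$ in $H_y$: on each of the $2n$ threads the $H_y$ beads have cyclically ordered positions in $\mathbb{R}/\mathbb{Z}$, giving per-thread volume at most $1/H_y!$, and the interlacing constraint only tightens this, so $V^{(H_x,H_y)}\leq C_n/(H_y!)^{2n}$. Combined with the $2n-1$ possible values of $H_x$, this dominates the growing factor $(\alpha n)^{N_a}=(\alpha n)^{2nH_y}$ super-exponentially, since $((\alpha n)^{H_y}/H_y!)^{2n}$ is summable in $H_y$. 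One also has to check that the $o(1)$ in $\widetilde{Z}_{mn,n}^{(H_x,H_y)}/(mn)^{N_a}=V^{(H_x,H_y)}(1+o(1))$ is controlled uniformly enough in $(H_x,H_y)$: revisiting the lattice-point count in Proposition~\ref{beadanddimerpartitionfunction}, the relative error is of order $N_a/(mn)=O(H_y/m)$, so for any fixed $H_y$ the limit in $m$ is clear, while the range $H_y\geq\sqrt{m}$ is already suppressed by the factorial bound. Dominated convergence then yields the convergence of $Z_{mn,n}(\alpha,\gamma)$.
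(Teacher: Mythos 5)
Your argument is correct in outline but takes a genuinely different route from the paper's. The paper proves Propositions~\ref{prop1} and~\ref{prop2} together in Appendix A by exact computation: $Z_{mn,n}(\alpha,\gamma)$ is written as a signed combination of four Kasteleyn products of $\det\hat{K}(z,w)=(\alpha/m)^2z-(e^{\alpha\gamma/m}+w)^2/w$ over roots of unity, and the $m\rightarrow\infty$ convergence is extracted by an Euler--Maclaurin comparison of the sum over $w^m=u$ with the corresponding contour integral, with a careful treatment of the remainder near $w=-1$ where the two roots $w_{1,2}$ accumulate. Your decomposition over the height change $(H_x,H_y)$ --- which the paper itself uses later, in~(\ref{relation}) --- followed by a termwise application of Proposition~\ref{beadanddimerpartitionfunction} and dominated convergence is more elementary and purely combinatorial, and it identifies the limit $\widetilde{Z}_n(\alpha,\gamma)$ directly as a weighted sum of bead-model volumes $V^{(H_x,H_y)}$. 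What it does not give is the explicit value of the free energy in Proposition~\ref{prop2}, which is the real payoff of the analytic computation and the reason the paper sets the proof up this way.

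One step of yours needs tightening: for dominated convergence you must bound the finite-$m$ summand uniformly in $m$, not the limiting volume. The lattice-point approximation error behind Proposition~\ref{beadanddimerpartitionfunction} behaves like $\prod_{k\leq N_a}\left(1+\frac{k}{mn}\right)-1\approx e^{N_a^2/(2mn)}-1$ (not $O(N_a/(mn))$), which is far from uniform over the admissible range $H_y\leq mn$, so ``the range $H_y\geq\sqrt{m}$ is suppressed by the factorial bound on $V$'' does not by itself produce a dominating function. The fix is a direct combinatorial bound valid for every $m$: since a toroidal tiling with $H_y\geq 1$ is uniquely determined by the positions of its horizontal lozenges (the portion of a vertical strip between two consecutive interlaced beads is forced), one has $\widetilde{Z}^{(H_x,H_y)}_{mn,n}\leq\binom{mn}{H_y}^{2n}\leq (mn)^{2nH_y}/(H_y!)^{2n}$, whence the summand is at most $C_n\left((\alpha n)^{H_y}/H_y!\right)^{2n}$, which is summable over $(H_x,H_y)$. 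With that replacement your proof is complete.
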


Assuming Proposition \ref{prop1}, we define the partition function of the bead model of the torus of size $n$ and of parameters $\alpha$ and $\gamma$ as
$$\widetilde{Z}_{n}(\alpha,\gamma)=\lim_{m\rightarrow\infty}Z_{mn,n}(\alpha,\gamma).$$

\begin{prop}\label{prop2}
When $n\rightarrow\infty$, $\ln \widetilde{Z}_n(\alpha,\gamma)$ is of order $n^2$, and
$$\lim_{n\rightarrow\infty} \frac{\ln \widetilde{Z}_n(\alpha,\gamma)}{n^2}=
\frac{2\alpha}{\pi}\big(\gamma\arccos(-\gamma)+\sqrt{1-\gamma^2}\big).$$
\end{prop}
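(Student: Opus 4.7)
The strategy is to express $\ln Z_{mn,n}(\alpha,\gamma)$ via Kasteleyn theory in terms of the characteristic polynomial $P(z,w)=a^2z-(b+w)^2/w$, integrate out the $z$-variable using Jensen's formula (since $P$ is affine in $z$), and then perform a concentration analysis of the remaining $w$-integral as $m\to\infty$ under the scaling $a=\alpha/m$, $b=e^{\alpha\gamma/m}$. Granting the interchange of the $m$- and $n$-limits, Proposition~\ref{prop2} reduces to an explicit elementary integration.

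\emph{Integrating out $z$.} For the toroidal bipartite graph $\mathcal{G}_{mn,n}$, the standard asymptotic of \cite{CKP01,KOS06} gives
$$\frac{1}{mn^2}\ln Z_{mn,n}(\alpha,\gamma)\;\xrightarrow[n\to\infty]{}\;\frac{1}{(2\pi)^2}\iint_{[0,2\pi]^2}\ln|P(e^{i\theta},e^{i\phi})|\,d\theta\,d\phi.$$
Because $P$ is affine in $z$, Jensen's formula in $\theta$ collapses this to
$$\frac{1}{\pi}\int_0^{2\pi}\max\!\bigl(\ln a,\;\ln|b+e^{i\phi}|\bigr)\,d\phi.$$
Accepting commutativity with the $m$-limit, the sought-for quantity equals the $m\to\infty$ limit of $m$ times this one-dimensional integral.

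\emph{Concentration as $m\to\infty$.} Split $\max(\ln a,\ln|b+e^{i\phi}|)=\ln|b+e^{i\phi}|+[\ln a-\ln|b+e^{i\phi}|]^+$. The first summand, averaged in $\phi$ by a second application of Jensen's formula, equals $\ln^+\!|b|=(\alpha\gamma)^+/m$, contributing $2(\alpha\gamma)^+$ after multiplying by the outer $m$. The excess is supported on an $O(1/m)$-neighborhood of $\phi=\pi$; zooming in via $\phi=\pi+\psi/m$ linearizes $|b+e^{i\phi}|^2\approx((\alpha\gamma)^2+\psi^2)/m^2$, so that the Jacobian $d\phi=d\psi/m$ exactly cancels the prefactor $m$ and yields the finite limit
$$\frac{1}{2\pi}\int_{-\alpha\sqrt{1-\gamma^2}}^{\alpha\sqrt{1-\gamma^2}}\ln\frac{\alpha^2}{(\alpha\gamma)^2+\psi^2}\,d\psi.$$
Substituting $\psi=\alpha t$ and using the antiderivative $\int\ln(\gamma^2+t^2)\,dt=t\ln(\gamma^2+t^2)-2t+2\gamma\arctan(t/\gamma)$, evaluation at $t=\pm\sqrt{1-\gamma^2}$, with careful tracking of the branch of $\arctan$ (which supplies the ``missing'' $2\alpha\gamma$ in the $\gamma<0$ regime where the baseline contribution vanishes), unifies both sign cases into the stated expression $\frac{2\alpha}{\pi}\bigl(\gamma\arccos(-\gamma)+\sqrt{1-\gamma^2}\bigr)$.

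\emph{Main obstacle.} The delicate point is justifying the exchange of the limits in $m$ and $n$: the bulk asymptotic above is intrinsically a joint two-dimensional limit, and setting $m=\infty$ first degenerates $P$ (its $a^2z$ term drops out, and the resulting polynomial vanishes identically at $w=-1$). To close the argument rigorously, I would either bound the subdominant sector corrections of the Kasteleyn Pfaffian formula uniformly in $m$, or—more robustly—carry out the $m\to\infty$ limit inside the explicit product over $mn$-th roots of unity in $z$ to obtain a closed-form product expression for $\widetilde Z_n=\lim_m Z_{mn,n}$ over $n$-th roots of unity, and then analyze that product via a Riemann-sum argument as $n\to\infty$. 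Both routes lead to the same integral computed above.
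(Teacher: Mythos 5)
Your computation of the limiting value is correct and, after unwinding the two sign cases, agrees with the paper's answer: the Jensen-formula reduction of the two-dimensional free-energy integral, the split into a baseline $2(\alpha\gamma)^{+}$ plus an excess concentrated at scale $1/m$ around $\phi=\pi$, and the final elementary integral all check out. But what you have computed is $\lim_{m\to\infty}\lim_{n\to\infty}\frac{1}{n^2}\ln Z_{mn,n}$, whereas Proposition~\ref{prop2} is a statement about $\lim_{n\to\infty}\frac{1}{n^2}\ln\widetilde{Z}_n$ with $\widetilde{Z}_n=\lim_{m\to\infty}Z_{mn,n}$ --- the opposite order. You name this interchange as the main obstacle and defer it, so the proposal as written has a genuine gap at exactly the point where the paper's proof does its work.

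The paper's Appendix A follows the second route you sketch (take $m\to\infty$ inside the Kasteleyn product for fixed $n$, then run a Riemann-sum argument in $n$), but that route is not the formality your closing paragraph suggests. For fixed $n$ and fixed $z$ with $z^n=(-1)^{\theta}$, the sum over the $m$ roots $w^m=u$ of $\ln\det\hat{K}(z,w)$, minus $m$ times the corresponding contour integral, does \emph{not} tend to zero as $m\to\infty$: the two zeros $w_{1,2}=-1+\frac{\alpha}{m}(-\gamma\pm\sqrt{-z})+o(1/m)$ sit at distance $O(1/m)$ from the integration contour, and the Euler--Maclaurin remainder they generate converges to a nontrivial $O(1)$ quantity per value of $(z,u)$. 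The paper must (i) isolate this remainder by splitting the circle into three zones around $w=-1$, (ii) show it converges as $m\to\infty$, and (iii) show that, once summed over the $n$ values of $z$ and $u$ and divided by $n^2$, this correction disappears in the $n\to\infty$ limit because the residual sum-versus-integral discrepancy cancels under the further averaging. Only after (iii) does one recover the integral you computed. So the answer is right and your reduction is a clean way to guess it, but to turn the proposal into a proof you must either carry out this fixed-$n$, $m\to\infty$ analysis (your second route, which is the paper's) or establish the limit interchange by a uniform-in-$m$ error bound for the Kasteleyn asymptotics (your first route, which the paper does not attempt); neither is done in the proposal.
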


This limit is called the free energy of the bead model with parameters $\alpha$ and $\gamma$ per fundamental domain. This value depends on the choice of fundamental domain. The proof of Propositions~\ref{prop1} and~\ref{prop2} is put in Appendix A.

\vspace{0.5cm}

As a corollary, by~(\ref{1}) and~(\ref{2}), we get an estimate of the numbers of the different types of edges:
$$\mathbb{E}\left(\frac{N_a}{n^2}\right)=\frac{2\alpha}{\pi}\sqrt{1-\gamma^2},\ \mathbb{E}\left(\frac{N_b}{mn^2}\right)=\frac{2}{\pi}\arccos(-\gamma),\ \mathbb{E}\left(\frac{N_c}{mn^2}\right)=\frac{2}{\pi}\arccos(\gamma).$$

The following proposition allows us to take the limit $m,n\rightarrow\infty$ in an arbitrary way.

\begin{prop} \label{twolimits}
The limit $m\rightarrow\infty$ and the limit $n\rightarrow\infty$ can be exchanged when calculating the partition function, i.e.,
$$\lim_{m\rightarrow\infty}\lim_{n\rightarrow\infty} \frac{\ln Z_{mn,n}(\alpha,\gamma)}{n^2}=
\lim_{n\rightarrow\infty}\frac{\ln \widetilde{Z}_{n}(\alpha,\gamma)}{n^2}.$$
\end{prop}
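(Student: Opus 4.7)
\emph{Proof plan.} The plan is to compute the iterated limit $\lim_{m\to\infty}\lim_{n\to\infty}\ln Z_{mn,n}(\alpha,\gamma)/n^2$ directly from dimer theory and verify that it matches the formula of Proposition~\ref{prop2}. For each fixed $m$, $Z_{mn,n}(\alpha,\gamma)$ is the partition function of a weighted dimer model on the toroidal bipartite graph $\mathcal{G}_{mn,n}$, which has $mn^2$ fundamental domains and weights $a=\alpha/m$, $b=e^{\alpha\gamma/m}$, $c=1$. Applying the Kasteleyn determinantal formula together with the standard dimer free energy theorem on the torus, the inner limit exists and equals
$$\lim_{n\to\infty}\frac{\ln Z_{mn,n}(\alpha,\gamma)}{n^2}=m\,\mu(P_m),$$
where $P_m(z,w)=(\alpha/m)^2 z-(e^{\alpha\gamma/m}+w)^2/w$ is the fundamental-domain characteristic polynomial and $\mu(P_m)=\frac{1}{(2\pi)^2}\iint_{[0,2\pi]^2}\ln|P_m(e^{i\theta},e^{i\phi})|\,d\theta\,d\phi$ is its Mahler measure; the extra factor $m$ is simply the ratio between the $mn^2$ fundamental domains and the normalizing $n^2$.

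For the outer limit, I would first integrate out $\theta$ using the identity $\frac{1}{2\pi}\int_0^{2\pi}\ln|A-re^{i\theta}|\,d\theta=\ln\max(|A|,r)$, reducing the Mahler measure to
$$\mu(P_m)=\frac{1}{2\pi}\int_0^{2\pi}\max\bigl(\ln|B_m(\phi)|,\;2\ln(\alpha/m)\bigr)\,d\phi,$$
with $|B_m(\phi)|=e^{2\alpha\gamma/m}+2e^{\alpha\gamma/m}\cos\phi+1$. The set $I_m=\{\phi:|B_m(\phi)|<(\alpha/m)^2\}$ on which the constant branch dominates is a neighborhood of $\phi=\pi$ whose width is asymptotic to $2\alpha\sqrt{1-\gamma^2}/m$ (solvable explicitly from the quadratic $|B_m(\phi)|=(\alpha/m)^2$). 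I split the integral into the bulk $[0,2\pi]\setminus I_m$ and the peak $I_m$. On the bulk, Jensen's identity $\frac{1}{2\pi}\int\ln|t+e^{i\phi}|\,d\phi=\ln^{+}t$ combined with a Taylor expansion of $\ln|B_m|$ in $1/m$ gives $2\max(0,\alpha\gamma)/m+o(1/m)$. On $I_m$, the rescaling $\phi=\pi+(\alpha/m)s$ with $s\in(-\sqrt{1-\gamma^2},\sqrt{1-\gamma^2})$ converts the integrand into $-\ln(\gamma^2+s^2)$ to leading order and contributes $-\frac{\alpha}{2\pi m}\int_{-\sqrt{1-\gamma^2}}^{\sqrt{1-\gamma^2}}\ln(\gamma^2+s^2)\,ds+o(1/m)$.

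Multiplying by $m$ and letting $m\to\infty$ yields
$$\lim_{m\to\infty}m\,\mu(P_m)=2\max(0,\alpha\gamma)-\frac{\alpha}{2\pi}\int_{-\sqrt{1-\gamma^2}}^{\sqrt{1-\gamma^2}}\ln(\gamma^2+s^2)\,ds.$$
The elementary integral, computed with the antiderivative $s\ln(\gamma^2+s^2)-2s+2\gamma\arctan(s/\gamma)$, evaluates to $-4\sqrt{1-\gamma^2}+4\gamma\arctan(\sqrt{1-\gamma^2}/\gamma)$. Distinguishing the cases of positive and negative $\gamma$ (the sign of $\arctan(\sqrt{1-\gamma^2}/\gamma)$ flips, and $\max(0,\alpha\gamma)$ turns on or off accordingly) and using $\arccos(-\gamma)=\pi-\arccos(\gamma)$, both cases collapse to $\frac{2\alpha}{\pi}\bigl(\gamma\arccos(-\gamma)+\sqrt{1-\gamma^2}\bigr)$, which is exactly the formula of Proposition~\ref{prop2}.

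The main obstacle is the first step: the zero set of $P_m$ on the unit bi-torus is a non-empty curve that concentrates near $\phi=\pi$ as $m\to\infty$, so the Riemann-sum convergence underlying the dimer free energy theorem must be justified by checking that these zeros are integrable singularities of $\ln|P_m|$ and that the error in $\frac{\ln Z_{mn,n}}{n^2}\to m\,\mu(P_m)$ is $o_n(1)$ for each fixed $m$. Once this is granted (from standard KOS-type estimates), the remaining asymptotic bookkeeping around $\phi=\pi$ is routine but delicate, since the bulk and peak contributions are both of order $1/m$ and a careful matched expansion is needed to track the constants correctly.
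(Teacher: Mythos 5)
Your proposal is correct, and it is in fact more informative than the paper itself, whose entire proof of this proposition is the sentence ``This can be proved via direct computation.'' What you supply is precisely that computation for the left-hand side: the inner limit $\lim_{n\to\infty}\ln Z_{mn,n}/n^2=m\,\mu(P_m)$ via the standard toroidal free-energy theorem (the factor $m$ being the ratio of the $mn^2$ fundamental domains to the normalization $n^2$), followed by a $1/m$ expansion of the Mahler measure localized at $\phi=\pi$. I checked the arithmetic: the width of $I_m$ is indeed $\sim 2\alpha\sqrt{1-\gamma^2}/m$, the full-circle Jensen integral gives $2\max(0,\alpha\gamma)/m$, the antiderivative $s\ln(\gamma^2+s^2)-2s+2\gamma\arctan(s/\gamma)$ yields $-4\sqrt{1-\gamma^2}+4\gamma\arctan(\sqrt{1-\gamma^2}/\gamma)$, and the case split (using $\arctan(\sqrt{1-\gamma^2}/\gamma)=\arccos\gamma$ for $\gamma>0$ and $=-\arccos(-\gamma)$ for $\gamma<0$) collapses both signs to $\tfrac{2\alpha}{\pi}\bigl(\gamma\arccos(-\gamma)+\sqrt{1-\gamma^2}\bigr)$, matching Proposition~\ref{prop2} for the right-hand side. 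By contrast, the paper's Appendix~A computes the \emph{other} iterated limit ($m\to\infty$ first) by an Euler--Maclaurin comparison of the finite product over roots of unity with its integral, tracking the non-negligible $O(1)$ correction terms; your route and theirs are genuinely different techniques applied to the two sides of the identity, and the proposition is exactly the statement that they agree.

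One presentational caveat: as written, your ``bulk'' claim that $\frac{1}{2\pi}\int_{[0,2\pi]\setminus I_m}\ln|B_m|\,d\phi=2\max(0,\alpha\gamma)/m+o(1/m)$ is not literally true, because $\int_{I_m}\ln|B_m|\,d\phi$ is of order $(\ln m)/m$, not $o(1/m)$. The decomposition that works is the one you implicitly use in the peak term: write $\mu(P_m)$ as the full-circle Jensen integral plus the correction $\frac{1}{2\pi}\int_{I_m}\bigl[2\ln(\alpha/m)-\ln|B_m(\phi)|\bigr]\,d\phi$; the two $(\ln m)/m$ contributions cancel inside the bracket and leave exactly your $-\frac{\alpha}{2\pi m}\int_{-\sqrt{1-\gamma^2}}^{\sqrt{1-\gamma^2}}\ln(\gamma^2+s^2)\,ds+o(1/m)$. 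This is bookkeeping, not a gap, and your identification of the main remaining obstacle (justifying the inner limit despite the zero curve of $P_m$ concentrating near $\phi=\pi$, which is standard for fixed $m$ since $\ln|P_m|$ has integrable singularities) is apt.
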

\begin{proof}
This can be proved via direct computation.
\qed
\end{proof}

\subsection{Surface tension, local entropy function}\label{surfacetensionlocalentropyfunction}

We now turn to the uniform measure on the periodic bead model with given height change, which corresponds to the uniform bead measure with given periodic boundary condition introduced in Section~\ref{qseliyrzame}. Take the definition of height function of Section~\ref{qesfihjzemrau}, and let $N_a$, $N_b$ and $N_c$ respectively be the number of $\lozengeconstant{}$, $\lozengedecreasing{}$ and $\lozengeincreasing{}$ in a tiling of $T_{mn,n}$, then the height change $(H_x,H_y)$ is given by
\begin{eqnarray*}
N_a=n H_y ,\ N_b-N_c=-2mn H_x ,\ N_a+N_b+N_c=mn^2.
\end{eqnarray*}

Recall that $T_{mn,n}$ corresponds to $\mathcal{G}_{mn,n/2}$ (without loss of generality we suppose that $n$ is even). If we fix the height change $(H_x,H_y)$ and define $Z_{mn,n}^{H_x,H_y}$ as the partition function for a uniform tiling of $T_{mn,n}$, then the partition function of $\mathcal{G}_{mn,n/2}$ with parameters $\alpha$, $\gamma$ is given by
\begin{eqnarray}
Z_{mn,n/2}(\alpha,\gamma)&=&\sum_{H_x,H_y}Z_{mn,n}^{H_x,H_y}\left(\frac{1}{m}\right)^{n H_y}
e^{n^2\big(\ln\alpha\frac{H_y}{n}+\alpha\gamma(-\frac{H_x}{n}+\frac{1}{2})+o(1)\big)},
\label{relation}
\end{eqnarray}
where $o(1)$ is in $m$. Recall that in Proposition~\ref{beadanddimerpartitionfunction} we proved that for given $(H_x,H_y)$, the term
$$Z_{mn,n}^{H_x,H_y}\left(\frac{1}{m}\right)^{n H_y}$$
converges when $m\rightarrow\infty$. Later in Section~\ref{sectionProveVP} we prove that moreover the logarithm of this limit value divided by $n^2$ converges when $n\rightarrow\infty$ and the limits depends and is continuous on the average slope $(\frac{H_x}{n},\frac{H_y}{n})$, which by construction should be included in $[-\frac{1}{n}]\times[0,+\infty]$. The case where $\frac{H_y}{n}\rightarrow\infty$ (\emph{i.e.} $H_y$ is beyond the order $O(n)$) is possible, but this has a negligible contribution because otherwise the right hand side of~(\ref{relation}) explodes if we take an $\alpha$ bigger than $1$, which is not the case.

Following the idea of \cite{KOS06}, for $(s,t)\in[-\frac{1}{2},\frac{1}{2}]\times[0,\infty[$, and for any $m,n$, consider the lozenge tilings of $T_{mn,n}$ of height change
$$(H_x,H_y)=(\lfloor ns\rfloor,\lfloor nt\rfloor),$$
and by the discussion above we can define the surface tension as
$$\sigma(s,t)=-\lim_{n\rightarrow\infty}\lim_{m\rightarrow\infty}\left(
\frac{\ln Z^{\lfloor ns\rfloor,\lfloor nt\rfloor}_{mn,n}}{n^2}-t\ln m\right).$$

If we consider a fundamental domain as in Figure~\ref{fundomp} (which is half of that of Figure~\ref{fundom}), and consider the free energy per fundamental domain which is equal to
$$F(\alpha,\gamma)=\lim_{n\rightarrow\infty}\lim_{m\rightarrow\infty}\frac{1}{2}\frac{\ln Z_{mn,n}(\alpha,\gamma)}{n^2}= \frac{1}{\pi}\big(\alpha\gamma\arccos(-\gamma)+\alpha\sqrt{1-\gamma^2}\big),$$
(it is half of the limit in Proposition~\ref{prop2}), then equation~(\ref{relation}) implies that
$$F(\alpha,\gamma)=\max_{s,t}\big(-\sigma(s,t)+\ln\alpha t+\alpha\gamma(\frac{1}{2}-s)\big).$$

\begin{figure}[H]
\centering
\includegraphics[width=0.2\textwidth]{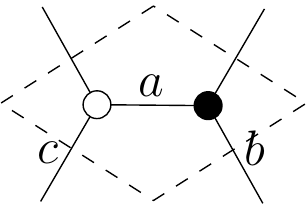}
\caption{Another fundamental domain.}\label{fundomp}
\end{figure}

Let $A=\ln\alpha$, $B=-\alpha\gamma$, then we get that
\begin{eqnarray}
\alpha=e^A,\ \gamma=-\frac{B}{e^A}.\label{changementvariable}
\end{eqnarray}
Replace $\alpha$ and $\gamma$ by $A$ and $B$, and define
$$\widetilde{F}(A,B)=F(\alpha,\gamma)-\frac{\alpha\gamma}{2}=\frac{1}{\pi}\big(-B\arccos(\frac{B}{e^A})+\sqrt{e^{2A}-B^2}\big)+\frac{B}{2}.$$
Its Hessian matrix is positive-definite so $\widetilde{F}$ is strictly convex. Since the $\sigma$ as a limit of strictly convex function (the surface tension in the dimer model) is convex, $\widetilde{F}$ and $\sigma(s,t)$ are Legendre duals, so we have
\begin{eqnarray*}\label{locallabelx}
\sigma(s,t)&=&\max_{A,B}\big(-\widetilde{F}(A,B)+At+Bs \big)\\
&=&-\left(1+\ln\left(\frac{\cos(\pi s)}{\pi t}\right)\right)t.
\end{eqnarray*}

Define the local entropy $ent$ of the bead model as $-\sigma$. More precisely,

\begin{definition}\label{EntDef}
For any slope $(s,t)\in[-\frac{1}{2},\frac{1}{2}]\times[0,\infty]$, define the \emph{local entropy function} $ent(s,t)$ of the bead model as the following function:
\begin{eqnarray*}
ent(s,t)=
\begin{cases}
0 &\text{ if }t=0,\\
-\infty &\text{ if }s=\pm\frac{1}{2},\ t\neq 0,\\
\left(1+\ln\left(\frac{\cos(\pi s)}{\pi t}\right)\right)t &\text{ otherwise}.
\end{cases}
\end{eqnarray*}
\end{definition}

The function $ent(s,t)$ is concave in $s$ and $t$ and strictly concave on any domain where $t>0$.
\begin{figure}[H]
\centering
\subfigure[$ent(s,t)$ as function of slope $(s,t)$.\label{entFig}]{
\includegraphics[clip, trim=6.5cm 10cm 6.5cm 10cm, width=0.43\textwidth]{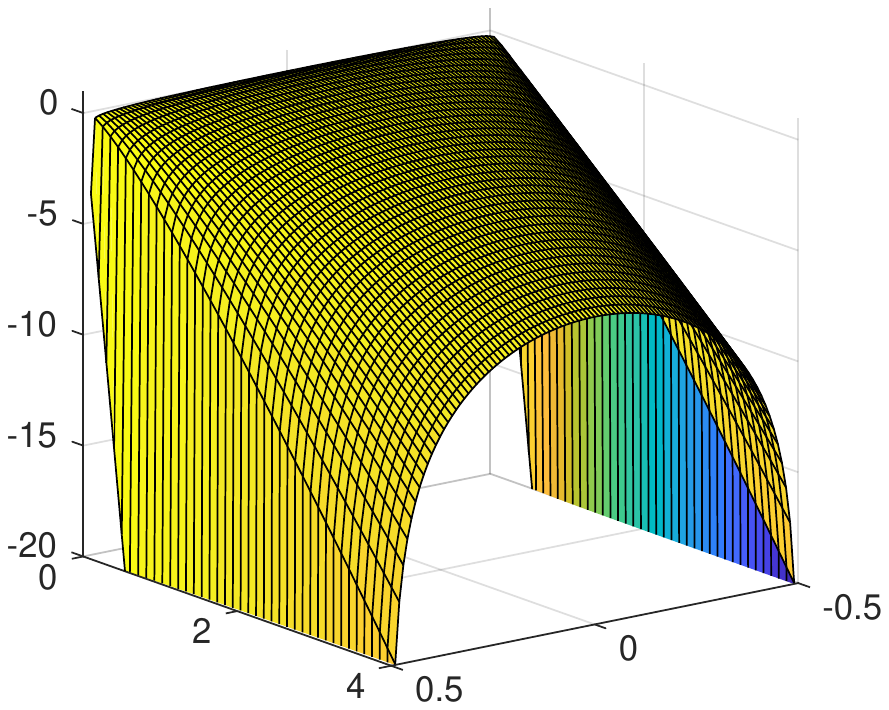}}
\hspace{0.1\textwidth}
\subfigure[Contour lines of $ent$.]{
\includegraphics[width=0.32\textwidth]{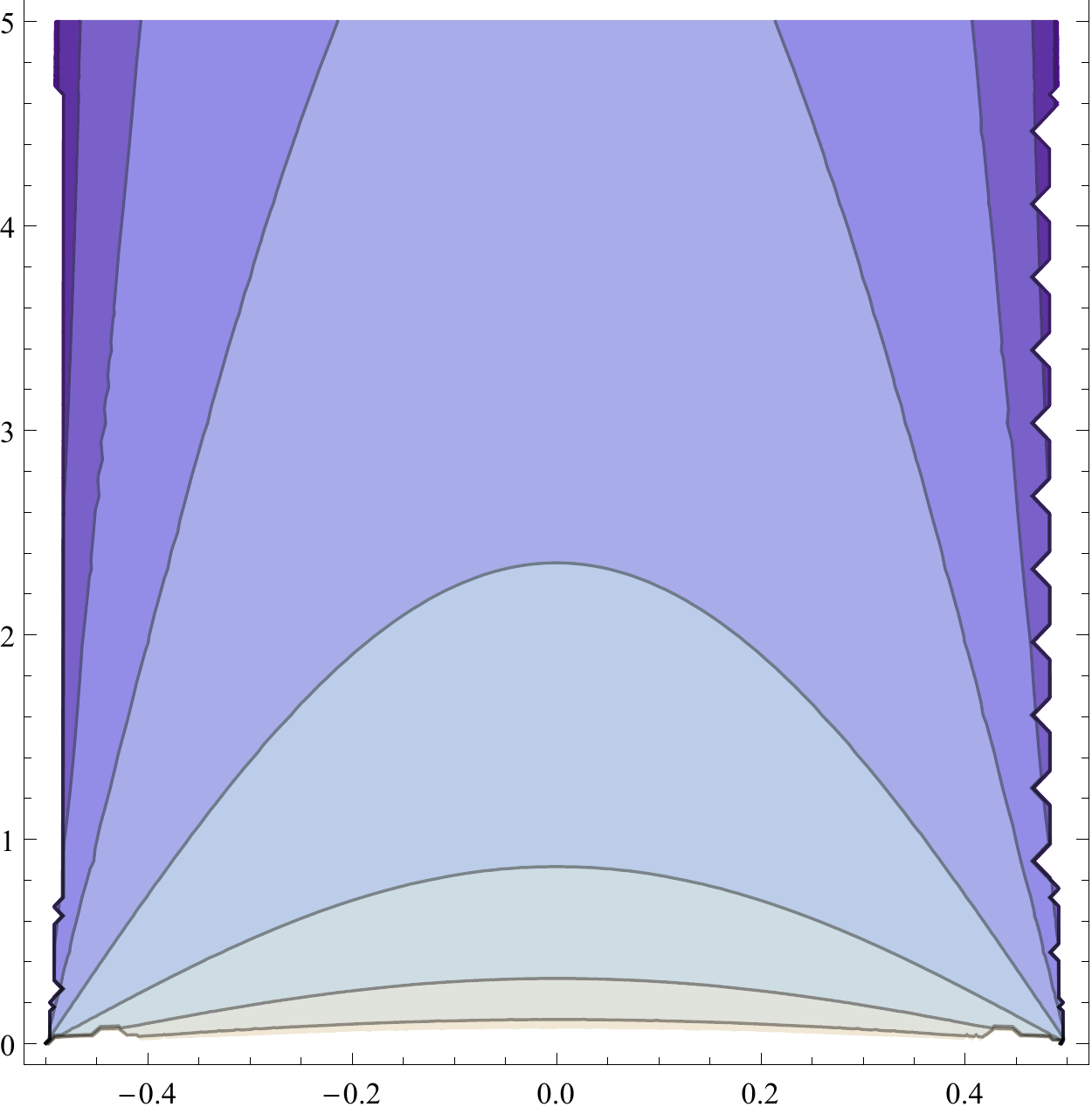}}
\caption{The local entropy function $ent$.}
\end{figure}

In Section \ref{sect5.4}, we consider the problem of maximizing (roughly speaking) the integral of $ent$ over $D$ where where $t$ will be taken to be $\frac{\partial h}{\partial y}$ and $s=\frac{\partial h}{\partial x}$. For later use, we here take a look at the expression of the local entropy $ent(s,t)$ (Definition~\ref{EntDef}):
$$\left(1+\ln\left(\frac{\cos(\pi s)}{\pi t}\right)\right)t,$$

The integral of any constant times $\frac{\partial h}{\partial y}$ on $D$ is fixed by the boundary condition. Also, $-u\ln u\leq\frac{1}{e}$. So maximizing the integral of $ent$ is equivalent to maximizing the integral of
$$\ln\left(\frac{\cos(\pi s)}{t}\right)t-\frac{1}{e},$$
which is non-positive and we call it the \emph{active part} of $ent(s,t)$. In Section~\ref{sect5.4}, without loss of generality, sometimes we consider the active part and assume that $ent$ is bounded above by $0$ for the sake of simplification.

Meanwhile, it is good to remark that $ent(s,t)$ tends to $-\infty$ when $t$ to infinity or when $s$ tends to $\pm{\frac{1}{2}}$ while $t$ not tends to $0$ fast enough.

\vspace{0.5cm}

We end this section by an analog of Proposition~\ref{twolimits} for entropy. Denote by $ent^{\diamond}$ the entropy of the dimer model on the honeycomb lattice.
\begin{prop}\label{entropyexchange}
For any $(s,t)\in[-\frac{1}{2},\frac{1}{2}]\times[0,+\infty[$, the entropy function of the bead model $ent(s,t)$ is the following limit of that of the dimer model on the honeycomb lattice:
\begin{eqnarray}\label{entropyexchangeequation}
ent(s,t)=\lim_{m\rightarrow\infty}m\ ent^{\diamond}(s,t/m)-\ln m t.
\end{eqnarray}
Also, we have the following properties concerning the convergence:\\
\emph{(a)} for any compact set of possible slopes that doesn't contain points where $s=\pm\frac{1}{2}$, the above convergence~(\ref{entropyexchangeequation}) is uniform.\\
\emph{(b)} for any $\varepsilon>0$, there exists $\delta<0$ and $M\in\mathbb{N}^*$ such that for all possible slopes $(s,t)$ such that $t\leq\delta$ and for all $m\geq M$, we have
$$m\ ent^{\diamond}(s,t/m)-\ln m t<\varepsilon.$$
\end{prop}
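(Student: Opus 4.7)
The plan is to substitute the Kenyon formula for the local entropy of the honeycomb dimer model and perform an asymptotic expansion as the second slope argument $\tau=t/m$ tends to zero. With lozenge densities $p_a=\tau$, $p_b=(1-\tau)/2-s$, $p_c=(1-\tau)/2+s$ (so that $p_a$ counts horizontal lozenges, i.e.\ beads), the classical formula (with the normalization of the fundamental domain of Figure~\ref{fundomp}) reads
$$ent^{\diamond}(s,\tau)=\tfrac{1}{\pi}\bigl[L(\pi p_a)+L(\pi p_b)+L(\pi p_c)\bigr],$$
where $L(x)=-\int_0^x\ln|2\sin u|\,du$ is the Lobachevsky function. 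By the definition of $\sigma$ from Section~\ref{surfacetensionlocalentropyfunction} this is already consistent with the identity $ent(s,t)=-\sigma(s,t)=\lim_{m\to\infty}(m\,ent^{\diamond}(s,t/m)-t\ln m)$, so the proposition reduces to a direct asymptotic calculation.

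Three properties of $L$ drive the expansion. First, near zero $L(x)=x-x\ln(2x)+O(x^3)$, so $L(\pi\tau)=\pi\tau-\pi\tau\ln(2\pi\tau)+O(\tau^3)$. Second, Taylor expansion using $L'(y)=-\ln|2\sin y|$ and $L'(\pi/2\mp\pi s)=-\ln|2\cos(\pi s)|$ gives $L(\pi p_{b,c})=L(\pi/2\mp\pi s)+\tfrac{\pi\tau}{2}\ln|2\cos(\pi s)|+O(\tau^2)$. Third, the reflection identity $L(\pi-x)=-L(x)$, a consequence of $\int_0^\pi\ln|2\sin u|\,du=0$ together with the substitution $u\mapsto\pi-u$, applied at $x=\pi/2-\pi s$ yields $L(\pi/2-\pi s)+L(\pi/2+\pi s)=0$; hence the zeroth-order contributions of $L(\pi p_b)$ and $L(\pi p_c)$ cancel. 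Combining,
$$ent^{\diamond}(s,\tau)=\tau\bigl[1+\ln(\cos(\pi s)/(\pi\tau))\bigr]+O(\tau^2),$$
and the substitution $\tau=t/m$ yields $m\,ent^{\diamond}(s,t/m)-t\ln m=ent(s,t)+O(t^2/m)$. For part~(a), on any compact set of slopes with $|s|\leq\tfrac{1}{2}-\eta_0$ the higher derivatives $L''$ and $L'''$ at $\pi/2\pm\pi s$ (namely $\cot$ and $\csc^2$ there) are uniformly bounded, so the error is uniform.

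The main obstacle is part~(b), where one must upper bound $m\,ent^{\diamond}(s,t/m)-t\ln m$ uniformly in $s$ as $s\to\pm\tfrac{1}{2}$, where the expansion above loses uniformity. To handle this regime I would instead apply the reflection identity to $p_c$: since $1-p_c=p_a+p_b$, we have $L(\pi p_c)=-L(\pi(p_a+p_b))$, so $\pi\,ent^{\diamond}(s,t/m)=L(\pi p_a)+L(\pi p_b)-L(\pi(p_a+p_b))$ with all three arguments simultaneously small when $p_b$ and $p_a$ are. Plugging the small-$x$ expansion into each term and writing $u:=mp_b/t$ yields
$$m\,ent^{\diamond}(s,t/m)-t\ln m = t\ln(1+u)+tu\ln(1+1/u)-t\ln m+O(t^3/m^2),$$
which by elementary monotonicity is bounded above by $t(1-\ln t)+o(1)$ as $t\to 0$, uniformly in $u\geq 0$ and for $m$ large. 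Combined with part~(a) in the complementary region $|s|\leq\tfrac{1}{2}-\eta_0$, this gives the required uniform upper bound $<\varepsilon$ for $t\leq\delta$ and $m\geq M$.
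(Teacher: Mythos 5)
Your proof is correct, and for the main limit and part (a) it is essentially the paper's argument: both start from the Lobachevsky-function formula for $ent^{\diamond}$ and extract the $t\ln m$ divergence from the term $\mathcal{L}(\pi t/m)$. The only cosmetic difference there is that the paper keeps the two non-horizontal terms combined as a single integral $\int_{\pi(1/2-s-t/2m)}^{\pi(1/2-s+t/2m)}\ln|2\sin u|\,du$ over a shrinking interval, whereas you Taylor-expand each term separately and invoke the reflection identity $\mathcal{L}(\pi-x)=-\mathcal{L}(x)$ to cancel the zeroth order; these are the same computation. Part (b) is where you genuinely diverge. The paper's argument is a two-line observation: after pulling the factor $\ln 2$ out of the second integral, what remains is $\frac{m}{\pi}\int\ln|\sin u|\,du\leq 0$, so the whole $s$-dependent contribution is discarded at once and only the $s$-independent bracket, which converges uniformly on $\{t\leq\delta\}$ to $(1-\ln\pi-\ln t)t$, needs to be controlled. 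Your route instead splits into $|s|\leq\frac{1}{2}-\eta_0$ (handled by the uniform convergence of part (a) plus the bound $ent(s,t)\leq(1-\ln(\pi t))t$) and $|s|>\frac{1}{2}-\eta_0$, where you use $p_a+p_b+p_c=1$ to rewrite everything in terms of three small arguments and bound $t\ln(1+u)+tu\ln(1+1/u)-t\ln m$ uniformly in $u$. This works, but it is more laborious, and your stated error $O(t^3/m^2)$ in that regime is not quite right as an equality: the replacement of $\mathcal{L}$ by its small-argument expansion at $\pi p_b$ and $\pi(p_a+p_b)$ leaves a residual of order $O(t\,p_b^2)$ after the near-cancellation of the two terms (the naive term-by-term error $O(m p_b^3)$ would blow up). Since $\ln|2\sin u|\leq\ln(2u)$, the expansion does give a valid \emph{upper} bound, which is all part (b) requires, so the argument survives; you should just state it as an inequality rather than an asymptotic identity. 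In short: same proof for the limit and (a); for (b) the paper's negativity trick buys a much shorter argument, while yours buys nothing extra but is still sound once the error term is phrased as a one-sided bound.
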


Note that in (b) we just claim an arbitrarily small upper bound and the lower bound is in fact $-\infty$.

\begin{proof}
By \cite{CKP01,LN_K}, the dimer entropy of slope $(s,t)$ is
$$ent^{\diamond}(s,t)=\frac{1}{\pi}\big(\mathcal{L}(\pi t)+\mathcal{L}(\pi(\frac{1}{2}-s-\frac{t}{2}))+\mathcal{L}(\pi(\frac{1}{2}+s-\frac{t}{2}))\big),$$
where $\mathcal{L}$ is the Lobachevsky function defined by
$$\mathcal{L}(\theta)=-\int^{\theta}_0 \ln|2\sin t|dt.$$
So we have
\begin{eqnarray}\label{clkjdssfsdqs}
m\ ent^{\diamond}(s,\frac{t}{m})=-\frac{m}{\pi}\int_0^{\pi\frac{t}{m}}\ln|2\sin t|dt
+\frac{m}{\pi}\int_{\pi(\frac{1}{2}-s-\frac{t}{2m})}^{\pi(\frac{1}{2}-s+\frac{t}{2m})}\ln|2\sin t|dt.
\end{eqnarray}

The first term of~(\ref{clkjdssfsdqs}) is equal to
$$\big(1+\ln m-\ln2-\ln\pi-\ln t\big)t+o(1)$$
where the $o(1)$ tends to $0$ when $m\rightarrow\infty$ and this is uniform on any set where $t$ is bounded.

For the second term of~(\ref{clkjdssfsdqs}), for any $(s,t)$, when $m$ is big this is equal to
\begin{eqnarray}\label{dfsqfezrgt}
\ln|2\sin (\pi(\frac{1}{2}-s))|t+o(1),
\end{eqnarray}
so we have proved the pointwise convergence in the lemma:
\begin{eqnarray*}
m\ ent^{\diamond}(s,t/m)-\ln m t=\left(1+\ln\left(\frac{\cos(\pi s)}{\pi t}\right)\right)t+o(1)=ent(s,t)+o(1).
\end{eqnarray*}

Clearly, the convergence~(\ref{dfsqfezrgt}) is uniform on any compact set of slopes that excludes the points where $s=\pm\frac{1}{2}$, which finish the proof of (a). The convergence is not uniform on a bounded set containing $(\pm\frac{1}{2},0)$, as for all $m$ the function $ent^{\diamond}_m$ is continuous on any possible point of slopes, while $ent$ is not continuous at $(\pm\frac{1}{2},0)$.

To prove (b), we have
\begin{align*}
&m\ ent^{\diamond}(s,t/m)-\ln m t\\
=&\left[-\frac{m}{\pi}\int_0^{\pi\frac{t}{m}}\ln|2\sin t|dt+(-\ln m +\ln 2)t\right]
+\frac{m}{\pi}\int_{\pi(\frac{1}{2}-s-\frac{t}{2m})}^{\pi(\frac{1}{2}-s+\frac{t}{2m})}\ln|\sin t|dt.
\end{align*}

For any $\delta>0$, the term in the bracket converges uniformly to $(1-\ln\pi-\ln t)t$ on the set $t\leq \delta$ when $m\rightarrow\infty$ and $(1-\ln\pi-\ln t)t$ converges to $0$ when $t\rightarrow 0$, so it suffices to choose a $\delta$ small enough and $M$ large enough so that for all $m>M$ this term is less than $\varepsilon$. Meanwhile, the second term is always negative. Thus we have finished the proof.
\qed
\end{proof}

\section{Entropy-maximizing problem}\label{sect5.4}

Our main aim is to establish a variational principle for the bead model as in \cite{CKP01}. This mainly consists of three parts: giving an entropy function, proving that there exists a unique maximizer and proving that there is a large deviation type behavior around that maximizer. In this section we focus on the first two parts, \emph{i.e.} raise a functional $Ent$ and prove that there exists a unique maximizer of it.

Since the bead model is just some kind of limit of that of the dimer, there are a lot of similarities between our case and that in \cite{CKP01}. It is natural to think of defining a global entropy function $Ent(h)$ as the integral of $ent\circ\nabla h$ on some domain with given boundary condition, where $h$ is the normalized height function. However, some delicate differences make the proof in the case of the bead model not a trivial and direct corollary of the dimer model. As we will see, the most remarkable difference is the unboundness of $ent$.

We consider a bead model normalized into the unit square $D=[0,1]\times[0,1]$, define a normalized height function $h$ and define the space of admissible functions $\mathcal{H}$ in Section~\ref{eoizruzeauoh}. In Section~\ref{sect5.4part1} we define the functional $Ent$ on the admissible functions, and in Section~\ref{sect5.4part2} we prove that there is a unique admissible function that maximizes the entropy $Ent$.

\subsection{Bead model normalized into unit square}\label{eoizruzeauoh}

We normalize the bead configuration into the unit square $D=[0,1]\times[0,1]$. Consider a bead model with $n$ threads as in Section~\ref{The bead model} but take the threads as
$$\left\{\left(x=\frac{i-1}{n-1},y\right):i=1,2,...,n,\ y\in[0,1]\right\},$$
and we normalize the bead height function $H$ defined in Section~\ref{qesfihjzemrau} by $n-1$. Moreover, we extend this function to the whole of $D$ in a piecewise linear way.

\begin{definition}\label{heightfunctionbeadmodel}
Given a bead configuration $\mathbf{B}$ on threads $x\in\left\{\frac{i-1}{n-1},i=1,2,...,n\right\}$, ${y\in[0,1]}$, the \emph{normalized height function} $h=h^{\mathbf{B}}$ (again for convenience we omit $\mathbf{B}$) is defined as
$$h(x,y)=\frac{1}{n-1}H\big((n-1)x+1,y\big)$$
along every thread
$$\left\{\frac{i-1}{n-1},i=1,2,...,n\right\}\times[0,1],$$
then extended to the whole unit square $D$ in the following way: for every $y\in[0,1]$, the height function $x\mapsto h(x,y)$ viewed as a function of $x$ is taken to be the piecewise linear extension of $h(\frac{i-1}{n-1},y)$.
\end{definition}

For a toroidal bead model, we can also define a normalized multivalued height function $h$ on $\left(\mathbb{R}/\mathbb{Z}\right)^2$ analogously to Definition~\ref{heightfunctionbeadmodel}.

\vspace{0.5cm}

From now on, when we speak of the bead model with $n$ threads defined on the unit square $D$, we mean a normalized bead model as above, with normalized height function extended to $D$.

Clearly the function $h$'s horizontal partial derivative is equal to $\pm\frac{1}{2}$ almost everywhere and its vertical partial derivative equals to $0$ almost everywhere. To describe the boundary condition by the normalized height function $h$, compare this to Section~\ref{qseliyrzame}, we should introduce two imaginary threads ${x=-\frac{1}{n-1}}$ and ${x=1+\frac{1}{n-1}}$ where we define the boundary height function. So we sometimes consider a bead model on ${[-\frac{1}{n-1},1+\frac{1}{n-1}]\times[0,1]}$ if necessary. Denote a boundary height function by $h^{\partial}_n$ since its domain of definition depends on $n$.

\begin{definition}\label{defbeadboundarydcondition}
Consider a bead model with $n$ threads defined on $D$. The normalized height function ${h:D\rightarrow \mathbb{R}}$ is said to have
one of the boundary conditions below if there exists a normalized bead model height function ${h':[-\frac{1}{n-1},1+\frac{1}{n-1}]\times[0,1]\rightarrow\mathbb{R}}$ such that $h'|_D=h$ and\\
(a) if $\mathcal{U}$ is a subspace of the functions ${h^{\partial}_n:\partial\big([-\frac{1}{n-1},1+\frac{1}{n-1}]\times[0,1]\big)\rightarrow\mathbb{R}}$, then we say $h$ has a
\emph{boundary condition lying in} $\mathcal{U}$ if
$$h'|_{\partial\big([-\frac{1}{n-1},1+\frac{1}{n-1}]\times[0,1]\big)}\in \mathcal{U}.$$\\
(b) we say $h$ has a \emph{fixed boundary condition} ${h^{\partial}_n:\partial\big([-\frac{1}{n-1},1+\frac{1}{n-1}]\times[0,1]\big)\rightarrow\mathbb{R}}$ if
$${h'|_{\partial\big([-\frac{1}{n-1},1+\frac{1}{n-1}]\times[0,1]\big)}=h^{\partial}_n},$$
\emph{i.e.}, $\mathcal{U}$ has only one element.\\
\end{definition}

When $n\rightarrow\infty$, the domain $[-\frac{1}{n-1},1+\frac{1}{n-1}]\times[0,1]$ tends to the unit square $D$, so if we talk about an \emph{asymptotic boundary condition}, it means a function $h^{\partial}$ defined on $\partial D$, non-decreasing in $y$ and $\frac{1}{2}$-Lipschitz in $x$.

Given such a $h^{\partial}$, for any $n\in\mathbb{N^*}$, we want to consider a boundary height function $h^{\partial}_n$ close to $h^{\partial}$. However, as we will see later in Section~\ref{sectionProveVP}, the dependence of the entropy of the bead model on the boundary condition is delicate, so the meaning of ``close to $h^{\partial}$" should be clarified with attention. We postpone this problem to Section~\ref{sectionProveVP}, and in this section we focus on analytic results.

For every given asymptotic boundary condition $h^{\partial}$, we define the space of \textit{admissible functions} as the closure of the normalized height function, \emph{i.e.},
\begin{definition}\label{admissible}
Given the unit square $D$ and a boundary condition $h^{\partial}$ defined on $\partial D$, a function $h$ is called \emph{admissible} if it is horizontally $\frac{1}{2}$-Lipschitz, vertically non decreasing, and when restricted on $\partial D$ it is equal to $h^{\partial}$. Denote by $\mathcal{H}$ the space of admissible functions.
\end{definition}

We have the following generalization of Dini's theorem, which will be used later:
\begin{lemma}\label{dini}
For any sequence of admissible functions $(h^i)_{i=1,2,...}$, if they converge pointwise to some continuous function, then the convergence is uniform.
\end{lemma}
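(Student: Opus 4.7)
The plan is to combine a standard compactness/grid argument with the asymmetric regularity built into admissibility: uniform $\tfrac12$-Lipschitz control in $x$ and monotonicity (only) in $y$.

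First I would set up the inputs. Since the limit $h$ is continuous on the compact square $D$, it is uniformly continuous; moreover, as a pointwise limit of functions that are $\tfrac12$-Lipschitz in $x$ and non-decreasing in $y$, $h$ itself shares these properties. Fix $\varepsilon>0$; by uniform continuity choose $\delta>0$ with $|h(p)-h(q)|<\varepsilon$ whenever $|p-q|_\infty<\delta$, and shrink $\delta$ so that $\delta<\varepsilon$. Let $G\subset D$ be a finite $\delta$-grid containing the points $(k\delta,\ell\delta)\cap D$.

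Next I would use pointwise convergence on the finite set $G$. For each $p\in G$ there is $N_p$ with $|h^i(p)-h(p)|<\varepsilon$ for $i\geq N_p$; set $N=\max_{p\in G}N_p$. This is the only place where the hypothesis of pointwise convergence is invoked.

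The key step is the interpolation from grid points to arbitrary $(x,y)\in D$. Given $(x,y)$, choose grid points with $x_k\le x\le x_{k+1}$ and $y_\ell\le y\le y_{\ell+1}$. Using vertical monotonicity and then horizontal $\tfrac12$-Lipschitz control of $h^i$, for $i\geq N$,
\begin{align*}
h^i(x,y)&\le h^i(x,y_{\ell+1})\le h^i(x_k,y_{\ell+1})+\tfrac{\delta}{2}\\
&\le h(x_k,y_{\ell+1})+\varepsilon+\tfrac{\delta}{2}\le h(x,y)+2\varepsilon+\tfrac{\delta}{2},
\end{align*}
where the final step uses uniform continuity of $h$ since $|(x_k,y_{\ell+1})-(x,y)|_\infty<\delta$. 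The symmetric lower bound
\[
h^i(x,y)\ge h^i(x,y_\ell)\ge h^i(x_{k+1},y_\ell)-\tfrac{\delta}{2}\ge h(x_{k+1},y_\ell)-\varepsilon-\tfrac{\delta}{2}\ge h(x,y)-2\varepsilon-\tfrac{\delta}{2}
\]
follows identically. Hence $\sup_{(x,y)\in D}|h^i(x,y)-h(x,y)|\le 2\varepsilon+\delta/2<3\varepsilon$ for every $i\geq N$, which is the desired uniform convergence.

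The only delicate point is the asymmetry between the two variables: admissibility gives genuine modulus of continuity only in $x$, while in $y$ we only have monotonicity and a priori the $h^i$ may have jumps. This is precisely what forces the two-sided bracketing $h^i(x,y_\ell)\le h^i(x,y)\le h^i(x,y_{\ell+1})$ before the Lipschitz transfer in $x$. Continuity of the limit $h$ (not of the $h^i$) is what closes the loop, just as in the classical Dini argument where continuity of the limit is the essential ingredient that upgrades pointwise to uniform convergence.
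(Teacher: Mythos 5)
Your proof is correct. It uses the same essential ingredients as the paper's argument --- monotonicity in $y$ to bracket $h^i(x,y)$ between values at nearby heights, the uniform $\tfrac12$-Lipschitz bound in $x$ to transfer to nearby threads, continuity of the limit to close the loop, and compactness to reduce pointwise convergence to finitely many points --- but organizes them differently. The paper argues fiber by fiber: for each fixed $x$ it invokes the one-dimensional theorem (monotone functions converging pointwise to a continuous limit converge uniformly, which it calls Dini's theorem) to get $\sup_y|h^i(x,y)-h^\infty(x,y)|\to 0$, and then uses the Lipschitz condition in $x$ together with compactness of $[0,1]$ to make this uniform over $x$. You instead run a single two-dimensional finite-grid argument with two-sided bracketing, which amounts to unrolling that one-dimensional theorem and the subsequent spreading step into one self-contained computation. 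What your version buys is that it does not cite any external result and makes explicit exactly where each hypothesis (monotonicity, Lipschitz, continuity of the limit, pointwise convergence) enters; what the paper's version buys is brevity. The only cosmetic point in your write-up is that the grid should be taken to include the boundary values $x=1$ and $y=1$ so that $x_{k+1}$ and $y_{\ell+1}$ always exist in $D$, but this is immediate to arrange and does not affect the argument.
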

\begin{proof}
Denote the limiting function by $h^{\infty}$. For any $x$, as $h^i(x,.)$ is non-decreasing and $h^{\infty}(x,.)$ is continuous, the convergence of $h^i(x,.)$ to $h^{\infty}(x,.)$ is uniform on $y\in[0,1]$ by Dini's theorem.

For all $\varepsilon>0$ and for all $x$, there exists $I_x$ such that for all $i>I_x$, $$\sup_{y\in[0,1]}|h^i(x,y)-h^{\infty}(x,y)|<\frac{\varepsilon}{2}.$$
By the Lipschitz condition on $x$, for fixed $x_0$, for every $x$ in the interval $[x_0-\frac{\varepsilon}{2},x_0+\frac{\varepsilon}{2}]$ we have that
$$\sup_{y\in[0,1]}|h^i(x,y)-h^{\infty}(x,y)|<\varepsilon$$
for all $i>I_{x_0}$ and for all $y$. By compactness of $[0,1]$, we can choose $I$ such that for $i>I$ we have
$$\sup_{y\in[0,1]}|h^i(x,y)-h^{\infty}(x,y)|<\varepsilon$$
for all $(x,y)\in D$.
\qed
\end{proof}

\subsection{Statement of the entropy-maximizing problem}\label{sect5.4part1}

From now on we consider a specific case: the bead models on the unit square $D$ with fixed asymptotic boundary condition
$$h|_{\partial D}=h^{\partial},$$
where the left and right boundary conditions are given by a constant function. More precisely, we have $h(0,y)=C_0$, $h(1,y)=C_1$ for $C_0,C_1\in\mathbb{R}$, $h(x,0)$ and $h(x,1)$ are $\frac{1}{2}$-Lipschitz and $h(x,1)\geq h(x,0)$ for $x\in[0,1]$. Recall that any such boundary condition corresponds to an asymptotic shape of (skew) Young diagram. The shape of the diagram is given by the projection of ${h(x,0)}$ and ${h(x,1)}$ in the direction of $y$ to the same plane:
\begin{eqnarray}\label{QKJFQSDKFQ}
\lambda=\{(x,z):2h(x,0)\leq z \leq 2h(x,1)\}.
\end{eqnarray}
In the case where $h(x,0)=h(x,1)$ for some $x\in]0,1[$, the diagram can be decomposed into two independent regions, so without loss of generality we can always suppose that $h(x,1)>h(x,0)$ for all $x\in]0,1[$.

We want to define a global entropy function $Ent(.)$ on the space of admissible functions. According to the definition, an admissible function is differentiable almost everywhere so $ent\circ\nabla h$ is well defined almost everywhere too.

Naturally we can define the entropy of a function $h\in \mathcal{H}$ as the integral of $ent\circ\nabla h$ in $D$, but in fact it is not reasonable. If we take a discontinuous function as
\begin{eqnarray*}
h(x,y)=
\begin{cases}
h(x,0)\text{ if }y\leq\frac{1}{2},\\
h(x,1)\text{ if }y>\frac{1}{2},
\end{cases}
\end{eqnarray*}
then the integral of $ent\circ\nabla h$ is equal to $0$. Meanwhile, in the bead model this corresponds to a phenomenon where almost all the beads are located on one horizontal segment, which should be very rare. Even if a surface is continuous, it can have $\frac{\partial h}{\partial y}$ being $0$ almost everywhere but the height changes (such a function can be constructed via Cantor set).

The solution is to think of a new space where we fix the $x-$axis and turn the space in the $y-z$ plane by $\frac{\pi}{4}$ so that the vertically monotonicity converts to a $1-$Lipshitz condition. This turning map is denoted by $\sim$, and the new coordinate system is denoted by $\t{x}$, $\t{y}$ and $\t{z}$. By properly choosing the $0$ of the coordinates of the system, we have\\
\noindent\begin{minipage}{.5\linewidth}
\begin{eqnarray*}
\begin{cases}
\t{x}=x,\\
\t{y}=\frac{\sqrt{2}}{2}(y+z),\\
\t{z}=\frac{\sqrt{2}}{2}(y-z),
\end{cases}
\end{eqnarray*}
\end{minipage}
\begin{minipage}{.5\linewidth}
\begin{eqnarray*}
\begin{cases}
x=\t{x},\\
y=\frac{\sqrt{2}}{2}(\t{y}-\t{z}),\\
z=\frac{\sqrt{2}}{2}(\t{y}+\t{z}).
\end{cases}
\end{eqnarray*}
\end{minipage}

Consider the surface of any admissible function $h$ as subset of $\mathbb{R}^3$ containing the points $(x,y,z)$ where for any $x,y$ it contains such $z$ that
$$\lim_{\delta\rightarrow0^-}h(x,y+\delta)\leq z\leq\lim_{\delta\rightarrow0^+}h(x,y+\delta).$$
The surface under new coordinates is given by $\t{h}:\t{D}\rightarrow\mathbb{R}$, where the domain of definition $\t{D}$ is uniquely determined the boundary condition of $\mathcal{H}$. Denote by $\t{\mathcal{H}}$ the space $\{\t{h}:h\in \mathcal{H}\}$. We will still call the functions in $\t{\mathcal{H}}$ as admissible function, but under the coordinates $(\t{x},\t{y})$.

One advantage of this change is that the new space $\tilde{\mathcal{H}}$ is compact under the uniform metric, as we see that for fixed $x=\t{x}$, the monotonicity in $y$ of ${h(x,y)}$ turns to $1$-Lipschitz in $\t{y}$ of ${\t{h}(\t{x},\t{y})}$. More precisely we have the following relations between $(s,t)$ and $(\t{s},\t{t})$ being the partial differentials corresponding to the same point $(x,y)$, $(\t{x},\t{y})$:\\
\noindent\begin{minipage}{.5\linewidth}
\begin{eqnarray*}
\begin{cases}
\t{s}=\frac{s}{t+1},\\
\t{t}=\frac{t-1}{t+1},
\end{cases}
\end{eqnarray*}
\end{minipage}
\begin{minipage}{.5\linewidth}
\begin{eqnarray*}
\begin{cases}
t=\frac{1+\t{t}}{1-\t{t}},\\
s=\frac{2\t{s}}{1-\t{t}}.
\end{cases}
\end{eqnarray*}
\end{minipage}

Now consider in the double integral of $ent\circ\nabla h$ in $D$ the change of variable $(x,y)$ to $(\t{x},\t{y})$. As the Jacobian is equal to
\begin{eqnarray}\label{Jacobianjacobian}
J=\frac{\partial x}{\partial \t{x}}\frac{\partial y}{\partial \t{y}}-\frac{\partial x}{\partial \t{y}}\frac{\partial y}{\partial \t{x}} =\frac{\sqrt{2}}{2}(1-\t{t}),
\end{eqnarray}
so the new entropy under the variable change is
$$\wt{ent}(\t{s},\t{t})=\frac{\sqrt{2}}{2}\ln\left(1+\frac{(1-\t{t})
\cos(\frac{2\pi\t{s}}{1-\t{t}})}{\pi (1+\t{t})}\right)(1+\t{t}).$$
Readers can verify that the new entropy $\wt{ent}$ is also strictly concave in the interior of its domain of definition.


We see that the entropy function $\wt{ent}$ is equal to $-\infty$ when the slope $\t{t}=\frac{\partial \t{h}}{\partial \t{y}}$ is equal to $1$, the case corresponding to the discontinuity or quasi-discontinuity in $\mathcal{H}$. If we check the examples we considered as the typical cases that the integral of $ent$ doesn't reflect the entropy, in the new integral they both give an integral equal to $-\infty$. So we take the following definition.

\begin{definition}\label{sdlflhsmdgqedgedeer}
For any admissible function $\t{h}\in\t{\mathcal{H}}$, its \emph{entropy} is defined as
$$\wt{Ent}(\t{h})=\iint_{\t{D}}\wt{ent}\left(\frac{\partial\t{h}}{\partial\t{x}},\frac{\partial\t{h}}{\partial\t{y}}\right)d\t{x}d\t{y},$$
and for any admissible function $h\in \mathcal{H}$, its \emph{entropy} is defined as
$$Ent(h)=\wt{Ent}(\t{h}).$$
\end{definition}

We can announce the main theorem of Section~\ref{sect5.4} now:
\begin{theorem}\label{theoremexistence}
There exists a unique $h_0\in \mathcal{H}$ (resp. $\t{h}_0\in\t{\mathcal{H}}$) which maximizes $Ent(.)$ (resp. $\wt{Ent}(.)$) among all admissible functions of $\mathcal{H}$ (resp. $\t{\mathcal{H}}$).
\end{theorem}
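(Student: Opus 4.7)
The plan is to apply the direct method of the calculus of variations in the tilted coordinates $(\t x,\t y)$, where admissibility becomes uniform Lipschitz control and the integrand $\wt{ent}$ is concave, and strictly concave on the interior of its domain.

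\emph{Compactness.} The relations $\t s=s/(t+1)$ and $\t t=(t-1)/(t+1)$, combined with $|s|\leq\frac{1}{2}$ and $t\geq 0$ valid for any $h\in\mathcal{H}$, give $|\t s|\leq\frac{1}{2}$ and $|\t t|\leq 1$. Thus every $\t h\in\t{\mathcal{H}}$ is Lipschitz on the bounded domain $\t D$ with a common constant and has fixed boundary values; Arzel\`a--Ascoli yields relative compactness in the topology of uniform convergence, and both the Lipschitz bounds and boundary conditions pass to uniform limits, so $\t{\mathcal{H}}$ is closed and hence compact.

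\emph{Upper semi-continuity and existence.} I would show that $\wt{Ent}$ is upper semi-continuous on $\t{\mathcal{H}}$ under uniform convergence. Recall that $\wt{ent}$ is concave and bounded above (its active part is nonpositive and the remaining linear part contributes an integral determined by the fixed boundary data). Given a mollifier $\phi_\epsilon$, Jensen's inequality applied pointwise to the concave integrand gives $\wt{Ent}(\t h_n)\leq\wt{Ent}(\t h_n*\phi_\epsilon)$; if $\t h_n\to\t h$ uniformly then $\nabla(\t h_n*\phi_\epsilon)\to\nabla(\t h*\phi_\epsilon)$ uniformly on the interior of $\t D$, so $\wt{Ent}(\t h_n*\phi_\epsilon)\to\wt{Ent}(\t h*\phi_\epsilon)$ by continuity of $\wt{ent}$ on compacts of its interior; letting $\epsilon\to 0$ and using the upper bound on $\wt{ent}$ together with dominated convergence yields $\limsup_n\wt{Ent}(\t h_n)\leq\wt{Ent}(\t h)$. (A thin neighborhood of $\partial\t D$ must be handled separately, e.g.\ by extending $\t h$ as a Lipschitz function to a slightly larger domain.) Existence now follows by the direct method: a maximizing sequence admits a uniform subsequential limit $\t h_0\in\t{\mathcal{H}}$, and USC gives $\wt{Ent}(\t h_0)\geq\limsup_n\wt{Ent}(\t h_n)=\sup_{\t{\mathcal{H}}}\wt{Ent}$. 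That the supremum is finite is checked by exhibiting an admissible function with finite entropy, for instance a smoothed linear interpolation between $h(\cdot,0)$ and $h(\cdot,1)$, which is available under the standing assumption $h(x,1)>h(x,0)$.

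\emph{Uniqueness.} Suppose $\t h_1,\t h_2$ both achieve the (finite) maximum. Convex combinations preserve Lipschitz bounds and boundary values, so the midpoint $\t h=(\t h_1+\t h_2)/2$ lies in $\t{\mathcal{H}}$, and concavity of $\wt{ent}$ yields
\[
\wt{Ent}(\t h)\geq\tfrac{1}{2}\bigl(\wt{Ent}(\t h_1)+\wt{Ent}(\t h_2)\bigr),
\]
with equality forced by maximality. Finiteness of $\wt{Ent}(\t h_i)$ forces $\nabla\t h_i$ to lie almost everywhere in the interior of the domain of $\wt{ent}$, where $\wt{ent}$ is strictly concave; equality in the displayed inequality then forces $\nabla\t h_1=\nabla\t h_2$ a.e., and equality of boundary values gives $\t h_1=\t h_2$. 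The statement for $\mathcal{H}$ follows by pulling back through the tilting bijection $h\mapsto\t h$.

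\emph{Main obstacle.} The key difficulty, and the point of departure from the standard dimer variational principle of \cite{CKP01}, is the unboundedness of $\wt{ent}$ from below on the boundary of the slope domain ($|\t s|=\frac{1}{2}$ or $\t t=1$). This means $\wt{ent}$ is continuous only on compacts of the interior, strict concavity fails at the boundary, and mollification can produce gradients approaching the slope boundary. The remedy in both the USC step and the uniqueness step is to restrict the delicate arguments to the full-measure interior region, which is automatic once the candidate maximizers are known to have finite entropy.
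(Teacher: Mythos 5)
Your proposal is essentially correct, but it takes a genuinely different route from the paper. The paper does \emph{not} prove upper semi-continuity of $\wt{Ent}$ on all of $\t{\mathcal{H}}$. Instead it introduces the truncated spaces $\mathcal{H}^{\lambda}=\{h: ent\circ\nabla h\geq-\lambda \text{ a.e.}\}$, on which $ent$ is bounded so the CKP01 semicontinuity argument applies (Lemma~\ref{lemmaHlambda}); it builds the rather heavy approximation operators $A_{\delta,\delta'}=T_{\delta}P_{-\delta',\delta,0}C_{\delta'}$ (shrink toward the center of a star-convex diagram, mollify, refill the boundary strip with the explicit function $h^t$ at cost $O(\delta\ln\delta)$) to show $\sup_{\mathcal{H}}Ent=\lim_{\lambda}\sup_{\mathcal{H}^{\lambda}}Ent$; and it then extracts a limit of the truncated maximizers $h_{\lambda}$, proves separately that this limit lies in $\mathcal{H}_0$ (the set $\{\partial\t{h}/\partial\t{y}=1\}$ is null) via an averaging argument over small convex pieces, and closes the loop with semicontinuity on $\mathcal{H}^{\Lambda(\delta,\delta')}$. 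Your direct method short-circuits all of this: the chain ``Jensen on the flat extension $\Rightarrow$ $\wt{Ent}(\t h_n)\leq\wt{Ent}(\t h_n*\phi_\epsilon)$, uniform convergence of mollified gradients, then $\epsilon\to0$ via $L^1$-convergence of $\nabla(\t h*\phi_\epsilon)$ and reverse Fatou with the uniform upper bound on $\wt{ent}$'' does give USC on the whole compact space $\t{\mathcal{H}}$, because only an \emph{upper} bound is needed and the unboundedness of $\wt{ent}$ from below never obstructs that direction (the paper itself proves the $\epsilon\to0$ step as Definition and Lemma~\ref{DL2}(b)). What the paper's longer route buys is machinery ($A_{\delta,\delta'}$, the triangulation lemmas, the $\mathcal{H}^{\lambda}$ scale) that is reused for the two-sided bounds in the variational principle of Section~\ref{sectionProveVP}, where lower bounds genuinely require controlling the singularities; for Theorem~\ref{theoremexistence} alone your argument is leaner.

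Two soft spots you should tighten. First, ``$\wt{Ent}(\t h_n*\phi_\epsilon)\to\wt{Ent}(\t h*\phi_\epsilon)$ by continuity of $\wt{ent}$ on compacts of its interior'' is too strong as stated: mollified gradients need not stay in a compact of the interior of the slope domain, and $\wt{ent}$ is only upper semi-continuous (not continuous) at the degenerate slopes; but USC of $\wt{ent}$ plus reverse Fatou gives the $\limsup\leq$ inequality, which is all you use. The boundary strip is harmless here precisely because the flat extension has zero local entropy and Jensen points the right way, but you should say so rather than defer it. Second, in the uniqueness step, finiteness of $\wt{Ent}$ rules out $\{\t t=1\}$ having positive measure but not the frozen set $\{t=0\}$ (equivalently $\t t=-1$), on which $ent(\cdot,0)\equiv0$ is \emph{not} strictly concave; equality in the concavity inequality therefore only forces $\nabla\t h_1=\nabla\t h_2$ off the set where both slopes are frozen, and an extra argument (two admissible functions with the same boundary values whose gradients differ only on a common frozen set coincide) is needed. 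The paper's own proof dismisses this with the same one-line appeal to strict concavity, so this is a gap you share with the source rather than one you introduced.
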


Later, Definition and Lemma~\ref{DL1} will show that there exists at least one admissible function whose entropy is not $-\infty$, so this set $\mathcal{H}$ (resp. $\t{\mathcal{H}}$)is not empty.

\subsection{Proof of the existence and uniqueness of entropy-maximizer}\label{sect5.4part2}
\sectionmark{Existence and uniqueness of the entropy-maximizer}

We prove Theorem~\ref{theoremexistence} in this section. For some technical reason that we will see soon, we still hope to calculate directly $Ent(h)$ by integrating $ent\circ\nabla h$ on $D$ in the normal sense of Lebesgue.

\begin{definition}
Define the following subspace of the admissible functions:
$$\mathcal{H}_0=\{h\in \mathcal{H}:\ Ent(h)=\iint_D ent\circ\nabla h dxdy\},$$
where $Ent(h)=\wt{Ent}(\t{h})$ (Definition~\ref{sdlflhsmdgqedgedeer}). We define $\t{\mathcal{H}}_0$ as the image of $\mathcal{H}_0$ in $\t{\mathcal{H}}$.
\end{definition}

Check the Jacobian in Equation~(\ref{Jacobianjacobian}), we directly get that
\begin{lemma}
The space $\t{\mathcal{H}}_0$ is the subspace of $\t{\mathcal{H}}$ where for every function $\t{h}\in\t{\mathcal{H}}$, the Lebesgue measure of the set $$\{(\t{x},\t{y}):\frac{\partial\t{h}}{\partial\t{y}}=1\}$$
is equal to 0.
\end{lemma}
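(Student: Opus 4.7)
The plan is to prove the lemma by making precise the change of variables that relates $\wt{Ent}(\t h)$ (defined on $\t D$) to $\iint_D ent\circ\nabla h\,dx\,dy$. By definition $Ent(h)=\wt{Ent}(\t h)$, so membership $h\in\mathcal H_0$ is equivalent to the equality of these two integrals, and the lemma asserts that this equality holds if and only if the set $\{\t t=1\}\subset\t D$ is Lebesgue-null.

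First I would establish a change-of-variables identity. Define $\phi:\t D\to D$ by $\phi(\t x,\t y)=(\t x,\frac{\sqrt 2}{2}(\t y-\t h(\t x,\t y)))$. Since $\t h$ is $1$-Lipschitz in $\t y$ (coming from the vertical monotonicity of $h$) and Lipschitz in $\t x$, $\phi$ is Lipschitz, hence a.e.\ differentiable by Rademacher's theorem, with Jacobian $J(\t x,\t y)=\frac{\sqrt 2}{2}(1-\t t)$ as computed in~(\ref{Jacobianjacobian}); thus $J>0$ exactly on $\{\t t<1\}$. For each fixed $\t x=x$, the map $\t y\mapsto y=\frac{\sqrt 2}{2}(\t y-\t h(x,\t y))$ is non-decreasing and strictly increasing off $\{\t t=1\}$, so $\phi|_{\{\t t<1\}}$ is a.e.\ injective onto $D$ minus the projection of the vertical pieces of the graph of $\t h$, which is itself a null subset of $D$. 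Applying the area formula for Lipschitz maps and recognising that by construction $ent(s,t)\cdot J=\wt{ent}(\t s,\t t)$ on $\{\t t<1\}$, one obtains
\[\iint_D ent\circ\nabla h\,dx\,dy=\iint_{\{\t t<1\}}\wt{ent}(\t s,\t t)\,d\t x\,d\t y.\]
Next, split $\wt{Ent}(\t h)=\iint_{\{\t t<1\}}\wt{ent}\,d\t x\,d\t y+\iint_{\{\t t=1\}}\wt{ent}\,d\t x\,d\t y$. If $|\{\t t=1\}|=0$, the second summand vanishes and the two sides of the defining equation of $\mathcal H_0$ coincide, so $\t h\in\t{\mathcal H}_0$. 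If on the contrary $|\{\t t=1\}|>0$, the identity $\wt{ent}(\t s,1)=-\infty$ (verified directly from Definition~\ref{EntDef} by letting $t\to\infty$, since $ent(s,t)\cdot J\sim -\sqrt 2\,\ln t\to -\infty$) forces $\wt{Ent}(\t h)=-\infty$, while $\iint_D ent\circ\nabla h=\iint_{\{\t t<1\}}\wt{ent}$ does not see this contribution; hence the two integrals disagree and $\t h\notin\t{\mathcal H}_0$.

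The main obstacle is the rigor of the change of variables under mere Lipschitz regularity of $\t h$: the map $\phi$ is not a diffeomorphism, so one must either appeal to the area formula for Lipschitz maps, or, more elementarily, reduce along the vertical fibres $\{\t x=x\}$ by Fubini and apply the one-dimensional change of variables for monotone Lipschitz functions, then verify that the exceptional set where $\phi$ fails to be injective (corresponding to vertical walls of the graph) has the right behaviour under both projections. Once this identity is secured, the remainder of the proof is the straightforward splitting of the integral at the level set $\{\t t=1\}$, using the boundary value $\wt{ent}(\cdot,1)=-\infty$ already noted in the discussion right after Definition~\ref{EntDef}.
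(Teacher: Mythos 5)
Your proof takes the same route as the paper's, whose entire argument is the one-line remark that one should ``check the Jacobian in Equation~(\ref{Jacobianjacobian})''; you have simply carried that check out properly: since $J=\frac{\sqrt{2}}{2}(1-\t{t})$ vanishes exactly on $\{\t{t}=1\}$, the change of variables identifies $\iint_D ent\circ\nabla h\,dx\,dy$ with the integral of $\wt{ent}$ over $\{\t{t}<1\}$ only, and your fibre-wise justification of that identity via the one-dimensional change of variables for monotone Lipschitz maps is sound. One caveat on your converse direction: from $|\{\t{t}=1\}|>0$ you deduce $\wt{Ent}(\t{h})=-\infty$ and conclude that the two integrals ``disagree'' because $\iint_{\{\t{t}<1\}}\wt{ent}$ does not see that contribution; but if that partial integral is itself $-\infty$ (which can happen for an admissible $h$, e.g.\ one whose vertical slope $t$ blows up so that $t\ln t$ is non-integrable on a region disjoint from the jump set), then both sides equal $-\infty$ and the defining equation of $\mathcal{H}_0$ is still satisfied. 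This edge case is really an imprecision of the lemma as stated rather than a flaw specific to your reasoning, and it is harmless downstream since the paper only uses that $\mathcal{H}\backslash\mathcal{H}_0$ consists of functions with $Ent=-\infty$; still, as written, your final implication is not airtight.
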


In particular, any function in $\mathcal{H}_0$ is continuous, so a pointwise convergence of any sequence of admissible functions to a function in $\mathcal{H}_0$ is uniform (Lemma~\ref{dini}).

Obviously for any function $h\in\t{\mathcal{H}}\backslash\t{\mathcal{H}}_0$, $Ent(h)=\wt{Ent}(\t{h})=-\infty$ (while its converse is false). As in Theorem~\ref{theoremexistence} we are only interested in finding the maximizer of the entropy, we can restrict ourselves to any subspace which excludes only some functions whose entropy is $-\infty$, so it suffices to consider Theorem~\ref{theoremexistence} in $\mathcal{H}_0$ and $\t{\mathcal{H}}_0$.

\vspace{0.5cm}

To prove the existence, although with the new coordinates we have compactness, the proof of the semicontinuity in \cite{CKP01} does not apply here because the local entropy $\wt{ent}$ here is no longer bounded from below, and these singularities should be taken into consideration because typically the slope explodes at some boundary points, which is shown in the explicit examples given later.

The method we use is to give a way to construct for every admissible function a good approximation. However, as the construction highly relies on the boundary condition, and it is hard to describe the boundary condition in $\t{\mathcal{H}}$ in a simple and clear way, we choose to do the construction still in $\mathcal{H}$. So in the remaining part of this section, we will often switch between $\t{\mathcal{H}}$ and $\mathcal{H}$. We hope that this inconvenience will not cause too many difficulties to the readers.

\vspace{0.5cm}

We introduce successive technical constructions which will be used in the proof of Theorem~\ref{theoremexistence}.

\begin{definitionAndLemma}\label{DL1}
Given a rectangular domain $[0,1]\times[a,b]$ with a boundary condition where $h(0,y)$ and $h(1,y)$ are constant, $h(x,a)$ and $h(x,b)$ are $\frac{1}{2}$-Lipschitz and $h(x,b)>h(x,a)$ for $x\in]0,1[$, then we can construct an admissible function
$$h^t:[0,1]\times[a,b]\rightarrow\mathbb{R},$$
whose vertical partial derivative $\frac{\partial h^t}{\partial y}$ only take two possible values, and ${ent\circ\nabla h^t}$ is bounded on $D$.

\end{definitionAndLemma}
\begin{proof}
Since $h(x,b)>h(x,a)$ for all $x\in]0,1[$, there exists a $(\frac{1}{2}-\varepsilon)$-Lipschitz function $\bar{h}(x)$ for some $\varepsilon>0$ such that $h(x,b)\geq\bar{h}(x)\geq h(x,a)$ for all $x\in[0,1]$. Let $A=\max_{x\in[0,1]}(\bar{h}(x)-h(x,a))$ and $B=\max_{x\in[0,1]}(h(x,b)-\bar{h}(x))$, consider $D'$ as a subdomain of $[0,1]\times[a,b]$ given by
$$D'=\left\{(x,y)\in[0,1]\times[a,b]:\frac{b-a}{A+B}(A+h(x,a)-\bar{h}(x))\leq y-a\leq\frac{b-a}{A+B}(A+h(x,b)-\bar{h}(x))\right\}.$$

We take
\begin{eqnarray*}
h^t(x,y)=
\begin{cases}
h(x,b)\text{ if }y\geq a+\frac{b-a}{A+B}(A+h(x,b)-\bar{h}(x)),\\
h(x,a)\text{ if }y\leq a+\frac{b-a}{A+B}(A+h(x,a)-\bar{h}(x)),\\
\lambda_a h(x,a)+\lambda_b h(x,b)\text{ otherwise},
\end{cases}
\end{eqnarray*}
where $\lambda_a=\frac{a+\frac{b-a}{A+B}(A+h(x,b)-\bar{h}(x))-y}{\frac{b-a}{A+B}(h(x,b)-h(x,a))}$ and $\lambda_b=1-\lambda_a$. In short, what we do is inscribing into the domain $[0,1]\times[a,b]$ a domain $D'$ of shape corresponding to the boundary condition of $D$. Outside $D'$ we have $\frac{\partial h^t}{\partial y}=0$ so the height function on the boundary of $D$ extends vertically to the boundary of $D'$, and inside $D'$ we construct the surface of $h^t$ by linking every pair of points with the same horizontal coordinate by line segment.

Outside $D'$ we have $\frac{\partial h^t}{\partial y}=0$ so $ent\circ\nabla{h^t}$ is equal to $0$. Inside $D'$ we always have
\begin{eqnarray*}
\frac{\partial h^t}{\partial y}(x,y)&=&\frac{A+B}{b-a}\\
\frac{\partial h^t}{\partial x}(x,y)&=&\frac{\partial\bar{h}}{\partial x}(x),
\end{eqnarray*}
so $h^t$ satisfies the conditions in the statement.
\qed
\end{proof}

The construction of $h^t$ is not unique: it depends on the choice of $\bar{h}$. Figure \ref{sdqlfmhsqgedmgnkqeer} is an illustration of an example $h^t$ where the domain of definition is taken to be the unit square $D$ and $h|_{\partial D}$ corresponds to the square Young diagrams and $\bar{h}$ is taken to be constant. In this example, $h^t$ is piecewise linear.
\begin{figure}[H]
\centering
\includegraphics[width=0.35\textwidth]{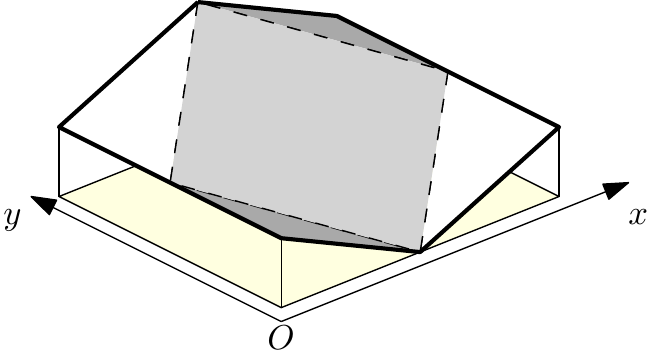}
\caption{An example of $h^t$.\label{sdqlfmhsqgedmgnkqeer}}
\end{figure}

The following corollary is direct.
\begin{corol}
For all $h^{\partial}$ as given asymptotic boundary function on $\partial D$ satisfying that $h(0,y)$ and $h(1,y)$ are constant, $h(x,0)$ and $h(x,1)$ are $\frac{1}{2}$-Lipschitz, and $h(x,1)\geq h(x,0)$ for $x\in]0,1[$, there exists at least an admissible function whose entropy is not $-\infty$.
\end{corol}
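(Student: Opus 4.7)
The proof is a direct application of Definition and Lemma~\ref{DL1}: the plan is to use the $h^t$ constructed there, with $[a,b]=[0,1]$, as the witness for the corollary, and to verify that the integral formula for $Ent$ is applicable to it.

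First I would invoke Definition and Lemma~\ref{DL1} to obtain an admissible function $h^t\in\mathcal{H}$ whose vertical partial derivative takes only two finite values ($0$ outside $D'$ and $\frac{A+B}{b-a}$ inside $D'$), and whose horizontal partial derivative is inherited from a $(\frac{1}{2}-\varepsilon)$-Lipschitz function $\bar h$, so $|\partial h^t/\partial x|\leq \tfrac{1}{2}-\varepsilon$ almost everywhere. Hence $\cos(\pi s)$ stays bounded below by $\sin(\pi\varepsilon)>0$ and $t$ stays bounded, so the explicit formula of Definition~\ref{EntDef} shows that $ent\circ\nabla h^t$ is uniformly bounded on $D$. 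Since $D$ has finite area, $\iint_D ent\circ\nabla h^t\,dx\,dy$ is finite.

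Next I would verify that this Lebesgue integral actually equals $Ent(h^t)=\widetilde{Ent}(\widetilde h^t)$ in the sense of Definition~\ref{sdlflhsmdgqedgedeer}, i.e. that $h^t\in \mathcal{H}_0$. By the coordinate change relation $\widetilde t=(t-1)/(t+1)$, the value $\widetilde t=1$ corresponds to $t=+\infty$; but $\partial h^t/\partial y$ takes only finite values almost everywhere, so the set $\{\partial\widetilde h^t/\partial\widetilde y=1\}$ has Lebesgue measure zero. By the lemma immediately preceding the corollary this means $h^t\in\mathcal{H}_0$, and therefore $Ent(h^t)$ equals the finite integral above; in particular $Ent(h^t)>-\infty$.

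The only mild subtlety is that Definition and Lemma~\ref{DL1} is stated under the strict inequality $h(x,1)>h(x,0)$ on $]0,1[$, whereas the corollary merely assumes $\geq$. As already pointed out right after~(\ref{QKJFQSDKFQ}), whenever equality occurs at some $x_0\in ]0,1[$ the diagram $\lambda$ splits at $x_0$ into independent subregions on each of which strict inequality holds; the construction of $h^t$ is carried out separately on each piece and the pieces glue continuously along the separating vertical segments (where $h^t$ matches the common boundary value). This gluing is routine and there is no real obstacle in the proof.
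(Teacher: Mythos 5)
Your proposal is correct and is essentially the paper's own argument: the paper states this corollary immediately after Definition and Lemma~\ref{DL1} with the remark that it is ``direct,'' meaning exactly that the function $h^t$ constructed there has bounded $ent\circ\nabla h^t$ and hence finite entropy. Your extra verifications (that $h^t\in\mathcal{H}_0$ so the Lebesgue integral computes $Ent$, and the reduction to the strict-inequality case by splitting the diagram) only make explicit what the paper leaves implicit.
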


We give a series of definitions for technical reasons. Although they are long and redundant, some of them may be used more than one time, so we decide to list them here rather than putting them separately into the proofs. Reader may skip this part and go back once they are needed.
\begin{definition}\label{defs}
\ \\
\begin{itemize}
\item For any $\delta\in]-\infty,\frac{1}{2}[$, $D_{\delta}$ is defined as the domain $[\delta,1-\delta]\times[\delta,1-\delta]$. Attention, when $\delta<0$, the new domain is bigger than the unit square, see Figure \ref{Ddelta} where the dashed square is the unit square $D$.
    \begin{figure}[H]
    \centering
    \includegraphics[width=0.5\textwidth]{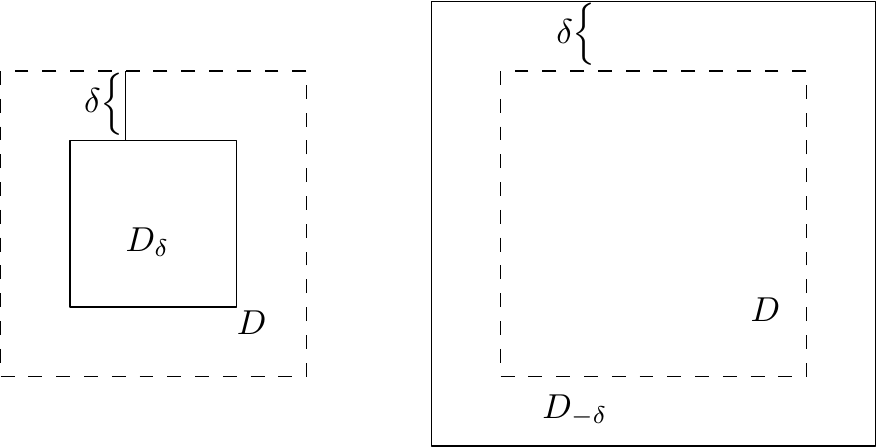}
    \caption{Examples of $D_{\delta}$ and $D_{-\delta}$ for $\delta>0$.\label{Ddelta}}
    \end{figure}
\item For any $\delta_1,\delta_2\in]-\infty,\frac{1}{2}[$ and $z_0\in\mathbb{R}$, define the operator of contraction
    $$P_{\delta_1,\delta_2,z_0}:\{h:D_{\delta_1}\rightarrow\mathbb{R}\}\rightarrow\{h:D_{\delta_2}\rightarrow\mathbb{R}\},$$
    where for any bounded function $h:D_{\delta_1}\rightarrow\mathbb{R}$, we apply on the surface given by $h$ the following map:
    $$(x,y,z)\rightarrow\left(\frac{\delta_2-\delta_1}{1-2\delta_1}+\frac{1-2\delta_2}{1-2\delta_1}x, \frac{\delta_2-\delta_1}{1-2\delta_1}+\frac{1-2\delta_2}{1-2\delta_1}y, z_0+\frac{1-2\delta_2}{1-2\delta_1}z\right).$$
    In other words, the surface of $P_{\delta_1,\delta_2,z_0}(h)$ is taken to be geometrically similar to that of $h$ and to fit the domain $D_{\delta_2}$.
\item For any boundary function $h^{\partial}$ which is respectively constant on the left and right boundaries of $D$, and for $\delta>0$ and any function $h:D_{\delta}\rightarrow\mathbb{R}$ which is respectively constant on the left and right boundaries of $D_{\delta}$, define ${T_{\delta}(h):D\rightarrow\mathbb{R}}$ as the following extension of $h$ on $D$ whenever possible:
    \begin{itemize}
    \item Respectively on $[0,\delta]\times[\delta,1-\delta]$ and on $[1-\delta,1]\times[\delta,1-\delta]$, $T_{\delta}(h)$ is taken to be flat and fitting the boundary conditions on $\partial D$ and $\partial D_{\delta}$.
    \item If for the function constructed in the last step respectively we have
        \begin{eqnarray*}
        h(x,1)&> T_{\delta}(h)(x,1-\delta) &\text{ for }x\in]0,1[,\\
        h(x,0)&< T_{\delta}(h)(x,\delta)\ \ \ \ \ &\text{ for }x\in]0,1[,
        \end{eqnarray*}
        then extend $T_{\delta}(h)$ respectively on these two domains by using the function $h^t$ in Definition and Lemma~\ref{DL1}.
    \end{itemize}
\item For any admissible function $h\in \mathcal{H}$ and any $\delta>0$, its flat extension on $D_{-\delta}=[-\delta,1+\delta]\times[-\delta,1+\delta]$ is the function
      $$F_{\delta}(h):D_{-\delta}\rightarrow\mathbb{R}$$
      which is the unique extension of $h$ on $D_{-\delta}$ whose vertical partial derivative is equal to $0$ everywhere outside $D$ and the horizontal derivative is equal to $0$ if $x\not\in[0,1]$.
\end{itemize}
\end{definition}

\begin{definitionAndLemma}\label{DL2}
Fix a family of non-negative functions ${U_{\delta}\in C^{\infty}(\mathbb{R}^2)}$ parameterized by $\delta>0$, whose integral is equal to $1$ and whose support is contained in a disc centered at the origin and of radius $\delta$. Define the operator
$$C_{\delta}: \mathcal{H}\rightarrow\{h:D_{-\delta}\rightarrow\mathbb{R}\}$$
such that for any admissible $h\in \mathcal{H}$, we extend $h$ to
$$D_{-2\delta}=[-2\delta,1+2\delta]\times[-2\delta,1+2\delta]$$
by using the flat extension defined above, and $C_{\delta}(h)$ is taken to be the convolution of $U_{\delta}$ and the extended $h$ on $D_{-\delta}$.

This new function $C_{\delta}(h)$ is horizontally $\frac{1}{2}$-Lipschitz and vertically non-decreasing. Moreover,\\
(a) the integral of $ent\circ\nabla C_{\delta}(h)$ on $D_{-\delta}$ is bigger than that of $ent\circ\nabla h$ on $D$.\\
(b) if $h\in \mathcal{H}_0$, then the integral of $ent\circ\nabla\big(C_{\delta}(h)\big)$ on $D_{-\delta}$ tends to $Ent(h)$ when $\delta\rightarrow 0$.
\end{definitionAndLemma}
Remark: the letter $C$ stands for convolution.

\begin{proof}
The function $C_{\delta}(h)$ is obviously horizontally $\frac{1}{2}$-Lipschitz and vertically non-decreasing.

To prove (a), it suffices to note that $ent$ is concave and outside $D$ the local entropy of the extended $h$ is always $0$, so for any admissible function $h$ the value of $ent\circ\nabla(U_{\delta}*h)$ at any point is bigger than $U_{\delta}*ent\circ\nabla h$ (define by default $ent\circ\nabla h=0$ on $D_{-2\delta}\backslash D$).

If $h\in \mathcal{H}_0$, then $Ent(h)$ is equal to the integral of $ent\circ\nabla h$ on $D$. As we have already proved (a), to prove the convergence it suffices to prove that
$$\limsup_{\delta\rightarrow 0}\int_{D_{-\delta}}ent\circ\nabla\big(C_{\delta}(h)\big)dxdy \leq \int_{D}ent\circ\nabla h dxdy.$$

Let $\delta_i$ be any sequence tending to $0$, as $\nabla h\in L^1$, by property of the convolution we have the convergence in $L^1$ of $U_{\delta}*(\nabla h)$ to $\nabla h$. We can take a subsequence $\delta_{i_j}$ of $\delta_i$ such that the convergence is almost everywhere, so $ent\circ\nabla C_{\delta}(h)$ tends to $ent\circ\nabla h$ almost everywhere for this subsequence of $\delta$.

Since $ent$ is a non-positive function, apply Fatou's lemma for this subsequence we have
$$\limsup_{j\rightarrow\infty}\int_{D_{-\delta_{i_1}}}\mathds{1}_{D_{-\delta_{i_j}}}ent\circ\nabla\big(C_{\delta_{i_j}}(h)\big)dxdy \leq \int_{D_{-\delta_{i_1}}}ent\circ\nabla h dxdy,$$
while by construction the right hand side is equal to $Ent(h)$. As the sequence of $\delta_i$ can be chosen arbitrarily, so the inequality above is true for any $\delta\rightarrow 0$.
\qed
\end{proof}

We remark that the convolution provides us a function of better regularity but breaks the boundary condition. Using the convolution technique here above, in Lemma~\ref{lemmaapproximation} we construct an admissible function of good enough regularity.

We also remark that, restricted on the left (resp. right) boundary of $D_{-\delta}$, it is constant and equal to the value of $h$ on the left (resp. right) boundary of $D$, while its restriction on the upper and lower boundaries are functions that only depend on the boundary condition of $h$ on the upper and lower boundaries of $D$.



\begin{lemma}\label{lemmaapproximation}
For any given boundary condition, any $\delta<\frac{1}{2}$, and for any $\delta'$ small enough (depending on $\delta$ and the boundary condition), then there exists a family of operators
$$A_{\delta,\delta'}:\mathcal{H}\rightarrow \mathcal{H},$$
parameterized by $\delta$ and $\delta'$, verifying that\\
\emph{(a)} for all functions $h\in \mathcal{H}$,
\begin{eqnarray*}
Ent\big(A_{\delta,\delta'}(h)\big)\geq(1-2\delta)^2 Ent(h)+O(\delta\ln\delta),
\end{eqnarray*}
where the function $O(.)$ only depends on $\delta$ and the boundary condition.\\
\emph{(b)} for any $h\in \mathcal{H}_0$, then for $\delta'$ sufficiently small depending on $h$, we have
\begin{eqnarray*}
Ent\big(A_{\delta,\delta'}(h)\big)=(1-2\delta)^2 Ent(h)+O(\delta\ln\delta).
\end{eqnarray*}.\\
\emph{(c)} if $Ent(h)>-\infty$, then $ent\circ\nabla A_{\delta,\delta'}(h)$ is bounded from below on $D$ by some constant depending on $\delta$, $\delta'$ and $Ent(h)$ (and not on the precise $h$).
\end{lemma}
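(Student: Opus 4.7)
The plan is to build $A_{\delta,\delta'}$ as the composition of three operators already introduced: smoothing by convolution ($C_{\delta'}$), contracting the smoothed surface to fit inside $D_{\delta}$ (via $P_{-\delta',\delta,z_0}$), and then extending back to $D$ via $T_\delta$ so as to re-install the boundary condition.

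First, set $h_1 = C_{\delta'}(h) : D_{-\delta'}\to\mathbb{R}$. The flat extension used inside $C_{\delta'}$ makes $h_1$ equal to the constants $C_0$ and $C_1$ on the left and right boundaries of $D_{-\delta'}$, and Definition and Lemma~\ref{DL2}(a) gives
\[
\iint_{D_{-\delta'}} ent\circ\nabla h_1\, dxdy \;\geq\; Ent(h),
\]
with equality in the limit $\delta'\to 0$ when $h\in\mathcal{H}_0$ by part~(b) of the same lemma. Next, set $h_2 = P_{-\delta',\delta,z_0}(h_1)$ on $D_\delta$, choosing $z_0 = (1-\lambda)(C_0+C_1)/2$ with $\lambda = (1-2\delta)/(1+2\delta')$; since $|C_0-C_1|\leq 1/2$ and $1-\lambda = O(\delta+\delta')$, the mismatch between the contracted boundary values and $C_0, C_1$ is at most of order $\delta$ provided $\delta'$ is small compared to $\delta$, which is what makes $T_\delta$ applicable. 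The contraction scales $(x,y,z)$ by the same factor, so the slopes are preserved and
\[
\iint_{D_\delta} ent\circ\nabla h_2\, dxdy \;=\; \left(\frac{1-2\delta}{1+2\delta'}\right)^2 \iint_{D_{-\delta'}} ent\circ\nabla h_1\,dxdy,
\]
whose prefactor tends to $(1-2\delta)^2$ as $\delta'\to 0$. Finally define $A_{\delta,\delta'}(h) = T_\delta(h_2)$.

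To prove (a), combine the above. On the side strips added by $T_\delta$ the vertical slope is $0$, so $ent$ vanishes. On the top and bottom strips of width $\delta$, the $h^t$ construction of Definition and Lemma~\ref{DL1} forces $\partial_y h^t$ of order $1/\delta$ on a subset $D'$ of area $O(\delta)$ and $\partial_x h^t$ strictly less than $1/2$; plugging $(s,t)=(O(1), 1/\delta)$ into $ent(s,t)=(1+\ln(\cos(\pi s)/\pi t))t$ yields an integrand $\simeq (\ln\delta)/\delta$ and, after integration over $D'$, a total contribution of order $\delta\ln\delta$. Assembling the three steps,
\[
Ent(A_{\delta,\delta'}(h))\;\geq\;\left(\tfrac{1-2\delta}{1+2\delta'}\right)^2 Ent(h) + O(\delta\ln\delta),
\]
and sending $\delta'\to 0$ (or simply choosing $\delta'\leq \delta$) gives (a). For (b), if $h\in\mathcal{H}_0$ then Definition and Lemma~\ref{DL2}(b) upgrades the first inequality to an asymptotic equality as $\delta'\to 0$, and the remaining two steps are exact identities up to the same $O(\delta\ln\delta)$ boundary contribution; picking $\delta'$ small enough (depending on $h$) then yields the claimed equality. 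For (c), once $\delta'$ is fixed, $\nabla h_1 = U_{\delta'}\ast\nabla(\text{flat extension of }h)$ is bounded with bounds depending only on $\delta'$ and $\|h\|_\infty$; moreover if $Ent(h)>-\infty$ one can check that $\partial_x h_1$ stays bounded away from $\pm\tfrac12$ on a set of positive measure, so $|\partial_x h_1|$ does not saturate $1/2$ uniformly, whence $ent\circ\nabla h_1$ is bounded from below. The contraction preserves these bounds, and on the strips added by $T_\delta$ the slopes are controlled by the explicit form of $h^t$, giving the desired pointwise lower bound in terms of $\delta$, $\delta'$ and $Ent(h)$.

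The main obstacle is the matching of boundary values: the contracted function $h_2$ on $\partial D_\delta$ must be compatible enough with $h^\partial$ on $\partial D$ for $T_\delta$ to produce an admissible extension, and this forces a coupling between $\delta$ and $\delta'$ (it is here that the sufficient smallness of $\delta'$ relative to $\delta$, and in part (b) relative to $h$, enters). The second delicate point is the tightness of the $O(\delta\ln\delta)$ estimate on the top/bottom strips; this must be obtained uniformly in $h$ by exploiting the fact that $h^t$ restricted to $D'$ has explicit slope $(A+B)/(b-a)$ with $A+B$ bounded in terms of the boundary data only, so that the blow-up of $ent$ at large $t$ is absorbed by the small measure of $D'$.
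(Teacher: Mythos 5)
Your construction is the same as the paper's: $A_{\delta,\delta'}=T_{\delta}P_{-\delta',\delta,\cdot}C_{\delta'}$, with Definition and Lemma~\ref{DL2} handling the convolution step, the gradient-preserving contraction giving the $(1-2\delta)^2$ prefactor, and the $h^t$ filling of Definition and Lemma~\ref{DL1} on the top and bottom strips producing the $O(\delta\ln\delta)$ error. For parts (a) and (b) your argument matches the paper's; the one point you flag but do not resolve (why a $(\frac12-K\delta)$-Lipschitz intermediate function $\bar h$ can be interposed between $h(x,1)$ and the contracted surface, uniformly in $h$) is exactly where the paper invokes a star-convexity assumption on the diagram (reducing the general case by cutting the domain into pieces); without some such geometric hypothesis the claim that $\cos(\pi s)\gtrsim\delta$ on the strip, hence $ent=O(\ln\delta)$ there, is not justified.

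The genuine gap is in part (c). You argue that if $Ent(h)>-\infty$ then $\partial_x h_1$ ``stays bounded away from $\pm\frac12$ on a set of positive measure, so $|\partial_x h_1|$ does not saturate $1/2$ uniformly, whence $ent\circ\nabla h_1$ is bounded from below.'' This is a non sequitur: a bound on a set of positive measure says nothing about the remaining points, and even a pointwise strict inequality $|\partial_x h_1|<\frac12$ everywhere yields no uniform lower bound on $ent\circ\nabla h_1$, since $ent(s,t)\to-\infty$ as $s\to\pm\frac12$ with $t$ bounded away from $0$. Moreover the statement requires a constant depending only on $\delta,\delta'$ and the value $Ent(h)$, not on $h$ itself, and your argument gives no such dependence. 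The correct route, which is the paper's, is Jensen's inequality for the concave $ent$: pointwise
$$ent\circ\nabla\big(U_{\delta'}*h\big)\;\geq\;U_{\delta'}*\big(ent\circ\nabla h\big)\;\geq\;-\|U_{\delta'}\|_{\infty}\,\big|Ent(h)\big|,$$
using that $ent\leq 0$ (active part) and that the $L^1$ norm of $ent\circ\nabla h$ equals $|Ent(h)|$; the contraction preserves this bound up to the explicit prefactor, and on the strips the bound comes from the explicit slopes of $h^t$. You should replace your positive-measure argument by this one.
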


The letter $A$ stands for approximation.

\begin{proof}
Technically we will limit ourselves to the case that the domain
$$\lambda=\{(x,z):x\in[0,1],z\in[2h(x,0),2h(x,1)]\}$$
is \emph{star convex}, \emph{i.e.} there exist points $(x_0,z_0)$ such that for any $\alpha\in]0,1[$ and $x\in[0,1]$, we have
\begin{eqnarray*}
\alpha(h(x,1)-z_0)&<&h(x_0+\alpha(x-x_0),1)-z_0,\\
\alpha(h(x,0)-z_0)&>&h(x_0+\alpha(x-x_0),0)-z_0.
\end{eqnarray*}
Moreover, we ask that every straight line passing $(x_0,z_0)$ is not tangent to the boundary of this domain at any point. This assures a distance of order $1-\alpha$ between the domain and the one multiplied by $\alpha$, $\alpha$ close to $1$.

For example, the function $h(x,1)=\frac{1}{2}|x-\frac{1}{2}|$ and $h(x,0)=-\frac{1}{2}|x-\frac{1}{2}|$ verify the condition above for $(x_0,y_0)=\frac{1}{2}$. In the case that the domain does not verify such condition, by the Lipschitz condition and compactness we can cut the domain vertically into disjoint parts such that every part verifies this condition. The following procedure still works with small modifications if we treat each part simultaneously.

Without loss of generality throughout the remainder of this section we will always suppose the star convexity with respect to $(x_0,z_0)=(\frac{1}{2},0)$. In this case, for every $\delta<\frac{1}{2}$, we take $\delta'$ small enough (to be specified below) so that we can define
$$T_{\delta}P_{-\delta',\delta,0}C_{\delta'}(h),$$
where $T$ and $P$ are operators defined in Definition~\ref{defs}. In this case, we just let
$$A_{\delta,\delta'}=T_{\delta}P_{-\delta',\delta,0}C_{\delta'}.$$

We will prove that such defined $A_{\delta,\delta'}$ verifies the conclusion in the Lemma. For $h$ such that $Ent(h)=-\infty$, the results (a) and (b) are automatical. So without loss of generality we consider the $h$ such that $Ent(h)>-\infty$, so these $h$ are in $\mathcal{H}_0$, and
$$Ent(h)=\int_D ent\circ\nabla h\ dxdy.$$

As proved in Definition and Lemma~\ref{DL2}, for any $h\in \mathcal{H}$,
\begin{eqnarray}\label{sjflwsjreowiwu}
\int_{D_{-\delta'}}ent\circ\nabla\big(C_{\delta'}(h)\big) dxdy \geq Ent(h),
\end{eqnarray}
and for any $h\in \mathcal{H}_0$, when $\delta'\rightarrow 0$,
\begin{eqnarray}\label{sjflkfdgjdlwiwu}
\lim_{\delta'\rightarrow 0}\int_{D_{-\delta'}}ent\circ\nabla\big(C_{\delta'}(h)\big) dxdy = Ent(h).
\end{eqnarray}

The map $P_{-\delta',\delta,0}$ keeps gradient, so
\begin{eqnarray}\label{djoingondoinxioorgni}
\int_{D_{\delta}}ent\circ\nabla\big(P_{-\delta',\delta,0}C_{\delta'}(h)\big) dxdy
=\frac{(1-\delta)^2}{(1+\delta')^2}\int_{D_{-\delta'}}ent\circ\nabla\big(C_{\delta'}(h)\big) dxdy.
\end{eqnarray}

Consider the function $P_{0,\delta,0}(h):D_{\delta}\rightarrow\mathbb{R}$, and we claim that $T_{\delta}P_{0,\delta,0}(h)$ is well defined for $\delta$ small enough, \emph{i.e.}, it is possible to fill in $D\backslash D_{\delta}$ piecewisely by $h^t$ constructed in Definition and Lemma~\ref{DL1}. Moreover, we will prove that the filling function will give a contribution of order ${O(\delta\ln\delta)}$ when $\delta\rightarrow 0$ in the integral of $ent$.

On $[0,\delta]\times[\delta,1-\delta]$ and $[1-\delta,1]\times[\delta,1-\delta]$, it is always possible to define the extended function $T_{\delta}P_{0,\delta,0}(h)$, which is of $0$ vertical slope so gives $0$ contribution in $Ent$. In the following, without loss of generality we only treat the region near the upper boundary of $D$, that is $[0,1]\times[1-\delta,1]$.

By the star convex hypothesis, there exists a $(\frac{1}{2}-K\delta)$-Lipschitz function $\bar{h}$ for some constant $K>0$ not depending on $\delta$ such that
\begin{eqnarray*}
h(x,1)\geq\bar{h}(x)\geq
P_{0,\delta,0}(h)(x,1-\delta),
\end{eqnarray*}
since $P_{0,\delta,0}(h)(x,1-\delta)$ is already defined on $[0,1]\times[\delta,1-\delta]$. By construction (see Definition and Lemma~\ref{DL1}), the local entropy $ent$ is at most of order $O(\ln\delta)$, thus the contribution of this region in $Ent$ is at most of order $O(\delta\ln\delta)$. The function $O(.)$ only depends on the constant $K$ we just mention.

Now consider $P_{-\delta',\delta,0}C_{\delta'}(h)$ instead of $P_{0,\delta,0}(h)$. Clearly for $\delta'$ small enough depending only on $\delta$ and the boundary condition, all the results above are still true.

Combining this with~(\ref{sjflwsjreowiwu}),~(\ref{sjflkfdgjdlwiwu}) and~(\ref{djoingondoinxioorgni}), we prove (a) and (b).

To prove (c), it suffices to note that on $D_{\delta}$, $A_{\delta,\delta'}(h)$ is constructed via convolution. By concavity of $ent$,
$$ent\circ\nabla\big(A_{\delta,\delta'}(h)\big)(x,y)\geq \frac{(1-\delta)^2}{(1+\delta')^2} U_{\delta'}*\left(ent\circ\nabla h(x,y)\right),$$
and the right hand side has a trivial lower bound depending on $Ent(h)$ and $U_{\delta'}$. Outside $D_{\delta}$, $A_{\delta,\delta'}(h)$ is constructed via $h^t$ in Definition and Lemma~\ref{DL1}, whose local entropy function $ent$ has also a lower bound only depending on the boundary condition and $\delta$. Thus we prove (c).
\qed
\end{proof}

For any $\lambda\in\mathbb{R}^+$, let $\mathcal{H}^{\lambda}$ be the subspace of the admissible functions such that ${ent\circ\nabla{h}(x,y)\geq -\lambda}$ for $(x,y)\in \mathring{D}$ almost everywhere. Clearly $\mathcal{H}^{\lambda}$ is an increasing sequence of space of functions in $\lambda$. Definition and Lemma~\ref{DL1} constructs a function $h^t$ whose entropy ${ent\circ\nabla h^t}$ is bounded, so $\mathcal{H}^{\lambda}$ is not empty for all $\lambda$ bigger than some $\Lambda\in\mathbb{R}^+$.

We furthermore have the following two lemmas.

\begin{lemma}\label{triangulation}
For $\delta$ and $\delta'$ such that $A_{\delta,\delta'}(h)$ verifies Lemma~\ref{lemmaapproximation}, for any $\varepsilon>0$, there exists $l>0$ such that we can construct a function $h'$ as below:\\
\emph{(a)} $h'$ agrees with $A_{\delta,\delta'}(h)$ on $D\backslash D_{\delta}$.\\
\emph{(b)} on $D_{\delta}$ it is piecewise linear on a triangle mesh of size $O(l)$.\\
\emph{(c)} the sup norm between $h'$ and $A_{\delta,\delta'}(h)$ is less than $\varepsilon$, and
$$|Ent(A_{\delta,\delta'}(h)) -Ent(h')|<\varepsilon.$$
\end{lemma}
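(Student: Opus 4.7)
The plan is to choose a regular triangular mesh $\mathcal{T}_l$ of $D_\delta$ of mesh size $l$ whose vertices include a fine sampling of $\partial D_\delta$, define $h'$ on $D_\delta$ as the piecewise affine interpolant of $A_{\delta,\delta'}(h)$ at the vertices of $\mathcal{T}_l$, and set $h' = A_{\delta,\delta'}(h)$ on $D \setminus D_\delta$. Item (a) is then immediate from the definition and item (b) from the construction; all the work is in (c).

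The crucial structural input for (c) is that $A_{\delta,\delta'}(h)|_{D_\delta}$ is smooth, since $C_{\delta'}$ is convolution with the $C^\infty$ mollifier $U_{\delta'}$ and $P_{-\delta',\delta,0}$ is just a rescaling. Because $h$ ranges over a set uniformly bounded in $L^\infty$ (controlled by the fixed boundary condition), one obtains a bound $\|\nabla^2 A_{\delta,\delta'}(h)\|_{L^\infty(D_\delta)} \leq M$ for some constant $M=M(\delta,\delta')$ independent of $h$. Classical piecewise affine interpolation estimates then give
\[
\|h'-A_{\delta,\delta'}(h)\|_{L^\infty(D_\delta)}=O(Ml^2), \qquad \|\nabla h'-\nabla A_{\delta,\delta'}(h)\|_{L^\infty(D_\delta)}=O(Ml),
\]
which handles the sup-norm part of (c) once $l$ is taken small enough.

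For the entropy part, I would invoke Lemma~\ref{lemmaapproximation}(c), which furnishes a uniform lower bound $ent\circ\nabla A_{\delta,\delta'}(h)\geq -\lambda$ with $\lambda$ depending only on $\delta,\delta'$ and $Ent(h)$. This pins $\nabla A_{\delta,\delta'}(h)$ into a compact subset $K$ of the interior of $\{(s,t):ent(s,t)>-\infty\}$, on which $ent$ is uniformly continuous with some modulus $\omega_K$. By the gradient bound above, for $l$ sufficiently small $\nabla h'$ lies in a slightly enlarged compact subset $K'$ of the same good region, and uniform continuity yields
\[
\bigl|Ent(h')-Ent(A_{\delta,\delta'}(h))\bigr|\leq |D_\delta|\,\omega_{K'}(Ml)<\varepsilon,
\]
the contributions on $D\setminus D_\delta$ cancelling because $h'$ agrees with $A_{\delta,\delta'}(h)$ there.

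The main obstacle I anticipate is boundary matching along $\partial D_\delta$: on an edge of $\mathcal{T}_l$ lying in $\partial D_\delta$, $h'$ is affine while $A_{\delta,\delta'}(h)$ is smooth, creating an $O(Ml^2)$ jump and hence breaking continuity (and admissibility) of $h'$. The fix is to add a thin collar of triangles of $\mathcal{T}_l$ of width $O(l)$ just inside $\partial D_\delta$ on which $h'$ is defined by linearly blending between the piecewise affine interpolant and the exact trace of $A_{\delta,\delta'}(h)$ on $\partial D_\delta$. This preserves a triangle-mesh structure of size $O(l)$, and by the same uniform estimates the collar contributes only $O(l)$ to both the sup-norm and the entropy error, which can be absorbed into $\varepsilon$ by shrinking $l$ further.
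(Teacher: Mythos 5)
Your construction of $h'$ (piecewise affine interpolation of the mollified function on an $l$-mesh, plus the sup-norm and gradient estimates $O(Ml^2)$ and $O(Ml)$ from the uniform $C^2$ bound) matches the paper's, and items (a), (b) and the sup-norm half of (c) are fine. The gap is in the entropy estimate, and it sits exactly at the difficulty this lemma exists to handle. You claim that the bound $ent\circ\nabla A_{\delta,\delta'}(h)\geq-\lambda$ from Lemma~\ref{lemmaapproximation}(c) pins $\nabla A_{\delta,\delta'}(h)$ into a compact subset of the interior of $\{ent>-\infty\}$ on which $ent$ is uniformly continuous. That is false: the sublevel set $\{(s,t):ent(s,t)\geq-\lambda\}$ contains the corner points $(\pm\frac{1}{2},0)$, where $ent$ is discontinuous ($ent(\pm\frac12,0)=0$ while $ent(\pm\frac12,t)=-\infty$ for $t>0$; along the curve $t=\lambda/|\ln\cos(\pi s)|$ one approaches $(\frac12,0)$ with $ent\to-\lambda$). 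Concretely, two slopes at distance $O(l)$ from each other, both near $(\frac12,0)$ and both in $\{ent\geq-\lambda\}$, can have $ent$-values differing by an amount bounded away from $0$ uniformly in $l$ (take $(s,0)$ versus $(s,b)$ with $\cos(\pi s)\ll b$). So no modulus of continuity $\omega_{K'}$ exists, and the displayed bound $|Ent(h')-Ent(A_{\delta,\delta'}(h))|\leq|D_\delta|\,\omega_{K'}(Ml)$ does not follow. Note also that frozen regions with $\partial h/\partial x=\pm\frac12$ genuinely occur (e.g.\ for the square Young diagram), and the mollification does not remove them, so this is not a vacuous case.

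The paper's proof repairs exactly this point by a measure-theoretic argument rather than uniform continuity: it introduces the frozen sets $K^{\pm}=\{\partial h/\partial x=\pm\frac12\}$ and their $\eta$-erosions $K^{\pm}_{\eta}$, shows the triangles well inside $K^{\pm}$ contribute the same (zero) local entropy for $h'$ and for $A_{\delta,\delta''}(h)$, covers the non-frozen points by balls on which $ent\circ\nabla h$ oscillates by less than $\varepsilon/4$, and then bounds the contribution of the remaining ``bad'' triangles (those straddling the frozen boundary or too small a ball) by showing (i) their total measure is at most $O(\varepsilon/\lambda')$ and (ii) $ent\circ\nabla h'\geq-\lambda'$ everywhere, because the slope of $h'$ on each triangle is an average of $\nabla A_{\delta,\delta''}(h)$ over a segment and hence lies in the convex hull of $\{ent\geq-\lambda\}$. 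To complete your argument you need to add this two-regime decomposition: your uniform-continuity bound is valid only on the part of $D_\delta$ whose gradient stays in a compact set bounded away from $s=\pm\frac12$, and on the complement you must trade pointwise closeness for a small-measure times uniformly-bounded-error estimate. Your collar-blending fix at $\partial D_\delta$ is a reasonable addition (the paper glosses over continuity across $\partial D_\delta$), but it too must be checked against the same lower bound on $ent\circ\nabla h'$ rather than against uniform continuity.
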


We need the triangulation to avoid the possible explosion of $ent$ near the singularity $(s,t)=(\pm\frac{1}{2},0)$. Readers will see later that the lemma above plays the same role as Lemma 2.2 of \cite{CKP01} where the authors give an approximation by triangulation, and it is interesting to compare them. In \cite{CKP01}, the main problem the authors deal with is the lack of smoothness, and thanks to Lemma~\ref{lemmaapproximation} this is not the main focus in our case.

\begin{proof}
Define respectively for $+$ and $-$ the set of frozen points as
$$K^{\pm}:=\left\{(x,y)\in D_{\delta}:\frac{\partial h}{\partial x}(x,y)=\pm\frac{1}{2}\right\},$$
and for all $\eta$, define respectively for $+$ and $-$ the set
$$K^{\pm}_{\eta}:=\left\{(x,y)\in D_{\delta}:\text{dist}((x,y),D\backslash K^{\pm})\geq\eta\right\}.$$

For any $\delta'$ small enough we have the following equality:
$$K^{\pm}_{\delta'}=\left\{(x,y)\in D_{\delta}:\frac{\partial A_{\delta,\delta'}(h)}{\partial x}(x,y)=\pm\frac{1}{2}\right\}.$$

The Lebesgue measure of $K^{+}_{\eta}\cup K^{-}_{\eta}$ is a decreasing function in $\eta$ so it is continuous almost everywhere. From now on we take $\delta''$ close enough to $\delta'$ such that $\delta''$ is a continuous point of the measure of $K^{+}_{\eta}\cup K^{-}_{\eta}$, the sup norm between $A_{\delta,\delta''}(h)$ and $A_{\delta,\delta'}(h)$ is less than $\frac{\varepsilon}{2}$, and $|Ent(A_{\delta,\delta'}(h))-Ent(A_{\delta,\delta''}(h))|<\frac{\varepsilon}{2}$ (by the arguments we used in the proof of Lemma~\ref{lemmaapproximation} it is possible to do so).

The function $A_{\delta,\delta''}(h)$ belongs to some $\mathcal{H}^{\lambda}$.

For any $l$ small enough and dividing $1-2\delta$, consider a $l$-grid on $D_{\delta}$, and consider the mesh of isosceles right triangles constructed by linking the northeast and southwest vertices of every $l$-square of the grid. Consider the only function $h'$ which is a linear function on every triangle of the mesh and agrees with $A_{\delta,\delta''}(h)$ on vertices of triangles and outside $D_{\delta}$ we just take $h'=A_{\delta,\delta''}(h)$.

As $A_{\delta,\delta''}(h)$ is $C^{\infty}$ on $D_{\delta}$, for $l$ sufficiently small, $h'$ agrees with $h$ within $\frac{\varepsilon}{2}$, so the first approximation in (c) is direct.

We also conclude that there exists $\lambda'\in\mathbb{R}^+$ such that $h'\in \mathcal{H}^{\lambda'}$ (\emph{i.e.} $ent\circ\nabla h'$ is bounded from below by $-\lambda'$ almost everywhere) and $\lambda'$ is independent of $l$. The boundness on $D\backslash D_{\delta}$ is trivial. Inside $D_{\delta}$, consider any triangle of the $l$-mesh. Its slope is equal to the integrals of the partial differentials of $A_{\delta,\delta''}(h)$ along the edge which is the diagonal of the $l$-square then normalized by the length $\sqrt{2}l$. Since on any point the partial differentials $(\partial A_{\delta,\delta''}(h)/\partial x,\partial A_{\delta,\delta''}(h)/\partial y)$ is within the set
$$\{(s,t):ent(s,t)\geq-\lambda\},$$
the normalized integral of the partial differentials is within the convex hull of this set, which is easy to be shown to be included in $\{(s,t):ent(s,t)\geq-\lambda'\}$ for some finite $\lambda'$, so we get the wanted property.

We will respectively treat the case where $s=\pm\frac{1}{2}$ (note that whenever this is true we have also $\{t=0\}$) and where $s\neq\pm\frac{1}{2}$. We show that for $l$ small enough, $ent\circ\nabla h'$ approximates well $ent\circ\nabla h$ on most points in the interior of these sets, and the rest points have a contribution arbitrarily small.

As the Lebesgue measure of $K^{\pm}_{\eta}$ is continuous at $\delta''$, for the $\varepsilon$ given, find $d$ small enough such that the Lebesgue measure of $(K^{+}_{\delta''}\cup K^{-}_{\delta''})\backslash(K^{+}_{\delta''+d}\cup K^{-}_{\delta''+d})$ is less than $\frac{\varepsilon}{8 \lambda'}$.

For any $l<\frac{d}{\sqrt{2}}$, the $l$-squares respectively intersecting $K^{\pm}_{\delta''+d}$ gives a cover of $K^{\pm}_{\delta''+d}$, and they are contained in $K^{\pm}_{\delta''}$. In other words, we give an inner approximation of $K^{\pm}_{\delta''}$ by disjoint squares of length $l$. The measure of the difference set is less than that of $(K^{+}_{\delta''}\cup K^{-}_{\delta''})\backslash(K^{+}_{\delta''+d}\cup K^{-}_{\delta''+d})$, which is less than
$\frac{\varepsilon}{8 \lambda'}$.

Note that $ent\circ\nabla h'(x,y)=0$ on $K^{\pm}_{\delta''+d}$, so the local entropy of the function $h'$ is equal to that of $A_{\delta,\delta''}(h)$ there.


On the other hand, for any point $(x_0,y_0)\in D_{\delta}$ such that $\frac{\partial h}{\partial x}(x_0,y_0)\neq\pm\frac{1}{2}$, there exists $r$ (which depends on $(x_0,y_0)$) such that within the $r$-neighborhood of $(x_0,y_0)$, $\nabla h$ is within a small convex neighborhood of $\nabla h(x_0,y_0)$ where
$$|ent\circ\nabla h(x,y)-ent\circ\nabla h(x_0,y_0)|<\frac{\varepsilon}{4}.$$

This gives an open cover of the points that $\frac{\partial h}{\partial x}\neq\pm\frac{1}{2}$. We may choose $\rho$ small enough such that the area of the union of the balls of diameters bigger than $\rho$ is bigger than the measure of
$$\{(x,y)\in D_{\delta}:\frac{\partial h}{\partial x}(x,y)\neq\pm\frac{1}{2}\}$$
minus $\frac{\varepsilon}{8 \lambda'}$.

Now take $l<\min\{\frac{d}{\sqrt{2}},\rho\}$, for the piecewise linear function $h'$, we have
$$|ent\circ\nabla h(x,y)-ent\circ\nabla h(x_0,y_0)|<\frac{\varepsilon}{4}$$
on all points except a set of measure $\frac{\varepsilon}{2 \lambda'}$, thus
$$|Ent\big(A_{\delta,\delta''}(h)\big)-Ent(h')|<\frac{\varepsilon}{2},$$
thus we have finished the proof.
\qed
\end{proof}

Figure \ref{lMesh} gives an illustration of several notions used in the proof above: the $l$-mesh, the frozen point set $K^{\pm}$ and the set $K^{\pm}_{\delta''}$.
\begin{figure}[H]
\centering
\includegraphics[width=0.3\textwidth]{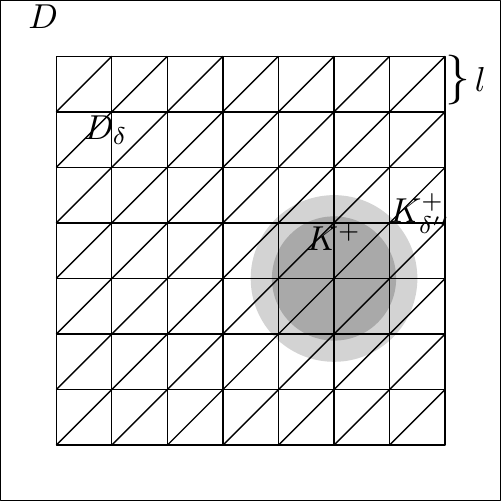}
\caption{The $l$-mesh, the frozen point set $K^{\pm}$ and the set $K^{\pm}_{\delta''}$.}\label{lMesh}
\end{figure}

\begin{lemma}\label{lemmaHlambda}
The space $\mathcal{H}^{\lambda}$ is compact and semicontinuous with respect to the sup norm of $H$, and there exists a unique function $h_{\lambda}$ that maximizes $Ent(.)$ among all functions of $\mathcal{H}^{\lambda}$.
\end{lemma}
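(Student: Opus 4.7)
The plan is to establish the three assertions in turn: compactness of $\mathcal{H}^{\lambda}$, upper semicontinuity of $Ent$ on it, and existence-uniqueness of the maximizer.

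For compactness, the key observation is that the sublevel set
$$\Sigma_{\lambda}:=\left\{(s,t)\in[-\tfrac{1}{2},\tfrac{1}{2}]\times[0,\infty):ent(s,t)\geq-\lambda\right\}$$
is a compact convex subset of $[-\tfrac{1}{2},\tfrac{1}{2}]\times[0,T(\lambda)]$ for some finite $T(\lambda)$: compactness in the $t$-direction comes from the remark after Definition~\ref{EntDef} that $ent(s,t)\to-\infty$ when $t\to\infty$, and convexity is equivalent to the concavity of $ent$. Since every $h\in\mathcal{H}^{\lambda}$ already satisfies $|\partial_x h|\leq\tfrac{1}{2}$ and now also $\partial_y h\leq T(\lambda)$ almost everywhere, the family is uniformly Lipschitz, and the fixed boundary condition gives uniform boundedness. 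Arzelà–Ascoli then yields precompactness in $(C(D),\|\cdot\|_{\infty})$. Closedness: if $h_n\to h$ uniformly with $h_n\in\mathcal{H}^{\lambda}$, then $h$ is still admissible and Lipschitz, $\nabla h_n\rightharpoonup\nabla h$ in the weak-$\ast$ topology of $L^{\infty}(D;\mathbb{R}^2)$, and Mazur's lemma applied with the closed convex set $\Sigma_{\lambda}$ forces $\nabla h\in\Sigma_{\lambda}$ a.e., so $h\in\mathcal{H}^{\lambda}$.

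Upper semicontinuity of $Ent$ on $\mathcal{H}^{\lambda}$ is the standard calculus-of-variations statement that integrals of concave functions of the gradient are upper semicontinuous under weak convergence of gradients; the uniform Lipschitz bound established above ensures that this weak convergence holds along uniformly convergent sequences in $\mathcal{H}^{\lambda}$. Combined with compactness and the uniform upper bound on $ent$ (recall the active-part discussion showing $ent\leq$ constant), this gives existence of at least one $h_{\lambda}\in\mathcal{H}^{\lambda}$ maximizing $Ent$. For uniqueness, I would take two maximizers $h_1,h_2$ and consider their midpoint $h=(h_1+h_2)/2$, which belongs to $\mathcal{H}^{\lambda}$ by convexity of both $\mathcal{H}$ and $\Sigma_{\lambda}$. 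The strict concavity of $ent$ on the open region $\{t>0\}$ (noted directly after Definition~\ref{EntDef}) yields $Ent(h)>\tfrac{1}{2}(Ent(h_1)+Ent(h_2))$ unless $\nabla h_1=\nabla h_2$ almost everywhere on the set where at least one of $\partial_y h_1,\partial_y h_2$ is positive, and the common boundary condition then forces $h_1\equiv h_2$.

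The main obstacle I expect is handling the degeneracy of strict concavity on the frozen set $\{t=0\}$, where two distinct admissible functions could in principle have different $x$-slopes without loss in the entropy integral. To rule this out I would use that on any connected component of $\{t=0\}$ inside $D$ the heights $h_1,h_2$ are functions of $x$ alone, and then propagate the equality from the boundary (where the two heights agree) through the horizontal structure, exploiting the $\tfrac{1}{2}$-Lipschitz constraint in $x$ and the fact that the asymptotic boundary values of $h^\partial$ pin the functions along all of $\partial D$. A secondary technical nuisance is checking that Mazur's lemma applies even though the gradients are only defined almost everywhere, but this is immediate from the Lipschitz structure.
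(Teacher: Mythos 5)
Your argument is correct and its skeleton coincides with the paper's: both derive precompactness from the observation that $ent\circ\nabla h\geq-\lambda$ forces a uniform vertical Lipschitz bound, and closedness from the convexity of the sublevel set of slopes (your Mazur's-lemma step is a clean way to make rigorous what the paper only gestures at with the phrase ``local convexity near $(\pm\frac{1}{2},0)$''). You diverge in two places. For upper semicontinuity the paper does not invoke the general weak-$*$ upper semicontinuity of concave gradient integrals; it routes through the piecewise-linear approximation of Lemma~\ref{triangulation} together with the boundedness of $ent$ on $\mathcal{H}^{\lambda}$, following Lemma 2.3 of \cite{CKP01}. Your functional-analytic shortcut is legitimate precisely because $ent$ is bounded on your set $\Sigma_{\lambda}$, and it is arguably more economical. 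Second, you actually attempt the uniqueness, which the paper's proof of this lemma omits entirely (it is only asserted later, in the proof of Theorem~\ref{theoremexistence}, via a one-line appeal to strict concavity). Your midpoint argument is the right one, but the repair on the frozen set should be reoriented: the natural propagation there is vertical, not horizontal. On the set where both maximizers satisfy $\partial_y h_i=0$, monotonicity in $y$ makes each $h_i$ constant along the maximal vertical segments contained in that set, and the endpoints of those segments lie where the two functions already agree (either on $\partial D$ or at points where some $\partial_y h_i>0$ and hence $\nabla h_1=\nabla h_2$ by strict concavity); pushing the common values along these vertical segments identifies $h_1$ and $h_2$ on the frozen set, with no need for the $\frac{1}{2}$-Lipschitz constraint in $x$. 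With that adjustment your proposal is complete, and on the uniqueness point more detailed than the paper's own proof.
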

\begin{proof}
When $\frac{\partial h}{\partial y}$ tends to plus infinity, the entropy $ent(\frac{\partial h}{\partial x},\frac{\partial h}{\partial y})$ tends to minus infinity. So for any ${\lambda}$, the boundness condition implies that $\frac{\partial h}{\partial y}$ is bounded from above by a constant depending on ${\lambda}$. Thus we have a Lipshitz condition on $y$ for $\mathcal{H}^{\lambda}$, so $\mathcal{H}^{\lambda}$ is relative compact in $\mathcal{H}$ under the uniform norm.

To prove the compactness we should also prove that $\mathcal{H}^{\lambda}$ is closed in $\mathcal{H}$. This is because the constraint ${ent\circ\nabla h(x,y)\geq -\lambda}$ can be interpreted as a constraint on the horizontal and vertical slope. By the argument of the local convexity near the points $(\pm\frac{1}{2},0)$, a space verifying such geometric constraint is closed.

As for the semicontinuity of $Ent$, thanks to Lemma~\ref{triangulation} and the boundness of $ent$ on $\mathcal{H}^{\lambda}$, the proof is nothing different from Lemma 2.3 in \cite{CKP01}.


All these imply the existence of a maximizer: taking a sequence of height functions whose entropies tend to $\sup\{Ent(h):h\in \mathcal{H}^{\lambda}\}$, then there exists a converging subsequence. By semicontinuity, the entropy of the limit height function is $\sup\{Ent(h):h\in \mathcal{H}^{\lambda}\}$.
\qed
\end{proof}

\begin{corol}\label{corolsup}
We have
$$\sup_{h\in \mathcal{H}}Ent(h)=\lim_{\lambda\rightarrow\infty}\sup_{h\in \mathcal{H}^{\lambda}}Ent(h).$$
\end{corol}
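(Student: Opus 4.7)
The plan is to prove the two inequalities separately; the nontrivial content is packaged in Lemma~\ref{lemmaapproximation}, and once that is available the argument becomes essentially a density argument.

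The easy direction is the inequality $\lim_{\lambda\to\infty}\sup_{h\in\mathcal{H}^{\lambda}}Ent(h)\leq \sup_{h\in\mathcal{H}}Ent(h)$. Since $\mathcal{H}^{\lambda}\subset \mathcal{H}$ for every $\lambda$, taking suprema gives $\sup_{h\in\mathcal{H}^{\lambda}}Ent(h)\leq \sup_{h\in\mathcal{H}}Ent(h)$. Moreover $\mathcal{H}^{\lambda}$ is increasing in $\lambda$, so the left-hand side is an increasing limit and the inequality is preserved.

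For the reverse inequality, I would argue that any admissible function with finite entropy can be approximated, in the sense of entropy value, by functions in some $\mathcal{H}^{\lambda}$. Fix $\varepsilon>0$ and pick $h\in\mathcal{H}$ such that $Ent(h)\geq \sup_{\mathcal{H}}Ent-\varepsilon$ (we may assume the supremum is finite, for otherwise Definition and Lemma~\ref{DL1} already gives an admissible function with finite entropy and the statement is vacuous or automatic; note also that if $Ent(h)=-\infty$ there is nothing to prove). Apply the operator $A_{\delta,\delta'}$ from Lemma~\ref{lemmaapproximation} to $h$. By property (c) of that lemma, $ent\circ\nabla A_{\delta,\delta'}(h)$ is bounded from below on $D$ by a constant $-\lambda_0$ depending only on $\delta,\delta'$ and $Ent(h)$, so $A_{\delta,\delta'}(h)\in \mathcal{H}^{\lambda_0}$. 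By property (b), since $Ent(h)>-\infty$ forces $h\in \mathcal{H}_0$, we may further choose $\delta'$ small enough (depending on $h$) so that
\[
Ent\bigl(A_{\delta,\delta'}(h)\bigr)=(1-2\delta)^2 Ent(h)+O(\delta\ln\delta).
\]
Choosing $\delta$ sufficiently small (depending on $h$ and $\varepsilon$) makes the right-hand side at least $Ent(h)-\varepsilon$, which is in turn at least $\sup_{\mathcal{H}}Ent-2\varepsilon$.

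Putting the two ingredients together, for every $\varepsilon>0$ there exists $\lambda_0=\lambda_0(\varepsilon)$ and a function in $\mathcal{H}^{\lambda_0}$ whose entropy exceeds $\sup_{\mathcal{H}}Ent-2\varepsilon$. Hence $\lim_{\lambda\to\infty}\sup_{h\in\mathcal{H}^{\lambda}}Ent(h)\geq \sup_{\mathcal{H}}Ent-2\varepsilon$, and letting $\varepsilon\to 0$ completes the proof. The only subtlety I anticipate is the book-keeping around the case $Ent(h)=-\infty$: the $O(\delta\ln\delta)$ bound in part (a) does not help unless we already know the supremum is attained (or nearly attained) by some $h$ with $Ent(h)>-\infty$, which is guaranteed by the corollary following Definition and Lemma~\ref{DL1}; beyond that, the argument is just unwinding the definitions.
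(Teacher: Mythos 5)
Your proof is correct and follows essentially the same route as the paper, which simply declares the corollary a direct consequence of Lemma~\ref{lemmaapproximation} (using part (c) to place $A_{\delta,\delta'}(h)$ in some $\mathcal{H}^{\lambda_0}$ and parts (a)--(b) to control the entropy loss); you have merely written out the details the paper omits. The paper also cites Lemma~\ref{lemmaHlambda}, but that is only needed if one wants the suprema over $\mathcal{H}^{\lambda}$ to be attained, not for the equality of suprema itself.
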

\begin{proof}
This is a direct corollary of Lemma~\ref{lemmaapproximation} and Lemma~\ref{lemmaHlambda}.
\end{proof}

\vspace{0.5cm}

\noindent\textit{Proof of Theorem }\ref{theoremexistence}. Choose $I\in\mathbb{N}$ such that $\mathcal{H}^{I}$ is not empty. For any $i\geq I$, consider the function $h_i$ as the maximizer of $Ent$ among all functions of $\mathcal{H}^{i}$. We first take the turned coordinates $(\t{x},\t{y})$. By compactness of $\t{\mathcal{H}}$, the sequence $\t{h}_i$ given by Corollary~\ref{corolsup} has a converging subsequence $\t{h}_{i_j}$ under the uniform norm. Denote the limit by $\t{h}_0$, and denote its preimage by $h_0\in \mathcal{H}$. The function $h_0$ is well defined except for the discontinuous points, and on these points we will take $h_0(x,y)=\limsup_{(a,b)\rightarrow(x,y)}h(a,b)$. It is easy to verify that the convergence of $h_{i_j}$ to $h_0$ is pointwise except on the discontinuous points.


To simplify the notation, here rather than a subsequence of $\t{h}_i$, we suppose that the sequence $\t{h}_i$ converges. This simplification does not lose generality: we prove below that the limit of the subsequence is the unique function that maximizes $Ent(.)$, so by the uniqueness of the limit of the subsequence, the sequence itself converges to the same limit.

We now prove that $\t{h}_0\in \t{\mathcal{H}}_0$. Otherwise, the set $\Delta$ defined by
$$\Delta=\{(\t{x},\t{y})\in\t{D}:\frac{\partial\t{h}_0}{\partial\t{y}}=1\}$$
has a positive measure $\mu>0$.

For all $\varepsilon$, there exists an open set $U\subset\mathbb{R}^2$ such that $\Delta\subset U$ and the $\mathbb{R}^2$-Lebesgue measure of $U$ is less than $\mu(1+\varepsilon)$. Denote the $\mathbb{R}^2$-Lebesgue measure by $|\ .\ |$. The set $U$ is the union of at most countable open discs, and we choose finite discs such that the $2-$Lebesgue measure of their union is bigger than $\mu$, and we can cut their union into finite disjoint convex parts.

Let $\Delta_1$, $\Delta_2$,...,$\Delta_M$ respectively be their closures. The sum of $|\Delta_k|$ for ${k=1,2,...,M}$ is less than $\mu(1+\varepsilon)$ and bigger than $\mu$, and the sum of $|\Delta_k\cap\Delta|$ is bigger than $\mu(1-\varepsilon)$. Thus, there exists a subset $K$ of $\{1,2,..M\}$ such that for every $k\in K$, $\frac{|\Delta_k\cap\Delta|}{|\Delta_k|}\geq 1-3\varepsilon$ and $\sum_{k\in K}|\Delta_k|\geq\frac{1}{3}\mu$.

For any $\t{h}\in\t{\mathcal{H}}$ and $k\in K$, define the average vertical slope on $\Delta_k$ as
$$av_y^{\Delta_k}(\t{h}):=\frac{1}{|\Delta_k|}\iint_{\Delta_k}\frac{\partial\t{h}}{\partial\t{y}}d\t{x}d\t{y}.$$

As $\frac{\partial\t{h}}{\partial\t{y}}\geq-1$, for all $k\in K$, we have
$$av_y^{\Delta_k}(\t{h}_0)\geq \frac{|\Delta_k\cap\Delta|}{|\Delta_k|}-\left(1-\frac{|\Delta_k\cap\Delta|}{|\Delta_k|}\right)=1-6\varepsilon.$$

As the convergence of $\t{h}_i$ to $\t{h}_0$ is uniform, there exists $J$ such that for all $i\geq J$, $\t{h}_i$ is within a $\varepsilon\min_{k\in K}\{\text{diam}\Delta_k\}$-neighborhood of $\t{h}_0$, then $av_y^{\Delta_k}(\t{h}_i)\geq 1-8\varepsilon$ for all $k\in K$ and $i\geq J$.

By concavity of $\wt{Ent}$, negativity of $\wt{ent}$, we get
\begin{eqnarray}\label{ineqtilde}
\wt{Ent}(\t{h}_i)\leq\sum_{k\in K} \iint_{\Delta_k}\wt{ent}\left(\frac{\partial\t{h}_i}{\partial\t{x}},\frac{\partial\t{h}_i}{\partial\t{y}}\right)d\t{x}d\t{y}
\leq\sum_{k\in K}\iint_{\Delta_k}\wt{ent}(av_x^{\Delta_k},av_y^{\Delta_k}) d\t{x}d\t{y},
\end{eqnarray}
where the average horizontal height change $av_x^{\Delta_k}$ is defined as analogue of $av_y^{\Delta_k}$. When $\varepsilon\rightarrow 0$, the average vertical slopes on all $\Delta_k$ uniformly tend to $1$ so $\wt{ent}(av_x^{\Delta_k},av_y^{\Delta_k})$ uniformly tend to $-\infty$. As the sum of the $2-$Lebesgue measure of $\Delta_k$ is bigger than $\frac{1}{3}\mu$, we prove that $\wt{Ent}(\t{h}_i)$ tend to $-\infty$. However, by definition it should be finite and increasing, thus we get a contradiction and prove that $\t{h}_0\in\t{\mathcal{H}}_0$, so $h_0\in \mathcal{H}_0$.

By Lemma~\ref{lemmaapproximation} for all $h_i$ and $h_0$, for all $\varepsilon>0$, there exist $\delta$ and $\delta'$ such that the function $O(\delta\ln\delta)$ depending only on the boundary condition is less than $\varepsilon$, and
\begin{align}
\label{twoinequality}
\begin{split}
Ent\big(A_{\delta,\delta'}(h_i)\big)&\geq(1-2\delta)^2 Ent(h_i)-\varepsilon,\ i=I,I+1,...\\
Ent\big(A_{\delta,\delta'}(h_0)\big)&\in[(1-2\delta)^2 Ent(h_0)-\varepsilon,(1-2\delta)^2 Ent(h_0)+\varepsilon].\\
\end{split}
\end{align}

By construction, for all $i$,
$$Ent\big(A_{\delta,\delta'}(h_i)\big)\geq (1-2\delta)^2 Ent(h_I)-\varepsilon,$$
and by Lemma~\ref{lemmaapproximation} (c) there exists $\Lambda(\delta,\delta')\in\mathbb{R}$ such that for all $i=I,I+1,...$, $A_{\delta,\delta'}(h_i)\in \mathcal{H}^{\Lambda(\delta,\delta')}$.

The convergence of $h_i$ to $h_0$ when $i\rightarrow\infty$ implies the convergence of $A_{\delta,\delta'}(h_i)$ to $A_{\delta,\delta'}(h_0)$. By semicontinuity of $Ent$ for functions of $\mathcal{H}^{\Lambda(\delta,\delta')}$, we have
$$Ent\big(A_{\delta,\delta'}(h_0)\big)\geq\limsup_{i\rightarrow\infty}Ent\big(A_{\delta,\delta'}(h_i)\big).$$

Compare this inequality to~(\ref{twoinequality}), we prove that $Ent(h_0)\geq\limsup_{i\rightarrow\infty}Ent(h_i)$, and by Lemma~\ref{corolsup}, $h_0$ maximizes $Ent(.)$ among functions of $\mathcal{H}$. It is the unique maximizer because of the strict concavity of $ent$ in the interior of its domain of definition.
\qed

\section{The variational principle}\label{sectionProveVP}

In this section, we establish a variational principle and prove a large deviation result for the bead model, which reveals a limit shape of the model when the size tends to infinity. A lot of notations appear. So as not to confuse the readers, we give here, in the very beginning of this section, a general convention on the use of notation.
\begin{itemize}
\item The number of beads or the number of horizontal lozenges is denoted by $N(n)$, where $n$ emphasizes its dependence on the number of threads $n$.
\item For any upper index $u$ and lower index $l$,
\begin{itemize}
\item $\mathcal{H}^u_l$ will denote the space of configurations.
\item $X^u_l$ is a uniformly chosen random configuration of $\mathcal{H}^u_l$.
\end{itemize}
\item If the lower index $l$ is $n$, we mean the bead model with $n$ threads, and if $l$ is $mn,n$, that means we are considering a discrete approximation of the bead model by lozenge tilings.
\item The letter $h$ is used for the normalized bead height function. $H$ with a lower index $n$ means the un-normalized bead height function, and with a lower index $mn,n$ means a height function of lozenge tiling. 
\item We use $h^{\partial}_n$ to denote a fixed boundary condition of the normalized bead height function, while that of the lozenge tiling is given by the domain $R_{mn,n}$.
\end{itemize}

The main result of this section is the following theorem. Consider a fixed normalized boundary condition as in Section~\ref{sect5.4part1}, \emph{i.e.}, a normalized boundary function $h^{\partial}$ defined on $\partial D$ where $D$ is the unit square, and we furthermore ask that $h^{\partial}(0,y)$, $h^{\partial}(1,y)$ viewed as functions of $y$ are constant functions, while ${h^{\partial}(x,0)<h^{\partial}(x,1)}$ as functions of $x$ are $\frac{1}{2}$-Lipschitz.

Fix the asymptotic boundary function $h^{\partial}$. For any $n\in\mathbb{N}^*$, consider the following boundary height function $h^{\partial}_n$ of a bead model with $n$ threads and normalized into $D$:
$$h^{\partial}_n:\left([-\frac{1}{n-1},1+\frac{1}{n-1}]\times[0,1]\right)\rightarrow\mathbb{R}.$$
We furthermore ask that the function $h^{\partial}_n$ is constant if restricted to ${x=-\frac{1}{n-1}}$ or ${x=1+\frac{1}{n-1}}$ and that for any $x\in[0,1]$,
$$\max\left\{|h^{\partial}_n(x,0)-h^{\partial}(x,0)|,
|h^{\partial}_n(x,1)-h^{\partial}(x,1)|\right\}<\frac{1}{n-1}.$$

\begin{theorem}\label{thm4.3CKP01}
For any admissible function $h:D\rightarrow\mathbb{R}$ that agrees with $h^{\partial}$ on $\partial D$ and $Ent(h)>-\infty$, define ${\mathcal{V}_{\delta}(h)}$ as the $\delta$-neighborhood of $h$ under the supremum norm. For any $n\in\mathbb{N^*}$, consider the normalized bead model on $D$ with $n$ threads and fixed boundary condition $h^{\partial}_n$. Let $\mathcal{H}_n^{\mathcal{V}_{\delta}(h)}$ be the space of configurations whose normalized height function is in ${\mathcal{V}_{\delta}(h)}$ and let ${X_n^{\mathcal{V}_{\delta}(h)}}$ be a random configuration uniformly chosen in this space, then we have
\begin{eqnarray}\label{dfserfew}
\lim_{\delta\rightarrow 0}\lim_{n\rightarrow\infty}\frac{S(X_n^{\mathcal{V}_{\delta}(h)})}{n^2}=Ent(h),
\end{eqnarray}
where we recall that $S(X)$ is the adjusted combinatorial entropy of the random variable $X$.
\end{theorem}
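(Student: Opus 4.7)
I would adapt the strategy behind Theorem 4.3 of CKP01, pulling back from the lozenge tiling setting via the discretization scheme of Section~\ref{sqdfhyzqmurei}. The whole argument rests on three ingredients: (i) the restricted form of Proposition~\ref{beadanddimerpartitionfunction}, (ii) the CKP01 variational principle applied to the tilings of the tall domain $R_{mn,H^\partial_n}$, and (iii) the conversion $m\cdot ent^\diamond(s,t/m)-t\ln m\to ent(s,t)$ of Proposition~\ref{entropyexchange}. The key translation is that under the correspondence of Section~\ref{sqdfhyzqmurei}, a normalized tiling height $\bar h:[0,1]\times[0,m]\to\mathbb{R}$ corresponds to the bead profile $h^{bead}(x,y)=\bar h(x,my)$, so its tiling slopes are $(\bar s,\bar t)=(s,t/m)$ with $(s,t)=\nabla h^{bead}$, and a $\delta$-sup-norm neighborhood of $h$ in bead coordinates is exactly a $\delta$-sup-norm neighborhood of $\bar h_0(x,y)=h(x,y/m)$ in tiling coordinates.

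\textbf{Steps.} First, because the subset of bead positions realizing $\mathcal{V}_\delta(h)$ is still a closed convex body in $[0,1]^{N(n)}$, the Riemann sum argument of Proposition~\ref{beadanddimerpartitionfunction} extends verbatim to the restricted partition function to yield, for each fixed $n$,
$$\frac{S(X_n^{\mathcal{V}_\delta(h)})}{n^2}=\lim_{m\to\infty}\frac{\ln Z_{mn,n}^{\mathcal{V}_\delta(h)}-N(n)\ln m}{n^2}.$$
Second, for fixed $m$ and $n\to\infty$, CKP01 applied to the tiling of $R_{mn,H^\partial_n}$ (normalized so the domain becomes $[0,1]\times[0,m]$ and the boundary data converge to the one induced by $h^\partial$) gives
$$\frac{\ln Z_{mn,n}^{\mathcal{V}_\delta(h)}}{n^2}\xrightarrow[n\to\infty]{} m\iint_D ent^\diamond\bigl(\partial_x h^\ast_{m,\delta},\,\partial_y h^\ast_{m,\delta}/m\bigr)\,dx\,dy,$$
where $h^\ast_{m,\delta}$ is the maximizer in $\mathcal{V}_\delta(h)$ of the tiling entropy functional. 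Third, using $N(n)/n^2\to \iint_D\partial_y h^\ast_{m,\delta}\,dxdy$ (which is fixed up to $O(\delta)$ by the boundary condition) and invoking Proposition~\ref{entropyexchange} to let $m\to\infty$, the integrand becomes $ent(s,t)$ and the $n\to\infty$ limit in the first step evaluates to $Ent(h^\ast_{m,\delta})$. Finally, letting $\delta\to 0$, upper semicontinuity of $Ent$ (Lemma~\ref{lemmaHlambda} on sublevel sets combined with Lemma~\ref{lemmaapproximation}) and uniqueness of the entropy-maximizer (Theorem~\ref{theoremexistence}) give $Ent(h^\ast_{m,\delta})\to Ent(h)$; the matching lower bound comes from inserting $h$ itself (or its smoothing $A_{\delta,\delta'}(h)$ when $\nabla h$ is too singular) into the CKP01 lower estimate.

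\textbf{Main obstacle.} The hard part is the interchange of the limits $n\to\infty$ and $m\to\infty$. The identity above fixes $n$ and lets $m\to\infty$, whereas CKP01 requires $n\to\infty$ for fixed $m$; as the discussion following Proposition~\ref{beadanddimerpartitionfunction} already emphasizes, the Riemann-sum relative error is of order $N(n)^2/(mn)\sim n^3/m$, so uniformity in $n$ fails and one cannot blindly swap the limits. My strategy is to run both estimates along a carefully chosen diagonal $m=m(n)\to\infty$ fast enough (say $m(n)\gg n^3$) for the approximation to remain valid, while simultaneously controlling the non-uniformity of Proposition~\ref{entropyexchange} near the singular slopes $(\pm\tfrac12,0)$ and at $t=+\infty$. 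To handle the latter I would apply Lemma~\ref{lemmaapproximation} to replace $h$ by $A_{\delta,\delta'}(h)$, whose slopes stay in a compact subset of the interior of the slope domain: there Proposition~\ref{entropyexchange}(a) provides uniform convergence, while near the frozen boundary Proposition~\ref{entropyexchange}(b) lets one absorb the leftover contribution into an arbitrarily small error. Synchronizing these two sources of non-uniformity along the diagonal $m=m(n)$, and gluing the interior uniform estimate to the frozen-boundary estimate, is the most delicate step of the proof.
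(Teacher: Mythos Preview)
Your approach is genuinely different from the paper's, and the gap you yourself flag is real and not closed by the diagonal you propose.

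The paper does \emph{not} apply CKP01's variational principle globally to the tilings of $R_{mn,H^\partial_n}$ and then send $m\to\infty$. Instead it localizes: via Theorem~\ref{thm-ent} (and its triangular version, Proposition~\ref{prop-ent}) it shows directly in the bead model that for an \emph{almost planar} boundary of tilt $(s,t)$ the normalized adjusted entropy $S/n^2$ converges to $ent(s,t)$. The discretization of Section~\ref{sqdfhyzqmurei} enters only inside the proofs of these local statements (Lemmas~\ref{Slessthan0}, \ref{Lemmaentexist}, \ref{prop3.6CKP0}), where the boundary is a single tilt and the $m$--$n$ interchange is handled by subadditivity and a surface--coupling argument. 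The global result is then assembled by triangulation \emph{in the bead model itself}: Lemma~\ref{lemmaapproximation} followed by Lemma~\ref{triangulation} gives a piecewise linear approximant for the lower bound, Lemma~\ref{triangulationprime} gives one for the upper bound, and on each triangle one invokes Theorem~\ref{thm-ent} and Lemma~\ref{Slessthan0}. No global interchange of the $m$ and $n$ limits is needed.

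Your route, by contrast, loads the entire interchange onto a single diagonal $m=m(n)$, and this is where the argument breaks. CKP01's Theorem~4.3 is proved for a \emph{fixed} asymptotic domain; along your diagonal the normalized tiling region $[0,1]\times[0,m(n)]$ changes with $n$, so CKP01 cannot be invoked as a black box. Making the diagonal rigorous would require error bounds in CKP01 that are explicit and uniform in $m$, which in effect means redoing the CKP01 triangulation with $m$-dependent control---the same work the paper does, only in tiling coordinates rather than bead coordinates. Moreover, the commutativity of the two limits is established in the paper (Theorem~\ref{thm-hmcvg2h0} and diagram~(\ref{commutativediagram})) only \emph{after} Theorem~\ref{thm4.3CKP01}, using the latter as an input; invoking it here would be circular. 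Your idea of passing to $A_{\delta,\delta'}(h)$ to keep slopes compact is sound and parallels the paper's use of Lemma~\ref{lemmaapproximation}, but it controls only the non-uniformity of Proposition~\ref{entropyexchange}, not the $m$-dependence of the CKP01 error terms.
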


Note that, in order to differ with $\mathcal{U}$ (the letter we use to a set of boundary condition), here we use $\mathcal{V}$ to denote the neighborhood of an admissible function $h$ which is a subspace of the height functions of the bead configurations.

Here we give an outline of the proof of this theorem. Recall that with the help of Lemma~\ref{beadanddimerpartitionfunction}, we are able to approach the entropy $S$ of a bead model via dimers, well studied in \cite{CKP01}.

The main idea of the proof is to use triangulations. Although most results are given for the bead model on the unit square, it is not hard to generalize the definition to other shapes, particularly the isosceles right triangles by  Proposition~\ref{prop-ent}.

We first study a bead model on the unit square $D$ with almost planar boundary condition. Unlike in \cite{CKP01}, in the bead model, close boundary condition (under the uniform norm on $\partial D$) does not imply close entropy: by manipulating the distance between little jumps on the boundary, in any neighborhood of a nearly planar boundary condition we can always find two boundary conditions that give entropies arbitrarily far apart.

To solve this problem, we give a more precise definition that clarify what is an ``almost planar" boundary condition (Definitions~\ref{definitionalmostplanar} and~\ref{definitionalmostplanardiscrete}). We prove that the normalized (and adjusted) entropy of a bead model with almost planar boundary condition converges (Lemma~\ref{Lemmaentexist}).

Then we prove a series of technical lemmas (Lemma~\ref{LemmaProp20CEP},~\ref{lemmaazuma},~\ref{lemmaProp3.4CKP}) which the readers can find their origins in \cite{CKP01} and \cite{CEP}. They serve to prove Lemma~\ref{prop3.6CKP0}, which concludes that among all nearby boundary conditions, the almost planar ones have the biggest entropy. Theorem~\ref{thm-ent} proves that it is equal to $ent(.,.)$, and the entropy of the bead model whose boundary condition is the union of nearby functions in a neighborhood of an almost planar function has the same limit when the radius of the neighborhood tends to $0$.

Lemma~\ref{triangulation} and Lemma~\ref{triangulationprime} give two triangulations of a surface of bead configurations respectively for a lower bound and an upper bound of entropy. This proves Theorem~\ref{thm4.3CKP01}, a variational principle of the bead model.

As a corollary of the variational principle, in the end of this section, we prove a limiting behavior for the bead model with fixed boundary condition: when $n\rightarrow\infty$, the random surface of the bead configuration converges in probability to the function $h_0$ that maximizes $Ent(h)$.

\vspace{0.5cm}

In the beginning we give a lemma which is somehow independent of the others. It proves that the combinatorial entropy $S$ is bounded from above by a constant. This is reminiscent of the fact that the local entropy function $ent$ is also bounded from above. This lemma will be used when we prove the upper bound of the entropy in Theorem~\ref{thm4.3CKP01}.

\begin{lemma}\label{Slessthan0}
We have a global constant $C$ such that uniformly for any asymptotic fixed or periodic boundary condition of bead model, when $n\rightarrow\infty$, we have
$$\limsup_{n\rightarrow\infty}\frac{S(X)}{n^2}\leq C.$$
\end{lemma}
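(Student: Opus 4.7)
The plan is to bound $S(X)$ uniformly by dropping the interlacing constraint across threads and estimating the volume contribution from each thread separately. Under the uniform measure we have $S(X) = \ln V + N\ln n$, where $V$ is the Lebesgue measure of the admissible convex set in $[0,1]^N$ (or $\mathbb{T}^N$ in the periodic case) and $N=\sum_{i=1}^n N_i$ with $N_i$ beads on thread $i$.

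First I would bound $V$ by ignoring interlacing between different threads. On thread $i$, the beads are naturally labeled by their vertical rank (fixed case) or cyclic rank (periodic case), so their coordinates are confined to an ordered simplex in $[0,1]^{N_i}$ (resp.\ $\mathbb{T}^{N_i}$) of volume at most $1/N_i!$, with a harmless cyclic-shift factor $\leq N_i$ in the toroidal case that contributes $O(n\ln(N+2))$ after taking logarithms. Since the interlacing constraint across threads only shrinks the admissible set further,
\[
V \;\leq\; \prod_{i=1}^{n}\frac{1}{N_i!} \cdot e^{O(n\ln(N+2))}.
\]
Applying Stirling's formula $\ln N_i! = N_i\ln N_i - N_i + O(\ln(N_i+1))$ and substituting into $S(X)=\ln V+N\ln n$ yields
\[
S(X) \;\leq\; \sum_{i=1}^n N_i\ln\!\Bigl(\frac{en}{N_i}\Bigr) + O\bigl(n\ln(N+2)\bigr).
\]

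Setting $y_i:=N_i/n$, each summand equals $n\,g(y_i)$ with $g(y):=y(1-\ln y)$. Since $g'(y)=-\ln y$, the function $g$ attains its global maximum on $[0,\infty)$ at $y=1$, with $g(1)=1$. Averaging over the $n$ threads,
\[
\frac{S(X)}{n^2} \;\leq\; \frac{1}{n}\sum_{i=1}^n g(y_i) + O\!\Bigl(\frac{\ln(N+2)}{n}\Bigr) \;\leq\; 1 + o(1),
\]
so the claim holds with $C=1$ (or any larger constant). The bound is uniform in the boundary condition: when $N$ is polynomial in $n$ (the range relevant to any asymptotic boundary condition) the Stirling error is $o(1)$, and in the degenerate periodic regime where $N$ grows much faster than $n^2$ the terms $g(y_i)$ become very negative and the bound only improves (in fact $S/n^2\to-\infty$).

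The main (minor) obstacle is bookkeeping in the toroidal case: fixing a labeling convention on each thread so that the ordered-simplex estimate is rigorous, and confirming the cyclic-shift multiplicative factor per thread is at most $N_i$, contributing only $O(n\ln N)$ to $\ln V$ and hence vanishing after dividing by $n^2$. All remaining steps are elementary.
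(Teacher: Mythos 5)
Your proof is correct, and it takes a genuinely different route from the paper's. The paper passes to the discrete approximation (Proposition~\ref{beadanddimerpartitionfunction}), counts lozenge tilings of $R_{mn,n}$ column by column — bounding the number of placements of column $i+1$ by the product of the gap lengths on column $i$, hence by $(mn/N_i)^{N_{i+1}}$ via AM--GM — and then must handle the index shift between $N_i$ and $N_{i+1}$ with a partition of the threads into those with few and many beads, before concluding with the concavity bound $-x\ln x\leq 1/e$. You instead stay in the continuum, drop the interlacing between threads entirely, and bound $V$ by the product of within-thread ordered-simplex volumes $\prod_i 1/N_i!$; Stirling then gives $S(X)/n^2\leq\frac{1}{n}\sum_i g(N_i/n)$ with $g(y)=y(1-\ln y)\leq 1$. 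This is shorter, avoids the limit $m\to\infty$ and the $N_i$ versus $N_{i+1}$ bookkeeping, and in fact yields a bound valid for every finite $n$ rather than only in the $\limsup$; the price is a slightly worse constant ($1$ instead of the paper's $1/e$, reflecting the extra $+N_i$ from Stirling), which is irrelevant since the lemma only asserts the existence of some global $C$. Your treatment of the toroidal case (a factor $N_i$ per thread from the cyclic labeling, contributing $O(n\ln N)$ to $\ln V$ and hence $o(1)$ after dividing by $n^2$ for $N$ polynomial in $n$) is the right fix and covers the asymptotic periodic conditions, where $N=nH_y=O(n^2)$.
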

\begin{proof}
We use the discretization based on Proposition~\ref{beadanddimerpartitionfunction} to prove this lemma. For any fixed or toroidal boundary condition of $N(n)$ beads, consider its discrete version of lozenge tiling of a rectangle-like region $R_{mn,n}$, so we should study
$$\limsup_{n\rightarrow\infty}\lim_{m\rightarrow\infty}\frac{1}{n^2}\Big(\ln Z_{mn,n}-\ln m N(n)\Big).$$

We consider the tiling of $R_{mn,n}$ column by column from left to right. Suppose that the number of horizontal lozenges on the $i^{th}$ column is $N_i$. Given the positions of the $i^{th}$ column, for the $i+1^{th}$ column, the number of possible positions of the columns are the product of the length of the intervals between the neighboring horizontal lozenges on the $i^{th}$ thread, thus at most $\left(\frac{mn}{N_i}\right)^{N_{i+1}}$. Thus we have
$$Z_{mn,n}\leq\prod_{i=1}^n\left(\frac{mn}{N_i}\right)^{N_{i+1}}.$$

We derive that
\begin{eqnarray}\label{zerfhzeqfhkfsdgvd}
\frac{1}{n^2}\Big(\ln Z_{mn,n}-\ln mN(n)\Big)
\leq\frac{1}{n^2}\Big(\sum_{i=1}^n N_i(\ln n -\ln N_{i+1})\Big)
\end{eqnarray}

If $\min_{i=1,...,n}N_i=0$, then the model can be decomposed into independent models (one can be trivial, \emph{i.e.} no bead at all). Without loss of generality we suppose that $\min_{i=1,...,n}N_i\geq 1$.

For any $\varepsilon>0$, we consider the following partition of ${\{1,2,...,n\}}$:
\begin{itemize}
\item set $I_{\leq\varepsilon n}:=\{i:N_i\leq\varepsilon n\}$,
\item set $I_{>\varepsilon n}:=\{i:N_i>\varepsilon n\}$.
\end{itemize}

By construction, the numbers of beads on neighboring threads differ at most by $1$, so for ${i\in I_{\leq\varepsilon n}}$, we have
$${\ln\left(\frac{N_i}{N_{i+1}}\right)\leq\ln 2},$$
so
$$N_i(\ln n -\ln N_{i+1})\leq N_i(\ln n -\ln N_{i})+\ln 2.$$

By the same reason, for ${i\in I_{>\varepsilon n}}$, we have
$$N_i(\ln n -\ln N_{i+1})\leq N_i\left(\ln n -\ln N_{i}+\ln\left(\frac{1+\varepsilon n}{\varepsilon n}\right)\right)\leq
N_i\left(\ln n -\ln N_{i}+\left(\frac{1}{\varepsilon n}\right)\right).$$

Thus, the right hand side of Inequality (\ref{zerfhzeqfhkfsdgvd}) is less than
\begin{eqnarray*}
&\ &\frac{1}{n^2}\left(\sum_{i=1}^{n} N_i\big(\ln n -\ln N_{i})
+\sum_{i\in I_{\leq\varepsilon n}}\ln 2 N_i
+\sum_{i\in I_{>\varepsilon n}}\left(\frac{1}{\varepsilon n}\right)N_i\right)\\
&\leq&\frac{1}{n^2}\left(\sum_{i=1}^{n} N_i\big(\ln n -\ln N_{i})\right)
+ \varepsilon \ln 2+\frac{1}{\varepsilon}O\left(\frac{1}{n}\right),
\end{eqnarray*}
where we use the fact that $N(n)=O(n^2)$. Let $n\rightarrow\infty$, and by the fact that $\varepsilon$ is arbitrarily small, the right hand side of Inequality~(\ref{zerfhzeqfhkfsdgvd}) is less than
\begin{eqnarray*}
\frac{1}{n^2}\Big(\sum_i N_i(\ln n -\ln N_{i})\Big)+o(1),
\end{eqnarray*}
which takes maximum if $N_i$ are almost all equal, thus less than
\begin{eqnarray}\label{sqdfqjksdghsg}
\frac{1}{n^2}\left(n\frac{N(n)}{n}\left(\ln n-\ln\frac{N(n)}{n}\right)\right)+o(1)
={-\frac{N(n)}{n^2}\ln\frac{N(n)}{n^2}}+o(1).
\end{eqnarray}
Let $C$ be any constant bigger than
$\max_{x>0}(-x\ln x) =\frac{1}{e}$
and we have finished the proof.
\qed
\end{proof}

Readers may compare this lemma to the expression of $ent$, and the term ${-\frac{N(n)}{n^2}\ln\frac{N(n)}{n^2}}$ of Equation~(\ref{sqdfqjksdghsg}) corresponds to the term $-t\ln t$ in $ent(s,t)$.

\vspace{0.5cm}

As we have already seen, the dependence of the entropy of a random bead configuration on the boundary condition is more delicate than that of the dimer model. So rather than roughly speaking that one boundary condition is close to a plane, we need to have a more precise definition.

\begin{definition}\label{definitionalmostplanar}
Given a tilt $(s,t)\in]-\frac{1}{2},\frac{1}{2}[\times]0,+\infty[$, for any $n\in\mathbb{N}^*$, a fixed boundary condition $h^{\partial,0}_n$ of a bead model with $n$ threads is called \emph{almost planar} if there exists a plane of tilt $(s,t)$ and $h^{\partial,0}_n$ is chosen to give the best approximation of that plane.
\end{definition}

We remark that being chosen to give the best approximation of a plane implies that on the left and right boundaries of $[-\frac{1}{n-1},1+\frac{1}{n-1}]\times[0,1]$, the distances between neighboring jumps of $h^{\partial,0}_n$ are all equal.

We will equally need its discrete version:

\begin{definition}\label{definitionalmostplanardiscrete}
Given a tilt $(s,t)\in]-\frac{1}{2},\frac{1}{2}[\times]0,+\infty[$, for any $n\in\mathbb{N}^*$ and $m\in\mathbb{N}^*$ large, \emph{an almost planar region} $R_{mn,n}^{0}$ is a tall region tileable by lozenges corresponding to $h^{\partial,0}_n$ constructed as in Section~\ref{sqdfhyzqmurei}.
\end{definition}

According to the construction of $R_{mn,n}^{0}$, the distance between the neighboring cracks on the left and right boundaries of $R_{mn,n}^{0}$ are all equal except for an error smaller than $1$. We also remark that an equivalent way to describe this region is that there exists a parallelogram in $\mathbb{R}^3$ corresponding to the tilt, and the boundary height function of $R_{mn,n}^{0}$ is chosen to fit best to that parallelogram.

By construction, the almost planar bead boundary condition (Definition~\ref{definitionalmostplanar}) is the continuous limit of its discrete version (Definition~\ref{definitionalmostplanardiscrete}). It is also clear that for any tilt $(s,t)$, it is always possible to find at least one boundary function $h^{\partial}_n$ verifying Definition~\ref{definitionalmostplanar} and to find at least one region $R_{mn,n}^0$ verifying Definition~\ref{definitionalmostplanardiscrete}.

\begin{lemma}\label{Lemmaentexist}
For any tilt $(s,t)\in]-\frac{1}{2},\frac{1}{2}[\times]0,\infty[$,\\
\emph{(a)} let $\mathcal{H}_n^t(s,t)$ be the space of bead configurations on the torus with $n$ threads and the height change of $H$
is equal to $(\lfloor n s \rfloor,\lfloor n t \rfloor)$, and let $X_n^{t}(s,t)$ be a randomly chosen element of $\mathcal{H}_n^t(s,t)$, then
$$\liminf_{n\rightarrow\infty}\frac{S\big(X_n^{t}(s,t)\big)}{n^2}>-\infty.$$\\
\emph{(b)} let $\mathcal{H}_n^{0}(s,t)$ be the space of almost planar bead configurations on $D$ with $n$ threads best fitting a plane of tilt $(s,t)$, and let $X_n^{0}(s,t)$ be a randomly chosen element of $\mathcal{H}_n^{0}(s,t)$, then
$$\liminf_{n\rightarrow\infty}\frac{S\big(X_n^{0}(s,t)\big)}{n^2}>-\infty.$$
\end{lemma}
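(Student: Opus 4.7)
The plan is to reduce both parts to the lozenge-tiling formalism via the discretization of Proposition~\ref{beadanddimerpartitionfunction} and then invoke the dimer-model computations of Section~\ref{sect5.3}. For part~(a), Proposition~\ref{beadanddimerpartitionfunction} with $l=mn$ gives
\begin{equation*}
S\big(X_n^{t}(s,t)\big) = \lim_{m\to\infty}\Big(\ln Z_{mn,n}^{\lfloor ns\rfloor,\lfloor nt\rfloor} - n\lfloor nt\rfloor \ln m\Big).
\end{equation*}
Divided by $n^2$, the right-hand side is exactly the quantity used in Section~\ref{surfacetensionlocalentropyfunction} to define the surface tension $\sigma(s,t)$ as the Legendre dual of the free energy $F(\alpha,\gamma)$ computed in Proposition~\ref{prop2}. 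Since Proposition~\ref{twolimits} certifies that in the toroidal setting the limits $m\to\infty$ and $n\to\infty$ commute, I conclude
\begin{equation*}
\lim_{n\to\infty}\frac{S\big(X_n^{t}(s,t)\big)}{n^2} \;=\; -\sigma(s,t) \;=\; ent(s,t),
\end{equation*}
which by Definition~\ref{EntDef} is finite on $(-\tfrac12,\tfrac12)\times(0,\infty)$, settling~(a).

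For part~(b), the same proposition yields
\begin{equation*}
S\big(X_n^{0}(s,t)\big) = \lim_{m\to\infty}\Big(\ln Z(R_{mn,n}^{0}) - n\lfloor nt\rfloor \ln m\Big),
\end{equation*}
where $R_{mn,n}^{0}$ is a planar region whose associated dimer slope is $(s, t/m)$. Applying the variational principle of \cite{CKP01} to $R_{mn,n}^{0}$ with $m$ fixed, $\ln Z(R_{mn,n}^{0})/n^2$ tends to $m\cdot ent^{\diamond}(s, t/m)$ as $n\to\infty$, and by Proposition~\ref{entropyexchange} this quantity minus $t\ln m$ tends to $ent(s,t)$ as $m\to\infty$. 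To extract a uniform-in-$n$ lower bound on $S(X_n^{0}(s,t))/n^2$ from these two iterated limits, I would use a diagonal choice $m=m(n)\to\infty$ slowly enough that the bracketed expression is within $o(1)$ of its $m$-limit (equal to $S(X_n^{0})/n^2$) for each $n$, combined with the uniform convergence on compacta in Proposition~\ref{entropyexchange}(a); this would yield $\liminf_n S(X_n^{0}(s,t))/n^2 \geq ent(s,t) > -\infty$.

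The main obstacle is exactly this interchange of limits: the discretization furnishes $S$ as an $m\to\infty$ limit for fixed $n$, whereas \cite{CKP01} supplies dimer asymptotics as $n\to\infty$ for fixed $m$, and as emphasized at the end of Section~\ref{erdsfdhzilmhfq} the lozenge approximation of the bead partition function is not uniform in $n$. If the diagonal argument runs into uniformity trouble, a purely combinatorial fallback suffices for the weaker conclusion demanded by the lemma: construct an explicit Lebesgue neighborhood of a ``ground state'' bead configuration in which beads are evenly spaced along each thread, with the horizontal phase shift dictated by $s$ so that interlacing is preserved under independent perturbations of each bead within a window of size $o(1/(nt))$; the volume of this product neighborhood is bounded below by $(1/(nt))^{N(n)}$ up to lower-order corrections, giving $S(X_n^{0}(s,t))/n^2 \geq -t\ln t + o(1) > -\infty$ directly and without any limit-exchange.
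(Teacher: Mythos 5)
Your primary route for both parts has a circularity problem. You identify $\lim_n S\big(X_n^{t}(s,t)\big)/n^2$ with $-\sigma(s,t)=ent(s,t)$ by appealing to the Legendre-duality computation of Section~\ref{surfacetensionlocalentropyfunction} and to Proposition~\ref{twolimits}. But Proposition~\ref{twolimits} concerns the weighted partition function $Z_{mn,n}(\alpha,\gamma)$ (summed over all height changes), not the fixed-height-change partition function $Z^{\lfloor ns\rfloor,\lfloor nt\rfloor}_{mn,n}$; and the existence of the iterated limit defining $\sigma(s,t)$ is explicitly deferred by the paper to Section~\ref{sectionProveVP}, where it is established in Theorem~\ref{thm-ent} --- whose proof begins by invoking Lemmas~\ref{Slessthan0} and~\ref{Lemmaentexist} to get boundedness of $S\big(X_n^{t}(s,t)\big)/n^2$. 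So you cannot use the convergence to $ent(s,t)$ to prove the very lower bound that that convergence proof requires. The same objection applies to part~(b), where you additionally (and correctly) flag that the diagonal choice $m=m(n)$ does not by itself control the fixed-$n$ quantity $S\big(X_n^{0}(s,t)\big)$, which is defined as an $m\to\infty$ limit at fixed $n$; the non-uniformity discussed at the end of Section~\ref{erdsfdhzilmhfq} is precisely the obstruction.

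Your fallback, however, is sound and suffices on its own: placing $\lfloor nt\rfloor$ evenly spaced beads on each thread with a phase shift between neighbouring threads dictated by $s$, one gets gaps of order $c(s)/(nt)$ with $c(s)>0$ since $s\in\,]-\frac12,\frac12[$, so each bead may be perturbed independently in a window of width $\varepsilon/(nt)$ while preserving interlacing; the resulting product set has volume at least $(\varepsilon/(nt))^{N(n)}$ with $N(n)=n\lfloor nt\rfloor$, whence $S/n^2\geq t(\ln\varepsilon-\ln t)+o(1)>-\infty$ directly, with only an $O(n\ln n)$ boundary correction in the planar case~(b). This is a genuinely different argument from the paper's, which disposes of the lemma in one line by sub(super)additivity of the entropy under gluing of configurations plus finiteness for a single $n$; your explicit volume bound is more quantitative and self-contained, at the cost of having to verify the interlacing geometry of the ground state, while the paper's route needs only the existence of one configuration with positive-volume neighbourhood together with a Fekete-type argument. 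I would promote the fallback to the main proof and drop the Legendre-duality route entirely.
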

\begin{proof}
The existence of the $\liminf_{n\rightarrow\infty}$ is a result of subadditivity.
\qed
\end{proof}

We give a series of technical lemmas following \cite{CEP,CKP01}.

\begin{lemma}\label{LemmaProp20CEP}
For any two bead models having the same number of threads but with different boundary conditions, consider the un-normalized height function $H$. If the two boundary conditions differ by at most $\Delta$, then on every common vertex, the expected value of these two unnormalized height functions differ by at most $\Delta+2$ under the supremum norm.
\end{lemma}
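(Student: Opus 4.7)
The plan is to derive the statement from the analogous height-comparison theorem for uniformly random lozenge tilings, via the discretization of Section~\ref{sqdfhyzqmurei}. Fix $n$ and the two bead boundary conditions $h^{\partial,(1)}_n$, $h^{\partial,(2)}_n$ with supremum distance at most $\Delta$. For each sufficiently large integer $l$, construct the tall tileable polygons $R^{(1)}_{l,n}$ and $R^{(2)}_{l,n}$ as in Section~\ref{sqdfhyzqmurei}. Let $H^{\mathrm{loz}}_i$ be the integer-valued lozenge height function of a uniformly random tiling of $R^{(i)}_{l,n}$. Under the identification of beads with horizontal lozenges, the restriction of $H^{\mathrm{loz}}_i$ to any lattice vertex on a thread equals the un-normalized bead height function $H_i$ of the associated bead configuration. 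Moreover, since $h^{\partial}_n$ takes values in $\tfrac12\mathbb{Z}$ up to a constant, for $l$ large enough the rounding of the cracks $\Delta^0_{\lfloor l y_k \rfloor}$, $\Delta^1_{\lfloor l y_{k'} \rfloor}$ is immaterial, and the boundary heights of $R^{(1)}_{l,n}$ and $R^{(2)}_{l,n}$ differ by at most $\Delta$ on their common boundary vertices.

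Now invoke the classical height-comparison theorem for uniform lozenge tilings (Proposition~20 of \cite{CEP}, whose proof uses a path-swapping coupling of dimer covers): if two simply-connected tileable regions have boundary height functions differing by at most $\Delta$ on their common boundary, then the expectations of the uniform lozenge height functions satisfy
\[
\bigl|\mathbb{E}[H^{\mathrm{loz}}_1(v)] - \mathbb{E}[H^{\mathrm{loz}}_2(v)]\bigr| \le \Delta + 2
\]
at every common interior vertex $v$. Applying this to $R^{(1)}_{l,n}$ and $R^{(2)}_{l,n}$ yields the corresponding inequality for the un-normalized bead heights at every common lattice vertex that sits inside both polygons.

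Finally, let $l\to\infty$ with $n$ fixed. By the weak convergence of the joint Dirac measure of horizontal-lozenge positions to the uniform bead measure (Section~\ref{sqdfhyzqmurei}), and because the quantity $H_i(v)$ is bounded above by the total number of beads (a constant fixed by the boundary condition), the expectations $\mathbb{E}[H^{\mathrm{loz}}_i(v)]$ converge to $\mathbb{E}[H_i(v)]$ for every common vertex $v$; the inequality is preserved in the limit, giving $|\mathbb{E}[H_1(v)] - \mathbb{E}[H_2(v)]| \le \Delta + 2$. The main subtlety is ensuring the rounding error coming from the crack placement does not leak extra slack into the $+2$ constant; this is handled by choosing $l$ large enough that every crack falls exactly at an allowed half-integer level on the left and right boundaries, so the $+2$ is inherited verbatim from the lozenge version.
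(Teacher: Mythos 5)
Your proof is correct and follows essentially the same route as the paper: the paper's entire proof is the one-line citation of Proposition~20 of \cite{CEP} (in its lozenge-tiling form), and you have simply made explicit the discretization via $R_{l,n}$ and the passage to the limit $l\to\infty$ that this citation implicitly relies on. The extra care you take with the crack-rounding and with convergence of expectations is sound and only fills in details the paper omits.
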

\begin{proof}
This is a corollary of Proposition 20 of \cite{CEP}'s analog in the case of lozenge tilings.
\qed
\end{proof}

\begin{lemma}\label{lemmaazuma}
For a bead model with $n$ threads, if the average height function is vertically $A$-Liptshitz, then there exists a constant $C>0$ and $C'>0$ depending on $A$ such that for any simply connected region contained in $D$ with given boundary condition of the bead model and two points $(x_1,y_1)$ and $(x_2,y_2)$ in the this region, the probability that $h(x_1,y_1)-h(x_2,y_2)$ differs from its expected height change by more than $\alpha\sqrt{\frac{|x_1-x_2|+|y_1-y_2|}{n-1}}$ is less than $Ce^{-C'{\alpha}^2}$.
\end{lemma}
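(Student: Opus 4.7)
The plan is to establish the concentration via an Azuma--Hoeffding martingale argument applied to the lozenge-tiling approximation of the bead model, in the spirit of \cite{CEP}. Throughout, I will pass to the discrete approximation: fix $m$ large and work with the uniform lozenge tiling of the tall region $R_{mn,n}$ from Section~\ref{sqdfhyzqmurei}, identifying (via Proposition~\ref{beadanddimerpartitionfunction}) the normalized bead height $h$ with $H_{mn,n}/(n-1)$ evaluated at lattice points close to the scaled positions $((n-1)x_i,mny_i)$. The strategy is to obtain the estimate at the dimer level with constants uniform in $m$ and then pass to the limit $m\to\infty$.

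First, I would fix a monotone lattice path $\pi$ from $p_1$ to $p_2$ in $R_{mn,n}$ (e.g.\ vertical then horizontal). The signed count of horizontal lozenges \lozengeconstant{} crossed by $\pi$ equals $H_{mn,n}(p_1)-H_{mn,n}(p_2)$, since only these tiles change the dimer height along a monotone path. Using the $A$-Lipschitz assumption on the average height function, together with the interlacing and the fact that on each thread the expected number of beads in a vertical interval $[y_2,y_1]$ is of order $A(n-1)|y_1-y_2|$, one checks that the expected number of horizontal lozenges met by $\pi$ is at most a constant (depending on $A$) times $(n-1)(|x_1-x_2|+|y_1-y_2|)$, uniformly in $m$.

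Next, I would set up a Doob martingale by revealing the configuration of $R_{mn,n}$ in successive small blocks progressing along $\pi$. By Lemma~\ref{LemmaProp20CEP} applied locally (the bead-model translation of the Kenyon--Propp monotonicity used in \cite{CEP}), each such revelation modifies the conditional expectation of $H_{mn,n}(p_1)-H_{mn,n}(p_2)$ by at most an absolute constant. Because only horizontal lozenges actually influence this height difference along a monotone path, the number of martingale steps with nonzero effect is controlled by the count in the previous paragraph. Azuma's inequality then yields
\[
\mathbb{P}\bigl(|H_{mn,n}(p_1)-H_{mn,n}(p_2) - \mathbb{E}[H_{mn,n}(p_1)-H_{mn,n}(p_2)]| > \beta \sqrt{(n-1)(|x_1-x_2|+|y_1-y_2|)}\bigr) \leq C e^{-C'\beta^{2}},
\]
with $C,C'$ depending only on $A$ and independent of $m$. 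Dividing by $n-1$ and letting $m\to\infty$ (permitted by Proposition~\ref{beadanddimerpartitionfunction} since the bound is uniform in $m$) converts this into the announced estimate with $\alpha=\beta$.

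The main obstacle I expect is precisely the uniformity in $m$ of the effective length of the Doob martingale: a naive application of the Azuma bound of \cite{CEP} would use the full dimer graph distance in $R_{mn,n}$, which carries a factor $mn|y_1-y_2|$ and diverges as $m\to\infty$. Circumventing this requires exploiting the structural fact that vertical lozenges contribute nothing to the height difference across a monotone path, so only the beads actually met by $\pi$ (of which, by the $A$-Lipschitz hypothesis on the average, there are $O((n-1)(|x_1-x_2|+|y_1-y_2|))$ in expectation) can produce nonzero martingale increments. Making this rigorous --- in particular, specifying the revealing order so that Lemma~\ref{LemmaProp20CEP} applies while the bounded-difference property survives uniformly in $m$ --- is the technical heart of the argument.
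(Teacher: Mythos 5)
Your horizontal step is sound and matches what the paper does (reveal the configuration thread by thread between the two $x$-coordinates, use the monotonicity statement of Lemma~\ref{LemmaProp20CEP} to get bounded martingale differences, apply Azuma over the deterministic number $N_1=\lfloor (n-1)|x_1-x_2|\rfloor$ of columns). The gap is in the vertical step, and it sits exactly where you flagged it. To apply Azuma you need an \emph{almost-sure} bound on the number of martingale increments that can be nonzero (or on the sum of their squares), not a bound in expectation. Along a vertical segment of a single thread, the number of horizontal lozenges (beads) met by the path \emph{is} the height change $H(i,y_1)-H(i,y_2)$ --- precisely the random quantity whose concentration you are trying to prove. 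The $A$-Lipschitz hypothesis controls only its expectation, so ``only the beads met by $\pi$ produce nonzero increments, and there are $O(A(n-1)|y_1-y_2|)$ of them in expectation'' does not give you a martingale of deterministically bounded length; on the event that the thread carries many more beads than expected, your martingale has too many steps and the Azuma exponent degrades. Using the expected count here is circular.

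The paper's resolution is a different idea that your proposal is missing: rotate the $(y,z)$-plane by $\pi/4$ (the map $\sim$ of Section~\ref{sect5.4part1}), after which the surface $\t{z}=\t{H}(\t{x},\t{y})$ is \emph{deterministically} $1$-Lipschitz in $\t{y}$ for every configuration, not just on average. Discretizing $\t{y}$ into steps of size $\frac{1}{n-1}$ then yields a Doob martingale with a deterministic number $\lfloor (n-1)|\t{y}_1-\t{y}_2|\rfloor$ of increments, each bounded by a constant, to which Azuma applies directly; the $A$-Lipschitz hypothesis on the \emph{average} height is used only afterwards, to convert a deviation bound in the rotated coordinates back into one for $(n-1)h$ in the original coordinates at the cost of a constant depending on $A$. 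This also removes the need for your detour through $R_{mn,n}$ and the uniformity-in-$m$ issue entirely, since the argument runs directly on the continuous bead height function. As written, your proof does not close; supplying the rotation (or some other device producing a deterministic Lipschitz bound in the vertical direction) is necessary, not merely a technical refinement.
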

\begin{proof}
This is the bead-model-version of Theorem 21 and Proposition 22 of \cite{CEP}. To prove this, consider the path
$$\big((n-1)x_1,(n-1)y_1\big)\rightarrow\big((n-1)x_2,(n-1)y_1\big)\rightarrow\big((n-1)x_2,(n-1)y_2\big)$$
in the domain $[0,n-1]\times[0,n-1]$, and consider the height function $(n-1)h(x,y)$ (attention, this is not the un-normalized height function $H$).

Let $N_1=\lfloor (n-1)|x_1-x_2|\rfloor$ be the number of threads between these two points. For the first step (the horizontal step), consider the same martingale as in Theorem 21 of \cite{CEP}, and using Azuma's inequality \cite{AS}, the probability that the difference between the exact height change and expected height change is bigger than $\frac{1}{2}\alpha\sqrt{N_1}$ is less than $C_1e^{-C_2\alpha^2}$ for some constant $C_1,C_2>0$.

Now consider the vertical step. To do this, we turn the space by $\frac{\pi}{4}$ again so that the new space is vertically Lipshitz (see Section~\ref{sect5.4part1}) under the turned coordinates ${(\t{x},\t{y},\t{z})}$ and the surface is ${\t{z}=\t{H}(\t{x},\t{y})}$. Since the space is Lipschitz, we can apply Azuma's inequality again. Take a discretization in $\t{y}$ and for any two points on the same thread and with vertical coordinates $\t{y}_1$, $\t{y}_2$, the number of steps is equal to ${\lfloor (n-1)|\t{y}_1-\t{y}_2|\rfloor}$, and we get a result for $\t{y}$ similar to that for $x$. For any surface $(n-1)h$, turning ${(\t{x},\t{y},\t{z})}$ back into the original space ${(x,y,z)}$ will lead to another difference, but it is bounded by a constant depending on $A$ times the difference of the real and expected height change of $(n-1)h$. Thus, there exist constants $C_3,C_4>0$ such that ${(n-1)h(x,y)}$ changes by more than ${\frac{1}{2}\alpha\sqrt{N_2}}$ is less than $C_3e^{-C_4\alpha^2}$, where ${N_2=\lfloor (n-1)|y_1-y_2|\rfloor}$.

In conclusion, the probability that $h$ and its expected value differs by more than $\alpha\sqrt{\frac{|x_1-x_2|+|y_1-y_2|}{n-1}}$ is at most $Ce^{-C'a^2}$ for well chosen $C$ and $C'$.
\qed
\end{proof}

For any tilt $(s,t)\in]-\frac{1}{2},\frac{1}{2}[\times]0,+\infty[$, any
$\delta>0$ and $n\in\mathbb{N}^*$, define ${\mathcal{U}_{\delta,n}(s,t)}$ as the space of fixed boundary condition of the normalized bead model with $n$ threads, where the boundary functions $h^{\partial}_n$ are in the $\delta$-neighborhood of a plane of tilt $(s,t)$.

\begin{lemma}\label{lemmaProp3.4CKP}
Under the setting above, for $n$ sufficiently large, for any fixed boundary condition ${h^{\partial}_n\in\mathcal{U}_{\delta,n}(s,t)}$, the average normalized height function of the bead model is given within ${\delta +o(1)}$ by that plane, $o(1)$ tending to $0$ when $n\rightarrow\infty$.
\end{lemma}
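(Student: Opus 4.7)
The plan is a two-step argument: I first reduce the general boundary condition to an almost planar one via Lemma~\ref{LemmaProp20CEP}, then handle the almost planar case by comparing locally with a translation-invariant reference.

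For the reduction step, let $P_{s,t}$ denote the reference plane and let $h^{\partial,0}_n$ be the almost planar boundary of Definition~\ref{definitionalmostplanar} fitting $P_{s,t}$. Because $h^{\partial,0}_n$ is the best discrete approximation of $P_{s,t}|_{\partial D}$ by a valid boundary function with jumps of size $1/(n-1)$, one has $\|h^{\partial,0}_n - P_{s,t}\|_{\infty,\partial D} = O(1/n)$; combined with $\|h^{\partial}_n - P_{s,t}\|_{\infty,\partial D} \leq \delta$, this yields $\|h^{\partial}_n - h^{\partial,0}_n\|_{\infty,\partial D} \leq \delta + O(1/n)$. Passing to the un-normalized height $H_n = (n-1)h_n$, the boundary data differ by at most $(n-1)\delta + O(1)$, so Lemma~\ref{LemmaProp20CEP} gives the same bound for the difference of the expected un-normalized heights, which renormalizes to $\delta + o(1)$. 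It therefore suffices to show that the expected normalized height $\bar{h}^{0}_n$ under the almost planar boundary $h^{\partial,0}_n$ satisfies $\|\bar{h}^{0}_n - P_{s,t}\|_\infty = o(1)$.

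For the almost planar case, the family $\{\bar{h}^{0}_n\}_n$ is uniformly bounded, horizontally $\tfrac12$-Lipschitz and vertically non-decreasing; in the turned coordinates of Section~\ref{sect5.4part1} it is $1$-Lipschitz, hence precompact in $C(D)$ by Arzel\`a--Ascoli. Let $h^{\ast}$ be the uniform limit along any subsequence; on $\partial D$, the convergence of boundary data forces $h^{\ast}|_{\partial D} = P_{s,t}|_{\partial D}$. To pin down $h^{\ast}$ in the interior, I would compare locally with the bead model on the torus of slope $(s,t)$, whose expected height function is exactly $P_{s,t}$ by translation invariance. For any open disc $B$ compactly contained in $\mathring{D}$, the Gibbs property lets me condition on the bead configuration outside $B$; the concentration inequality of Lemma~\ref{lemmaazuma} shows that with high probability this conditioning is within $o(1)$ of the planar trace $P_{s,t}|_{\partial B}$, so applying Lemma~\ref{LemmaProp20CEP} inside $B$ gives $|\bar{h}^{0}_n - P_{s,t}| = o(1)$ on $B$. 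Letting $B$ exhaust $\mathring{D}$ and matching to the boundary yields $h^{\ast} = P_{s,t}$, and since every subsequential limit agrees, $\bar{h}^{0}_n \to P_{s,t}$ uniformly.

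The main obstacle is the second step: propagating the torus-like behavior in the bulk all the way to $\partial D$ requires care, because the Azuma fluctuations on $\partial B$ are of order $n^{-1/2}$ in normalized height only when $B$ stays away from $\partial D$, and one must handle discs whose distance to $\partial D$ shrinks with $n$. I anticipate a dyadic scale argument: prove convergence at distance $\varepsilon$ from $\partial D$ for each fixed $\varepsilon$, then let $\varepsilon \to 0$ while controlling the errors using the Lipschitz bound and Lemma~\ref{lemmaazuma} to ensure the near-boundary region contributes only $o(1)$ to the sup norm.
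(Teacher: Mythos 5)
Your first step (reducing to the almost planar boundary via Lemma~\ref{LemmaProp20CEP}, after noting that $h^{\partial,0}_n$ and $h^{\partial}_n$ differ by at most $\delta+O(1/n)$ on $\partial D$) is sound and is exactly the standard use of the boundary-influence estimate. The paper itself gives no argument at all here: it simply declares the lemma a direct corollary of Proposition 3.4 of \cite{CKP01}, so you are reconstructing the content of that cited proposition.

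The reconstruction, however, has a genuine circularity in the second step. Lemma~\ref{lemmaazuma} is a concentration inequality: it says the random height change between two points is, with high probability, within $O(\alpha\sqrt{\cdot/n})$ of its \emph{expected} height change. It says nothing about what that expectation is. So when you condition on the configuration outside a disc $B$ and claim that ``with high probability this conditioning is within $o(1)$ of the planar trace $P_{s,t}|_{\partial B}$'', you are implicitly assuming that the expected height on $\partial B$ is already the plane --- which is precisely the statement you are trying to prove on the interior. Your argument, as written, only establishes the implication ``if $\bar h^0_n\approx P_{s,t}$ on $\partial B$ then $\bar h^0_n\approx P_{s,t}$ inside $B$'', and there is no anchor in the bulk to start the induction; the anchor at $\partial D$ only controls $\bar h^0_n$ near the boundary, not on discs deep inside $\mathring D$. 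The torus comparison does not repair this: translation invariance identifies the expected height change \emph{on the torus}, but transferring that to a simply connected domain with fixed boundary data is exactly the nontrivial content of Proposition 3.4 of \cite{CKP01}, whose actual proof anchors the bulk by a translation/concatenation argument (comparing the region with a lattice translate of itself, whose planar boundary data differ by exactly the linear increment, and invoking the comparison lemma plus concentration to force linearity of the mean). Your closing paragraph worries about the near-boundary scales, but the real gap is this missing bulk anchor; without it the subsequential limit $h^{\ast}$ is only pinned down on $\partial D$, and an admissible function agreeing with the plane on $\partial D$ need not be the plane.
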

\begin{proof}
This is a direct corollary of Proposition 3.4 of \cite{CKP01}.
\qed
\end{proof}

\begin{lemma}\label{prop3.6CKP0}
Under the same setting of Lemma~\ref{lemmaProp3.4CKP}, for any $\varepsilon>0$, if we let $X^{h^{\partial}_n}_n$ be any random bead model whose fixed boundary condition is given by ${h^{\partial}_n\in\mathcal{U}_{\delta,n}(s,t)}$, then for $\delta$ sufficiently small and $n$ sufficiently large, we have
$$\frac{S(X^{h^{\partial}_n}_n)}{n^2}\leq\frac{S\big(X_n^{0}(s,t)\big)}{n^2}+\varepsilon,$$
where recall that $X_n^{0}(s,t)$ is the random bead configuration with an almost planar fixed boundary condition.
\end{lemma}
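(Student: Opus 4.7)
The plan is to exploit concentration of the height function around the tilted reference plane, then perform a fine grid decomposition of $D$ so that after conditioning on the boundary heights along the grid skeleton each sub-square becomes an independent bead model whose own boundary is approximately planar; the global entropy is then bounded by a sum of sub-square entropies, each compared to an almost-planar reference at the corresponding scale.

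First I would apply Lemma~\ref{lemmaProp3.4CKP} to assert that the expected normalized height function of $X^{h^{\partial}_n}_n$ lies within $\delta+o(1)$ of the tilted plane, and combine this with the Azuma-type fluctuation bound of Lemma~\ref{lemmaazuma} to force the actual surface into a $2\delta$-tube around that plane with probability $1-o(1)$ for $\delta$ small and $n$ large. Fix $k=k(\varepsilon)\in\mathbb{N}^*$ to be chosen, partition $D$ into a $k\times k$ grid of sub-squares, pass to the discretization of Proposition~\ref{beadanddimerpartitionfunction} (lozenge tilings of $R_{mn,n}$ with $m\to\infty$), and apply Lemma~\ref{lemmaSdecomposition} to the variable $I_E$ recording the restriction of the lozenge height function to the grid skeleton:
$$S(X^{h^{\partial}_n}_n) = S(I_E) + \sum_i \mathbb{P}(E_i)\,S(X_i).$$
Since the grid skeleton carries $O(kn)$ lattice sites with $O(n)$ possible heights each, we have $S(I_E)=O(kn\ln n)=o(n^2)$.

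For a \emph{good} restriction, namely one whose height profile stays within $O(\delta)$ of the reference plane (this is the regime of probability $1-o(1)$ by the concentration above), each sub-square hosts an independent bead sub-model with $n/k$ threads whose boundary lies in $\mathcal{U}_{O(\delta),n/k}(s,t)$. Comparing this forced boundary to the uniformly-spaced almost-planar reference of Definition~\ref{definitionalmostplanar}, via Lemma~\ref{LemmaProp20CEP} (bounding the expected height change under a boundary perturbation) together with Lemma~\ref{lemmaazuma} (controlling typical fluctuations), one couples the two boundary types at entropic cost of order $o((n/k)^2)$. Each good sub-square therefore contributes at most $S(X_{n/k}^0(s,t))+o((n/k)^2)$; summing the $k^2$ contributions and using subadditivity/scaling of the almost-planar family to identify the per-area limit at scales $n$ and $n/k$ yields a global bound of $S(X_n^0(s,t))+\tfrac{\varepsilon}{2}n^2$. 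The \emph{bad} restrictions have total probability $o(1)$ and each carries entropy at most $Cn^2$ by Lemma~\ref{Slessthan0}, so their weighted contribution is $o(n^2)$. Adding this to the $o(n^2)$ term from $S(I_E)$ closes the argument after choosing $k$ large, then $\delta$ small, then $n$ large.

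The hard part is the boundary-swap estimate in the previous paragraph: the boundary forced on a sub-square by the random grid restriction is only heuristically planar (the jump positions on its sides are close to equispaced only up to order $\sqrt{n/k}$), while Definition~\ref{definitionalmostplanar} requires the genuine best-fit placement. Quantifying the entropic price of this swap at $o((n/k)^2)$ rather than $O((n/k)^2)$, uniformly over the typical grid restrictions, is the bead-model counterpart of the most delicate comparison inside Proposition~3.6 of \cite{CKP01}; this is precisely what the assembly of Lemmas~\ref{LemmaProp20CEP} and~\ref{lemmaazuma} is designed to handle, though executing it rigorously for the continuous bead coordinates (rather than for the discrete dimer heights) is where most of the technical care will go.
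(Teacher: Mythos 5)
There is a genuine gap at the step you yourself flag as ``the hard part'': the claim that each good sub-square, whose boundary is merely known to lie in $\mathcal{U}_{O(\delta),n/k}(s,t)$, contributes at most $S\big(X_{n/k}^0(s,t)\big)+o((n/k)^2)$. That claim \emph{is} the statement of Lemma~\ref{prop3.6CKP0} at scale $n/k$, so the grid decomposition has not reduced the problem --- the boundary induced on a sub-square by conditioning on the skeleton is exactly as irregular as the original $h^{\partial}_n$, and the whole point of this lemma (stated explicitly in the paper just before it) is that in the bead model two boundary conditions that are close in sup norm can have entropies that are far apart, because the entropy is sensitive to the spacing of the individual jumps and not just to the macroscopic profile. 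Lemmas~\ref{LemmaProp20CEP} and~\ref{lemmaazuma} control expected heights and their fluctuations; they give no control whatsoever on the entropic cost of swapping one boundary microstructure for another, so ``one couples the two boundary types at entropic cost $o((n/k)^2)$'' is an assertion of the conclusion, not a proof. A secondary issue: your bound $S(I_E)=O(kn\ln n)$ ignores that in the discretization the vertical skeleton lines carry $O(mn)$ sites each, so the skeleton entropy is a priori $O(kn\ln m)$ and must be cancelled against the $-N\ln m$ adjustment for the beads sitting on the skeleton; this is precisely the delicate bookkeeping that the paper isolates as its term (c) and, notably, can only handle under an explicitly admitted technical assumption.

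The paper's route is structurally different and is designed to avoid the boundary swap entirely. It fixes a deterministic reference surface $h^{0,+}$ (the supremum of admissible functions with tilt in a $\rho$-neighborhood of $(s,t)$ matching the planar boundary), and for each tiling defines $\gamma_r(H_{mn,n})$ as the maximal intersection curve of the random surface with $H^{0,+}_{mn,n}$ enclosing $D_r$. Conditioned on $\gamma$, the configurations of the $h^{\partial}_n$-model \emph{inside} $\gamma$ form a literal subset of the configurations of the almost-planar model conditioned on the same curve, so the inside contribution is compared by monotonicity with no error at all; the outside-$\gamma$ annulus has area $O(\delta)$ and is controlled from above by Lemma~\ref{Slessthan0} and from below by Lemma~\ref{Lemmaentexist}; the event $\gamma=\emptyset$ is killed by Lemmas~\ref{LemmaProp20CEP} and~\ref{lemmaazuma}. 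If you want to pursue your decomposition, you would need to replace your sub-square boundary-swap by some analogue of this containment argument --- as written, the proposal assumes the lemma in order to prove it.
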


This lemma corresponds to Proposition 3.6 of \cite{CKP01}, where the authors prove that the entropies of the dimer models of nearby boundary conditions are close. As already explained, this is no longer true for the bead model, and the lemma above tells that among all the boundary conditions near a planar, the one that is almost planar has almost the biggest entropy, with an error tending to $0$ when $n\rightarrow\infty$. 


We remark that in the proof below there is a technical assumption. We don't succeed to find a rigorous proof of this point but we have reason to believe that it is true.

\begin{proof}
To prove this lemma, we will look at the discrete version of the bead model. There we use the same idea of \cite{CKP01}, where the authors compare the entropy of two different boundary conditions by applying a coupling-like method between the surfaces. In our case this is more complicated since even a tiny region may have big negative contribution in the entropy, so some more detailed construction is needed.

Given a tilt $(s,t)\in]-\frac{1}{2},\frac{1}{2}[\times]0,+\infty[$, we define $h^{\partial,0}:\partial D\rightarrow\mathbb{R}$ as the linear function fitting a plane of tilt $(s,t)$. For any $n\in\mathbb{N}^*$, we denote by $h^{\partial,0}_n$ an almost planar boundary condition fitting best $h^{\partial,0}$, and for any $m\in\mathbb{N}^*$ large, consider an almost planar region $R_{mn,n}^{0}$ as in Section~\ref{sqdfhyzqmurei}.

Now given any other boundary condition ${h^{\partial}_n\in\mathcal{U}_{\delta,n}(s,t)}$, consider another region $R_{mn,n}$ that corresponds to $h^{\partial}_n$ as in Section~\ref{sqdfhyzqmurei}.

As rising the whole boundary by the same amount doesn't change the entropy, without loss of generality we can suppose that $h^{\partial}_n\geq h^{\partial,0}_n$. We superpose $R_{mn,n}^{0}$ and $R_{mn,n}$ in such a way that their left and right sides are on the same lines (except for the positions of cracks corresponding to the jumps of $h^{\partial,0}_n$ and $h^{\partial}_n$), and the upper and lower boundaries differ by at most $O(\delta n)$.

Define respectively $\mathcal{H}_{mn,n}$ and $\mathcal{H}_{mn,n}^0$ as the space of tilings of $R_{mn,n}$ and of $R_{mn,n}^0$, and a random tiling uniformly chosen respectively from $\mathcal{H}_{mn,n}$ and $\mathcal{H}_{mn,n}^0$ is denoted by $X_{mn,n}$ and $X_{mn,n}^0$. We want to compare the adjusted entropies of $X_{mn,n}$ and $X_{mn,n}^0$. To do this, we use a surface coupling method as in \cite{CKP01} but more delicate (in some sense).

For a given tilt $(s,t)$, we fix some $\rho>0$ such that the $\rho$-neighborhood of $(s,t)$ lies within $]-\frac{1}{2},\frac{1}{2}[\times]0,\infty[$. Define $h^{0,+}:D\rightarrow\mathbb{R}$ as the supremum of the admissible function fitting $h^{\partial,0}$ on $\partial D$ and of tilt within the $\rho$-neighborhood of $(s,t)$ almost everywhere. Such $h^{0,+}$ is a piecewise linear surface on $D$. For any $n$, we let $h^{0,+}_n$ be a bead height function that fits best to $h^{0,+}$ and let $H^{0,+}_{mn,n}$ be a height function of tiling of $R^0_{mn,n}$ which approximates $h^{0,+}$ best when normalized horizontally by $n$ and vertically by $mn$.

For any $r\in]0,\frac{1}{2}[$, for every $H_{mn,n}$, whenever possible, define $\gamma_r(H_{mn,n})$ to be the maximal curve made up by the points of the intersection of $H_{mn,n}$ and $H^{0,+}_{mn,n}$ and enclosing ${D_r=[r,1-r]\times[r,1-r]}$. Here the ``curve" means a path along the edges of lozenges, and ``maximal'' means having the biggest enclosed area.

We decompose the set of tilings of $R_{mn,n}$ by $\gamma_r(H_{mn,n})$. In case that $\gamma_r(H_{mn,n})$ doesn't exist, we just note $\gamma_r(H_{mn,n})=\emptyset$. According to Lemma~\ref{lemmaSdecomposition}, we have the following decomposition for $\gamma\in\{\gamma_r(H_{mn,n}):H_{mn,n}\in \mathcal{H}_{mn,n}\}\cup\{\emptyset\}$:
\begin{eqnarray}\label{eqnSdecomp}
S(X_{mn,n})-\ln m N(n)=\sum_{\gamma}p_{\gamma}\big(-\ln p_{\gamma}+
S(X_{mn,n}|_{\gamma_r(H_{mn,n})=\gamma})-\ln m N(n)\big),
\end{eqnarray}
where $N(n)$ is the number of horizontal tiles in a tiling of $R_{mn,n}$ and $p_{\gamma}$ is the probability that $\gamma_r(H_{mn,n})=\gamma$. We will compare this to $S(X_{mn,n}^0)-\ln m N^0(n)$, where $N^0(n)$ is the number of horizontal tiles in a tiling of $R_{mn,n}^0$.

We first treat the term $\gamma=\emptyset$ and fix $r=\frac{2\delta}{\rho}$ as a function of $\delta$. The probability that $\gamma=\emptyset$ is less than the probability that there is some point on $\partial D_r$ such that on the corresponding point in the discrete version we have $H_{mn,n}>H^{0,+}_{mn,n}$.

By Lemmas~\ref{LemmaProp20CEP} and~\ref{lemmaazuma}, on any such point this probability is exponentially small in $n$. Moreover, there are only $O(n)$ points that need to be checked: on the upper and lower sides there are only $O(n)$ point, and on the other two sides it suffices to check $O(n)$ with fixed distance between neighboring ones. The unit distance should be small enough depending on $\rho$ so that if two neighboring points verify the condition then on the whole interval the same condition is automatically verified. Thus, the total probability tends to $0$ when $\delta\rightarrow 0$ and $n\rightarrow\infty$, and the term $-p_{\gamma}\ln p_{\gamma}$ tends to $0$ too.

By Lemma~\ref{Slessthan0} the remaining part is bounded from above by a global constant times the area, so in conclusion, we can choose $\delta$ small enough so that this term is less than $\frac{\varepsilon}{4}$ for any $n$ large enough.

We now restrict ourselves to the case where $\gamma\neq\emptyset$. For any $\gamma$, denote respectively the number of horizontal lozenges on the curve by $N^{\gamma}$, the number of lozenges not enclosed by $\gamma$ by $N^{\gamma}_{out}$ and the number of lozenges enclosed by $\gamma$ by $N^{\gamma}_{in}$.
Conditioned to $\gamma$, the tiling of the regions inside and outside $\gamma$ are independent, so we can write every term (corresponding to $\gamma$) in the sum on the right hand side of~(\ref{eqnSdecomp}) as a sum of:\\
(a) $p_{\gamma}$ times the adjusted entropy of a tiling of the region enclosed by $\gamma$,\\
(b) that of a tiling of the region not enclosed by $\gamma$,\\
(c) $p_{\gamma}(-\ln p_{\gamma}-\ln m N^{\gamma}).$

We take the following technical assumption: we assume that for $\delta$ small enough, when $n\rightarrow\infty$ and $m\rightarrow\infty$ (depending on $n$), the term (c) summed over all $\gamma$ and normalized by $n^2$ will be finally smaller than $\frac{\varepsilon}{4}$. In fact, the sum over all $\gamma$ of (c) can be viewed as an expectation, and
we consider a typical boundary. If on the left piece there are $N_l$ horizontal lozenges, and we suppose that the winding contributes not too much so the left piece behaves as a lazy random walk with fixed number of moves, starting position and ending position. The way to take this piece is around $\dbinom{nm}{\frac{N_l}{2}}^2$, so typically the probability is of order $\dbinom{mn}{N^{\gamma}}^{-1}$, and
$$\frac{1}{n^2}\mathbb{E}\Big[\sum_{\gamma}p_{\gamma}(-\ln p_{\gamma}-\ln m N^{\gamma})\Big]$$
should be of order $\frac{\ln n}{n}$. By this argument, our assumption seems to be reasonable, but we wish to find a way to make this argument rigorous.

For terms (a) and (b), we consider the following subspaces of the tiling of $R_{mn,n}^0$: for every given $\gamma$, define
\begin{eqnarray}
\mathcal{H}^{0,+}_{mn,n}(\gamma)=\{H^0_{mn,n}\in \mathcal{H}^0_{mn,n} :H^0_{mn,n}|_{\gamma}=H^{0,+}_{mn,n}|_{\gamma}\}.
\end{eqnarray}
Denote by $X^{0,+}_{mn,n}(\gamma)$ a random tiling uniformly chosen in this space. We prove that the normalized and adjusted entropy  $$\frac{1}{n^2}\Big(S(X^{0,+}_{mn,n}(\gamma))-\ln m(N^0(n)-N^{\gamma})\Big)$$
is at least not much smaller than
$$\frac{1}{n^2}\Big(S(X_{mn,n}|_{\gamma_r(H_{mn,n})=\gamma}-\ln m (N(n)-N^{\gamma})\Big).$$


In fact, both of them can be written as a sum of the adjusted and normalized entropy on the region enclosed by $\gamma$ and that on the region not enclosed by $\gamma$. Obviously their contributions of the region enclosed by $\gamma$ in the entropy are equal. On the region not enclosed by $\gamma$, by Lemma~\ref{Slessthan0} we have
$$\frac{1}{n^2}\Big(S(X^{out}_{mn,n}|_{\gamma_r(H_{mn,n})=\gamma})-
\ln m N^{\gamma}_{out}\Big)$$
is less than a global constant $C$ times the area of region, which tends to $0$ when $\delta\rightarrow 0$ (so $r\rightarrow 0$). Here $X^{out}_{mn,n}|_{\gamma_r(H_{mn,n})=\gamma}$ is the conditioned random tiling outside the region enclosed by $\gamma$ and of boundary condition $\partial R_{mn,n}$.

Meanwhile, if let $X^{out,0,+}_{mn,n}|_{\gamma_r(H_{mn,n})=\gamma}$ be the conditioned random tiling outside the region enclosed by $\gamma$ and of boundary condition $\partial R_{mn,n}^0$, then for
$$\frac{1}{n^2}\Big(S(X^{out,0,+}_{mn,n}|_{\gamma_r(H_{mn,n})=\gamma})-\ln m N^{\gamma,0}_{out}\Big),$$
where $N^{\gamma,0}_{out}=N^{0}(N)-N^{\gamma}-N^{\gamma}_{in}$ is the number of horizontal lozenges outside $\gamma$, its boundary condition restricted on $\partial R^0_{mn,n}$ and $\gamma$ fits best to a piecewise linear function $h^{0,+}$. We can decomposed the region into a union of disjoint squares, and by Lemma~\ref{Lemmaentexist}, the adjusted normalized ventropy normalized entropy is bounded from below by some constant (depending on $(s,t)$ and $\rho$) times the area of this region when $n\rightarrow\infty$ and $m\rightarrow\infty$ depending on $n$. Since when $\delta\rightarrow 0$, the normalized area of the region between curve $\gamma$ and $\partial R_{mn}^0$ also tends to $0$, as conclusion, for $\delta$ small enough, for $n$ big enough and for $m$ big enough, we have
$$\frac{1}{n^2}\Big(S(X^{out,0,+}_{mn,n}|_{\gamma_r(H_{mn,n})=\gamma})-\ln m N^{\gamma,0}_{out}\Big)>-\frac{\varepsilon}{4}.$$

Finally, since the space $\mathcal{H}^{0+}_{mn,n}|_{\gamma_r(H_{mn,n})=\gamma}$ is a subspace of $\mathcal{H}^0_{mn,n}$, the normalized adjusted entropy of  $X^{0+}_{mn,n}|_{\gamma_r(H_{mn,n})=\gamma}$ for every $\gamma$ is less than that of $X^0_{mn,n}$. Together with the technical assumption on (c), in conclusion we have: for $\delta$ small enough, $n$ large enough, we have
\begin{eqnarray*}
\sum_{\gamma}p_{\gamma}\big(-\ln p_{\gamma}+
S(X_{mn,n}|_{\gamma_r(H_{mn,n})=\gamma})-\ln m N(n)\big)
< \frac{1}{n^2}\Big(S(X^0_{mn,n})-\ln m N^0(n)\Big)+\varepsilon.
\end{eqnarray*}
\qed
\end{proof}

Lemma~\ref{prop3.6CKP0} proves that among the fixed boundary conditions that are close to a plane, the almost planar one has almost the biggest entropy. As a corollary, we have the following theorem. Recall that for bead models with $n$ threads, $X_n^{t}(s,t)$ is the random bead configuration of toroidal boundary condition given by tilt $(s,t)$, and $X_n^{0}(s,t)$ is that of almost planar fixed boundary condition.

\begin{theorem}\label{thm-ent}
For any tilt $(s,t)\in]-\frac{1}{2},\frac{1}{2}[\times]0,+\infty[$, we have
$$\lim_{n\rightarrow\infty}\frac{S\big(X_n^{t}(s,t)\big)}{n^2}
=\lim_{n\rightarrow\infty}\frac{S\big(X_n^{0}(s,t)\big)}{n^2}=ent(s,t).$$
Moreover, for any $\delta>0$, $n\in\mathbb{N}^*$, if we consider the union of bead models with fixed boundary conditions taken in ${\mathcal{U}_{\delta,n}(s,t)}$, then the combinatorial entropy of a random bead configuration in this set normalized by $n^2$ is also equal to $ent(s,t)+o(1)$ when $n\rightarrow\infty$ and $\delta\rightarrow 0$.
\end{theorem}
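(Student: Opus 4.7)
The theorem contains three limit identities: the toroidal case, the almost-planar case, and the $\mathcal{U}_{\delta,n}(s,t)$-case. The toroidal one is essentially built into the constructions already in place. By Proposition~\ref{beadanddimerpartitionfunction},
\[
\frac{S\bigl(X_n^{t}(s,t)\bigr)}{n^2}=\lim_{m\to\infty}\Bigl(\frac{\ln Z_{mn,n}^{\lfloor ns\rfloor,\lfloor nt\rfloor}}{n^2}-\frac{\lfloor nt\rfloor}{n}\ln m\Bigr),
\]
and letting $n\to\infty$ recovers $-\sigma(s,t)=ent(s,t)$ because $ent$ was defined as Legendre dual of the free energy $F$ computed in Proposition~\ref{prop2}. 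Proposition~\ref{twolimits} licenses the exchange of the $m$- and $n$-limits, and $\lfloor nt\rfloor/n\to t$ handles the floor correction.

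\textbf{Almost planar case.} I would sandwich $\lim_n S(X_n^0(s,t))/n^2$ against this toroidal value by a two-sided surgery argument. For $\limsup\le ent(s,t)$, I would tile $k^2$ copies of the almost planar bead model, vertically rescaled by a factor $1/k$, on a torus of $kn$ threads so that adjacent copies share matching boundary beads (possible since every copy best-fits the same plane of tilt $(s,t)$, and any small integer mismatch can be absorbed in the matching). This embeds a product set of volume $(V_n^0)^{k^2}\cdot k^{-k^2 N_n^0}$ (the second factor coming from vertical compression) as a subset of the toroidal configurations of tilt $(s,t)$. After regrouping with the $N\ln n$ adjustment in the entropy definition the $\ln k$ corrections cancel and one obtains $S(X_{kn}^t(s,t))/(kn)^2\ge S(X_n^0(s,t))/n^2+o(1)$ for fixed $k$ and $n\to\infty$; combined with the toroidal case this yields $\limsup_n S(X_n^0(s,t))/n^2\le ent(s,t)$. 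For the reverse inequality, I would cut a toroidal configuration of tilt $(s,t)$ along an almost-vertical and an almost-horizontal path chosen at points where the height function is within $O(n^{-1/2})$ of its planar expectation, which is possible by Lemma~\ref{lemmaazuma}. The resulting rectangular fixed-BC configuration has boundary height function lying in $\mathcal{U}_{\delta(n),n}(s,t)$ with $\delta(n)=o(1)$, and the combinatorial cost of specifying the two cut paths is only $O(n\log n)=o(n^2)$. Applying Lemma~\ref{prop3.6CKP0} gives $S(X_n^t(s,t))\le S(X_n^0(s,t))+\varepsilon n^2+o(n^2)$, hence $\liminf_n S(X_n^0(s,t))/n^2\ge ent(s,t)$.

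\textbf{Union case.} By Definition~\ref{sqdsdfkufhrulgerkzzer}, the entropy of the union decomposes as
\[
S\bigl(X_n^{\mathcal{U}_{\delta,n}(s,t)}\bigr)=-\sum_i p_i\ln p_i+\sum_i p_i\, S\bigl(X_n^{h^\partial_i}\bigr),
\]
where the sum runs over the elements $h^\partial_i$ of $\mathcal{U}_{\delta,n}(s,t)$. Each such element is determined by $O(n)$ binary choices (the $\pm\frac{1}{2}$ jumps on the upper and lower sides together with a single constant value on each vertical side), so $\ln|\mathcal{U}_{\delta,n}(s,t)|=O(n)=o(n^2)$ and the first sum is negligible after normalizing by $n^2$. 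Lemma~\ref{prop3.6CKP0} bounds every term in the second sum by $S(X_n^0(s,t))+\varepsilon n^2$, so combined with the almost planar case one obtains $\limsup S(X_n^{\mathcal{U}_{\delta,n}(s,t)})/n^2\le ent(s,t)+\varepsilon$; since the almost planar boundary condition is itself one of the $h^\partial_i$, the trivial volume bound gives $S(X_n^{\mathcal{U}_{\delta,n}(s,t)})\ge S(X_n^0(s,t))$ and hence the matching lower bound.

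\textbf{Main obstacle.} The delicate point is the surgery in the almost planar case. The cut paths must simultaneously be close to straight lines of prescribed tilt (so that the induced fixed BC stays in $\mathcal{U}_{\delta,n}(s,t)$ with $\delta\to 0$), of combinatorial complexity $o(n^2)$ (so that conditioning on the exact paths contributes negligibly to the entropy), and compatible with the interlacing constraint (so that the cut truly produces valid bead configurations on both sides). The concentration estimate of Lemma~\ref{lemmaazuma} together with the deterministic height-control of Lemma~\ref{LemmaProp20CEP} are the key inputs for arranging all three. The gluing direction also requires some care to reconcile the $O(1)$ integer mismatch between $(kn)s$ and $k((n-1)s)$ between copies, which is handled by continuity of $ent$ on the interior of its domain together with small perturbations of the tilts.
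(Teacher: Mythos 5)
Your overall architecture --- almost planar $\le$ toroidal by an embedding, toroidal $\le$ almost planar by cutting along well-concentrated paths and invoking Lemma~\ref{prop3.6CKP0}, union case by decomposing over boundary conditions --- matches the paper's. But the foundation is circular: you treat the toroidal identity $\lim_n S(X_n^t(s,t))/n^2=ent(s,t)$ as ``already in place'' via Proposition~\ref{beadanddimerpartitionfunction} and the Legendre duality of Section~\ref{surfacetensionlocalentropyfunction}, whereas the paper explicitly defers to the present theorem the proof that the $n\to\infty$ limit of the $m$-adjusted toroidal entropy exists and is continuous in the slope --- precisely the two facts needed to extract $\sigma(s,t)$ from the free energy $F(\alpha,\gamma)$ by a Laplace-type argument and to justify the Legendre transform. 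Since your bounds for the almost planar and union cases are all relative to the toroidal value, the whole proposal rests on an unproven step. The paper's route is: establish existence of the limits first (boundedness from Lemmas~\ref{Slessthan0} and~\ref{Lemmaentexist} plus a subsequence argument, with Lemma~\ref{prop3.6CKP0} guaranteeing independence of the choice of almost planar condition), then prove the three quantities coincide, then prove continuity in $(s,t)$ (because $\mathcal{U}_{\delta,n}(s,t)$ also contains almost planar conditions of nearby tilts), and only then invoke the Legendre transform to identify the common limit with $ent(s,t)$. You need to supply the existence and continuity steps before the identification is legitimate.

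Second, your treatment of the union case fails as stated: since $t>0$, a boundary height function near a plane of tilt $(s,t)$ has $\Theta(n)$ unit jumps on each of the left and right sides, located at arbitrary real heights in $[0,1]$ (Definition~\ref{esqrfsdfhql}); the vertical sides are not ``a single constant value,'' $\mathcal{U}_{\delta,n}(s,t)$ is a continuum, and ``$\ln|\mathcal{U}_{\delta,n}(s,t)|=O(n)$'' is not meaningful. The term $-\sum_i p_i\ln p_i$ is a differential entropy that is not a priori $o(n^2)$. The paper controls it through the discrete approximation: the left boundary of $R_{mn,n}$ carries $K=\Theta(n)$ cracks among $mn$ sites, contributing $\ln\binom{mn}{K}$, and only after subtracting the adjustment $K\ln m$ does Stirling's formula leave an excess of order $O(n)$, hence $O(1/n)$ after normalization by $n^2$. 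The same continuum issue undermines your claim that ``the combinatorial cost of specifying the two cut paths is $O(n\log n)$'' in the surgery step: the boundary data induced by a vertical cut consists of real bead positions, and its entropy must again be measured through the $\ln m$-adjusted discretization rather than by a discrete count. With these two repairs your outline would align with the paper's proof.
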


If we take any two fixed boundary function of ${\mathcal{U}_{\delta,n}(s,t)}$, they do not necessarily have the same number of beads. So to define the adjusted combinatorial entropy for the union of boundary conditions in ${\mathcal{U}_{\delta,n}(s,t)}$, we need Definition~\ref{sqdsdfkufhrulgerkzzer}, which a priori furthermore asks fixing the probability that a random bead configuration has some given number of beads. However, in the proof below, we show that the choice of the probability doesn't affect the limit of the normalized adjusted entropy.

\begin{proof}
By Lemmas~\ref{Slessthan0} and \ref{Lemmaentexist}, for any tilt $(s,t)$, $\frac{S\big(X_n^{t}(s,t)\big)}{n^2}$ is bounded, so there exists a subsequence $n_k$ of $n$, and along this subsequence, for every $n_k$ we can choose an almost planar boundary conditions whose normalized entropies as sequence in $n_k$ converge.

Lemma~\ref{prop3.6CKP0} proves that among all nearby boundary conditions the almost planar one has the almost biggest normalized entropy. In particular, this implies that for any given tilt $(s,t)$ and any $n\in\mathbb{N}^*$ big enough, two almost planar boundary conditions have close entropy. Thus, the convergence along $n_k$ in the last paragraph doesn't depend on the choice of the precise almost planar boundary condition.

Moreover, this convergence is not just for a subsequence of $n_k$ but a convergence in $n$. In fact, in the following we prove that along $n_k$, the normalized adjusted entropies converge to $ent(s,t)$. As this is also true for any subsequence of $n$, we conclude that the normalized adjusted entropy converge as $n\rightarrow\infty$. Thus, without loss of generality, in the following we only consider a sequence in $n$.

Now for every $n\in\mathbb{N}^*$ fix the sequence $N(n)\propto n^2$ and consider the bead models with $n$ threads, $N(n)$ beads and with boundary conditions be any function in ${\mathcal{U}_{\delta,n}(s,t)}$. We claim that the normalized entropy of this sequence of models
converges to the same limit of $\frac{S(X_n^{0}(s,t))}{n^2}$ when $\delta\rightarrow0$ and $n\rightarrow\infty$. This claim corresponds to the second part of this theorem.

We first suppose that upper, lower and right boundaries are fixed and the left boundary boundary is free within $\delta$ neighborhood of the almost planar one. The number of beads on the left boundary is fixed (by the given upper and lower boundaries) and we denote it by $K$. For all $m$, consider the discrete version where we tile $R_{mn,n}$ by lozenges. There are $\binom{mn}{K}$ different possibilities, and by Stirling's formula
\begin{eqnarray*}
&\ &\frac{\ln \binom{mn}{K} - K\ln m}{n^2}\\
&=&\frac{1}{n^2}\Big[\ln\Big(\frac{\sqrt{2\pi mn}}{\sqrt{2\pi(mn-K)2\pi K}}\frac{(mn)^{mn}}{(mn-K)^{mn-K}K^K}\Big)+o(1)-K\ln m\Big]\\
&=&\frac{1}{n^2}\Big[\ln\sqrt{\frac{mn}{2\pi(mn-K)K}}-K\ln\frac{K}{n}+(mn-K)\ln\frac{mn}{mn-K}+o(1)\Big]\\
&=&\frac{1}{n^2}\Big[\ln\sqrt{\frac{mn}{2\pi(mn-K)K}}-K\ln\frac{K}{n}+\frac{mn-K}{mn}K+o(1)\Big]\\
&=&O(\frac{1}{n}),
\end{eqnarray*}
where $o(1)$ is for $m$ big enough. Thus, the entropy of the bead model of free left boundary will be at most $O(\frac{1}{n})$ bigger than that of the fixed almost planar one. It is not hard to show that for any of other three boundaries there is a similar result. Thus, if the number of $N(n)$ is fixed, then our claim is true.

Now we allow $N(n)$ to vary but under the constraint that the boundary functions are within ${\mathcal{U}_{\delta,n}(s,t)}$. For any $n$ and $\delta$, denote by $\mathcal{N}=\mathcal{N}(s,t,\delta,n)$ be the set of possible $N(n)$, then $|\mathcal{N}|$ is of order $O(2\delta n^2)$. According to Definition~\ref{sqdsdfkufhrulgerkzzer}, if for any $N_i\in\mathcal{N}$, the probability that $N(n)=N_i$ is given and equal to $p_{N_i}$, then the entropy of the bead configurations with boundary conditions taken in ${\mathcal{U}_{\delta,n}(s,t)}$ is equal to
\begin{eqnarray}
-\sum_{N_i\in\mathcal{N}} p_{N_i}\ln p_{N_i} +\sum_{N_i\in\mathcal{N}} p_{N_i} S_{N_i},
\end{eqnarray}
where $S_{N_i}$ is the entropy of the model whose number of beads is equal to $N_i$. Since we have proved that $\frac{S_{N_i}}{n^2}$ is at most $\frac{S(X_n^{0}(s,t))}{n^2}+o(1)$, and $\frac{-\sum_{N_i\in\mathcal{N}} p_{N_i}\ln p_{N_i}}{n^2}$ is at most of order $\frac{\ln n}{n^2}$, we have proved our claim that the normalized entropies of the bead configurations with ${\mathcal{U}_{\delta,n}(s,t)}$-boundary condition converge to the same limit of $\frac{S(X_n^{0}(s,t))}{n^2}$ when $n\rightarrow\infty$ and $\delta\rightarrow 0$.

It remains to prove the first part of this theorem.

First, by construction, we can find an almost planar boundary condition whose opposite sides matches. So we have
$$\lim_{n\rightarrow\infty}\frac{S\big(X_n^{t}(s,t)\big)}{n^2}\geq
\lim_{n\rightarrow\infty}\frac{S\big(X_n^{0}(s,t)\big)}{n^2}.$$

On the other hand, given a toroidal boundary condition (so the number of beads is fixed), consider the fixed boundary conditions of ${\mathcal{U}_{\delta,n}(s,t)}$ that yield the same number of beads. Since the bead configuration with periodic boundary condition not included in ${\mathcal{U}_{\delta,n}(s,t)}$ has a negligible contribution, by our claim proved above, we have
$$\lim_{n\rightarrow\infty}\frac{S\big(X_n^{t}(s,t)\big)}{n^2}\leq
\lim_{n\rightarrow\infty}\frac{S\big(X_n^{0}(s,t)\big)}{n^2}.$$

Finally, as for any $\delta>0$, ${\mathcal{U}_{\delta,n}(s,t)}$ also includes the almost planar boundary conditions of tilts near $(s,t)$, by applying a similar argument as above we conclude that for $(s',t')$ close to $(s,t)$, $\frac{S\big(X_n^{t}(s',t')\big)}{n^2}$ is also close to $\frac{S\big(X_n^{0}(s,t)\big)}{n^2}$. Thus, $\frac{S\big(X_n^{t}(s,t)\big)}{n^2}$ is continuous in the tilt. This allows us to use the Legendre transform in Section~\ref{surfacetensionlocalentropyfunction}, so we have proved this theorem.
\qed
\end{proof}

\begin{definition}
For any $\varepsilon>0$ and for any $(s,t)\in]-\frac{1}{2},\frac{1}{2}[\times]0,1[$, define
$$\rho^{\varepsilon}(s,t)=\sup_{\rho>0}\left\{\rho: \left(||(s',t')-(s,t)||<\rho\right)\Rightarrow \left(|ent(s,t)-ent(s',t')|<\varepsilon\right) \right\}.$$
\end{definition}

The following lemma is a direct corollary of Lemma~\ref{prop3.6CKP0} and Theorem~\ref{thm-ent}.
\begin{lemma}\label{prop3.6CKP}
Consider the unit square $D$. For any $\varepsilon>0$, and for any tilt
$$(s,t)\in]-\frac{1}{2},\frac{1}{2}[\times]0,+\infty[,$$
consider the bead model on $D$ with $n$ treads and with the fixed boundary condition fitting to a plane of tilt $(s,t)$ within $\rho^{\varepsilon}(s,t)$. Then for $n$ sufficiently large, the entropy $S$ of the bead configurations normalized by $n^2$ is at most the entropy of a bead model with an almost planar periodic boundary condition $h^{\partial,0}_n$ plus $\varepsilon+o(1)$ where $o(1)$ tends to $0$ when $n\rightarrow\infty$.
\end{lemma}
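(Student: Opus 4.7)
The plan is to chain Lemma~\ref{prop3.6CKP0}, Theorem~\ref{thm-ent}, and the definition of $\rho^{\varepsilon}(s,t)$. Fix $\varepsilon>0$ and a tilt $(s,t)\in]-\frac{1}{2},\frac{1}{2}[\times]0,+\infty[$. By hypothesis, the fixed boundary condition $h^{\partial}_n$ lies in $\mathcal{U}_{\rho^{\varepsilon}(s,t),n}(s,t)$. Since $\rho^{\varepsilon}(s,t)$ is determined by the continuity modulus of $ent$ at $(s,t)$ and need not itself be small enough for Lemma~\ref{prop3.6CKP0}'s threshold on the neighborhood radius, I would re-center $h^{\partial}_n$ on a nearby tilt before applying that lemma.

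First, I would cover the $\rho^{\varepsilon}(s,t)$-neighborhood of the plane of tilt $(s,t)$, viewed in the space of planar boundary conditions, by finitely many smaller neighborhoods $\mathcal{U}_{\delta_0,n}(s_i,t_i)$, where the centers $(s_i,t_i)$ all lie within $\rho^{\varepsilon}(s,t)$ of $(s,t)$ and $\delta_0$ is small enough to satisfy Lemma~\ref{prop3.6CKP0}'s requirement at each of the finitely many tilts simultaneously. The given $h^{\partial}_n$ then belongs to at least one of these smaller balls, say $\mathcal{U}_{\delta_0,n}(s_i,t_i)$.

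Next I would apply Lemma~\ref{prop3.6CKP0} at $(s_i,t_i)$: for $n$ sufficiently large, $\frac{S(X^{h^{\partial}_n}_n)}{n^2}\leq\frac{S(X_n^{0}(s_i,t_i))}{n^2}+\varepsilon$. By Theorem~\ref{thm-ent}, $\frac{S(X_n^{0}(s_i,t_i))}{n^2}\to ent(s_i,t_i)$ and $\frac{S(X_n^{0}(s,t))}{n^2}\to ent(s,t)$ as $n\to\infty$, while by the defining property of $\rho^{\varepsilon}(s,t)$ we have $|ent(s_i,t_i)-ent(s,t)|<\varepsilon$ since $(s_i,t_i)$ is within $\rho^{\varepsilon}(s,t)$ of $(s,t)$. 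Chaining these three inequalities yields $\frac{S(X^{h^{\partial}_n}_n)}{n^2}\leq\frac{S(X_n^{0}(s,t))}{n^2}+2\varepsilon+o(1)$, and the factor $2\varepsilon$ can be absorbed into $\varepsilon$ by rescaling from the outset.

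The main obstacle is the cover construction, specifically arranging that $\delta_0$ can be chosen uniformly for the finitely many tilts $(s_i,t_i)$. Although Lemma~\ref{prop3.6CKP0}'s threshold a priori depends on both $\varepsilon$ and the target tilt, all candidate tilts lie in a compact subset of $]-\frac{1}{2},\frac{1}{2}[\times]0,+\infty[$, on which the threshold behaves in a locally uniform way by essentially the same compactness argument that guarantees $\rho^{\varepsilon}(s,t)>0$ in the first place; taking the minimum of finitely many such thresholds then gives a uniform $\delta_0$. If this uniformity step proves more delicate than expected, an alternative is to bypass the cover by exploiting the ``union'' half of Theorem~\ref{thm-ent}, which already gives $ent(s,t)+o(1)$ as the normalized entropy of the full family $\mathcal{U}_{\delta,n}(s,t)$ when $n\to\infty$ and $\delta\to 0$, after observing that $\rho^{\varepsilon}(s,t)$ plays the role of such a $\delta$ up to the additive $\varepsilon$ coming from the modulus of continuity.
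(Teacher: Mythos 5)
Your overall strategy is the one the paper intends: its entire proof of Lemma~\ref{prop3.6CKP} is the one-line assertion that it is a ``direct corollary'' of Lemma~\ref{prop3.6CKP0} and Theorem~\ref{thm-ent}, and you correctly identify the point this glosses over, namely that the admissible radius $\delta$ in Lemma~\ref{prop3.6CKP0} is forced to be small in terms of $\varepsilon$ (in its proof the error coming from the region outside the coupling curve is of order $\delta/\rho$), while $\rho^{\varepsilon}(s,t)$ is governed by the modulus of continuity of $ent$ and can be much larger (near the maximizing tilt $(0,1/\pi)$ one has $\rho^{\varepsilon}\sim\sqrt{\varepsilon}$). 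Your final chaining $S(X_n^{h^{\partial}_n})/n^2\le S(X_n^{0}(s_i,t_i))/n^2+\varepsilon=ent(s_i,t_i)+\varepsilon+o(1)\le ent(s,t)+2\varepsilon+o(1)$ is fine, as is taking a uniform $\delta_0$ over finitely many tilts.

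The gap is the covering step itself. The hypothesis of the lemma is a sup-norm condition on the boundary height function: $h^{\partial}_n$ lies within $\rho^{\varepsilon}(s,t)$ of the restriction to $\partial D$ of one fixed plane, i.e. $h^{\partial}_n\in\mathcal{U}_{\rho^{\varepsilon}(s,t),n}(s,t)$. This function-space ball is not contained in any finite union $\bigcup_i\mathcal{U}_{\delta_0,n}(s_i,t_i)$ with $\delta_0<\rho^{\varepsilon}(s,t)/4$: membership in $\mathcal{U}_{\delta_0,n}(s_i,t_i)$ forces $h^{\partial}_n$ to be within $\delta_0$ of \emph{some} plane, whereas a typical element of $\mathcal{U}_{\rho^{\varepsilon}(s,t),n}(s,t)$ is not close to any plane --- take the planar boundary values plus a Lipschitz-compatible zigzag of amplitude $\rho^{\varepsilon}(s,t)/2$; it stays within $\rho^{\varepsilon}(s,t)$ of the plane of tilt $(s,t)$ but at distance of order $\rho^{\varepsilon}(s,t)$ from every plane. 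So the assertion ``the given $h^{\partial}_n$ then belongs to at least one of these smaller balls'' is false. The fallback suffers from the same defect: the union statement in Theorem~\ref{thm-ent} is only asserted in the joint limit $n\rightarrow\infty$, $\delta\rightarrow 0$, and substituting the fixed radius $\delta=\rho^{\varepsilon}(s,t)$ is exactly the step that needs proof; the continuity modulus of $ent$ controls the variation of $ent$ between nearby tilts, not the variation of the combinatorial entropy when the boundary function moves a fixed sup-norm distance off the plane. To close the lemma one has to deal directly with boundary conditions that are $\rho^{\varepsilon}(s,t)$-far from every plane, for instance by rerunning the surface-coupling argument of Lemma~\ref{prop3.6CKP0} with $\delta=\rho^{\varepsilon}(s,t)$ and showing that the resulting error of order $\rho^{\varepsilon}(s,t)/\rho$ can still be absorbed into $\varepsilon$; this does not follow from the two cited results alone.
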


\begin{prop}\label{prop-ent}
Lemma~\ref{prop3.6CKP0} and Theorem~\ref{thm-ent} hold if the region is an isosceles right triangle instead of a square.
\end{prop}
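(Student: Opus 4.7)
The plan is to re-run the proofs of Lemma~\ref{prop3.6CKP0} and Theorem~\ref{thm-ent} with the unit square $D$ replaced by an isosceles right triangle $T$, and to verify that every step only uses properties that survive this change. First I would extend the setup: an \emph{almost planar} fixed boundary condition on $T$ is the best bead-height approximation of the restriction to $\partial T$ of an affine function of tilt $(s,t)$, and the tall discrete region $R^T_{mn,n}$ is constructed as in Section~\ref{sqdfhyzqmurei}, now with the two piecewise linear paths $p_0$ and $p_1$ running along the two legs of $T$. The $l=0$ slice remains a skew Young diagram supported on the triangle, so tileability by lozenges is preserved.

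The three technical lemmas~\ref{LemmaProp20CEP},~\ref{lemmaazuma} and~\ref{lemmaProp3.4CKP} transfer without change: each is proved by local coupling and martingale arguments on height functions, and uses nothing about the outer shape of the domain beyond simple connectedness. Hence the exponential concentration estimates used later are still at our disposal on~$T$.

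The bulk of the work is adapting the proof of Lemma~\ref{prop3.6CKP0}. I would replace the inner shrunk square $D_r$ by a scaled copy $T_r$ of $T$, obtained by dilating about any fixed interior point (e.g.\ the incenter) by a factor $1-cr$ so that $\mathrm{dist}(\partial T_r,\partial T)=\Theta(r)$. The piecewise linear upper surface $h^{0,+}$, its discrete approximant $H^{0,+}_{mn,n}$, and the maximal intersection curve $\gamma_r$ are then defined exactly as before. The event $\{\gamma_r=\emptyset\}$ is covered by $O(n)$ boundary test points on $\partial T_r$, each failing with exponentially small probability by Lemmas~\ref{LemmaProp20CEP} and~\ref{lemmaazuma}. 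The decomposition~(\ref{eqnSdecomp}) along $\gamma_r$ is unchanged; the annular region between $\partial T$ and $T_r$ has area $O(\delta)$, whose contribution is controlled from above by Lemma~\ref{Slessthan0} and from below by Lemma~\ref{Lemmaentexist} applied to a finite covering of this annulus by small squares and right triangles of nearly constant tilt. Once Lemma~\ref{prop3.6CKP0} holds on $T$, the proof of Theorem~\ref{thm-ent} (subsequence extraction, identification of the limit via almost planar boundary conditions, continuity in tilt and the Legendre duality of Section~\ref{surfacetensionlocalentropyfunction}) transfers verbatim.

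The step I expect to be most delicate is the construction of $T_r$ and the covering of the annulus $T\setminus T_r$ near the hypotenuse of $T$, since the hypotenuse is oblique with respect to the lozenge lattice and the cracks of $R^T_{mn,n}$ along it are spaced differently than along the two axis-aligned legs. The key point to verify is that these spacings, together with the probability bounds along the portion of $\partial T_r$ parallel to the hypotenuse, remain uniformly $o(1)$ in $n$, so that the union bound giving $\mathbb{P}(\gamma_r=\emptyset)\to 0$ still closes and the entropic contribution of the annulus still vanishes in the limit.
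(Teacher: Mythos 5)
Your plan is a genuinely different route from the paper's, and it has a gap at the decisive point. The paper does not re-run the proofs on the triangle at all: following Corollary 4.2 of \cite{CKP01}, it approximates the isosceles right triangle from the interior by a union of small squares (fixing almost planar boundary data on each square gives a sub-family of configurations, hence a lower bound by the already-established square case), and glues two triangles along the hypotenuse to form a square (conditioning on the height along the diagonal and using the decomposition of Lemma~\ref{lemmaSdecomposition} gives an upper bound, again from the square case). Both bounds are $ent(s,t)$ times the area plus $o(1)$, which simultaneously proves convergence \emph{and} identifies the limiting constant.

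Your proposal instead redoes the whole machinery (almost planar boundary conditions on $T$, the tall region $R^T_{mn,n}$, the shrunk triangle $T_r$, the curve $\gamma_r$, the decomposition~(\ref{eqnSdecomp})). Even granting the delicate lattice issues along the oblique hypotenuse that you flag but do not resolve, the serious gap is your claim that the proof of Theorem~\ref{thm-ent} then ``transfers verbatim.'' That proof identifies the limit as $ent(s,t)$ by comparing the square to the \emph{torus} (an almost planar boundary condition whose opposite sides match embeds into the toroidal model, and conversely the torus is controlled by the union over $\mathcal{U}_{\delta,n}(s,t)$), and then by the Legendre duality of Section~\ref{surfacetensionlocalentropyfunction}, which is computed per fundamental domain on the torus. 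A triangle has no ``matching opposite sides,'' so this comparison does not apply; your argument would at best show that the normalized triangle entropy converges to \emph{some} tilt-dependent constant, without showing it equals $ent(s,t)$ per unit area. Since Proposition~\ref{prop-ent} is used in Theorem~\ref{thm4.3CKP01} precisely to assign the value $ent(s,t)\cdot\mathrm{area}$ to each mesh triangle, this identification is the whole content of the statement. The square-covering and triangle-doubling trick is exactly the device that supplies it, and it replaces essentially all of your construction.
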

\begin{proof}
The proof is exactly the same as that of Corollary 4.2 of \cite{CKP01}. An isosceles right triangle can be approached from interior by a union of squares, and combining two triangles gives a square. These operations naturally yield a lower bound and an upper bound of the entropy of a bead model on a isosceles right triangle, which is both equal to $ent(s,t)+o(1)$ when $n\rightarrow 0$.
\end{proof}

The following Lemma is another version of Lemma~\ref{triangulation} which we will see is related to an upper bound of the entropy $S$.
\begin{lemma}\label{triangulationprime}
For any admissible function $h$ such that $Ent(h)>-\infty$ and for any $\varepsilon_1,\varepsilon_2>0$, for $l>0$ sufficiently small, then the piecewise linear function $h'$ on the $l$-right-triangle mesh verifies the following two properties.\\
\emph{(a)} For all but a fraction of $\varepsilon_1$ of the triangles in the mesh, for every triangle, denote the tilt of $h'$ on that triangle by $(s,t)$, then the function $h$ is within $\rho^{\varepsilon_2}(s,t)l$ of $h'$.\\
\emph{(b)} $Ent(h')<Ent(h)+\varepsilon_2$.
\end{lemma}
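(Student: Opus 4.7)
The strategy mirrors Lemma~\ref{triangulation}, exploiting concavity of $ent$ in the opposite direction: rather than producing a triangulation whose entropy reproduces $Ent(h)$ from below, I construct one that is not too much above it, which is exactly what is needed for an upper bound on the bead partition function in Theorem~\ref{thm4.3CKP01}. Since $Ent(h)>-\infty$ implies $h\in\mathcal{H}_0$, $h$ is continuous and $ent\circ\nabla h\in L^1(D)$. First apply Lemma~\ref{lemmaapproximation} to form $\tilde h=A_{\delta,\delta'}(h)$, with $\delta,\delta'$ so small that $|Ent(\tilde h)-Ent(h)|<\varepsilon_2/3$, $|D\setminus D_\delta|<\varepsilon_1$, and $ent\circ\nabla\tilde h\geq-\Lambda$ on $D$ for some constant $\Lambda$ (Lemma~\ref{lemmaapproximation}(c)). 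The last property confines $\nabla\tilde h$ to the compact set $K_\Lambda=\{(s,t):ent(s,t)\geq-\Lambda\}$, which is bounded away from the singularities of $ent$; on a small open enlargement of $K_\Lambda$, $\rho^{\varepsilon_2/3}$ is continuous and strictly positive, and so admits a positive infimum $\eta=\eta(\Lambda)>0$.

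Next, let $h'$ be the piecewise linear interpolation of $\tilde h$ on the $l$-right-triangle mesh inside $D_\delta$, extended by $\tilde h$ outside $D_\delta$. Choose $l$ so small that the $C^\infty$-smoothness of $\tilde h$ yields both $|\tilde h-h'|_\infty<\eta l/2$ and $|\sigma_T-\nabla\tilde h(x,y)|<\eta$ uniformly for $(x,y)$ in each triangle $T\subset D_\delta$, where $\sigma_T=\nabla h'|_T$. Simultaneously tune the smoothing parameters so that also $|h-\tilde h|_\infty<\eta l/2$; this is possible by uniform continuity of $h$ on the compact set $D$, at the cost of re-selecting $\delta,\delta'$ once $l$ has been fixed.

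For part (a), call a triangle good if it is contained in $D_\delta$, so the bad fraction is at most $\varepsilon_1$. On a good triangle with tilt $\sigma_T$, the estimate $|\sigma_T-\nabla\tilde h|<\eta$ places $\sigma_T$ in an $\eta$-enlargement of $K_\Lambda$ (still bounded away from singularities), whence $\rho^{\varepsilon_2}(\sigma_T)\geq\rho^{\varepsilon_2/3}(\sigma_T)\geq\eta$; and $|h-h'|\leq|h-\tilde h|+|\tilde h-h'|<\eta l\leq\rho^{\varepsilon_2}(\sigma_T)\,l$, as required. For part (b), observe that $h'\equiv\tilde h$ outside $D_\delta$, so $Ent(h')-Ent(\tilde h)$ reduces to an integral over $D_\delta$. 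Uniform continuity of $ent$ on the enlargement of $K_\Lambda$ combined with $|\sigma_T-\nabla\tilde h|<\eta$ on good triangles gives
$$|T|\,ent(\sigma_T)<\int_T ent\circ\nabla\tilde h\,dxdy+(\varepsilon_2/3)|T|,$$
and summing yields $Ent(h')<Ent(\tilde h)+\varepsilon_2/3<Ent(h)+2\varepsilon_2/3<Ent(h)+\varepsilon_2$.

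The main obstacle is the scale-dependent requirement $|h-\tilde h|_\infty<\eta l/2$ in (a), which forces $\delta,\delta'$ to be chosen \emph{after} (or jointly with) the mesh size $l$, reversing the natural order of the construction in Lemma~\ref{triangulation}. The key enabler is Lemma~\ref{lemmaapproximation}(c), which confines $\nabla\tilde h$ away from the slope singularities of $ent$ and thereby yields a uniform positive lower bound on $\rho^{\varepsilon_2/3}(\sigma_T)$ on all good triangles. Without it, one would further have to excise triangles where $\nabla\tilde h$ approaches the singular set (using integrability of $ent\circ\nabla\tilde h$ and the $L\log L$-type behaviour of $ent$ near $t=\infty$), which would complicate---but not fundamentally alter---the present accounting.
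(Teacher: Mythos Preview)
Your approach differs from the paper's in a fundamental way: you interpolate the smoothed function $\tilde h=A_{\delta,\delta'}(h)$ on the mesh, whereas the lemma (as used downstream in Theorem~\ref{thm4.3CKP01}) concerns the piecewise linear interpolant of $h$ itself at the mesh vertices. More importantly, even granting your choice of $h'$, part~(a) contains a genuine gap.

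The requirement $|h-\tilde h|_\infty<\eta l/2$ cannot be met by ``re-selecting $\delta,\delta'$ once $l$ has been fixed''. The quantity $|h-\tilde h|_\infty$ is determined by $\delta,\delta'$ and does not scale with $l$; to make it smaller than a multiple of $l$ you must send $\delta,\delta'\to 0$ as $l\to 0$. But then $\Lambda=\Lambda(\delta,\delta')$ (the lower bound from Lemma~\ref{lemmaapproximation}(c), which comes from $U_{\delta'}*(ent\circ\nabla h)$) may blow up, forcing $\eta=\inf\rho^{\varepsilon_2/3}$ on $K_\Lambda$ to shrink, and the modulus of smoothness of the \emph{new} $\tilde h$ changes as well, so the choice of $l$ must be revisited. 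The dependencies $\delta'\to\Lambda\to\eta\to l\to\delta'$ form a genuine loop, and uniform continuity of $h$ alone does not close it. You correctly flag this as ``the main obstacle'', but the resolution you sketch does not work.

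The paper avoids smoothing altogether and works directly with $h$ via a Lebesgue metric-density argument (citing Lemma~2.2 of \cite{CKP01} for part~(a)). For part~(b) it partitions the slope space into a ``bad'' strip $V_0^{A,d}$ near the singularities (whose preimage contributes negligibly to $Ent(h)$ because $ent\circ\nabla h\in L^1$) and finitely many sets $V_1,\dots,V_n$ on which $ent$ oscillates by at most $\varepsilon_2/4$. Metric density then guarantees that for $l$ small, on all but an $\varepsilon'$-fraction of triangles an $\eta_i$-proportion of $\nabla h$ lies in some $V_i$; the tilt $\sigma_T$ of the interpolant lies near the average slope, and one bounds $ent(\sigma_T)$ against $\sup_{V_i}ent$. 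This decouples the mesh scale from any smoothing scale and handles the unbounded-below nature of $ent$ by an $L^1$ excision rather than a uniform bound. Your final paragraph gestures at exactly this alternative; carrying it out is essentially the paper's proof.
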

\begin{proof}
The proof of part (a) is the same as in Lemma 2.2 of \cite{CKP01}. We now prove (b). Define the space of possible tilts as
$$V_0=[-\frac{1}{2},\frac{1}{2}]\times[0,+\infty[,$$
and for any $A>0$, $d>0$, define the following subset of $V_0$:
$$V_0^{A,d}=\{(s,t):|s-\frac{1}{2}|<d,\text{ or }|s+\frac{1}{2}|<d,\text{ or }t>A\}.$$

Since $Ent(h)>-\infty$ and $ent(.,0)=0$, we can take $A$ sufficiently large and $d$ sufficiently small so that the points $$\left\{(x,y):\left(\frac{\partial h}{\partial x},\frac{\partial h}{\partial hy}\right)(x,y)\not\in V_0^{A,d}\right\}$$
gives a contribution of absolute value less than $\frac{\varepsilon_2}{4}$ in $Ent(h)$.

Let $V_1,V_2,...,V_n$ be a open cover of ${V_0\backslash V_0^{A,d}}$ such that within each set $V_i$ the function $ent(s,t)$ changes at most by $\frac{\varepsilon_2}{4}$. For any $i\in\{1,2,...,n\}$ and for any $\eta_i\in]0,1[$, consider the set ${\bar{S}(V_i,\eta_i)}$ which is composed of possible tilts ${(\bar{s},\bar{t})}$ that there exists a probability density function (in the sense of distribution) on $V_0$ such that the average slope is equal to $(\bar{s},\bar{t})$, and a proportion bigger than $\eta_i$ is in $V_i$. This gives a family of convex subsets of $V_0$ indexed by $\eta_i$. When $\eta_i\rightarrow 1$, the set ${\bar{S}(V_i,\eta_i)}$ tends to ${V_i+\{0\}\times\mathbb{R}^+}$, where the sum of two sets is defined as the set of the sums of any pair of elements.

By the property of $ent$, for $\eta_i$ close enough to $1$ we have that for any average tilt ${(\bar{s},\bar{t})\in\bar{S}(V_i,\eta_i)}$,
$$ent(\bar{s},\bar{t})\leq\sup_{(s,t)\in V_i}ent(s,t)+\frac{\varepsilon_2}{8},$$
and
$$(1-\eta_i)\inf_{(s,t)\not\in V_0^{A,d}}ent(s,t)\geq-\frac{\varepsilon_2}{4}.$$

Now we can apply an argument of metric density from \cite{RealAndComplexAnalysis} similar to the way \cite{CKP01} uses it. For any $\varepsilon'>0$, $\eta_i>0$, if $l_i$ is sufficiently small, then for any $\delta\leq l_i$, on all but an $1-\varepsilon'$ fraction of the points $(x,y)$ such that $\left(\frac{\partial h}{\partial x},\frac{\partial h}{\partial y}\right)\in V_i$, at least a $\eta_i$ fraction of the ball centered at $(x,y)$ and of radius $\delta$ lies in $V_i$.

If there is some triangle where $h$ verifies $(a)$ for some $\varepsilon'$, the tilt of the piecewise linear function $h'$ differs from the average tilt on that triangle by at most $2\varepsilon'$. Take $\varepsilon'$ less than $\frac{\varepsilon_1}{2}$ such that for all $i$ and for all $(s,t)$ in the $2\varepsilon'$ neighborhood of $\bar{S}(V_i,\eta_i)$ we have
\begin{eqnarray}\label{dsfllsihgdfgghserezrze}
ent(s,t)\leq\sup_{(s,t)\in V_i}ent(s,t)+\frac{\varepsilon_2}{4}.
\end{eqnarray}
Also, for $\varepsilon'$ small enough, the integral of $ent\circ\nabla h$ is bigger than $-\frac{\varepsilon_2}{4}$ on any subset of $D$ whose measure is less than $2\varepsilon'$.

For all $l\leq\min_i\{l_i\}$ and less than the $l$ in (a) where we replace $\varepsilon_1$ by some $\varepsilon'$ less than $\varepsilon_1$ and verifying the conditions above, on at least a $1-\varepsilon'$ fraction of the triangles, (a) is verified.

Now compare $Ent(h)$ to $Ent(h')$ where $h'$ is the piecewise linear function on the $l$-mesh. There is at least a $1-2\varepsilon'$ fraction of triangles such that for each triangle, there exists $i$ such that in this triangle a proportion of at least $\eta_i$ of points $(x,y)$ verifies that $ent\circ\nabla h(x,y)$ is contained in $V_i$, thus the average slope of $h$ is in $\bar{S}(V_i,\eta_i)$. Meanwhile, as the tilt of $h'$ lies within $2\varepsilon'$-neighborhood of the average slope of $h$, according to (\ref{dsfllsihgdfgghserezrze}) we have that on this triangle
$$ent\circ\nabla h'\leq\sup_{(s,t)\in V_i}ent(s,t)+\frac{\varepsilon_2}{4}\leq\inf_{(s,t)\in V_i}ent(s,t)+\frac{\varepsilon_2}{2}.$$

In conclusion, we compare $Ent(h)$ and $Ent(h')$ respectively for the following two cases:
\begin{itemize}
\item On the $2\varepsilon'$ fraction of triangles and on the points in the $1-2\varepsilon'$ fraction of triangles where ${ent\circ\nabla h(x,y)\in V_0^{A,d}}$:
 \begin{itemize}
 \item the integral of $ent\circ\nabla h$ is bigger than $-\frac{\varepsilon_2}{2}$ by construction.
 \item the integral of $ent\circ\nabla h$ is less than $0$ by negativity.
 \end{itemize}
\item On the $1-2\varepsilon'$ fraction of triangles and where ${ent\circ\nabla h(x,y)\not\in V_0^{A,d}}$, for each triangle, there exists $i$ such that a proportion bigger than $\eta_i$ of points is in $V_i$. The contribution of the other $(1-\eta_i)$ proportion of points in $Ent(h)$ is most $-\frac{\varepsilon_2}{4}$ times the area. On other points,
    $$ent\circ\nabla h'<ent\circ\nabla h+\frac{\varepsilon}{4},$$
    so the contribution of these points in $Ent(h')-Ent(h)$ is at most $\frac{\varepsilon}{2}$.
 \end{itemize}
Thus we have proved the lemma.
\qed
\end{proof}

\vspace{0.5cm}

Now we can prove our main theorems of this section.

\noindent\textit{Proof of Theorem~\ref{thm4.3CKP01}.  }
We will separately prove that $Ent(h)$ is asymptotically the upper bound and lower bound of the normalized entropy on the left hand side of~(\ref{dfserfew}).

We begin by the part of lower bound. For any $\varepsilon>0$, by Lemma~\ref{lemmaapproximation}, we can find some $\tilde{h}$ such that $||\tilde{h}-h||_{L^{\infty}}<\frac{\delta}{4}$, $|Ent(\tilde{h})-Ent(h)|<\frac{\varepsilon}{4}$, and there exists some $K$ such that $ent\circ\nabla\tilde{h}>-K$ on $D$ (in other words $\tilde{h}\in \mathcal{H}^K$). By Lemma~\ref{triangulation}, for any $l$ small enough, we can construct a $l$-isosceles-right-triangle mesh and find a function $h'$ such that on every triangle of the mesh $h'$ is linear and $||h'-\tilde{h}||_{L^{\infty}}<\frac{\delta}{4}$ and $|Ent(h')-Ent(\tilde{h})|<\frac{\varepsilon}{4}$.

By Theorem~\ref{thm-ent} and Proposition~\ref{prop-ent}, on any triangular of the mesh, when $n\rightarrow\infty$, the entropy normalized by $n^2$ of the bead model with fixed almost planar boundary condition fitting the boundary of triangle converges to the contribution of this triangle in $Ent(h')$, and the configurations whose maximal height difference from $h'$ is bigger than $\frac{\delta}{4}$ is exponentially small in $n$. The fixed boundary conditions of the triangles together with the control on the maximal height difference gives a lower bound of $S(X_{n}^{\mathcal{V}_{\delta}(h)})$, so as conclusion we prove that for any $\delta$ and for $n$ small enough,
$$\frac{S(X_{n}^{\mathcal{V}_{\delta}(h)})}{n^2}\geq Ent(h)-\varepsilon.$$

Now we prove the upper bound. For any $\varepsilon>0$, since $h$ has no atom and $Ent(h)>-\infty$, there exists $\varepsilon_1$ such that for any subset of $D$ of Lebesgue measure less than $\varepsilon_1$, the integral of $ent\circ\nabla h$ on that set is bigger than $-\frac{\varepsilon}{4}$. By Lemma~\ref{triangulationprime}, for $l>0$ small enough, the piecewise linear function $h'$ on the $l$-right-triangle mesh satisfies that\\
(a) for at least a fraction of $1-\varepsilon_1$ of triangles in the mesh, on every triangle, the function $h$ is within $\rho^{\frac{\varepsilon}{2}}(s,t)l$ of $h'$ where $(s,t)$ is the tilt of that triangle.\\
(b) $Ent(h')<Ent(h)+\frac{\varepsilon}{4}$.

Lemma~\ref{Slessthan0} says that the at most $\varepsilon_1$ fraction of triangles, the entropy $S$ is at most $C$ times the area of the triangles, and Theorem~\ref{thm-ent} says that on every triangle, if the tilt $h'$ is $(s,t)$ there, then the normalized entropy of all the configurations whose height on the boundary of the triangle is within $\rho^{\frac{\varepsilon}{2}}(s,t)l$ is less than $ent(s,t)+\frac{\varepsilon}{2}+o(1)$ times the area of the triangle, $o(1)$ converging to $0$ when $n$ tends to infinity. Summing this gives an upper bound of entropy, which is less than $Ent(h)+\varepsilon+o(1)$. This finishes the proof.
\qed

The above large-deviation theorem naturally yields the following theorem about the convergence of a random bead configuration.

\begin{theorem}\label{Thm-Bead-convergenceinproba}
Given an asymptotic boundary condition function $h^{\partial}$ defined on $\partial D$ which is constant if restricted to $x=1$ or $x=0$, for any $n\in\mathbb{N}^*$, consider the bead model on $D$ with $n$ threads and with fixed boundary condition that approximates best $h^{\partial}$. Then the normalized height function $h$ converges (under the uniform norm) in probability when $n\rightarrow\infty$ to an admissible function $h_0$, which is the unique maximizer of $Ent(.)$.
\end{theorem}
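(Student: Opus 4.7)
The strategy is to deduce convergence in probability from the large deviation statement of Theorem~\ref{thm4.3CKP01} together with existence and uniqueness of the entropy maximizer $h_0$ (Theorem~\ref{theoremexistence}), via a compactness-plus-union-bound argument. The mechanism is that the ratio of partition functions (volumes of admissible bead coordinates) restricted to a small neighborhood of an admissible $h \neq h_0$ versus the full partition function decays exponentially at rate $\mathcal{E} - Ent(h) > 0$ in $n^2$, where $\mathcal{E} := Ent(h_0)$; note $\mathcal{E} > -\infty$ by the Corollary following Definition and Lemma~\ref{DL1}.

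First I would pass to the turned coordinates $(\tilde x, \tilde y)$ of Section~\ref{sect5.4part1}, where the ambient space $\tilde{\mathcal{H}}$ is compact under the uniform norm (vertical monotonicity becomes a $1$-Lipschitz condition). It suffices to show that for every $\varepsilon > 0$, $\mathbb{P}(\|\tilde h_n - \tilde h_0\|_\infty \geq \varepsilon) \to 0$. The set $K := \{\tilde h \in \tilde{\mathcal{H}} : \|\tilde h - \tilde h_0\|_\infty \geq \varepsilon\}$ is compact. For each $\tilde h \in K$, uniqueness of the maximizer gives $\wt{Ent}(\tilde h) < \mathcal{E}$; pick $\eta_{\tilde h} > 0$ with $\wt{Ent}(\tilde h) < \mathcal{E} - \eta_{\tilde h}$ (any $\eta_{\tilde h}$ works when $\wt{Ent}(\tilde h) = -\infty$). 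Then either by Theorem~\ref{thm4.3CKP01} (when $\wt{Ent}(\tilde h) > -\infty$) or by the triangulation/upper-semicontinuity reasoning of Lemma~\ref{triangulationprime} applied with a truncation of $\wt{Ent}$ (when $\wt{Ent}(\tilde h) = -\infty$), there exist $\delta_{\tilde h} > 0$ and $N_{\tilde h} \in \mathbb{N}^*$ such that
$$\frac{S\big(X_n^{\mathcal{V}_{\delta_{\tilde h}}(\tilde h)}\big)}{n^2} \leq \mathcal{E} - \frac{\eta_{\tilde h}}{2}, \qquad n \geq N_{\tilde h}.$$

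By compactness, extract a finite subcover $\mathcal{V}_{\delta_1}(\tilde h_1), \ldots, \mathcal{V}_{\delta_M}(\tilde h_M)$ of $K$ and put $\eta := \min_i \eta_{\tilde h_i}/2 > 0$. Applying Theorem~\ref{thm4.3CKP01} at $h_0$ gives a matching lower bound on the full adjusted entropy of the form $\frac{1}{n^2}S(X_n^{\text{total}}) \geq \mathcal{E} - \eta/4$ for $n$ large. Since $\mathbb{P}(\tilde h_n \in \mathcal{V}_{\delta_i}(\tilde h_i))$ is the ratio of the corresponding bead volumes, which translates via $S = \ln V + N(n)\ln n$ into the ratio $\exp\big(S(X_n^{\mathcal{V}_{\delta_i}(\tilde h_i)}) - S(X_n^{\text{total}})\big)$, we obtain
$$\mathbb{P}(\tilde h_n \in K) \leq \sum_{i=1}^M \mathbb{P}(\tilde h_n \in \mathcal{V}_{\delta_i}(\tilde h_i)) \leq M \, e^{-(\eta/4)\,n^2 + o(n^2)} \longrightarrow 0.$$
The main obstacle is the extension of the entropy upper bound to points $\tilde h$ with $\wt{Ent}(\tilde h) = -\infty$: Theorem~\ref{thm4.3CKP01} as stated requires $Ent(h) > -\infty$, so one must argue separately that in a sufficiently small neighborhood of such a degenerate $\tilde h$, the normalized entropy of any random configuration can be forced arbitrarily negative. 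This should follow from upper semicontinuity of $\wt{Ent}$ on the spaces $\mathcal{H}^\lambda$ of Lemma~\ref{lemmaHlambda} together with the slope-explosion control exploited in the contradiction argument at the end of the proof of Theorem~\ref{theoremexistence}, but a clean statement needs to be extracted before the compactness step above can be run without reservation.
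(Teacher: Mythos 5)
Your proposal is correct and follows essentially the same route as the paper: compactness of $\tilde{\mathcal{H}}$ under the uniform norm, a finite subcover of the set of admissible functions away from $\tilde{h}_0$, the large-deviation upper bound of Theorem~\ref{thm4.3CKP01} on each piece of the cover, and a union bound showing the neighborhood of $h_0$ dominates. The obstacle you flag for functions with $Ent(h)=-\infty$ is resolved in the paper exactly along the lines you anticipate, by splitting into $h\in\mathcal{H}_0$ (where Lemma~\ref{triangulation} and the upper-bound half of Theorem~\ref{thm-ent} still apply) and $h\notin\mathcal{H}_0$ (where the metric-density/slope-explosion argument of Lemma~\ref{triangulationprime} forces the entropy of a sufficiently small neighborhood to be arbitrarily negative).
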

\begin{proof}
Theorem~\ref{thm4.3CKP01} proves that for any admissible function $h:D\rightarrow\mathbb{R}$ such that $Ent(h)>-\infty$, for any $\delta>0$, when $n\rightarrow\infty$, $\frac{S\big(X_n^{\mathcal{V}_{\delta}(h)}\big)}{n^2}$ converges to $Ent(h)$ when $n\rightarrow\infty$. We should also take the functions that $Ent(h)=-\infty$ into consideration.

If $h\in \mathcal{H}_0$, it is easy to see that Lemma~\ref{triangulation} and the upper bound part of Theorem~\ref{thm-ent} still apply. Thus, for any $h\in \mathcal{H}_0$ such that $Ent(h)=-\infty$, we have
$$\lim_{\delta\rightarrow 0}\lim_{n\rightarrow\infty}\frac{S\big( X_n^{\mathcal{V}_{\delta}(h)}\big)}{n^2}=-\infty.$$

If $h\not\in \mathcal{H}_0$, we consider the turned space $\tilde{\mathcal{H}}$ under the uniform norm. By definition, if $h\not\in \mathcal{H}_0$, then there exists a subset of ${(\t{x},\t{y})}$ with Lebesgue measure $\mu>0$ where $\frac{\partial\t{h}}{\partial\t{y}}=1$. By the same argument of metric density used in Lemma~\ref{triangulationprime}, for all $\varepsilon>0$ small enough, there exists a subset of $\t{D}$ as a union of disjoint squares such that on every square the average vertical slope is bigger than $1-\varepsilon$ and the measure of this subset is bigger than $\mu-\varepsilon$. It is not hard to see that if we take $\varepsilon>0$ arbitrarily small, then for $\t{\delta}$ small enough, the entropy within the $\t{\delta}$-neighborhood in $\t{\mathcal{H}}$ of $\t{h}$ can be arbitrarily small.

An open set of admissible functions in the original height function space $\mathcal{H}$ is also an open set in the turned space $\t{\mathcal{H}}$, and the turned space $\t{\mathcal{H}}$ is compact under the uniform norm. Thus, from any open cover of the admissible functions we can choose a finite cover. By the definition of entropy, if we consider all the bead configurations with the same fixed boundary condition, then for any $\delta>0$, any admissible function $h$ such that $Ent(h)>-\infty$ and for $n$ large enough, the probability that a random bead configuration is in $\mathcal{V}_{\delta}(h)$, which by definition is equal to the proportion of the volume of this set with respect to the volume of the whole set of possible configurations, is proportional to $e^{Ent(h)n^2}$. When $n\rightarrow\infty$, the probability that $h$ is within the neighborhood of $h_0$ dominates the other possibilities, and we have proved the theorem.
\qed
\end{proof}

\section{Solutions of the entropy maximizing problem}\label{solution}

In this section, we will characterize $h_0$, the solution of the variational principle. The variational principle naturally yields a Euler-Lagrange equation of the limit shape $h_0$: since $ent$ is smooth, to maximize the integral of $ent$ over a region with given boundary condition, the height function should satisfies the equation:
$$\text{div}\nabla ent\circ\nabla h=0,$$
which implies
\begin{eqnarray}\label{pde}
\pi^2(1+\tan^2(\pi h_x)) h_y h_{xx}+\frac{h_{yy}}{h_{y}}+2\pi h_{xy}\tan(\pi h_x)=0.
\end{eqnarray}

However, in general it is hard to solve Equation~(\ref{pde}) directly, and we hope to have a systematical way to find the solutions. A possible option is applying directly the results of \cite{KO1} to the bead model, where the authors prove that finding the solution $h$ of the Euler-Lagrange equation can be done via finding and solving a system of algebraic equations. To do so, we prove in Theorem~\ref{thm-hmcvg2h0} that the maximizer of the bead model is a properly normalized limit of those of the dimer models.

This theorem can be summarized by a commutative diagram~(\ref{commutativediagram}) here below. For any given asymptotic fixed boundary condition $h^{\partial}$ defined on $\partial D$ and constant if restricted to $x=0$ or $x=1$, for any $n$, we consider the bead model with $n$ threads and an almost planar boundary condition $h^{\partial}_n$. Moreover, for any $m$ big enough we consider $R_{mn,n}$ as the domain constructed in Section~\ref{sqdfhyzqmurei}. We have:
\begin{eqnarray}\label{commutativediagram}
\begin{matrix}
  \text{Lozenge tiling of }R_{mn,n}, & \xrightarrow[m\rightarrow\infty]{}  & \text{Bead configuration with }n\text{ threads},\cr
  \Bigg\downarrow n\rightarrow\infty & \circlearrowright                   & \Bigg\downarrow n\rightarrow\infty \cr
  \text{Limit shape of a uniformly} & \xrightarrow[m\rightarrow\infty]{}  & \text{Limit shape of the bead model}. \cr
  \text{chosen tiling of }R_{mn,n},
\end{matrix}
\end{eqnarray}
This result seems quite natural as the bead model is a continuous scaling limit of the dimer model. However, it is not trivial since there is no theory yet that ensures the commutativity of the limit in $m$ (from dimer models to bead models) and that in $n$ (from finite cases to asymptotic limit).

\vspace{0.5cm}

For every $R_{mn,n}$, rather than considering $m\rightarrow\infty$ while keeps $n$ as when we defined the bead model, here we consider the limit $n\rightarrow\infty$ while keeping the asymptotic shape of the region. Let $R_m$ be the region $R_{mn,n}$ normalized by $n$. Define
$$\sigma:=\sup\{|y_1-y_2|:(x_1,y_1),(x_2,y_2)\in R_m\}-m.$$
In other words, the height of the region $R_m$ is equal to $m+\sigma$. Thus, if we vertically normalize $R_m$ by $m+\sigma$, then the new region, denoted by $D^m$, fits inside the unit square $D$.

The boundary condition of $D^m$ also naturally yields a boundary condition of $D$ by vertically extending the boundary height function of $D^m$, \emph{i.e.} for $x\in[0,1]$,
\begin{eqnarray*}
h(x,1)=h(x,\sup\{y:(x,y)\in D^m\}),\\
h(x,0)=h(x,\inf\{y:(x,y)\in D^m\}),
\end{eqnarray*}
while $h(0,y)$ and $h(1,y)$ are constant.

The (dimer) admissible function on $R_m$, defined as the closure of the height function $H$ of lozenge tilings normalized to $D^m$ as above (see Figure~\ref{heightfunctionlozengetiling}), forms the space of functions on $R_m$ which are horizontally $\frac{1}{2}$-Lipschitz, vertically non-decreasing and $1$-Lipschitz. If naturally extended from $D^m$ to the whole of $D$, they forms such following subspace of functions $\mathcal{H}_0$: define
\begin{eqnarray*}
\bar{\mathcal{H}}_m=\left\{h\in \mathcal{H}_0:\frac{\partial h}{\partial y}\Big|_{D\backslash D^m}=0,\ h\text{ is }(m+\sigma)\text{-Lipschitz}\right\}.
\end{eqnarray*}
It is easy to see that $(\bar{\mathcal{H}}_m)_m$ form an increasing subsequence exhausting $\mathcal{H}_0$ when $m\rightarrow\infty$.

Recall that $ent^{\diamond}$ as the local entropy function of the dimer model on the hexagon lattice. Considering Proposition~\ref{entropyexchange}, we define $ent_m$ as the normalized and adjusted local entropy function of the dimer model, \emph{i.e.},
\begin{eqnarray}\label{entropyasym}
ent_m(s,t)=(m+\sigma)\ ent^{\diamond}(s,t/(m+\sigma))-\ln(m+\sigma)t,
\end{eqnarray}
and for any $h\in\bar{\mathcal{H}}_m$ define
$$Ent_m(h)=\int_D ent_m\circ\nabla h\ dxdy.$$

Recall that Proposition~\ref{entropyexchange} says that the right side of Equation~(\ref{entropyasym}) converges to $ent(s,t)$ for any $(s,t)$ and the convergence is uniform on any compact of slopes that doesn't contain exploding points. We also remark that the concavity of $ent$ simply implies the concavity of $ent_m$.

By \cite{CKP01}, for any $m\in\mathbb{N}^*$, there exists a unique height function $\bar{h}_m\in\bar{\mathcal{H}}_m$ that maximizes $Ent_m$. The following theorem is the main result of this section.

\begin{theorem}\label{thm-hmcvg2h0}
The normalized height functions $\bar{h}_m$ converge to $h_0$ on $D$ when $m\rightarrow\infty$.
\end{theorem}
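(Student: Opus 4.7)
The plan is to show that every sub-sequential limit of $(\bar h_m)$ must coincide with the unique $Ent$-maximizer $h_0$, using a $\Gamma$-convergence-style argument. By compactness of $\tilde{\mathcal H}$ under the uniform norm, every subsequence of $\bar h_m$ admits a further subsequence $\bar h_{m_k}$ converging uniformly on $D$ to some $h_\ast\in\mathcal H$ (the Lipschitz-in-$y$ constraint on $\bar{\mathcal H}_m$ degenerates as $m\to\infty$, so the limit lives in $\mathcal H$, not a proper subset). It is therefore enough to prove $h_\ast=h_0$; uniqueness of sub-sequential limits then yields convergence of the full sequence, and uniformity follows from Lemma~\ref{dini}.

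First I would establish the liminf bound
\[
\liminf_{m\to\infty} Ent_m(\bar h_m)\ \ge\ Ent(h_0).
\]
Given $\varepsilon>0$, apply Lemma~\ref{lemmaapproximation} to obtain $\tilde h_0=A_{\delta,\delta'}(h_0)$ whose gradient is confined to a compact subset of the interior slope set $]-\tfrac12,\tfrac12[\,\times[0,T]$ and with $Ent(\tilde h_0)\ge Ent(h_0)-\varepsilon$. For $m$ large enough that $m+\sigma>T$ and $|D\setminus D^m|<\varepsilon$, build $h^{(m)}\in\bar{\mathcal H}_m$ by keeping $\tilde h_0$ on $D^m$ and flattening $\partial_y h^{(m)}=0$ on $D\setminus D^m$ (using the extension procedure $T_\delta$ of Definition~\ref{defs}). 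Proposition~\ref{entropyexchange}(a) gives uniform convergence $ent_m\to ent$ on the compact slope range of $\nabla\tilde h_0$, while part~(b) bounds the contribution of the flattening strip by $\varepsilon$, so $Ent_m(h^{(m)})\to Ent(\tilde h_0)$. Since $\bar h_m$ maximizes $Ent_m$ on $\bar{\mathcal H}_m$, the liminf inequality follows by letting $\varepsilon\to 0$.

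Next I would prove the complementary upper semi-continuity statement
\[
\limsup_{k\to\infty} Ent_{m_k}(\bar h_{m_k})\ \le\ Ent(h_\ast),
\]
imitating the triangulation argument of Lemma~\ref{triangulationprime}: replace $\bar h_{m_k}$ by its piecewise-affine interpolant $\bar h_{m_k}'$ on a mesh of right triangles of size $l_k\downarrow 0$, and use concavity of $ent_{m_k}$ to dominate $Ent_{m_k}(\bar h_{m_k})$ by the sum of per-triangle values of $ent_{m_k}$ at the average slope. Partition the triangles into good ones, whose average slope sits in a fixed compact subset of $]-\tfrac12,\tfrac12[\,\times[0,T]$, and bad ones whose slope either has $t$ very small or approaches $s=\pm\tfrac12$. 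On good triangles Proposition~\ref{entropyexchange}(a) lets me replace $ent_{m_k}$ by $ent$ with error $o(1)$, and then pass to the limit via uniform convergence $\bar h_{m_k}\to h_\ast$, obtaining a bound by $Ent$ of the piecewise-affine interpolant of $h_\ast$, which in turn is at most $Ent(h_\ast)+o_l(1)$ by Lemma~\ref{triangulationprime}. On bad triangles with small vertical slope Proposition~\ref{entropyexchange}(b) gives a pointwise $ent_{m_k}\le\varepsilon$ bound. The remaining bad triangles, those with slope near $s=\pm\tfrac12$, can only carry a small total area because the liminf inequality of the previous step together with Lemma~\ref{Slessthan0} (which bounds $ent_{m_k}$ from above by a universal constant) forces $\bar h_{m_k}$ to place negligible mass in that regime.

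Combining both bounds gives $Ent(h_\ast)\ge Ent(h_0)$; by the uniqueness statement of Theorem~\ref{theoremexistence} this forces $h_\ast=h_0$, completing the argument. The principal obstacle is the upper semi-continuity in the third step: because the functional $Ent_{m_k}$ varies with $k$, the standard semi-continuity arguments of Lemma~\ref{lemmaHlambda} do not apply directly, and one must simultaneously quantify the compact-slope uniform convergence rate $ent_{m_k}\to ent$ and the rate at which $\nabla\bar h_{m_k}$ avoids the singular slopes $s=\pm\tfrac12$. Matching the mesh size $l_k$ to $m_k$ so that both errors are $o(1)$ is the key technical point.
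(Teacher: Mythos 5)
Your skeleton is the same as the paper's: compactness of the turned space $\t{\mathcal H}$ gives a sub-sequential limit $h_\ast$, a liminf bound $\liminf_m Ent_m(\bar h_m)\geq Ent(h_0)$ is obtained by planting a good competitor in $\bar{\mathcal H}_m$, a limsup bound $\limsup Ent_{m_k}(\bar h_{m_k})\leq Ent(h_\ast)$ closes the sandwich, and uniqueness from Theorem~\ref{theoremexistence} identifies $h_\ast=h_0$. However, there are two genuine gaps. First, you never show that $h_\ast\in\mathcal H_0$ (equivalently that the set where $\partial\t{h}_\ast/\partial\t{y}=1$ is null). This is not cosmetic: if $h_\ast\notin\mathcal H_0$ then $Ent(h_\ast)=\wt{Ent}(\t{h}_\ast)=-\infty$ while the quantity your triangulation argument actually bounds --- the integral of $ent\circ\nabla$ over the piecewise-affine interpolant of $h_\ast$ --- can be close to $0$ (this is exactly the pathology, e.g.\ the two-level jump function, that motivated defining $Ent$ through the turned coordinates). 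So your limsup step does not yield $\limsup Ent_{m_k}(\bar h_{m_k})\leq Ent(h_\ast)$ for such $h_\ast$, and the final comparison with $Ent(h_0)$ collapses. The paper closes this by a separate contradiction argument (as in the proof of Theorem~\ref{theoremexistence}): positive mass on $\{\partial\t{h}_\ast/\partial\t{y}=1\}$ would force $\wt{Ent}_m(\t{\bar h}_m)\to-\infty$, contradicting the liminf bound already established. You need this intermediate step before Lemma~\ref{lemmaapproximation} or Lemma~\ref{triangulationprime} can legitimately be applied to $h_\ast$.

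Second, in the liminf step you assert that $\nabla A_{\delta,\delta'}(h_0)$ is confined to a compact subset of $]-\tfrac12,\tfrac12[\times[0,T]$. Lemma~\ref{lemmaapproximation}(c) only gives $ent\circ\nabla A_{\delta,\delta'}(h_0)\geq-\Lambda$, and the sublevel set $\{ent\geq-\Lambda\}$ reaches arbitrarily close to $s=\pm\tfrac12$ as $t\downarrow0$; on that regime Proposition~\ref{entropyexchange}(b) supplies only an upper bound on $ent_m$ (the lower bound is $-\infty$), so you cannot conclude $Ent_m(h^{(m)})\to Ent(\t{h}_0)$ from uniform convergence alone. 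The paper's fix is to insert the triangulation of Lemma~\ref{triangulation}, so that the competitor $h'$ has only finitely many slope values together with $t=0$ regions (where $ent_m=ent=0$); pointwise convergence $ent_m\to ent$ at finitely many slopes then suffices. Your limsup step, by contrast, is a genuinely different route from the paper's (triangulation plus concavity of $ent_{m_k}$, versus the paper's mollification $A_{\delta,\delta'}(\bar h_m)$, uniform convergence of gradients via Lemma~\ref{dini}, the exhaustion $K_l$, and equicontinuity); it is workable in spirit, but the control of the bad triangles should rest on the negativity of the active part of $ent$ and on Proposition~\ref{entropyexchange}(b), not on Lemma~\ref{Slessthan0}, which bounds the combinatorial entropy $S$ rather than $ent_{m_k}$.
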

\begin{proof}
Similar to Theorem~\ref{theoremexistence}, if we consider the turned space $\tilde{\mathcal{H}}$, by compactness there is a converging subsequence of $(\tilde{\bar{h}}_m)_m$, saying $(\tilde{\bar{h}}_{m_l})_l$. Denote the limit function's preimage in $\mathcal{H}$ by $\bar{h}_0$ (it may depends on the choice of the subsequence but we will prove that this is not the case).

We prove that it is the same function as $h_0$, and we do this by showing that $Ent(\bar{h}_0)$ is equal to $Ent(h_0)$. The proof is divided into the following three parts. We first prove that
\begin{eqnarray}\label{inequality1/3}
Ent(h_0)\leq\liminf_{l\rightarrow\infty}Ent_{m_l}(\bar{h}_{m_l}),
\end{eqnarray}
then we show that $\bar{h}_0\in \mathcal{H}_0$, so we can apply Lemma~\ref{lemmaapproximation}, and finally we prove that
\begin{eqnarray}\label{inequality3/3}
Ent(\bar{h}_0)\geq\limsup_{m\rightarrow\infty}Ent_{m_l}(\bar{h}_{m_l}),
\end{eqnarray}
thus $Ent(\bar{h}_0)\geq Ent(h_0)$. By uniqueness of Theorem~\ref{theoremexistence} we prove that $\bar{h}_0=h_0$.

Finally, as we can apply this argument to any subsequence of $(\tilde{\bar{h}}_m)_m$ and prove that any subsequence has a converging subsubsequence whose limit is $h_0$, so the convergence of subsequence is in fact a convergence of the sequence $(\tilde{\bar{h}}_m)_m$ itself. Thus, without loss of generality, here below we suppose $(\tilde{\bar{h}}_m)_m$ converges so as to simplify the notation.

\vspace{0.5cm}

Begin by proving Inequality~(\ref{inequality1/3}), and without loss of generality we still take the setting of star-convexity used in Lemma~\ref{lemmaapproximation}. For any $\varepsilon>0$, by Lemma~\ref{lemmaapproximation} and Lemma~\ref{triangulation} there exist $\delta,\delta'>0$, functions $A_{\delta,\delta'}(h)$ and $h'$, such that the function $h'$ agrees with $A_{\delta,\delta'}(h)$ on $D\backslash D_{\delta}$, is piecewise linear on a $l$-triangle mesh of $D_{\delta}$, and
$$Ent(h')\geq Ent(h_0)-\frac{\varepsilon}{2}.$$

The local entropy $ent\circ\nabla h'$ is bounded, so by the same reason mentioned in the proof of Lemma~\ref{lemmaHlambda}, there exists some $M\in\mathbb{Z}^+$ such that the vertical partial derivative is less than $M$. Still by construction, on the band $[0,1]\times[1-\delta,1]$ and that $[0,1]\times[0,\delta]$, we have some frozen-like regions of shapes corresponding to the height function near the boundaries, so there exists $M'\in\mathbb{Z}^+$ such that outside $D^{M'}$ the vertical slope of $h'$ is $0$.

Thus, for all $m\geq\max\{M,M'\}$ we have $h'\in \bar{\mathcal{H}}_m$. Especially,
$$Ent_m(\bar{h}_m)\geq Ent_m(h').$$

As $h'$ is piecewise linear on $D_{\delta}$ and the number of pieces is finite, and on $D\backslash D_{\delta}$ it is taken to be the naive function in Definition and Lemma~\ref{DL1}, $\nabla h'$ only takes the values of $t=0$ together with a finite number of possible values. By Lemma~\ref{entropyexchange},
$$\lim_{m\rightarrow\infty}Ent_m(h')=Ent(h')$$
so for $m$ sufficiently large we have
$$Ent_m(h')\geq Ent(h')-\frac{\varepsilon}{2}.$$

In conclusion, we have that for $m$ sufficiently large,
$$Ent_m(\bar{h}_m)\geq Ent(h_0)-\varepsilon,$$
which proves Inequality~(\ref{inequality1/3}).

\vspace{0.5cm}

Now we prove that $\t{\bar{h}}_0\in \wt{\mathcal{H}}_0$. As in Theorem~\ref{theoremexistence}, if the set that $\frac{\partial\tbh_0}{\partial\t{y}}=1$ is positive, then for any $\varepsilon>0$, there exist a finite number of disjoint convex compacts $K_j,j=1,2,...,J$ of a positive measure independent of $\varepsilon$ and $M\in\mathbb{Z}^*$ such that on each compact the average vertical height change $av_y^{\Delta_k}(\t{\bar{h}}_m)$ is greater than $1-8\varepsilon$ if $m\geq M$. By an argument similar to that used in Theorem~\ref{theoremexistence}, it can be proved that
$$\limsup_{m\rightarrow\infty}\wt{Ent}_m(\t{\bar{h}}_m)=-\infty.$$
However, this contradicts to Inequality~(\ref{inequality1/3}) which says that
$$\wt{Ent}_m(\t{\bar{h}}_m)=Ent_m(\bar{h}_m)$$
has a lower bound, so we have proved that $\bar{h}\in \mathcal{H}_0$.

\vspace{0.5cm}

Now we are allowed to use Lemma~\ref{lemmaapproximation} to approximate $\bar{h}_0$ by a function of better regularity. For all $\varepsilon>0$, we can choose $\delta,\delta'$ small enough so that
$$Ent\big(A_{\delta,\delta'}(\bar{h}_0)\big)=(1-2\delta)^2 Ent(\bar{h}_0)+O(\delta\ln\delta),$$
so for any $\varepsilon>0$ we may choose $\delta$ and $\delta'$ so that the absolute value of the term $O(\delta\ln\delta)$ is less than $\varepsilon$.

Furthermore, by construction of the operator $A_{\delta,\delta'}$, for the same $\delta$ and $\delta'$ as above, we have that for any admissible function $h$:\\
(a) if $h\in \bar{\mathcal{H}}_m$, then the integral of local entropy function $ent_m$ of $A_{\delta,\delta'}(h)|_{D_{\delta}}$ is bigger than that of $h|_{D_{\delta}}$ (by concavity of $ent_m$).\\
(b) $A_{\delta,\delta'}(h)|_{D\backslash D_{\delta}}$ is the same function for any $h$, with two possible vertical derivative, and when the vertical derivative is non-zero, the horizontal one is bounded away from $\pm\frac{1}{2}$ by some constant of order $\delta$. Thus, the integral of ${ent_m\circ(\nabla A_{\delta,\delta'}(h))}$ on $D\backslash D_{\delta}$ converges in $m$ uniformly for all $h$ to a term of absolute value less than $\varepsilon$.

In conclusion, for any $\varepsilon>0$, there exists $\delta,\delta'>0$ and $M\in\mathbb{N}^*$ such that for any $m\geq M$ we have
$$Ent_m\big(A_{\delta,\delta'}(\bar{h}_m)\big)\geq (1-2\delta)^2 Ent_m(\bar{h}_m)-\varepsilon.$$

The boundness of $Ent_m(\bar{h}_m)$ and concavity of $ent_m$ implies the uniform boundness of
$ent_m\circ\nabla A_{\delta,\delta'}(\bar{h}_m)$ on $D$.


We also claim that $\nabla A_{\delta,\delta'}(\bar{h}_m)$ converges uniformly to $\nabla A_{\delta,\delta'}(\bar{h}_0)$ on $D$. In fact, by construction, they are all identical on $D\backslash D_{\delta}$ so have the same gradient there, and on $D_{\delta}$ we have that for any $m$,
$$\nabla A_{\delta,\delta'}(\bar{h}_m)=-\nabla U_{\delta'}*P_{0,\delta,0}\bar{h}_m.$$

According to Lemma~\ref{dini}, $P_{0,\delta,0}\bar{h}_m$ converge uniformly to $P_{0,\delta,0}\bar{h}_0$, so the convergence of $\nabla A_{\delta,\delta'}(\bar{h}_m)$ to $\nabla A_{\delta,\delta'}(\bar{h}_0)$ is uniform.

Define $\big(K_l(A_{\delta,\delta'}(\bar{h}_0))\big)_{l=1,2,...}$ as the following increasing sequence of subsets of $D$:
$$K_l(A_{\delta,\delta'}(\bar{h}_0))=\left\{(x,y)\in D:\frac{\partial A_{\delta,\delta'}(\bar{h}_0)}{\partial x}(x,y)\in[-\frac{1}{2}+\frac{1}{l},\frac{1}{2}-\frac{1}{l}]\times[0,A]\right\},$$
and the limit of this sequence is
$$K_{\infty}(A_{\delta,\delta'}(\bar{h}_0))=\left\{(x,y)\in D:\frac{\partial A_{\delta,\delta'}(\bar{h}_0)}{\partial x}(x,y)\in]-\frac{1}{2},\frac{1}{2}[\times[0,A]\right\}.$$

By the uniform convergence of $\nabla A_{\delta,\delta'}(\bar{h}_m)$ to $\nabla A_{\delta,\delta'}(\bar{h}_0)$, for all $l$, there exists $M$ such that for all $m>M$, on $K_l(A_{\delta,\delta'}(\bar{h}_0))$ we have
$$\frac{\partial A_{\delta,\delta'}(\bar{h}_m)}{\partial x}\in[-\frac{1}{2}+\frac{1}{2l},\frac{1}{2}-\frac{1}{2l}],$$

By Lemma~\ref{entropyexchange} argument (a), the convergence of $ent_m(s,t)$ to $ent(s,t)$ is uniform for any $(s,t)\in[-\frac{1}{2}+\frac{1}{2l},\frac{1}{2}-\frac{1}{2l}]\times[0,A]$, \emph{i.e.}, for any $\varepsilon>0$, there exists $M'$ such that for all $m'>M'$ and all $(s,t)\in[-\frac{1}{2}+\frac{1}{2l},\frac{1}{2}-\frac{1}{2l}]\times[0,A]$, we have
\begin{eqnarray}
|ent_m(s,t)-ent(s,t)|<\frac{\varepsilon}{2}.
\end{eqnarray}
The uniform convergence also implies that the space
$$\{ent_m(.,.),m\geq M\}\cup\{ent(.,.)\}$$
viewed as a subspace of continuous functions on the compact set $$(s,t)\in[-\frac{1}{2}+\frac{1}{2l},\frac{1}{2}-\frac{1}{2l}]\times[0,A]$$
is compact. Especially, by Arzela-Ascoli, they are equicontinuous: for the same $\varepsilon$, there exists $\varepsilon'>0$ such that for any $(s,t)$ and $(s',t')$ in $[-\frac{1}{2}+\frac{1}{2l},\frac{1}{2}-\frac{1}{2l}]\times[0,A]$ and for any $m>M$,
$$||(s,t)-(s',t')||<\varepsilon'\Rightarrow|ent_m(s,t)+ent_m(s',t')|<\frac{\varepsilon}{2}.$$

Again by the uniform convergence of $\nabla A_{\delta,\delta'}(\bar{h}_m)$ to $\nabla A_{\delta,\delta'}(\bar{h}_0)$, there exists $M''>M$ such that for all $m''\geq M''$,
$$\sup_{(x,y)\in K_l}||\nabla A_{\delta,\delta'}(\bar{h}_{m''})-\nabla A_{\delta,\delta'}(\bar{h}_{0})||<\varepsilon'.$$

Thus for all $m'>M'$, $m''>M''$, $(x,y)\in K_l$, we have
\begin{eqnarray*}
&\ &|ent_{m'}\circ\nabla A_{\delta,\delta'}(\bar{h}_{m''})(x,y)-ent\circ\nabla A_{\delta,\delta'}(\bar{h}_0)(x,y)|\\
&\leq&|ent_{m'}\circ\nabla A_{\delta,\delta'}(\bar{h}_{m''})(x,y)-ent_{m'}\circ\nabla A_{\delta,\delta'}(\bar{h}_0)(x,y)|\\
&\ &+|ent_{m'}\circ\nabla A_{\delta,\delta'}(\bar{h}_{0})(x,y)-ent\circ\nabla A_{\delta,\delta'}(\bar{h}_0)(x,y)|\\
&\leq&\varepsilon.
\end{eqnarray*}

Thus, on $K_l$, we have the following uniform convergence on $m'$ and $m''$:
$$\lim_{m'\rightarrow\infty,m''\rightarrow\infty}ent_{m'}\circ\nabla A_{\delta,\delta'}(\bar{h}_{m''})(x,y)=
ent\circ\nabla A_{\delta,\delta'}(\bar{h}_0)(x,y),$$
so
\begin{eqnarray}\label{sdfqsrgtfergtezgt}
\lim_{m\rightarrow\infty}Ent_m^{K_l}(A_{\delta,\delta'}(\bar{h}_m))=Ent^{K_l}(A_{\delta,\delta'}(\bar{h}_0)).
\end{eqnarray}

When $l$ tends to infinity, by bounded convergence, the right hand side of~(\ref{sdfqsrgtfergtezgt}) tends to $Ent^{K_{\infty}}(A_{\delta,\delta'}(\bar{h}_0))$ which is equal to $Ent(A_{\delta,\delta'}(\bar{h}_0))$. For the left hand side, the difference between $Ent_m^{K_l}(A_{\delta,\delta'}(\bar{h}_m))$ and $Ent_m^{K_{\infty}}(A_{\delta,\delta'}(\bar{h}_m))$ also converges to $0$ by bounded convergence, and the difference between $Ent_m^{K_{\infty}}(A_{\delta,\delta'}(\bar{h}_m))$ and $Ent_m(A_{\delta,\delta'}(\bar{h}_m))$ is equal to
$$\iint_{D\backslash K_{\infty}}ent_m(A_{\delta,\delta'}(\bar{h}_m))dxdy,$$
and by Lemma~\ref{entropyexchange} (b), for $l$ sufficiently large, then for $m$ large enough, the term above will be uniformly bounded from above by $\varepsilon$.

In conclusion, for $m$ large enough, we have
$$Ent\big(A_{\delta,\delta'}(\bar{h}_0)\big)\geq Ent_m\big(A_{\delta,\delta'}(\bar{h}_m)\big)-2\varepsilon,$$
and let $\varepsilon\rightarrow0$ we get
$$Ent(\bar{h}_0)\geq\limsup_{m\rightarrow\infty} Ent_m(\bar{h}_m),$$
which is Inequality~(\ref{inequality3/3}).
\qed
\end{proof}

For the explicit example of $h^{\partial}$ (\ref{sqlilfjrgeg}) considered in Section 1, then it is the limit case of tiling an hexagonal domain lozenges, a particular case studied in \cite{CLP}. By taking limit in their explicit forms, we get $h_0(x,y)=\frac{1}{2\pi}H(2x-1,2y-1)$, where
\begin{eqnarray}\label{shfmlzfuezr}
H(x,y)=
\begin{cases}
\arctan\frac{y}{\sqrt{1-x^2-y^2}}-x\arctan\frac{xy}{\sqrt{1-x^2-y^2}}\text{ if } \ x^2+y^2\leq 1\\
\frac{\pi}{2}(1-|x|)\text{ if } \ x^2+y^2>1, y>\frac{1}{2}\\
\frac{\pi}{2}(|x|-1)\text{ if } \ x^2+y^2>1, y<\frac{1}{2}.
\end{cases}
\end{eqnarray}
Readers can verify that (\ref{shfmlzfuezr}) is a particular solution of the Euler-Lagrange equation~(\ref{pde}). More general, in cases like this where $h^{\partial}$ is piecewise linear, we can observe frozen boundaries, which are algebraic curves described by \cite{KO1}. Authors of \cite{BeadFinitization} prove the frozen boundaries for the bead model corresponding to $abc$-hexagons.

\begin{figure}[H]
\centering
\includegraphics[clip, trim=4cm 10cm 0.5cm 10cm, width=0.9\textwidth]{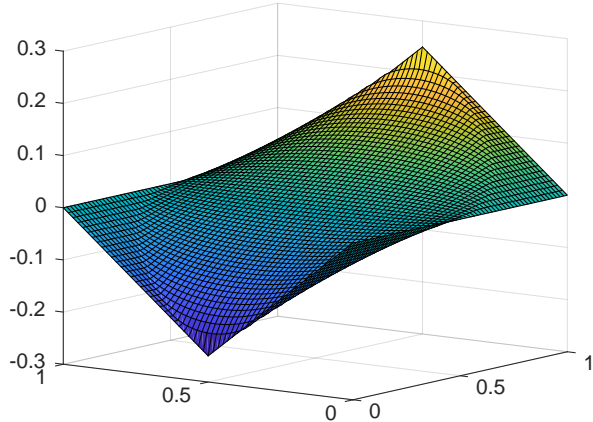}
\caption{The function $h_0$ for this boundary condition.}
\end{figure}


\section{Limit shape of standard Young tableaux}\label{sect6.4}

\subsection{Limit shape of standard (skew) Young tableaux}

In this section use the map from uniform bead configurations to standard Young tableaux (which can be skew) to study the limiting behavior of the later one. More precisely, we fix an arbitrarily chosen (skew) shape of Young diagram $\lambda$, given by two $\frac{1}{2}$-Lipschitz function
$$h(.,0),h(.,1):[0,1]\rightarrow\mathbb{R}$$
such that $h(x,1)-h(x,0)>0$ on $]0,1[$, and $\lambda$ is given by
$$\lambda=\{(x,z):2h(x,0)\leq z\leq 2h(x,1)\}.$$
Without loss of generality we can suppose that $h(0,0)=h(0,1)=0$. Readers can compare this to~(\ref{QKJFQSDKFQ}), page \pageref{QKJFQSDKFQ}.

For any $n\in\mathbb{N}^*$, define $\lambda_n$ as the normalized (skew) diagram that approximates $\lambda$ to an order of $O(\frac{1}{n})$, and the diagram is made of boxes of edge length $\frac{\sqrt{2}}{n}$ and written under the Russian convention. We use the $x-z$ coordinates.

Recall that $\mathcal{T}_{\lambda_n}$ is the set of standard tableaux of diagram $\lambda_n$. We can view a random tableau $T\in\mathcal{T}_{\lambda_n}$ as a random piecewise constant function on $\lambda_n$.

Consider $\Omega$ as a probability space, and consider $\mathbf{B}_n=\mathbf{B}_n(\omega)$ be a random bead configuration for the bead model corresponding to $\lambda_n$. By the map $\mathcal{Y}$ constructed in Section~\ref{sect6.4.1map} from bead configurations to the standard Young tableaux, we define the following random surface
\begin{eqnarray*}
\tau_n            :\ &\mathbb{R}^2\times\Omega &\rightarrow \ [0,1],\\
                     &(x,z;\omega)             &\mapsto     \ \mathds{1}_{(x,z)\in\lambda_n}\frac{\mathcal{Y}(\mathbf{B}_n(\omega))(x,z)}{|\lambda_n|},
\end{eqnarray*}
where we extend the function to the whole $\mathbb{R}^2$ plane and outside $\lambda_n$ we take $0$ by default. We have the following theorem.


\begin{theorem}\label{convergenceYoungSurface}
For a sequence of (skew) Young diagram $\lambda_n$ with an asymptotic shape $\lambda$, when $n\rightarrow\infty$, the random surfaces $\tau_n$ converge on any compact subset of the interior of $\lambda$ in probability and under uniform metric to a surface $\mathcal{S}$ supported on $\lambda$. The surface $\mathcal{S}$ is explicitly determined by the unique function $h_0\in \mathcal{H}$ that maximizes $Ent(.)$ with a boundary condition corresponding to $\lambda$. If we define for any $x\in[0,1]$
\begin{eqnarray*}
z_-(x)=\inf\{z:(x,z)\in\lambda\},\\
z_+(x)=\sup\{z:(x,z)\in\lambda\},
\end{eqnarray*}
and for any value $e\in[h(x,0),h(x,1)]$, define
$$h^{-1}_{0,x}(e)=\inf\{y\in[0,1]:h_0(x,y)\geq e\},$$
then the surface is given by
\begin{eqnarray*}
\mathcal{S}(x,z)=
\begin{cases}
h^{-1}_{0,x}\left(\frac{h_0(x,0)(z_+(x)-z)+h_0(x,1)(z-z_-(x))}{z_+(x)-z_-(x)}\right)&\ \text{if }(x,z)\in\lambda,\\
0 &\ \text{otherwise.}
\end{cases}
\end{eqnarray*}
\end{theorem}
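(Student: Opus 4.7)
The plan is to leverage the measure-preserving map $\mathcal{Y}:\mathcal{B}_{\lambda_n}\to\mathcal{T}_{\lambda_n}$ from Section~\ref{sect6.4.1map} together with the bead-model limit shape (Theorem~\ref{Thm-Bead-convergenceinproba}). Let $\mathbf{B}_n$ be a uniform random element of $\mathcal{B}_{\lambda_n}$, so $\mathcal{Y}(\mathbf{B}_n)$ is uniform in $\mathcal{T}_{\lambda_n}$ and carries the same law as the random tableau underlying $\tau_n$. For each box $(i,j)$, denote by $y_{i,j}$ the $y$-coordinate of the corresponding bead. The core observation is that two facts about $y_{i,j}$ combine to give the claim: (i) the bead-model LLN pins down the value of $y_{i,j}$ itself, and (ii) an order-statistics argument pins down the value of $y_{i,j}$ in terms of the rank $\mathcal{Y}(\mathbf{B}_n)(i,j)$, i.e.\ in terms of $\tau_n$.

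First I would identify the limiting $y$-coordinate of the bead at box $(x,z)$. In $\lambda_n$ (Russian convention, box edge $\sqrt{2}/n$, so the vertical box size is $2/n$), the box at $(x,z)$ is the $k$-th box from the bottom of the column over $x$, with $k\approx n(z-z_-(x))/2$. By Definition~\ref{unrenormalizedheightfunctionbeadmodel}, the corresponding bead satisfies $H(i,y_{i,j})-H(i,0)=k$, hence after normalizing $h_n(x,y_{i,j})-h_n(x,0)=k/(n-1)$. Theorem~\ref{Thm-Bead-convergenceinproba} gives $h_n\to h_0$ uniformly in probability on $D$, and the boundary condition forces $h_0(x,0)=h(x,0)=z_-(x)/2$, so in the limit $h_0(x,y_{i,j})=z/2$; that is, $y_{i,j}\to y(x,z):=h^{-1}_{0,x}(z/2)$ in probability. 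A one-line calculation, using $h_0(x,0)=h(x,0)$ and $h_0(x,1)=h(x,1)$, shows that the weighted average inside $h^{-1}_{0,x}(\,\cdot\,)$ in the statement simplifies to $z/2$, so $\mathcal{S}(x,z)=y(x,z)$.

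Next I would connect $y_{i,j}$ to $\tau_n(x,z)$ via an order-statistics argument. As recalled in Section~\ref{sect6.4.1map}, the preimage $\mathcal{Y}^{-1}(T)$ of any tableau $T\in\mathcal{T}_{\lambda_n}$ is the simplex $\{0<y_{(1)}<\cdots<y_{(|\lambda_n|)}<1\}$ of volume $1/|\lambda_n|!$. Hence, conditionally on $T_n:=\mathcal{Y}(\mathbf{B}_n)$, the sorted $y$-coordinates are the order statistics of $|\lambda_n|$ i.i.d.\ uniform $[0,1]$ random variables, and the bead at box $(i,j)$ carries the $T_n(i,j)$-th order statistic. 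Standard concentration of uniform order statistics (Dvoretzky--Kiefer--Wolfowitz applied to the empirical CDF, then inverted) gives $\sup_k|y_{(k)}-k/|\lambda_n||\to 0$ in probability at an exponential rate, uniformly in $T_n$ and hence unconditionally. Therefore $|y_{i,j}-T_n(i,j)/|\lambda_n||=|y_{i,j}-\tau_n(x,z)|\to 0$ in probability, uniformly over all boxes.

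Combining the two steps yields $\tau_n(x,z)\to\mathcal{S}(x,z)$ in probability, pointwise in the interior of $\lambda$. To promote pointwise to uniform convergence on a compact subset $K$ of the interior of $\lambda$, I would exploit that $T_n$ is monotone along the two diagonal directions of the Russian-convention diagram, which makes $\tau_n$ coordinatewise monotone in a fixed partial order; combined with the uniform bound $\tau_n\in[0,1]$ and the continuity of the limit $\mathcal{S}$ on $K$, this forces equicontinuity of $(\tau_n)_n$ on $K$, and hence uniform convergence (a Dini-type upgrade of the pointwise in-probability statement). The main obstacle will be the bookkeeping for this uniform upgrade, together with ensuring that $y(x,z)=h^{-1}_{0,x}(z/2)$ is well-defined and continuous on $K$: this requires $h_0$ to be strictly increasing in $y$ on the vertical fibres above $K$, i.e.\ a liquid (non-frozen) regime for $h_0$. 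Near the boundary of $\lambda$ the slope $\partial h_0/\partial y$ may vanish or explode, so a small fluctuation of $h_n$ could translate into a large fluctuation of $y_{i,j}$ there; this is precisely why the theorem restricts to compact subsets of the interior of $\lambda$.
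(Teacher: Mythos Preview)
Your proposal is correct and follows essentially the same two-step decomposition as the paper's proof: introduce the auxiliary surface $\eta_n(x,z)=y_{i,j}$ (the actual $y$-coordinate of the bead in the box at $(x,z)$), show $\eta_n\to\mathcal{S}$ via Theorem~\ref{Thm-Bead-convergenceinproba}, and then show $\sup|\tau_n-\eta_n|\to 0$ via the uniform order-statistics bound. Your observation that the weighted average inside $h^{-1}_{0,x}(\cdot)$ collapses to $z/2$ is a helpful simplification the paper leaves implicit, and your explicit flagging of the strict-monotonicity issue for $h_0(x,\cdot)$ (needed for continuity of $\mathcal{S}$ on $K$) is exactly the point the paper handles by noting that the degenerate inversion ``only happens in the frozen region of $h_0$, and by construction this doesn't matter.''
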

\begin{proof}
Consider the corresponding sequence of bead models, which by construction has an asymptotic boundary condition $h^{\partial}$ determined by $\lambda$. By Theorem~\ref{Thm-Bead-convergenceinproba}, the normalized height function $h$ converges in probability to $h_0$ under the uniform metric.

For any such compact $K$ in the interior of $\lambda$, there exists $N(K)\in\mathbb{N}^*$ such that for any $n>N(K)$ we have $K\subset\lambda_n$. To prove that $\tau_n$ converges to $\mathcal{S}$ on $K$, we define another random function $\eta_n$. Recall that $y_{i,j}=y_{i,j}(\omega)$ is the random vertical coordinate of the $j^{th}$ bead on the $i^{th}$ thread (page \pageref{sqdflfhriluhreiugtruetgrkjfjfdfv}). For any bead configuration $\mathbf{B}_n$ with $n$ threads, $n>N(K)$, consider
\begin{eqnarray*}
\eta_n:\ &K &\rightarrow \ [0,1],\\
         &(x,z;\omega)     &\mapsto     \ y_{\lfloor xn \rfloor, \lfloor (z-z_-(x))n \rfloor}(\omega),
\end{eqnarray*}
\emph{i.e.}, for all $n$ we associate the box containing the point $(x,z)$
to a value equal to the $y$-coordinate of the bead corresponding to that box.

When $n\rightarrow\infty$, the random function $\eta_n$ converges in probability to $\mathcal{S}(x,z)$ on $K$. In fact, restricted to every $x$, $\eta_n(x,.,\omega)$ viewed as a stepwise constant function of $z$ is roughly the inverse (which can be well defined by using $\inf$ and $\sup$) of the normalized height function $h$ as a stepwise constant function of $y$. Meanwhile, still restricted to $x$, the surface $\mathcal{S}$ viewed as a function of $z$ is merely the inverse function of $h_0$ as a function of $y$, while the degenerating case (where the inversion fails) only happens in the frozen region of $h_0$, and by construction this doesn't matter. Thus the fact that a random surface $h$ converges to $h_0$ implies that $\eta_n$ converges to $\mathcal{S}$.

Now consider the difference between $\tau_n$ and $\eta_n$. For any bead configuration, conditioning to any ordering of $y_{i,j}$, the difference of $\tau_n$ and $\eta_n$ on a box ${i,j}$ is just equal to the difference of $y_{i,j}$ and its rank normalized by $|\lambda_n|$. So
\begin{eqnarray*}
\sup_{(x,z)\in K} |\tau_n-\eta_n|(x,z)\leq
\sup_{k=1,2,...,|\lambda_n|}\left|y_{i_k,j_k}-\frac{k}{|\lambda_n|}\right|,
\end{eqnarray*}
where the right hand side converges to $0$ in probability since the array $(y_{i_k,j_k})_{k=1,2,...,|\lambda_n|}$ is of the same law than a random ordered $|\lambda_n|$-dimensional array under the uniform measure on $[0,1]$.
\qed
\end{proof}

We remark that the random surface $\eta_n$ in the proof can be viewed as the limit of a normalized plane partition, which also gives the bead model. So the convergence of $\eta_n$ when $n\rightarrow\infty$ is nothing different from the convergence of bead configurations in Theorem~\ref{Thm-Bead-convergenceinproba}.

\vspace{0.5cm}

We call Theorem~\ref{convergenceYoungSurface} the ``surface version" convergence of a random (skew) Young tableau. It will be interesting to recover for a general skew case the results of \cite{PR} and \cite{Sni}, which we call as the ``contour curve version" convergence.

For any (skew) Young diagram $\lambda_n$, any standard tableau $T\in\mathcal{T}_{\lambda_n}$, and for any $\alpha\in]0,1[$, define ${Y_{\alpha,n}(T)}$ as the set composed of the boxes whose entries are less than $\alpha|\lambda_n|$, \emph{i.e.} a sub (skew) diagram of $\lambda_n$ given by
\begin{eqnarray}\label{Yan}
Y_{\alpha,n}(T):=\left\{\left(\frac{i}{n},\frac{j}{n}\right):T(i,j)\leq\alpha|\lambda_n|\right\}.
\end{eqnarray}
If we consider $T$ as a random standard tableau, then $Y_{\alpha,n}$ such defined is a random subdiagram of $\lambda_n$. We have

\begin{theorem}\label{boundarymeasureconvergence}
When $n\rightarrow\infty$, the Dirac measures of the upper boundary of $Y_{\alpha,n}$ converges to a Dirac measure on a curve determined explicitly by $h_0$. The curve is the contour line of height $\alpha$ of the surface $\mathcal{S}$ in Theorem~\ref{convergenceYoungSurface}.
\end{theorem}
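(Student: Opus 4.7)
The plan is to derive this from Theorem~\ref{convergenceYoungSurface} by turning uniform convergence of the surface $\tau_n$ to $\mathcal{S}$ into Hausdorff convergence of sublevel sets, and then translating this into weak convergence of the associated boundary measures. First, I would note that by construction $\tau_n(i/n,j/n)=T(i,j)/|\lambda_n|$ on each box of $\lambda_n$, so the rescaled sub-diagram $Y_{\alpha,n}(T)$ defined in~(\ref{Yan}) coincides, up to a $1/n$-error, with the sublevel set $\{(x,z)\in\lambda_n:\tau_n(x,z)\leq\alpha\}$. Consequently, the upper boundary of $Y_{\alpha,n}$ is precisely (again up to a discretization error) the level curve $\{\tau_n=\alpha\}$.

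Second, I would analyze the limit surface $\mathcal{S}$ using its explicit description in Theorem~\ref{convergenceYoungSurface}. For each fixed $x\in(0,1)$, the map $z\mapsto\mathcal{S}(x,z)$ is the composition of the affine increasing bijection $z\mapsto\frac{h_0(x,0)(z_+(x)-z)+h_0(x,1)(z-z_-(x))}{z_+(x)-z_-(x)}$ from $[z_-(x),z_+(x)]$ onto $[h_0(x,0),h_0(x,1)]$ with $h^{-1}_{0,x}$, which is nondecreasing. Because $h_0$ maximizes $Ent$ and $Ent(h_0)>-\infty$, in the liquid region one has $\partial h_0/\partial y>0$, so $z\mapsto\mathcal{S}(x,z)$ is strictly increasing along any vertical slice that meets the liquid region, and in particular $\mathcal{S}$ has no horizontal plateaus along such slices. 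Hence, for any $\alpha\in(0,1)$, the level set $\{\mathcal{S}=\alpha\}$ intersects the liquid region in the graph $z=\phi_\alpha(x)$ of a continuous function over an interval of $x$-values, and this is the curve claimed in the theorem statement.

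Third, I would combine uniform convergence with the strict monotonicity just established to obtain Hausdorff convergence of the level sets. Fix a compact subset $K$ of the interior of $\lambda$ contained in the liquid region. By uniform continuity and strict monotonicity of $z\mapsto\mathcal{S}(x,z)$ on $K$, for every $\varepsilon>0$ there is $\delta>0$ with
\begin{equation*}
\{\mathcal{S}\leq\alpha-\delta\}\cap K\ \subset\ \{\mathcal{S}\leq\alpha\}\cap K\ \subset\ \{\mathcal{S}\leq\alpha+\delta\}\cap K,
\end{equation*}
the outer two sets differing by an $\varepsilon$-neighborhood of the curve $\{\mathcal{S}=\alpha\}\cap K$. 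By Theorem~\ref{convergenceYoungSurface}, the event $\{\sup_K|\tau_n-\mathcal{S}|<\delta\}$ has probability tending to $1$, and on this event the sublevel set $\{\tau_n\leq\alpha\}\cap K$ is sandwiched between $\{\mathcal{S}\leq\alpha-\delta\}\cap K$ and $\{\mathcal{S}\leq\alpha+\delta\}\cap K$. This shows that the upper boundary of $Y_{\alpha,n}$, restricted to $K$, converges in Hausdorff distance to $\{\mathcal{S}=\alpha\}\cap K$ in probability. Finally, since this limit curve is a graph of a continuous function, it has finite length on $K$, and testing against any bounded continuous function $f$ on $\lambda$ gives convergence in probability of $\int f\,d\mu_{\alpha,n}$ to the integral of $f$ along $\{\mathcal{S}=\alpha\}$, where $\mu_{\alpha,n}$ denotes the suitably normalized Dirac measure on the discrete upper boundary.

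The main obstacle, as in Theorem~\ref{convergenceYoungSurface}, is controlling what happens at the interface between the liquid and frozen regions of $\lambda$ and near $\partial\lambda$: there $\mathcal{S}$ may have vertical tangencies or the level curve may touch $\partial\lambda$ at cusps dictated by the frozen boundary of \cite{KO1}. Restricting to compact subsets of the interior of the liquid region avoids these issues cleanly, and the full statement follows by exhausting the liquid region with such compact sets, together with a bound on the total length of the upper boundary of $Y_{\alpha,n}$ near $\partial\lambda$ to show that no mass escapes in the limit.
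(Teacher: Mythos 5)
Your route is genuinely different from the paper's. You derive the contour-line statement from the surface convergence of Theorem~\ref{convergenceYoungSurface} by a sandwich argument on sublevel sets of $\tau_n$. The paper instead goes back to the bead coordinates: it introduces the auxiliary diagram $Y'_{\alpha,n}=\{(i/n,j/n):y_{i,j}\leq\alpha\}$ built from the continuous $y$-coordinates, observes that its upper boundary (a globally $1$-Lipschitz function of $x$ on all of $[0,1]$, by the Russian convention) converges because the bead height function converges uniformly on all of $D$ (Theorem~\ref{Thm-Bead-convergenceinproba}, not just on compacts of the interior), and then compares $Y_{\alpha,n}$ with $Y'_{\alpha,n}$: their symmetric difference is a skew diagram whose normalized area is $|\alpha-y_{i_{\lfloor\alpha|\lambda_n|\rfloor},j_{\lfloor\alpha|\lambda_n|\rfloor}}|+o(1)$, which vanishes in probability by the uniform order statistics, and a vanishing area difference between two $1$-Lipschitz upper boundaries forces a vanishing sup-norm difference. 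What the paper's organization buys is precisely global control: frozen regions and the neighborhood of $\partial\lambda$ are handled for free.

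There are two concrete soft spots in your version. First, the strict monotonicity of $z\mapsto\mathcal{S}(x,z)$ is false on frozen slices (there $h^{-1}_{0,x}$ jumps and $\mathcal{S}(x,\cdot)$ skips the value $\alpha$); what actually controls the thickness of your sandwich $\{\mathcal{S}\leq\alpha+\delta\}\setminus\{\mathcal{S}\leq\alpha-\delta\}$ is the continuity of $y\mapsto h_0(x,y)$ (uniform by Lemma~\ref{dini}), since the upper boundary of the sublevel set at level $\beta$ on the slice $x$ is an affine image of $h_0(x,\beta)$; with that substitution your sandwich works on every interior slice, liquid or frozen, so the restriction to the liquid region is unnecessary. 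Second, and more seriously, your treatment of the frozen region and of $\partial\lambda$ via ``a length bound showing no mass escapes'' does not identify the limit there: in the square-tableau example the level curve has a macroscopic portion lying on $\partial\lambda$ outside the arctic circle, and bounding length does not show the discrete boundary converges to that portion. The clean repair is the paper's observation that the upper boundary of $Y_{\alpha,n}$ is $1$-Lipschitz on all of $[0,1]$: pointwise convergence at each interior $x$ (which your sandwich gives) together with the uniform Lipschitz bound upgrades to uniform convergence on $[0,1]$, and hence to the weak convergence of the Dirac measures. With those two adjustments your argument closes.
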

\begin{proof}
Consider a boundary condition on $D$ of the bead model corresponding to $\lambda$ and a line segment $y=\alpha$ in $D$. Then for any $n\in\mathbb{N}^*$, define the following random subdiagram of $\lambda_n$ formed by the boxes corresponding to the beads under the line $y=\alpha$, \emph{i.e.},
\begin{eqnarray*}
Y_{\alpha,n}'=\left\{\left(\frac{i}{n},\frac{j}{n}\right):y_{i,j}\leq\alpha|\lambda_n|\right\}.
\end{eqnarray*}

The upper boundary of $Y_{\alpha,n}'$ normalized by $n$ is a $1$-Lipschitz function on $[0,1]$. By the same reason than in Theorem~\ref{convergenceYoungSurface}, this curve converges to a limiting curve determined by $h_0$.

Consider the difference of the diagrams $Y_{\alpha,n}'$ and  $Y_{\alpha,n}$. If $\alpha |\lambda_n|\leq y_{i_{\lfloor\alpha |\lambda_n|\rfloor},j_{\lfloor\alpha |\lambda_n|\rfloor}}$, then $Y_{\alpha,n}'\leq Y_{\alpha,n}$ and if $\alpha |\lambda_n|\geq y_{i_{\lfloor\alpha |\lambda_n|\rfloor},j_{\lfloor\alpha |\lambda_n|\rfloor}}$ then
$Y_{\alpha,n}'\geq Y_{\alpha,n}$. In either case, their difference is a random skew diagram, and the number of boxes in this diagram is
$$\big|\alpha |\lambda_n| -y_{i_{\lfloor\alpha n\rfloor},j_{\lfloor\alpha n\rfloor}}|\lambda_n|+O(1)\big|.$$

Thus, the area of this diagram normalized into $\lambda$ is equal to the difference of $\alpha$ and the $\lfloor\alpha|\lambda_n|\rfloor^{th}$ biggest element in a random array uniformly taking $|\lambda_n|$ points in $[0,1]$, which converges to $0$ in probability when $n\rightarrow\infty$. By the Lipshictz condition of the upper boundary of a Young diagram under the Russian convention, the norm sup of these upper boundaries converges to $0$ in probability. Thus the Dirac measure of the upper boundary of $Y_{\alpha,n}$ converges to the same limit than that of $Y_{\alpha,n}'$.
\qed
\end{proof}



The example (\ref{sqlilfjrgeg}) in Section 1 corresponds to the shape of a random square standard Young tableau. For a sequence of odd positive integers $n$, let $\lambda_n$ be a sequence of squares $\frac{n+1}{2}\times\frac{n+1}{2}$, so the corresponding bead model has $n$ threads, and $h_0$ is given by (\ref{shfmlzfuezr}). For any $\alpha\in]0,1[$, if we write the square diagram under the Russian convention and let the scale be $[0,1]\times[0,1]$, define $z_{\alpha}(x)$ as the limiting upper boundary of the first $\alpha$ proportion of boxes. This corresponds to a level line $y=\alpha$, and the difference of $z_{\alpha}(x)$ and the lower boundary of the diagram (\emph{i.e.} $z=|x-\frac{1}{2}|$) corresponds to the number of beads on the thread $x$ and between $y=\alpha$ and $y=0$. Since the total area is $\frac{1}{2}$, we have that
\begin{eqnarray*}
&\ &z_{\alpha}(x)=2\big(h_0(x,\alpha)-h_0(x,0)\big)+|x-\frac{1}{2}|\\
&=&
\begin{cases}
\frac{1}{\pi}\left(\arctan\frac{1-2\alpha}{\sqrt{1-(1-2x)^2-(1-2\alpha)^2}}-(1-2x)
\arctan\frac{(1-2\alpha)(1-2x)}{\sqrt{1-(1-2x)^2-(1-2\alpha)^2}}\right)+\frac{1}{2}\\
\hfill \text{ if }1-(1-2x)^2-(1-2\alpha)^2\geq 0,\\
|x-\frac{1}{2}| \hfill \text{  if }1-(1-2x)^2-(1-2\alpha)^2 < 0,\ \alpha<\frac{1}{2},\\
1-|x-\frac{1}{2}|\hfill \text{ if }1-(1-2x)^2-(1-2\alpha)^2 < 0,\ \alpha>\frac{1}{2}.\\
\end{cases}
\end{eqnarray*}

Thus we recover the result in \cite{PR}.
\begin{figure}[H]
\centering
\subfigure{
\includegraphics[width=0.35\textwidth]{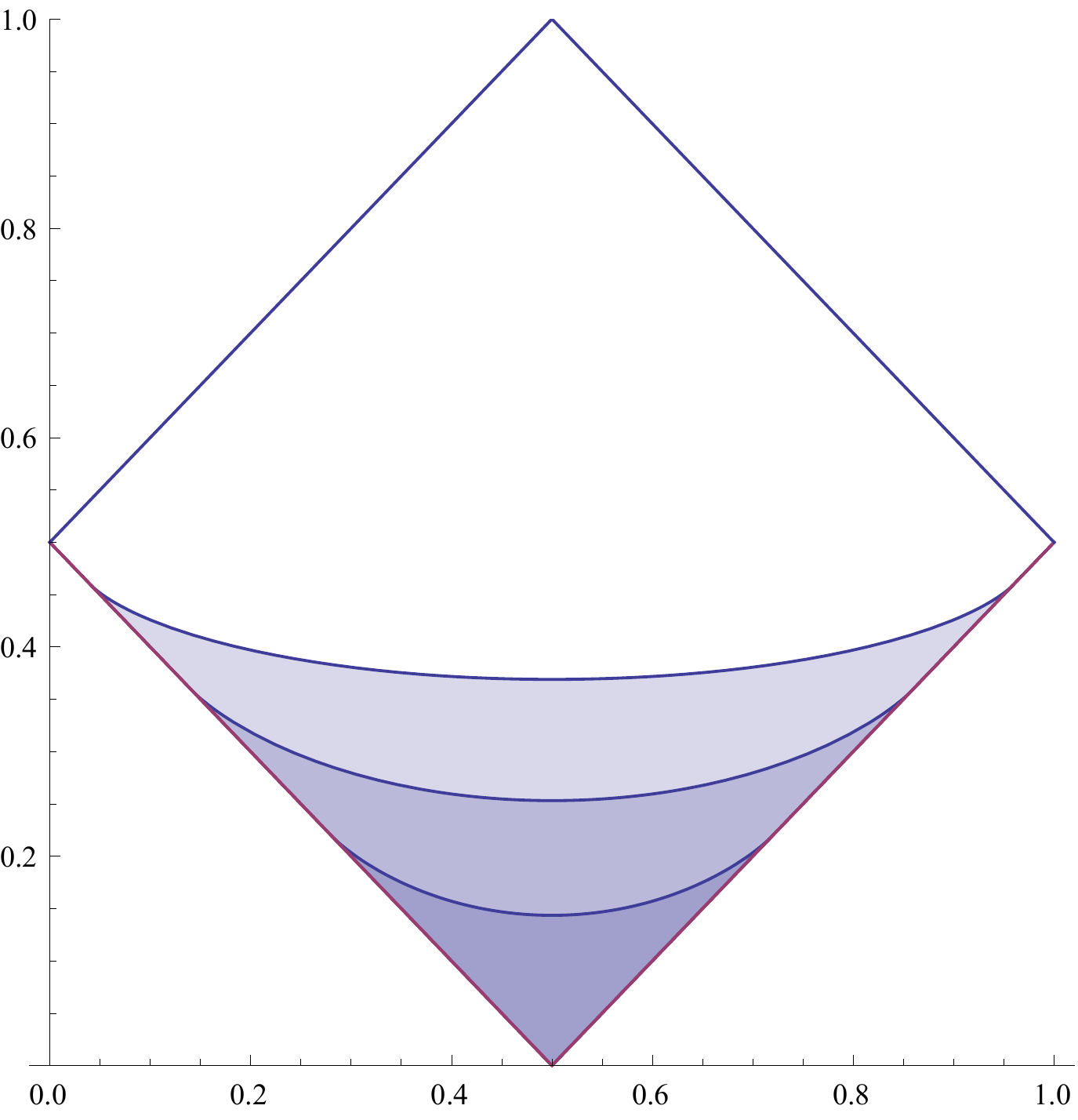}}
\hspace{0.1\textwidth}
\subfigure{
\includegraphics[width=0.35\textwidth]{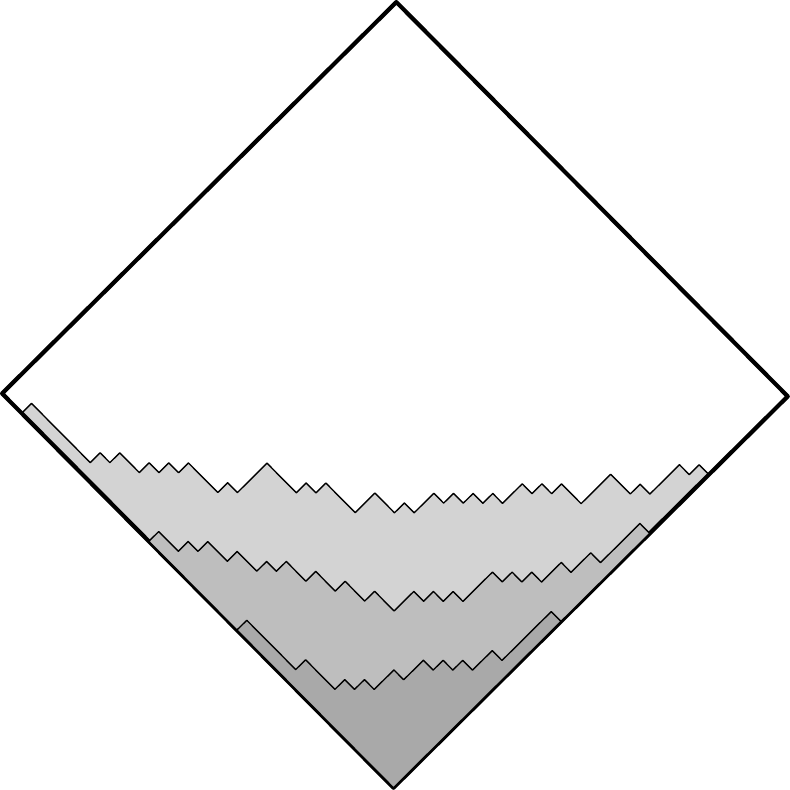}}
\caption{Level line of a standard square tableau, for $\alpha=0.05$, $0.15$ and $0.3$. Simulation for $|\lambda|=1600$.}\label{LevelLine}
\end{figure}

\newpage
\appendix
\noindent{\LARGE\textbf{Appendix}}
\section{Proof of Propositions~\ref{prop1} and~\ref{prop2}}
Fix the parameters $\alpha$ and $\gamma$ so we can simply write $Z_{mn,n}$ and $\tilde{Z}_n$ (this notation is only limited to this proof because the notation $Z_{mn,n}$ is already used for the partition function of lozenge tilings of $R_{mn,n}$ or $T_{mn,n}$). The characteristic polynomial is
\begin{eqnarray}
\det\hat{K}(z,w)=a^2z-(b+cw)^2/w=(\alpha/m)^2z-(e^{\alpha\gamma/m}+w)^2/w.\label{3}
\end{eqnarray}

The dimer partition function $Z_{mn,n}$ can be written as a linear combination of 4 terms ${Z_{mn,n}^{(\theta,\tau)}}$ \cite{Kas63,GalluccioLoebl,Tes00,CR07}, where ${\theta,\tau\in\{0,1\}}$, and the term ${Z_{mn,n}^{(\theta,\tau)}}$ is defined by
\begin{eqnarray*}
Z_{mn,n}^{(\theta,\tau)}=\prod_{\substack{z^n=(-1)^{\theta},\\w^{mn}=(-1)^{\tau}}}\det\hat{K}(z,w)=\prod_{\substack{z^n=(-1)^{\theta}, \\u^{n}=(-1)^{\tau}}}\prod_{w^m=u}\big((\alpha /m)^2z-(e^{\alpha\gamma /m}+w)^2/w\big).
\end{eqnarray*}
In the linear combination, the coefficient for every term is either $\frac{1}{2}$ or $-\frac{1}{2}$, where three terms have positive signs and one has negative sign.

Taking logarithm, we have
\begin{eqnarray}\label{srwkhraedsf}
\ln Z^{(\theta\tau)}_{mn,n}=\sum_{\substack{z^n=(-1)^{\theta},\\u^{n}=(-1)^{\tau}}}\sum_{w^m=u}\ln\big((\alpha/m)^2z-(e^{\alpha\gamma /m}+w)^2/w\big).
\end{eqnarray}

We rewrite~(\ref{3}) as
\begin{eqnarray}
-(w-w_1)(w-w_2)/w \label{decomposition1},
\end{eqnarray}
where $w_1$ and $w_2$ are two roots of the polynomial ~(\ref{3}) given by
\begin{eqnarray}
w_{1,2}=-1+\frac{\alpha}{m}(-\gamma\pm\sqrt{-z})+o\left(\frac{1}{m}\right)\label{roots},
\end{eqnarray}
which are close to $-1$ when $m$ is large. 

For given parameters $\alpha$ and $\gamma$, parity $(\theta,\tau)\in\{0,1\}^2$, and $n\in\mathbb{Z}$, for fixed $z\in S^1$, we first calculate the sum over $m$. As in the dimer model, we hope to approximate this sum by an integral. However, as we have stated, now the sum is of order $1$ so the integral (which approximates the sum divided by $m$) is of order $1/m$. So rather than to compare the sum divided by $m$ to the integral as usual, we compare the sum to $m$ times the integral. As a result, the difference between them is something a priori not negligible and should be determined precisely.

For every $(\theta,\tau)$, the term $Z_{mn,n}^{(\theta,\tau)}$ is a real number, so $\ln Z_{mn,n}^{(\theta,\tau)}$ is either real or purely imaginary. Since the partition function $Z_{mn,n}$ satisfies
$$Z_{mn,n}^{(\theta,\tau)}\leq Z_{mn,n}\leq 2\max_{(\theta,\tau)}Z_{mn,n}^{(\theta,\tau)},$$
we can just consider the case where $\ln Z_{mn,n}^{(\theta,\tau)}$ is real. Especially, we can just consider it real part.

For every $(\theta,\tau)$, by~(\ref{srwkhraedsf}), the real part of the logarithm $\ln Z_{mn,n}^{(\theta,\tau)}$ can be written as a  sum over $z$, $u$ and $w$. Consider first the sum over $w$, which by~(\ref{decomposition1}) can be rewritten as
\begin{eqnarray*}
\sum_{w^m=u}\big(\mathfrak{Re}\ln(-1)+\mathfrak{Re}\ln(w-w_1)+\mathfrak{Re}\ln(w-w_2)-\mathfrak{Re}\ln w\big).
\end{eqnarray*}
Since $\ln(-1)$ and $\ln w$ are purely imaginary, the above sum is always equal to
\begin{eqnarray}
\sum_{w^m=u}\big(\mathfrak{Re}\ln(w-w_1)+\mathfrak{Re}\ln(w-w_2)\big).\label{sum}
\end{eqnarray}

We need to compare this to the following value, which is an integral over the unit circle $S^1=\{w\in\mathbb{C}:|w|=1\}$:
\begin{eqnarray}
\mathfrak{Re}\Big(m\int_{S^1}\big(\ln(w-w_1)+\ln(w-w_2)\big)\frac{dw}{(2\pi i)w}\Big). \label{int}
\end{eqnarray}

We calculate the integral~(\ref{int}) first. Its value depends on whether the root $w_1$ and $w_2$ are inside or outside of the unit circle $S^1$. If a root is inside the unit circle $S^1$, we denote this root by $w_{in}$, and by the fact that $\ln w$ is purely imaginary we have
\begin{eqnarray*}
\mathfrak{Re}\Big(m\int_{S^1}\ln(w-w_{in})\frac{dw}{(2\pi i)w}\Big)
=\mathfrak{Re}\Big(m\int_{S^1}\ln\big(1-\frac{w_{in}}{w}\big)\frac{dw}{(2\pi i)w}\Big).
\end{eqnarray*}
Since $\left|\frac{w_{in}}{w}\right|<1$, we can develop $\ln\big(1-\frac{w_{in}}{w}\big)$ into a power series of $\frac{w_{in}}{w}$, whose powers in $w$ are not bigger than $-1$. The contour integral of any term in this series times $\frac{dw}{(2\pi i)w}$ around $S^1$ is $0$, so for an root inside $S^1$ we have
$$\mathfrak{Re}\Big(m\int_{S^1}\ln(w-w_{in})\frac{dw}{(2\pi i)w}\Big)=0.$$

If a root is outside $S^1$, denote it by $w_{out}$, we have
\begin{eqnarray*}
\mathfrak{Re}\Big(m\int_{S^1}\ln(w-w_{out})\frac{dw}{(2\pi i)w}\Big)=\mathfrak{Re}\Big(m\ln w_{out}+m\int_{S^1}\ln\big(1-\frac{w}{w_{out}}\big)\frac{dw}{(2\pi i)w}\Big).
\end{eqnarray*}
Again, we develop the logarithm $\ln\big(1-\frac{w}{w_{out}}\big)$ into a power series of $\frac{w}{w_{out}}$ with powers in $w$ bigger than $1$, so the contour integral is $0$.

In conclusion, if we use the indicator function $\mathds{1}_{out}$ to tell whether a root $w_{1,2}$ is outside $S^1$, then the integral~(\ref{int}) is equal to
\begin{eqnarray}\label{sqlidrazr}
m\sum_{j=1,2}\mathfrak{Re}\big(\mathds{1}_{out}\ln(w_{j})\big).
\end{eqnarray}

When $m$ is large, the roots $w_1$ and $w_2$ are both close to $-1$, so whether a root is inside or outside the unit circle mainly depends on its real part. When $m\rightarrow\infty$, we just need to check whether $\mathfrak{Re}(-\alpha\gamma\pm\alpha\sqrt{-z})$ is positive or negative, and when it is negative, the root $w_j$ is outside $S^1$, and in the logarithm of $\ln(w_{j})$, the only term of order $\frac{1}{m}$ is $\alpha\gamma\mp\alpha\mathfrak{Re}\sqrt{-z}$. Thus, when $m\rightarrow\infty$,~(\ref{int}) tends to
\begin{eqnarray}\label{integral2}
\sum_{+,-}\left(\alpha(\gamma\mp\mathfrak{Re}\sqrt{-z})\right)_+,
\end{eqnarray}
where $(x)_+$ is defined to be $\max\{x,0\}$.

Summing this term for $u\in S^1$, $u^{n}=(-1)^{\tau}$ just multiply it by $n$. Summing this for $z\in S^1$, $z^{n}=(-1)^{\theta}$ and divided by $n$ can be approximated by an integral over $S^1$:

\begin{eqnarray}
&\ &\int_{S^1}\big(\sum_{+,-}\alpha\left(\gamma\mp\mathfrak{Re}\sqrt{-z}\right)_+\big)\frac{dz}{(2\pi i)z}\nonumber\\
&=&\int_{0}^{2\pi}\frac{\alpha}{2\pi}\left(\Big(\gamma+\cos\big(-\theta/2\big)\Big)_+ +\Big(\gamma-\cos\big(-\theta/2\big)\Big)_+\right)d \theta\nonumber\\
&=&\int_0^{2\pi}\frac{\alpha}{\pi}(\gamma+\cos(\theta))_+ d\theta
=\frac{2}{\pi}\big(\alpha\gamma\arccos(-\gamma)+\alpha\sqrt{1-\gamma^2}\big),\label{partition}
\end{eqnarray}
and the error term between the sum over $u$ and $z$ and the integral~(\ref{partition}) is negligible.

Now we consider the difference between~(\ref{sum}) and~(\ref{int}). It suffices to consider the difference between terms of $w_1$, and the argument for $w_2$ is similar. We use Euler-Maclaurin formula, and to simplify the notation we denote by $f$ the function
$$f(x;w_1)=\ln(e^{i(\frac{2\pi x+\Arg u}{m})}-w_1),$$
then our problem is reduced to estimating the real part of
\begin{eqnarray}
\sum_{k=1}^{m}f(k;w_1)-\int_{0}^{m}f(x;w_1)dx.\label{diff}
\end{eqnarray}

A little remark is that here the function $f(.;w_1)$ is taken in the class $C^{\infty}$. Both the sum and the integral in~(\ref{diff}) differ from the original ones but only by imaginary constants, which causes no effect.

Apply the Euler-Maclaurin formula to~(\ref{diff}) to order two, then we have
\begin{eqnarray*}
&\ &\sum_{k=1}^{m}f(k;w_1)-\int_{0}^{m}f(x;w_1)dx\\
&=&\frac{1}{2}\big(f(m;w_1)-f(0;w_1)\big)+1/12\big(f'(m;w_1)-f'(0;w_1)\big) -\int_0^m\frac{1}{2}f''(x;w_1)B_2(x-\lfloor x\rfloor)dx\\
&=&\mathds{1}_{w_1\in D}\pi i-R(w_1),
\end{eqnarray*}
where the first term is imaginary and the second term $R(w_1)$ is the remainder term of the Euler-Maclaurin formula,
\begin{eqnarray}
R(w_1)=\int_0^m\frac{1}{2}f''(x)B_2(x-\lfloor x\rfloor)dx, \label{rest}
\end{eqnarray}
where $B_2$ is the Bernoulli polynomial of order 2 and $f''$ is equal to
$$f''(x;w_1)=\frac{e^{i(\frac{2\pi x+\Arg u}{m})}w_1}{\big(e^{i(\frac{2\pi x+\Arg u}{m})}-w_1\big)^2}\left(\frac{2\pi}{m}\right)^2.$$

The remainder term $R(w_1)$ is a priori not negligible. We will split the unit circle $S^1$ into the following three parts and respectively consider~(\ref{diff}) there. If we let $w=e^{i(\frac{2\pi x+\Arg u}{m})}$, then considering the following partition of $S_1$ is equivalent to considering a partition of ${x\in[0,m]}$:
\begin{eqnarray*}
S_{\RN{1}} &:=&\left\{w\in S_1:|w+1|>C_1\frac{\ln m}{\sqrt{m}}\right\}\\
S_{\RN{2}} &:=&\left\{w\in S_1:C_1\frac{\ln m}{\sqrt{m}}>|w+1|>C_2 \frac{\ln m}{m^{3/4}}\right\}\\
S_{\RN{3}} &:=&\left\{w\in S_1:|w+1|<C_2\frac{\ln m}{m^{3/4}}\right\}.\\
\end{eqnarray*}

Here $C_1$ is an arbitrary positive real number and $C_2$ is a positive real number small enough. When $m$ is sufficiently large, on $S_{\RN{1}}$, $f''=O(\frac{1}{m(\ln m)^2})$, so its contribution in the remainder term $R(w_1)$ (an integral over $S_{\RN{1}}$) tends to $0$ when $m\rightarrow\infty$. On $S_{\RN{2}}$, $f''=O(\frac{1}{\sqrt{m}(\ln m)^2})$. The length of $S_{\RN{2}}$ is $\frac{\ln m}{\sqrt{m}}$ so the terms in total is of order $\sqrt{m}\ln m$, so its contribution in $R(w_1)$ also tends to $0$.

To calculate the difference on $S_{\RN{3}}$, we approximate the sum and integral on the arc $S_{\RN{3}}$ respectively by the sum and integral on a line segment passing $w_1$ orthogonal to the $x-$axis whose length is of order $C_2\frac{\ln m}{m^{3/4}}$. In fact, if $w_1\in D$, this is a part of a chord passing $w_1$, whose length is $O\left(\frac{1}{\sqrt{m}}\right)$.

Without loss of generality we suppose that $m$ is even, let $x'=x-m/2$, so near $-1$ we have
$$e^{i(\frac{2\pi x+\Arg u}{m})}=-e^{i(\frac{2\pi x'+\Arg u}{m})}=-1-\frac{2\pi x'+\Arg u}{m} i+\frac{1}{2}\frac{(2\pi x'+\Arg u)^2}{m^2}+o\left(\frac{1}{m^2}\right),$$
where $|x'|<C_2m^{1/4}\ln m$. We have
\begin{eqnarray*}
\ln(-e^{i(\frac{2\pi x'+\Arg u}{m})}-w_1)-\ln (-1-\frac{2\pi x'+\Arg u}{m} i-w_1)=O\left(\frac{\ln m}{m^{3/4}}\right).
\end{eqnarray*}
This difference, if summed over $\{k\in\mathbb{Z},\ |k|<C_2m^{1/4}\ln m\}$ or integrated over $\{|x|<C_2m^{1/4}\ln m\}$, tends to zero when $m\rightarrow\infty$. Consider the sum
\begin{eqnarray*}
\sum_{k=-A}^{A-1}\ln (-1-\frac{2\pi k+\Arg u}{m} i-w_1)
=\sum_{k=-A}^{A-1}\Big(-\ln m+\ln \big(\alpha\gamma\mp\alpha\sqrt{-z}-(2\pi k+\Arg u)i\big)\Big),
\end{eqnarray*}
and the integral
\begin{eqnarray*}
\int_{-A}^{A}\ln (-1-\frac{2\pi x'+\Arg u}{m} i-w_1)dx'=\int_{-A}^{A}\Big(-\ln m+\ln \big(\alpha\gamma\mp\alpha\sqrt{-z}-(2\pi x'+\Arg u)i\big)\Big)dx'.\label{sumlocal}
\end{eqnarray*}

Their difference is independent of $m$. Let
$$g(x')=\ln \big(\alpha\gamma\mp\alpha\sqrt{-z}-(2\pi x'+\Arg u) i\big),$$
and apply the Euler-Maclaurin formula to $g$, we get
\begin{eqnarray*}
& &\sum_{k=-A}^{A-1} g(k)-\int_{-A}^A g(x')dx'\\
&=& \frac{1}{2}\big(g(A)-g(-A)\big)+\frac{1}{12}\big(g'(A)-g'(-A)\big)\\
&-&\frac{1}{2}\int_{-A}^A\frac{-4\pi^2}{(\alpha\gamma\mp\sqrt{-z}-(2\pi x'+\Arg u)i)^2}B(x'-\lfloor x'\rfloor)dx'.
\end{eqnarray*}

For $A$ large, the first term is close to $\frac{\pi}{2}i$ so its real part is close to $0$, and the second term is close to $0$. Considering the property of $\int\frac{dx}{(z+x i)^2}$, it is clear that the third part (the remainder term of Euler Maclaurin formula) is converging. Moreover, $\forall \varepsilon >0$, there exits $C(\varepsilon)\in\mathbb{N}^*$ such that outside $[-C(\varepsilon),C(\varepsilon)]$, uniformly on $u$ and $z$, the remainder term is less than $\varepsilon$. Note that we have proved the convergence of~(\ref{diff}) when $m\rightarrow\infty$, and we still need to prove that it can be arbitrarily small when $n\rightarrow\infty$.

For any given $\varepsilon$, we consider the difference of the finite sum $\sum_{k=-C(\varepsilon)}^{C(\varepsilon)-1} g(k)$ and the integral $\int_{-C(\varepsilon)}^{C(\varepsilon)} g(x')dx'$.

We calculate the sum first. Fix $k\in\mathbb{Z}\cap[-C(\varepsilon),C(\varepsilon)-1]$, the sum of $g(k)$ over $z:z^n=1$ divided by $n$ is approximated by
$$\ln\alpha+\int_{S^1}\ln \big(\gamma\mp\sqrt{-z}-\frac{2\pi k+\Arg u}{\alpha} i\big)\frac{dz}{(2\pi i)z}$$
within $o(1)$ as function of $n$. Let $s=s(k,u)=\gamma-\frac{2\pi k+\Arg u}{\alpha}i$. We consider the sum of those corresponding to $w_1$ and $w_2$,
\begin{eqnarray}
\int_{S^1}\ln(s+\sqrt{-z})\frac{dz}{(2\pi i)z}+\int_{S^1}\ln(s-\sqrt{-z})\frac{dz}{(2\pi i)z},\label{2sum}
\end{eqnarray}
and by taking $\sqrt{-z}=Z$, $Z\in S^1$, $\Arg Z$ from $\frac{\pi}{2}$ to $\frac{3\pi}{2}$,~(\ref{2sum}) is equal to
$$\int_{S^1}\ln(s+Z)\frac{2dZ}{(2\pi i)Z}+C,$$
where $C$ is imaginary so we don't need to consider. By a power expansion similar to what we used for $w$, we have
\begin{eqnarray*}
\mathfrak{Re}\Big(\int_{S^1}\ln(s+Z)\frac{2dZ}{(2\pi i)Z}\Big)=2\mathfrak{Re}(\mathds{1}_{s\not\in D}\ln s).
\end{eqnarray*}

We now approximate the sum over $u$ divided by $n$. Again this term can be approximated by an integral over $u$. Note that in fact if we glue the interval of $2\pi k-\Arg u$ for every $k\in[-C(\varepsilon),C(\varepsilon)-1]$, this gives exactly a continuous interval of
$[-2\pi C(\varepsilon), 2\pi C(\varepsilon)]$, and the double sum of $g$ over $u$ and $z$ is equal to
$$2\mathfrak{Re}\Big(\int^{2\pi C(\varepsilon)}_{-2\pi C(\varepsilon)}\mathds{1}_{(\gamma-\frac{y}{\alpha}i)\not\in D}\ln(\gamma-\frac{y}{\alpha}i) dy\Big).$$

If we do the same thing for the double integral of $g$ over $u$ and $z$, this gives exactly the same form. We see that integral over $z$ and $u$ makes disappear the difference between the sum and the integral of $g$.

By taking $\varepsilon\rightarrow 0$, we have proved the proposition.

\bibliographystyle{alpha}
\bibliography{mybib}
\end{document}